\newtheoremstyle{mystyle}
{3pt}               %space above
{3pt}               %space below
{\it }                      %bodyfont
{}                      %indent
{\sffamily\bfseries}             %headfont
{}                      %punctuation
{0.5em}                 %space after head
{#1 #2{\hspace{0.2cm}--\hspace{-0.2cm}}  }   %{\llap{#2 }#1{\hspace{0.2cm}--}}
\theoremstyle{mystyle}
\newtheorem{thm}{Theorem}[section]
\newtheorem{prop}[thm]{Proposition}%[section]
\newtheorem{defi}[thm]{Definition}%[section]
\newtheorem{lemm}[thm]{Lemma}%[section]
\newtheorem{coro}[thm]{Corollary}%[section]
\newtheorem{ex}[thm]{Example}%[section]
\newtheorem*{defi*}{Definition}
\newtheorem*{lemm*} {Lemma}
\newtheorem*{thm*}{Theorem}
\newtheorem*{prop*}{Proposition}
\renewenvironment{proof}[1][\unskip]{%
    \begin{list}{\hspace{1.15cm}\textbf{\textsf{Proof #1 --}}}{
        \setlength{\topsep}{0pt}%
        \setlength{\leftmargin}{0pt}%
        \setlength{\rightmargin}{0pt}%
        \setlength{\listparindent}{0pt}%
        \setlength{\itemindent}{0pt}%
        \setlength{\parsep}{0pt}%
        \addtolength{\leftmargin}{0pt} %        \addtolength{\leftmargin}{20pt}
        \addtolength{\rightmargin}{0pt}%
    } \item }{\hfill $\rhd$ \end{list}\smallskip}
\colorlet{darkgreen}{green!80!black}
\newcommand{\bbR}{\mathbb{R}}
\newcommand{\mcB}{\mathcal{B}}
\newcommand{\rmd}{\mathrm{d}}
\renewcommand\thesection       {\arabic{section}}
\renewcommand\thesubsection    {\thesection{\boldmath $.$}\arabic{subsection}}
\renewcommand\thesubsubsection    {\thesection{\boldmath $.$}\arabic{subsection}{\boldmath $.$}\arabic{subsubsection}} %{{\it \thesection}{\boldmath $.$}{\it \arabic{subsection}}{\boldmath $.$}{\it \arabic{subsubsection}}}
\titleformat{\section}[block] %\titleformat{\section}[block]
{\filcenter\normalfont\sffamily\bfseries\large}  % {\filcenter\normalfont\sffamily\bfseries\Large}
{{\hspace{-0.87cm}}\thesection \hspace{0.2em} --\vspace{0cm}}{0.5em}{} % {{\hspace{-0.87cm}}\thesection \hspace{0.2em} --\vspace{0cm}}{0.5em}{}
\titleformat{\subsection}[runin]
{\filcenter\normalfont\sffamily\bfseries}  % {\filright\normalfont\sffamily\bfseries\large}
{{\hspace{0cm}}\thesubsection \hspace{0.15em} -- \vspace{0.1cm}}{.2em}{}   %{\thesubsection \hspace{0.2em} -- \vspace{0.1cm}}{.2em}{}   %{\hspace{-0.7cm}\thesubsection \hspace{0.5em} \vspace{0.3cm}}{.5em}{}  
\titlespacing{\subsection}{-0pc}{1.5ex plus .1ex minus .2ex}{0pc}   %\titlespacing{\subsection}{-0pc}{1.5ex plus .1ex minus .2ex}{0pc}
\titleformat{\subsubsection}[runin]
{\filcenter\normalfont\sffamily\bfseries}   %{\normalfont\sffamily\bfseries}
{\filright\sffamily{\hspace{0cm}}\thesubsubsection\hspace{0.2em} --}{.5em}{}\titlespacing{\subsection}{-0pc}{1.5ex plus .1ex minus .2ex}{0pc}
\numberwithin{subsection}{section}
\numberwithin{subsubsection}{subsection}
\numberwithin{equation}{section} % Une equation p.q. est la q-eme equation de la section p.
    \pgfmathsetlength{\pgf@xb}{\pgfkeysvalueof{/pgf/outer xsep}}%  
    \pgfmathsetlength{\pgf@yb}{\pgfkeysvalueof{/pgf/outer ysep}}%  
\colorlet{symbols}{black}    
\colorlet{testcolor}{green!60!black}
\colorlet{supcolor}{red!60!black}
\tikzset{
	root/.style={circle, fill=testcolor!70, draw=testcolor, inner sep=1pt, minimum size=0.5mm},
	dot/.style={circle, draw=black, fill=black, inner sep=0pt, minimum size=0.2mm},
	noise/.style={circle, draw=black, fill=white, inner sep=0pt, minimum size=1mm},
	noiseblue/.style={circle, fill=blue!40, draw=blue, inner sep=0pt, minimum size=1mm},
	noisegray/.style={circle, fill=gray!40, draw=gray, inner sep=0pt, minimum size=1mm},
	blackdot/.style={circle, draw=black, fill=black, inner sep=0pt, minimum size=1.2mm},
	K/.style= {semithick, shorten >=0pt,shorten <=0pt,-},%{semithick,densely dashed,shorten >=1pt,shorten <=1pt,->},
	DK/.style={thick, densely dotted, shorten >=0pt,shorten <=0pt},   % 	DK/.style={semithick, densely dashed, shorten >=1pt,shorten <=1pt},
	}
\newcommand{\X}{\raisebox{-.2ex}{\includegraphics[scale=1]{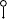}}}
\newcommand{\Xtwo}{\raisebox{0ex}{\includegraphics[scale=1]{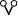}}}
\newcommand{\IXtwo}{\raisebox{-.2ex}{\includegraphics[scale=1]{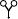}}}
\newcommand{\IXthree}{\raisebox{-.3ex}{\includegraphics[scale=1]{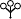}}}
\newcommand*{\defeq}{\mathrel{\rlap{%
                     \raisebox{0.3ex}{$\m@th\cdot$}}%
                     \raisebox{-0.3ex}{$\m@th\cdot$}}%
                     =}
\newcommand*{\eqdef}{=\mathrel{\rlap{\raisebox{0.3ex}{$\m@th\cdot$}}%
					                     \raisebox{-0.3ex}{$\m@th\cdot$}}%
                     }
\begin{document}
%\maketitle

\begin{center}
{\LARGE\sffamily{Global harmonic analysis for $\Phi^4_3$ on closed Riemannian manifolds   \vspace{0.5cm}}}
\end{center}

\begin{center}
{\sf I. BAILLEUL and  N.V. DANG and L. FERDINAND and T.D.  T\^O}
\end{center}

\vspace{1cm}

\begin{center}
\begin{minipage}{0.8\textwidth}
\renewcommand\baselinestretch{0.7} \scriptsize \textbf{\textsf{\noindent Abstract.}} 
Following Parisi \& Wu's paradigm of stochastic quantization, we constructed in \cite{BDFT} a $\Phi^4$ measure on an arbitrary closed, compact Riemannian manifold of dimension $3$ as an invariant measure of a singular stochastic partial differential equation. This solves a longstanding open problem in quantum fields on curved backgrounds. In the present work, we build all the harmonic and microlocal analysis tools that are needed in \cite{BDFT}. In particular, we extend the approach of Jagannath--Perkowski to the vectorial $\Phi^4_3$ model by introducing a new Cole-Hopf transform involving random bundle maps.
% We do not aim here for the greatest generality; rather, we tried to keep our exposition relatively self-contained and pedagogical enough in the hope that the techniques we show can be used in other settings.
\end{minipage}
\end{center}

\bigskip

%------------------------%
\section{Introduction}
\label{SectionIntro}
%------------------------%

Thanks to the recent breakthroughs of Hairer~\cite{Hairer} and Gubinelli, Imkeller \& Perkowski \cite{GIP}, a certain class of stochastic partial differential equations (SPDEs) with low regularity coefficients now have a robust solution theory. Examples of equations in this class include the KPZ equation
\begin{eqnarray*}
(\partial_t-\partial_x^2)u=(\partial_xu)^2 + \xi_1,
\end{eqnarray*}  
the parabolic $\Phi^4$ equation
\begin{eqnarray} \label{EqPhi43}
(\partial_t -\Delta+1)u+u^3 = \xi_2,
\end{eqnarray}
and the generalized parabolic Anderson model
\begin{eqnarray*}
\left(\partial_t-\Delta+1\right)u=F(u)\xi_3,
\end{eqnarray*}  
A common feature to the above equations is that they do not make sense in a classical sense due to the low regularity of the different driving noises $\xi_1, \xi_2,\xi_3$. One needs to renormalize the PDE, somehow subtracting some infinite counterterm in the equation itself, to have a well-defined notion of solution. These recent seminal works, and the body of works that followed, allowed a number of authors  \cite{HM,MW17, MW20,GH, BG} to recover the existence of the celebrated $\Phi^4_3$ quantum field theory measure first constructed by Glimm \& Jaffe \cite{Glimm,GJ} in the $2$ and $3$ dimensional Euclidean space in the 70s. The extension of such results to a curved setting is a longstanding open problem that matters from the point of view of constructive quantum field theory. We gave in the work \cite{BDFT} the first construction of the dynamical $\Phi^4$ model on any compact, boundaryless, $3$-dimensional Riemannian manifold $M$ and deduced from some functional properties of the long time behaviour of the semigroup generated by the SPDE \eqref{EqPhi43} the existence and non-triviality of a $\Phi^4$ Gibbs measure
\begin{equation} \label{EqGibbsMeasure}
\frac{e^{-\int_M\vert\nabla u\vert^2 -\int_M \frac{u^4}{4}  }}{\int  e^{-\int_M\vert\nabla u\vert^2 -\int_M \frac{u^4}{4}  } [\mathcal{D}u]},
\end{equation}
on $M$. This measure is seen as an invariant measure for the semigroup. Our construction is naturally deeply rooted in the recent developments in the area of singular stochastic PDEs.

The construction in \cite{BDFT} relies on several results of harmonic and microlocal analysis in the manifold setting which are considered in the present paper. Our goal is to build in a relatively simple and self-contained way all the tools from paradifferential calculus and microlocal analysis on compact manifolds that we used in the existence proof of a $\phi^4_3$ measure on $M$ in \cite{BDFT}. Most of its content is independent of that precise problem, though, so we hope the reader will take profit from what follows to investigate a number of other problems.

\medskip

{\it {\Large \S}1 -- The $\Phi^4_3$ measure on a closed $3$-dimensional manifold --} We fix from now on a smooth closed (i.e. compact, boundaryless) $3$-dimensional Riemannian manifold $(M,g)$ and let $\Delta\defeq \Delta_g$ stand for the (negative) Laplace-Beltrami operator on $M$. 
We will denote by
$$
P\defeq  1-\Delta 
$$ 
the massive Laplacian and by $e^{-tP}$  the corresponding heat kernel. The eigenfunctions $f_\lambda$ of $P$ form an orthonormal basis of $L^2(M)$. Let $\xi^\lambda$ stand for a collection of independent real valued one dimensional white-noises indexed by the set of eigenvalues of $P$, defined on some probability space $(\Omega, \mathcal{F}, \mathbb{P})$. Spacetime white noise on $\mathbb{R}\times M$ can be constructed as the random series
$$
\xi \defeq \sum_\lambda \xi^\lambda\otimes f_\lambda.
$$
It belongs almost surely to the parabolic H\"older space of regularity exponent $-5/2-\epsilon$, for any $\epsilon>0$. We let
$$
\xi_r \defeq e^{r(\Delta-1)}\xi
$$
stand for the space regularized spacetime white noise -- so $\xi_r$ is still white in time. The work \cite{BDFT} is dedicated to constructing a Gibbs measure that formally writes \eqref{EqGibbsMeasure} as an invariant probability measure of the parabolic $\Phi^4$ parabolic dynamics
\begin{equation} \label{EqSPDE}
\partial_tu = \sqrt{2}\xi + (\Delta-1) u - u^3.
\end{equation}
Set
\begin{equation} \label{EqDefnArBr}
a_r \defeq \frac{r^{-1/2}}{4\sqrt{2} \, \pi^{3/2}}\,, \qquad b_r \defeq \frac{\vert\hspace{-0.03cm}\log r\vert}{32\,\pi^2}\,.
\end{equation}
One of the main results from \cite{BDFT} reads as follows. The constant $0<\epsilon$ is small enough and fixed throughout.

\smallskip

\begin{thm*}
Pick $\phi\in C^{-1/2-\epsilon}(M)$. The equation
\begin{equation} \label{EqRenormalizedSPDE}
(\partial_t - \Delta + 1)u_r = \sqrt{2}\xi_r - u_r^3 + 3(a_r - b_r)u_r
\end{equation}
with initial condition $\phi$, has a unique solution over $[0,\infty)\times M$ in some appropriate function space. For any $0<T<\infty$ this random variable converges in probability in 
$$
C\big([0,T], C^{-1/2-\epsilon}(M)\big)
$$ 
as $r>0$ goes to $0$ to a limit $u$. 
\end{thm*}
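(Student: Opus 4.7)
\emph{Plan.} The strategy is a Da Prato--Debussche type decomposition combined with the Cole--Hopf/Jagannath--Perkowski transformation advertised in the abstract, the goal being to reduce \eqref{EqRenormalizedSPDE} to a semilinear PDE whose coefficients are built from an explicit finite family of converging stochastic objects, so that classical deterministic PDE methods can be applied pathwise.

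\emph{Enhanced noise.} First I would construct the stationary linear solution $X_r$ of $(\partial_t-\Delta+1)X_r=\xi_r$ along with its Wick square $X_r^{\diamond 2}\defeq X_r^2-a_r$, its Wick cube $X_r^{\diamond 3}\defeq X_r^3-3a_r X_r$, and the finitely many iterated paraproducts and resonant products forced by the paracontrolled analysis of the remainder equation. The specific form of $a_r$ in \eqref{EqDefnArBr} matches the short-distance singularity of the covariance of $X_r$, read off from the short-time heat kernel expansion on $(M,g)$; the logarithmic constant $b_r$ appears one order further, as the on-diagonal divergence of the resonant product of type $X_r^{\diamond 2}\circ P^{-1}X_r^{\diamond 2}$. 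Using the spectral theory of $P$, the manifold heat kernel asymptotics and the paradifferential calculus developed in the earlier sections, together with Nelson's hypercontractivity on Wiener chaoses, this enhanced-noise lift converges in probability as $r\downarrow 0$ in an appropriate parabolic H\"older--Besov topology.

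\emph{Cole--Hopf reduction and solvability.} Setting $u_r=X_r-Y_r+v_r$ with $Y_r$ the mild solution of $(\partial_t-\Delta+1)Y_r=X_r^{\diamond 3}$, and substituting into \eqref{EqRenormalizedSPDE}, produces an equation for $v_r$ whose most singular term is schematically $3(X_r^{\diamond 2}-b_r)\,v_r$, which cannot be interpreted classically in the limit $r\to 0$. Following the vectorial extension of Jagannath--Perkowski, I would then set $v_r=\Psi_r w_r$ for a random bundle map $\Psi_r$ obtained by solving a linear auxiliary equation driven by $X_r^{\diamond 2}-b_r$. The chain rule absorbs the singular paraproduct into $\Psi_r$, leaving $w_r$ to satisfy a semilinear parabolic equation whose coefficients depend only on the converging enhanced noise. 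A Picard iteration in a parabolic H\"older space then yields local well-posedness uniformly in $r$, while the dissipative cubic nonlinearity provides a coming-down-from-infinity a priori bound that upgrades local to global existence on $[0,T]$. Continuous dependence of the solution map on the enhanced-noise data, applied to the almost-sure limit of that data, gives convergence of $u_r$ in $C([0,T], C^{-1/2-\epsilon}(M))$ in probability.

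\emph{Main obstacle.} The conceptually delicate step is the Cole--Hopf reduction: the scalar Jagannath--Perkowski substitution does not directly transpose to the $\Phi^4_3$ problem because $X_r^{\diamond 2}$ does not commute with $-\Delta+1$ nor with multiplication by $v_r$ in any useful way. The resolution, which is the main new technical ingredient advertised in the abstract, is to allow $\Psi_r$ to be a random section of a bundle of linear endomorphisms on $M$ rather than a scalar function, and to show that this object carries enough regularity to effectively conjugate out the singular term; establishing the global harmonic analysis estimates on $M$ needed to control the stochastic products and the bundle-valued $\Psi_r$ is the heart of the work.
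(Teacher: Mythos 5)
Your overall architecture — stationary linear solution, Wick renormalization with $a_r$ from the heat-kernel short-distance singularity, $b_r$ from a second-order resonant product, a Jagannath--Perkowski exponential change of variables, then pathwise Picard iteration plus a coming-down-from-infinity bound, with convergence obtained by continuity of the solution map in the enhanced noise — is exactly the route taken in the paper (the analytic core being supplied here, the stochastic estimates in the companion work). However, your ``Main obstacle'' paragraph contains a genuine conceptual error. For the \emph{scalar} equation stated in the theorem, the Jagannath--Perkowski substitution transposes verbatim: one sets $u_r = X_r - \underline{\mathcal{L}}^{-1}(:\!X_r^3\!:) + e^{-3\underline{\mathcal{L}}^{-1}(:X_r^2:)}(v_{\mathrm{ref},r}+v_r)$ with a \emph{scalar} exponential, and no bundle endomorphism is needed. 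The fact that $:\!X_r^2\!:$ does not commute with $-\Delta+1$ is not an obstruction; it simply produces the gradient cross-term $\nabla\underline{\mathcal{L}}^{-1}(:\!X_r^2\!:)\cdot\nabla v$ in the transformed equation, exactly as on the torus. The random bundle map you invoke is the new ingredient only for the \emph{vectorial} ($O(N)$-type) model, where the quadratic Wick power is an $\mathrm{End}(E)$-valued object and scalar exponentiation is unavailable; it is an extension, not a prerequisite for the scalar theorem. What actually changes on a closed manifold is the harmonic analysis: paraproducts and resonant products must be built chart by chart, and one needs commutator estimates between the heat semigroup (viewed as a parameter-dependent pseudodifferential operator) and paramultiplication operators to justify the manipulations.

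A second, related omission: your sketch passes directly from ``conjugate out the singular term'' to ``Picard iteration,'' skipping the one term that is genuinely borderline after the Cole--Hopf step, namely $\nabla\underline{\mathcal{L}}^{-1}(:\!X_r^2\!:)\cdot\nabla v_{\mathrm{ref},r} - b_r\,e^{3\underline{\mathcal{L}}^{-1}(:X_r^2:)}\underline{\mathcal{L}}^{-1}(:\!X_r^3\!:)$. Controlling it requires paralinearization of the exponential, the Gubinelli--Imkeller--Perkowski corrector, the heat/paraproduct commutator, and the renormalized resonant product $|\nabla\underline{\mathcal{L}}^{-1}(:\!X_r^2\!:)|^2 - b_r/3$; this is precisely where part of the $b_r$ counterterm is consumed, and it is the content of Theorem \ref{thm_A_2_mfd}. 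Without an argument for this term your reduction to ``a semilinear parabolic equation with converging coefficients'' is not yet justified.
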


\smallskip

We note that obtaining a local in time well-posedness result for \eqref{EqRenormalizedSPDE} is relatively elementary. The non-trivial points in the preceding statement are the long time existence of that local in time solution and its convergence as $r>0$ goes to $0$. The function $u$ is what we define as the solution to equation \eqref{EqSPDE}; it turns out to be a Markov process. 

\smallskip

\begin{thm*}
The dynamics of $u$ is Markovian and its associated semigroup on $C^{-1/2-\epsilon}(M)$ has an invariant non-Gaussian probability measure.
\end{thm*}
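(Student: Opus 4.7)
The plan is to establish the two assertions separately: first the Markov property of $u$ as a limit of Markov processes, then the existence of an invariant measure by Krylov--Bogoliubov applied to time averages, and finally to rule out Gaussianity by a moment comparison.

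\textbf{Markov property.} For each fixed $r>0$ the regularized equation \eqref{EqRenormalizedSPDE} is a classical semilinear SPDE driven by a spatially smooth noise and the cubic drift $-u_r^3+3(a_r-b_r)u_r$; its solution therefore defines a Markov semigroup $(P^r_t)_{t\geq 0}$ on $C^{-1/2-\epsilon}(M)$, and for $s\leq t$ one has the flow identity $u_r(t,\phi;\xi)=u_r\bigl(t-s,u_r(s,\phi;\xi);\theta_s\xi\bigr)$, where $\theta_s$ denotes the time-shift of the noise. I would carry this flow identity to the limit: the convergence in probability in $C([0,T],C^{-1/2-\epsilon}(M))$ provided by the first theorem, together with continuity of the solution map in the initial condition (which is also part of the local well-posedness theory one sets up to prove that theorem), yields $u(t,\phi;\xi)=u(t-s,u(s,\phi;\xi);\theta_s\xi)$ almost surely. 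Since $\theta_s\xi$ is independent of $\mathcal{F}_s$ and equidistributed with $\xi$, this gives the Markov property and defines a semigroup $P_t$ on $C^{-1/2-\epsilon}(M)$. The Feller property follows from the same continuous-dependence statements.

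\textbf{Existence of an invariant measure.} I would use the Krylov--Bogoliubov procedure applied to the empirical averages
\begin{equation*}
\mu_T \defeq \frac{1}{T}\int_0^T (P_t^\ast \delta_0)\,dt,
\end{equation*}
starting for definiteness from $\phi=0$. The key input is a uniform-in-time moment bound of the form $\sup_{t\geq 1}\mathbb{E}\bigl[\|u(t)\|_{C^{-1/2-\epsilon}(M)}^p\bigr]\leq C_p<\infty$, for some $p>0$ and a better H\"older exponent $-1/2-\epsilon<\alpha$. This is exactly the \emph{coming down from infinity} estimate that underpins the long-time existence part of the previous theorem: the cubic damping in \eqref{EqSPDE} dominates the noise and forces the solution into a ball of the stronger space $C^{\alpha}(M)$ after an arbitrarily short time, independently of the initial condition. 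The compact embedding $C^{\alpha}(M)\hookrightarrow C^{-1/2-\epsilon}(M)$ then yields tightness of $\{\mu_T\}_{T\geq 1}$, and any weak limit $\mu$ is, by a standard argument using the Feller property, an invariant probability measure for $P_t$.

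\textbf{Non-Gaussianity.} The main remaining point is to show that $\mu$ is not Gaussian, and this is the step I expect to demand the most care. My strategy would be to compare moments of $\mu$ with those of the Gaussian free field $\mu_{\mathrm{GFF}}$ associated to $P$. Denote by $Z$ the stationary Ornstein--Uhlenbeck process solving $(\partial_t-\Delta+1)Z=\xi$, whose time marginals are $\mu_{\mathrm{GFF}}$, and write $u=Z+v$ as in the usual Da Prato--Debussche decomposition. Stationarity of $\mu$ together with the regularity gain $v\in C^{\alpha}(M)$ allows one to test $u^2$ (renormalized as $u^2-a_r$ in the regularized picture) against a smooth function $\varphi\geq 0$ with $\int \varphi>0$, and to compare the resulting expectation with the corresponding Gaussian one. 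Using the explicit form of the Wick squares associated to $\mu_{\mathrm{GFF}}$ on the one hand, and the integrated cubic damping on the other, one shows that these two expectations cannot coincide: intuitively, the $u^3$ term systematically pulls mass away from the Gaussian Wick distribution. The hard part will be translating this heuristic into an honest inequality, which requires using the uniform moment estimates of the previous step in concert with an integration-by-parts (or Dyson--Schwinger type) identity for $\mu$.
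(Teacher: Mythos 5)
First, a point of orientation: this theorem is not proved in the present paper at all — it is quoted in the introduction as a result of the companion work \cite{BDFT}, and the present paper only supplies the harmonic- and microlocal-analytic tools used there. So there is no in-paper proof to compare against. Judged on its own merits, your first two steps follow the standard (and, as far as one can tell from the references to ``coming down from infinity'' in Section 2.2 of \cite{BDFT}, the intended) route: the flow/cocycle identity for the regularized dynamics passed to the limit via continuity of the solution map gives the Markov and Feller properties, and Krylov--Bogoliubov applied to time averages, with tightness coming from the a priori bound in a better H\"older space $C^\alpha(M)$ compactly embedded in $C^{-1/2-\epsilon}(M)$, gives existence of an invariant measure. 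Those parts are fine as a sketch.

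The genuine gap is in the non-Gaussianity step. Your proposed test — comparing $\mathbb{E}_\mu\big[\langle :\!u^2\!:,\varphi\rangle\big]$ (a second-moment/covariance functional) with the corresponding quantity for the Gaussian free field — can at best show $\mu\neq\mu_{\mathrm{GFF}}$. A Gaussian measure is determined by its mean and covariance, but nothing forces a Gaussian invariant measure of this dynamics to have the covariance of $P^{-1}$; so a discrepancy in second moments does not rule out that $\mu$ is some \emph{other} Gaussian measure. To prove non-Gaussianity you must control a quantity that vanishes for \emph{every} Gaussian measure, the standard choice being the fourth cumulant (connected four-point function) $\mathbb{E}_\mu[u(\varphi)^4]-3\,\mathbb{E}_\mu[u(\varphi)^2]^2$, which one shows is nonzero (typically strictly negative) by a graphical/perturbative computation that survives the $r\to 0$ limit, or alternatively an integration-by-parts (Dyson--Schwinger) argument showing that invariance of a Gaussian measure under this semigroup forces the cubic term to act as a linear drift, which is impossible. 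Your closing paragraph gestures at such an identity but does not identify the cumulant that must be estimated, and as written the argument would not close. The rest of the proof stands independently of this step, so the fix is localized, but it is the substantive part of the non-triviality claim.
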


\smallskip

 We defined in \cite{BDFT} a $\Phi^4_3$ measure as {\sl an} invariant measure of this Markovian dynamics on $C^{-1/2-\epsilon}(M)$. The uniqueness of such an invariant measure is proved in \cite{BailleulUniqueness}, so we freely talk in the sequel of {\sl the} $\Phi^4_3$ measure. These results are proved by building on Jagannath \& Perkowski's insight \cite{JP} that a clever change of variable turns the stochastic PDE \eqref{EqRenormalizedSPDE} into a PDE
\begin{equation} \label{EqJPFOrmulation}
(\partial_t-\Delta+1) v_r = -A_rv_r^3 - B_r\nabla v_r + Z_{2r}v_r ^2 + Z_{1r}v_r + Z_{0r}
\end{equation}
with random coefficients whose solution theory is elementary provided one a uniform control of the coefficients in some appropriate spaces. The problems related to the low regularity of the spacetime white noise $\xi$ and the singular character of \eqref{EqSPDE} are all transferred to the question of proving the convergence in an appropriate space of the random coefficients. When formulated in this way there is no need to use the tools of regularity structures or paracontrolled calculus to set up an analytic framework for the study of \eqref{EqSPDE}. However the question of the convergence of the random coefficients is fairly non-trivial and remains to be dealt with separately. As a matter of fact the $r$-uniform control of one of the terms that appear in Jagannath \& Perkowski's reformulation can be obtained using a number of basic tools from paracontrolled calculus. Sections \ref{SectionParaproducts}, \ref{s:paradiffcommutators} and \ref{SectionCommutationParamultiplication} are dedicated to present these tools in a self-contained way. They are used in Section \ref{SectionProofThmA2} to prove a crucial $r$-uniform control on the above mentioned term. The random coefficients $A_r,B_r,Z_{2,r}, Z_{1r}, Z_{0r}$ are all continuous polynomial functions of eight distributions built from the Gaussian regularized noise $\xi_r$ by some elementary operations. Building on moment estimates we formulate the problem of the convergence of the random coefficients as a problem of extension of some distributions defined the diagonales of some configuration spaces over $M$. In doing so, we follow Epstein \& Glaser's approach to renormalization. The analysis of this extension problem requires some tools from microlocal analysis that we explain in detail in sections \ref{sect_microlocal_est}, \ref{SectionProofMainThm2} and \ref{sect_compostion_para_ker}.

\smallskip

We this global picture in mind we can now be more specific. Denote by 
$$
\mathcal{L}\defeq\partial_t-\Delta+1
$$ 
the heat operator and by $\underline{\mathcal{L}}^{-1}$ its inverse with null initial condition at $t=-\infty$. Set 
$$
\X_r \defeq \sqrt{2}\underline{\mathcal{L}}^{-1}(\xi_r), \quad\Xtwo_r \defeq :\X^2\hspace{-0.08cm}:_r,\quad \IXtwo_r \defeq \underline{\mathcal{L}}^{-1} (\Xtwo_r), \quad \IXthree_r \defeq \underline{\mathcal{L}}^{-1}( :\X^3\hspace{-0.08cm}:_r ).
$$ 
These stochastic terms are first regularized, since $\xi_r$ is mollified, and then Wick renormalized. 

\smallskip

{\it A Cole-Hopf transform  --} The main idea of \cite{JP} is to introduce a new Cole-Hopf transform which yields an optimal way to decompose the solution $u_r$ of \eqref{EqRenormalizedSPDE} in such a way that all the singularities of the SPDE are well-isolated. Setting
\begin{eqnarray*} \label{eq:decomp1}
u_r = \X_r - \IXthree_r + e^{-3\IXtwo_r} \big(v_{\textrm{ref},r} + v_r \big)
\end{eqnarray*}
where $v_{\textrm{ref},r}$ solves the equation
$$
\mathcal{L} v_{\textrm{ref},r} = 3e^{3\IXtwo_r} \left(\IXthree_r \Xtwo_r - b_r(\X_r + \IXthree_r)\right), \quad v_{\textrm{ref},r}(0)= 0,
$$ 
the function $v_r$ is the solution of Equation \eqref{EqJPFOrmulation} for some appropriate coefficients $A_r,B_r, Z_{ir}$ and initial condition.

\smallskip

\begin{thm} \label{thm_A_2_mfd}
One has $v_{\textrm{ref},r}\in \bigcap_{\epsilon>0} C_TC^{1-\epsilon}(M)$  and 
$$
\nabla \IXtwo_r \cdot \nabla v_{\textrm{ref},r} - b_r(e^{3\IXtwo_r}\IXthree_r)\in \bigcap_{\epsilon>0} C_TC^{-2\epsilon}(M),
$$
with estimates that are uniform as $r>0$ goes to $0$  in $\mathbb{P}$-probability.
\end{thm}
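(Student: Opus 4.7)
\textbf{Proof plan --} The plan is to apply parabolic Schauder to the Duhamel formula for $v_{\textrm{ref},r}$, with source controlled by Bony paraproducts and Wiener-chaos estimates, and then to isolate a resonance of type $\Pi_\circ(\nabla\IXtwo_r,\nabla\IXtwo_r)$ whose divergent expectation contributes exactly the counterterm $b_r e^{3\IXtwo_r}\IXthree_r$.

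\textit{First claim.} I collect first the spatial regularities, uniform in $r>0$ and in probability: $\X_r\in C^{-1/2-\epsilon}$, $\Xtwo_r\in C^{-1-\epsilon}$, $\IXthree_r\in C^{1/2-\epsilon}$, $\IXtwo_r\in C^{1-\epsilon}$ and hence $e^{3\IXtwo_r}\in C^{1-\epsilon}$ by paralinearization. The product $\IXthree_r\Xtwo_r$ is Young-critical, so I decompose
$$
\IXthree_r\Xtwo_r \,=\, \Pi_{<}(\IXthree_r,\Xtwo_r)+\Pi_{<}(\Xtwo_r,\IXthree_r)+\Pi_{\circ}(\IXthree_r,\Xtwo_r).
$$
The two paraproducts sit in $C^{-1-\epsilon}$ and $C^{-1/2-2\epsilon}$ respectively, deterministically, by the estimates of Section~\ref{SectionParaproducts}. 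The resonance $\Pi_\circ(\IXthree_r,\Xtwo_r)$ lies in Wiener chaoses of orders $1,3,5$; only the first-chaos piece diverges as $r\to 0$. Using the kernel and diagonal asymptotics of Section~\ref{sect_microlocal_est} together with a direct Wick expansion, this divergent piece is precisely $b_r(\X_r+\IXthree_r)$ modulo a term convergent in $C_TC^{-1/2-2\epsilon}$ in probability; the Euclidean leading singularity of the squared heat kernel at coinciding points accounts for the explicit constant in \eqref{EqDefnArBr}, and curvature corrections fall into the remainder. Multiplying the so-renormalized bracket by $e^{3\IXtwo_r}$ via Bony preserves the $C_T C^{-1-\epsilon}$ space, and parabolic Schauder applied to $v_{\textrm{ref},r}=\underline{\mathcal{L}}^{-1}F_r$ yields $v_{\textrm{ref},r}\in C_TC^{1-\epsilon}$ uniformly.

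\textit{Second claim.} Both factors of $\nabla\IXtwo_r\cdot\nabla v_{\textrm{ref},r}$ sit in $C^{-\epsilon}$. Bony again gives
$$
\nabla\IXtwo_r\cdot\nabla v_{\textrm{ref},r}\,=\,\Pi_{<}(\nabla\IXtwo_r,\nabla v_{\textrm{ref},r})+\Pi_{<}(\nabla v_{\textrm{ref},r},\nabla\IXtwo_r)+\Pi_{\circ}(\nabla\IXtwo_r,\nabla v_{\textrm{ref},r}),
$$
the two paraproducts lying in $C^{-2\epsilon}$. For the resonance I substitute the Duhamel formula for $v_{\textrm{ref},r}$ and use the commutator and corrector estimates of Sections~\ref{s:paradiffcommutators}--\ref{SectionCommutationParamultiplication} to pull the smooth prefactor $3 e^{3\IXtwo_r}\IXthree_r$ out of the gradient and the resonance up to errors in $C^{-2\epsilon}$; concretely, this uses the paracontrolled ansatz $\nabla v_{\textrm{ref},r}=3\,e^{3\IXtwo_r}\IXthree_r\,\nabla\IXtwo_r+($smoother remainder$)$ inherited from the heat integral of $e^{3\IXtwo_r}\IXthree_r\Xtwo_r$. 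What remains is the autoresonance $\Pi_\circ(\nabla\IXtwo_r,\nabla\IXtwo_r)$: its recentred version converges in $C_TC^{-2\epsilon}$ by a second-chaos moment bound, and its diagonal expectation is asymptotic to $b_r$ by the same Euclidean heat-kernel computation ($\int\! |\nabla_x K_t(x,\cdot)|^2$ near the diagonal produces the constant $1/128\pi^2$). Reassembling yields $\nabla\IXtwo_r\cdot\nabla v_{\textrm{ref},r}-b_r e^{3\IXtwo_r}\IXthree_r\in C_TC^{-2\epsilon}$.

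\textbf{Main obstacle --} The delicate point is that the same scalar $b_r$ must be simultaneously the divergent counterterm for two a priori distinct objects: the Wick-chaos-$1$ piece of $\Pi_\circ(\IXthree_r,\Xtwo_r)$ and the chaos-$0$ expectation of $\Pi_\circ(\nabla\IXtwo_r,\nabla\IXtwo_r)$. Establishing this matching requires sharp on-diagonal short-time asymptotics for iterated heat kernels on $(M,g)$, obtained from the microlocal and kernel tools of Sections~\ref{sect_microlocal_est}--\ref{sect_compostion_para_ker}, in order to separate the universal Euclidean singularity from curvature corrections (the latter being absorbed into the $C^{-2\epsilon}$ remainders). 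Hypercontractivity in the Wiener chaos then promotes the resulting moment bounds to convergence in probability in the relevant $C_TC^\alpha$ spaces.
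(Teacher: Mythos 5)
Your overall strategy (paraproduct decomposition of the source, corrector and commutator estimates, renormalized resonances supplied as stochastic input) is the one the paper follows, but there is a genuine error in how you distribute the counterterm $b_r(\X_r+\IXthree_r)$, and it breaks the first part of the argument. The first-chaos divergence of the resonance $\IXthree_r\odot\Xtwo_r$ is proportional to $\X_r$ only; the correct renormalized object is $\IXthree_r\odot\Xtwo_r-b_r\X_r\in C_TC^{-1/2-\epsilon}$. Subtracting $b_r(\X_r+\IXthree_r)$ from this resonance over-subtracts: the quantity $\IXthree_r\odot\Xtwo_r-b_r(\X_r+\IXthree_r)$ then diverges like $-b_r\IXthree_r$. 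The counterterm $b_r\IXthree_r$ is needed elsewhere, namely in the resonant interaction of the exponential prefactor with the paraproduct part of the bracket. Your step ``multiplying the so-renormalized bracket by $e^{3\IXtwo_r}$ via Bony preserves $C_TC^{-1-\epsilon}$'' is exactly where this surfaces: the bracket contains $\IXthree_r\prec\Xtwo_r\in C_TC^{-1-\epsilon}$, and the resonant product of $e^{3\IXtwo_r}\in C_TC^{1-\epsilon}$ with a $C^{-1-\epsilon}$ distribution is not defined by Bony, since the sum of regularities is $-2\epsilon<0$. This borderline term $e^{3\IXtwo_r}\odot(\IXthree_r\prec\Xtwo_r)$ is the ``most complicated term'' of the paper's proof; it is handled by paralinearizing the exponential and applying Lemma \ref{lemThmCorrector}, which reduces it to $9e^{3\IXtwo_r}\IXthree_r\,(\Xtwo_r\odot\IXtwo_r)$ up to a $C^{1/2-3\epsilon}$ remainder, and the divergence $b_r/3$ of $\Xtwo_r\odot\IXtwo_r$ is precisely what the counterterm $3e^{3\IXtwo_r}b_r\IXthree_r$ cancels. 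Your two misattributions cancel formally (the total source is the same), but as organized each intermediate object in your proof is either divergent or undefined, so the proof does not go through.

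A secondary remark: you propose to establish the stochastic inputs (convergence of $\IXthree_r\odot\Xtwo_r-b_r\X_r$, of $\Xtwo_r\odot\IXtwo_r-b_r/3$, of $\nabla\IXtwo_r\odot\nabla\IXtwo_r-b_r$, and the identification of the constant in \eqref{EqDefnArBr}) by Wick calculus inside this proof. The paper deliberately imports these from Section 4 of the companion work; your one-line sketches of the chaos computations are far from proofs, and on a curved manifold the separation of the universal Euclidean singularity from curvature corrections is exactly what the microlocal machinery of Sections \ref{sect_microlocal_est}--\ref{sect_compostion_para_ker} is built for. The second half of your argument (paracontrolled ansatz for $\nabla v_{\textrm{ref},r}$, commutation of the heat inverse and of $\partial_\mu$ with the paramultiplication, corrector lemma, autoresonance of $\nabla\IXtwo_r$) does match the paper's route, modulo the same caveat that the renormalization of $\nabla\IXtwo_r\odot\nabla\IXtwo_r$ is an input rather than something proved here.
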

 
\smallskip
 
We use paradifferential calculus on compact manifolds as a key ingredient in our proof of Theorem \ref{thm_A_2_mfd}. Sections \ref{SectionParaproducts} to \ref{SectionCommutationParamultiplication} are dedicated to giving a detailed exposition of paraproduct operators and para-decomposition operators on arbitrary closed manifolds, and to establish several commutator estimates on these objects, in the spirit of Gubinelli, Imkeller \& Perkowski' seminal work \cite{GIP}, as in Bailleul \& Bernicot's works \cite{BB1,BB2,BB3}. Our setting in these sections is general and our estimates hold for closed manifolds of any dimension. (A very nice related work for the first part is the recent paper of Guillarmou \& Poyferré \cite{BGP} where they developed some paradifferential calculus on manifolds. Some of our commutator estimates can be obtained from \cite{BGP} but we thought it would be useful to include some detailed proofs here to make our work more self-contained, more pedagogical and show clearly the mechanism behind the proofs.) In Section \ref{sect_vect_model}, we also extend Theorem \ref{thm_A_2_mfd} to $\Phi^4_3$ models whose fields take values in some vector bundle over the manifold $M$. We explained in Section 6.2 of \cite{BDFT} how to construct a $\Phi^4_3$ measure in this setting adding to the strategy used in the scalar case a key new ingredient: A new {\sl vectorial  Cole-Hopf transform} which extends Jagannath \& Perkowski's transform to the bundle case. Instead of multiplying a solution with the exponential of some random field we apply the exponential of some random bundle endomorphism. It is interesting to note that the regularized renormalized equation reads in that setting, for a coupling function $\lambda\in C^\infty(M), \lambda\geqslant 0$,
$$
(\partial_t - \Delta+1) u_r = \sqrt{2}\xi_r - \lambda \langle u_r, u_r\rangle_E u_r + \big(\textrm{rk}(E)+2\big)(\lambda a_r-\lambda^2 b_r) u_r,
$$
for the same constants $a_r,b_r$ as in the scalar setting, with $\textrm{rk}(E)$ the rank of the bundle $E$.

\medskip

{\it {\Large \S}2 -- Convergence of the random coefficients in \eqref{EqJPFOrmulation} --} These coefficients form a continuous polynomial of the following eight random (well-defined) distributions
\begin{equation*} \label{eq:stochasticobjects}
\left( \X_r, \Xtwo_r , \IXtwo_r, \IXthree_r, \IXthree_r\odot_i \X_r, \IXtwo_r\odot_i \Xtwo_r - \chi_i\frac{b_r}{3},  \chi_i\vert \nabla \IXtwo_r\vert^2-\chi_i\frac{b_r}{3}, \IXthree_r\odot_i \Xtwo_r - \chi_i b_r\X_r \right).
\end{equation*}
(They are well-defined as the noise used in their definition has been regularized.) The operator $\odot_i$ that appears here is the localized  resonance operator introduced in Section \ref{SectionParaproducts} and $\chi_i\in C^\infty_c(U_i)$ for a local chart $U_i\subset M$. In Section 4 of \cite{BDFT}, using renormalization, we proved that the preceding list of stochastic objects belongs to some appropriate Banach space of distributions, uniformly in $r>0$, provided the following distributional kernels are controlled microlocally. 

\smallskip

\begin{defi*} [Propagators and vertices in Feynman amplitudes] \label{def:listkernels}
We define the following collection of distributional kernels
{
\begin{equation*} \begin{split}
&\underline{\mathcal{L}}^{-1}\big((t,x),(s,y)\big) \defeq {\bf 1}_{(-\infty,t]}(s) \, e^{(t-s)(\Delta-1)}(x,y) \in \mathcal{D}^\prime( \mathbb{R}^2\times M^2 ) \\ 
& \Delta \underline{\mathcal{L}}^{-1}\big((t,x),(s,y)\big) \defeq {\bf 1}_{(-\infty,t]}(s) \, \left(\Delta e^{(t-s)(\Delta-1)}\right)(x,y) \in \mathcal{D}^\prime( \mathbb{R}^2\times M^2 )  \\
&G^{(p)}\big((t,x),(s,y)\big) \defeq  \left(\Big\{e^{\vert t-s\vert (\Delta-1)}(1-\Delta)^{-1}\Big\}(x,y)\right)^p \in  \mathcal{D}^\prime( \mathbb{R}^2\times M^2 ) , \qquad (1\leq p\leq 3) 
 \\
&[\odot_i](x,y,z) \defeq \sum_{\vert k-\ell\vert\leq  1} P^i_k(x,y) \, \widetilde{P}^i_\ell(x,z)  .\\
%&\mathcal{Q}^\gamma\big((t,x),(s,y)\big) \defeq \left( \eta_i \kappa^*\left[\left(-\partial_t^2 + P^2 \right)^{\frac{\gamma}{2}} \kappa_*(\eta_i\,\cdot)\right] \right) (t-s,x,y).
\end{split} 
\end{equation*}
}
where $P^i_k$ and $\widetilde{P}^i_\ell$ stand for some generalized Littlewood-Paley-Stein projectors that we introduce in Paragraph \ref{sss:definitionLPSproj}, and  $(\eta_i)_{i\in I}$ a  partition of unity.
\end{defi*}

\smallskip

These kernels form the \emph{elementary building blocks} of the Feynman amplitudes that one needs to control analytically to probe the regularity of the stochastic objects in the limit where $r>0$ goes to $0$. The microlocal description of these kernels is given in Theorem \ref{mainthm2}. We need to recall some terminology from microlocal analysis before we can state it. Let $\mathcal{X}$ denotes some ambient manifold and
$\mathcal{U}\subset \mathcal{X}$ some open subset of $\mathcal{X}$. For every closed conic set $\Gamma\subset T^\bullet\mathcal{U}$, we denote by $\mathcal{D}^\prime_\Gamma(\mathcal{U})$ the space of distributions whose wave front set lies in $\Gamma$. This space is considered as a locally convex topological vector space endowed with the normal topology~\cite{BDFT}. Let $\mathcal{Y}\subset \mathcal{X}$ denotes a submanifold of $\mathcal{X} $, $\rho$ a scaling field relative to $\mathcal{Y}$, $\mathcal{U}$ some open subset which is stable by the semiflow of $\rho$: $e^{-s\rho}(\mathcal{U})\subset \mathcal{U}$ and $\Gamma\subset T^\bullet \mathcal{U}$ a closed conic set which is stable by the semiflow of $\rho$. Then we will denote by $\mathcal{S}^a_\Gamma(\mathcal{U})$ the set of distributions $T$ such that the family of distributions $ \left(e^{as} e^{-s\rho*}T\right)_{s\geqslant 0} $ is bounded in $\mathcal{D}^\prime_\Gamma(\mathcal{U})$. Theorem \ref{mainthm2} states that the kernels of the operators {$\underline{\mathcal{L}}^{-1},  \Delta\underline{\mathcal{L}}^{-1}, G_r^{(i)}$} and $[\odot_i]$  are in different functional spaces of the form $\mathcal{S}^a_\Gamma(\mathcal{U})$ for some ambient spaces $\mathcal{U}$, scaling exponents $a$ and wavefront sets $\Gamma$. 

\smallskip

\begin{thm} \label{mainthm2}
In the conventions introduced above, we have the following microlocal estimates:
\begin{itemize}
	\item[--] The kernel $\underline{\mathcal{L}}^{-1}$ has scaling exponent $-3$ and wavefront set 
	$$
	N^*\left(\{t=s\}\times {\bf d}_2\subset \mathbb{R}^2\times M^2 \right).
	$$
	{
	\item[--] The kernel $\Delta\underline{\mathcal{L}}^{-1}$ has scaling exponent $-5$ and wavefront set 
	$$
	N^*\left(\{t=s\}\times {\bf d}_2\subset \mathbb{R}^2\times M^2 \right).
	$$}
	\item[--] The kernel $G^{(p)}$ have scaling exponent $-p$ and wavefront set 
	$$
	N^*\left(\{t=s\}\subset \mathbb{R}^2\times M^2 \right)\cup N^*\left(\{t=s\}\times {\bf d}_2 \subset \mathbb{R}^2\times M^2 \right).
	$$
	\item[--] The kernel $[\odot_i]$ has scaling exponent $-6$ and wavefront set
	$$
	 N^*\left(\{x=y=z\}\subset M^3 \right)
	$$
%	\item[--] The kernel $\mathcal{Q}^\gamma$ has scaling exponent $-5-2\gamma$ and wavefront set
%	$$
%	N^*\big(\{t=s\}\times{\bf d}_2\subset \mathbb{R}^2\times M^2\big). 
%	$$
\end{itemize}
Any kernel $K$ of the above list satisfies some local diagonal bounds of the form
$$
\vert\partial_{\sqrt{t},\sqrt{s},x,y}^\alpha K\vert\lesssim \left(\sqrt{\vert t-s \vert}+\vert x-y\vert \right)^{a-\vert\alpha\vert}
$$
for the corresponding scaling exponent $a$. 
\end{thm}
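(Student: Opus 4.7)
The plan is to treat the four families of kernels in parallel, establishing for each of them (i) the pointwise diagonal bounds on derivatives, (ii) the scaling exponent, and (iii) the claimed wavefront set. The overall logical structure in every case is the same: first work in local charts on $M$, identify the parabolic or spatial scaling field $\rho$ transverse to the relevant submanifold, derive Gaussian-type upper bounds on the kernel and its derivatives that scale correctly under $\rho$, and finally conclude that the rescaled family $\bigl(e^{as}e^{-s\rho*}T\bigr)_{s\geqslant 0}$ is bounded in the normal topology on $\mathcal{D}^\prime_\Gamma$ by checking both the uniform seminorm and the microlocal seminorms away from $\Gamma$.

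For the heat propagator $\underline{\mathcal{L}}^{-1}$ I would invoke the Minakshisundaram--Pleijel parametrix: in a tubular neighborhood of the diagonal,
\begin{equation*}
e^{t(\Delta-1)}(x,y) = (4\pi t)^{-3/2}e^{-d(x,y)^2/(4t)-t}\sum_{k=0}^N t^k a_k(x,y) + R_N(t,x,y),
\end{equation*}
with $a_k\in C^\infty(M\times M)$ and a smoothing remainder $R_N$. Differentiating and using the standard bound $\vert\partial^\alpha_{\sqrt{t},x,y}e^{-d^2/(4t)}\vert\lesssim (\sqrt{t})^{-\vert\alpha\vert}e^{-cd^2/t}$ produces the claimed diagonal estimate with $a=-3$, since the parabolic scaling leaves the Gaussian profile invariant while the prefactor $t^{-3/2}$ contributes the $-3$. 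The wavefront set is then computed from the explicit Gaussian: off $\{t=s\}\times{\bf d}_2$ the kernel is $C^\infty$, a transverse Fourier analysis of the Gaussian captures precisely the conormal of $\{t=s\}\times{\bf d}_2$, and the Heaviside ${\bf 1}_{(-\infty,t]}(s)$ adds no new wavefront direction because its singularity is absorbed into the Dirac mass produced by the heat semigroup at $t=s$.

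The kernel $G_r^{(p)}$ is then handled as a pointwise $p$-fold product of factors $e^{\vert t-s\vert(\Delta-1)}(1-\Delta)^{-1}(x,y)$, each bounded by $(\sqrt{\vert t-s\vert}+d(x,y))^{-1}$ (the Green's function of $1-\Delta$ contributes a $d^{-1}$ singularity in dimension three, and the heat factor only improves the estimate). Its two loci of singularity are well separated and carry independent information: a first-order jump along $\{t=s\}$ produced by the absolute value, smooth in $(x,y)$ away from the spatial diagonal, and the genuine $d^{-p}$ singularity along $\{t=s\}\times{\bf d}_2$. This yields exactly the union of conormals in the statement, and the scaling $-p$ is parabolic homogeneity of the bound. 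For the resonance kernel $[\odot_i]$ the argument is purely harmonic-analytic: each Littlewood--Paley--Stein projector $P^i_k$ has kernel of sup-norm $\lesssim 2^{3k}$ Gaussianly concentrated in a ball of radius $2^{-k}$ near the diagonal; the summation $\sum_k P^i_k(x,y)\widetilde{P}^i_\ell(x,z)$ restricted to $\vert k-\ell\vert\leq 1$ produces a kernel concentrated on the total diagonal $\{x=y=z\}$ with pointwise bound $\sim(\vert x-y\vert+\vert x-z\vert)^{-6}$, from which both the $-6$ scaling exponent and the conormal wavefront follow. Finally, $\mathcal{Q}^\gamma$ is treated by pushing the problem onto $\mathbb{R}\times\mathbb{R}^3$ via the chart $\kappa$ and using that $(-\partial_t^2+P^2)^{\gamma/2}$ is a classical parabolic pseudodifferential operator of parabolic order $2\gamma$: its Schwartz kernel is conormal to $\{t=s\}\times{\bf d}_2$ with homogeneity $-5-2\gamma$ (parabolic dimension $5=2+3$), and the partition-of-unity reassembly via $\eta_i$ preserves both the scaling and the wavefront.

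The main technical obstacle is the \emph{simultaneous} control of scaling and wavefront set in the spaces $\mathcal{S}^a_\Gamma(\mathcal{U})$: boundedness of $\bigl(e^{as}e^{-s\rho*}T\bigr)_{s\geqslant 0}$ in the normal topology requires controlling not only the sup-norm seminorms but also the full family of conic Fourier-localized seminorms, uniformly in $s\to+\infty$. For the explicitly Gaussian kernels this reduces to stability of Gaussian bounds under parabolic dilation, which is essentially tautological; the genuinely subtle case is $\mathcal{Q}^\gamma$, where preservation of the conormal wavefront through the chart pullback and the partition of unity demands a careful accounting of all commutator and smoothing terms to ensure they remain in the correct parabolic symbol class.
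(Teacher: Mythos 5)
Your overall architecture -- local diagonal bounds on derivatives in charts, then passage to boundedness of the rescaled family $\bigl(e^{as}e^{-s\rho*}T\bigr)_{s\geq 0}$ in $\mathcal{D}'_\Gamma$ -- is the same as the paper's, which packages the second step into a general lemma (Lemma \ref{l:topologytwopointfunctiong}) and feeds it the diagonal bounds established in Section \ref{sect_microlocal_est} (heat calculus representation $t^{-d/2}\widetilde{A}(t,(x-y)/\sqrt{t},x)$ for $\underline{\mathcal{L}}^{-1}$, the integral $\int_t^1 e^{-sP}\,ds$ estimate for the factors of $G_r^{(p)}$, and weak homogeneity of the Fourier multiplier $\bigl(\tau^2+(1+\vert\xi\vert^2)^2\bigr)^{\gamma/2}$ for $\mathcal{Q}^\gamma$). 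For these three families your proposal is essentially the paper's proof; note only that your reason for the absence of $N^*(\{t=s\})$ in the wavefront set of $\underline{\mathcal{L}}^{-1}$ should be the infinite-order vanishing of $e^{u(\Delta-1)}(x,y)$ as $u\downarrow 0$ for $x\neq y$ (so that extension by zero across $t=s$ is smooth off the spatial diagonal), not an ``absorption into the Dirac mass''. Note also that one cannot dismiss general scaling fields as ``tautological'': the spaces $\mathcal{S}^a_\Gamma$ are defined relative to an arbitrary scaling field $\rho$ for $\mathcal{Y}\subset\mathcal{X}$, and passing from the standard dilation to a general $\rho$ requires the normal-form argument (the exponential lower bound $e^{-t}\vert x_1-x_2\vert\lesssim\vert(e^{-t\rho})^*(x_1-x_2)\vert$ and, for $[\odot_i]$, the comparison $e^{-t\rho_2*}=\Psi(t)^*e^{-t\rho_1*}$ with $\Psi(t)$ converging as $t\to+\infty$), which your proposal does not supply.

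The genuine gap is in your treatment of $[\odot_i]$, which is in fact the hardest case and not, as you suggest, $\mathcal{Q}^\gamma$ (the latter is a translation-invariant convolution kernel in the chart, so its wavefront set and scaling are read off directly from the multiplier). You claim that the pointwise bound $\sim(\vert x-y\vert+\vert x-z\vert)^{-6}$ obtained by summing the dyadic blocks implies the conormal wavefront set $N^*(\{x=y=z\}\subset M^3)$. It does not: a pointwise diagonal bound constrains neither the directions of the wavefront set nor even guarantees that the series converges as a distribution, and here the exponent $-6$ on a codimension-$6$ submanifold of $M^3$ means $[\odot_i]$ is not locally integrable, so the distribution only exists through the series $\sum_{\vert k-\ell\vert\leq 1}P^i_k\,\widetilde{P}^i_\ell$ and its convergence must be established \emph{in} $\mathcal{D}'_{N^*(\{x=y=z\})}$ with its normal topology. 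The paper's proof of this point is the substantive part of Section \ref{SectionProofMainThm2}: one computes $\widehat{T}(\xi,\eta,\zeta)$ explicitly as an iterated convolution of the annular cut-offs $\psi(2^{-k}\theta_1)\psi(2^{-\ell}\theta_2)$ against the Fourier transforms of the chart cut-offs, and proves rapid decay $\lesssim(1+\vert\xi+\eta+\zeta\vert)^{-N}$ on closed cones avoiding the conormal fibre $\{\xi+\eta+\zeta=0\}$, using two elementary ``geometric'' inequalities and the fact that $1+\vert\xi\vert\lesssim 1+\vert\xi+\eta+\zeta\vert$ on such cones; the scaling statement then further requires splitting the rescaled series into a tail converging in conormal distributions uniformly in the scale and a finite part bounded in $C^\infty$. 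None of this is present in, or follows from, your outline, so this step of the proposal would need to be carried out in full before the theorem is proved for $[\odot_i]$.
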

\begin{proof}
It is proved in Lemmas~\ref{l:heatvsparabolic} and Proposition~\ref{p:propagatorinparabolic} that the kernels $K_{t-s}(x,y)$ given by $e^{(t-s)(1-\Delta)}(x,y)$ and $e^{(t-s)(1-\Delta)}(1-\Delta)^{-1}(x,y)$ belong to the parabolic calculus $\Psi_P^\alpha$ introduced in Definition~\ref{defi:flatparabolic} respectively for $\alpha=-1$ and $\alpha=-2$. They therefore obey some bounds of the form
\begin{equation*} 
\big\vert\partial^\beta_{\sqrt{t},\sqrt{s},x,y} K_{t-s}(x,y)\big\vert \lesssim \left(\sqrt{|t-s|}+\vert y-x\vert\right)^{a- \vert\beta\vert}\,,
\end{equation*}
for $a=-d-2-2\alpha=-5-2\alpha$. It is immediate to read from the above that $\underline{\mathcal{L}}^{-1}$ and $G_r^{(p)}$ verify the same bounds respectively for $a=-3$ and $-p$, $\underline{\mathcal{L}}^{-1}$ being obtained from $e^{(t-s)(1-\Delta)}(x,y)$ by multiplication by a bounded indicator function, and the kernel of $G_r^{(p)}$ being just the $p$-th power of that of $e^{(t-s)(1-\Delta)}(1-\Delta)^{-1}(x,y)$. They therefore satisfy the assumptions of Lemma~\ref{l:topologytwopointfunctiong}, so that the claim holds for these propagators. 

Let us now deal with the kernel of $\Delta\underline{\mathcal{L}}^{-1}$. For simplicity, we just focus on $K_{t-s}(x,y)=\Delta e^{(t-s)(\Delta-1)}(x,y)$. First, we would like to insist on the fact that $K$ is only defined in the sense of distributions, since it is not an $L^1_{\mathrm{loc}}$ function, so that the Laplacian acts on the heat kernel in the sense of distributions. 
To prove the statement, we choose some scaling field $\rho$ w.r.t. the diagonal $\{t=s\}\times \mathbf{d}_2$. Then, we note that
the family of differential operators of order $2$
defined as
$$ Q_u:=e^{-2u}e^{-u\rho*}\Delta e^{u\rho*} $$
is bounded as differential operator of degree $2$ with $C^\infty$ coefficients uniformly in $u\in [0,+\infty)$.

This can be seen in local coordinates where we choose a scaling which
takes the simple form:
$$ (t,s,x,y)\longmapsto (e^{-2u}(t-s)+s,s,e^{-u}(x-y)+y,y), $$
if $\Delta=g^{\mu\nu}(x)\partial_{\mu}\partial_\nu+b^\mu(x)\partial_\mu$,
then $Q_u$ reads in local coordinates as
\begin{align*}
 Q_u=e^{-2u}(e^{2u}g^{\mu\nu}(e^{-u}(x-y)+y)\partial_{\mu}\partial_\nu+e^ub^\mu(e^{-u}(x-y)+y)\partial_\mu)
 \\=g^{\mu\nu}(e^{-u}(x-y)+y)\partial_{\mu}\partial_\nu+e^{-u}b^\mu(e^{-u}(x-y)+y)\partial_\mu 
 \end{align*}
which is clearly bounded in $C^\infty$ uniformly in $u\in [0,+\infty)$. Then, we have the exact identity: 
$$ e^{-5u} e^{-u\rho*} \big(\Delta\underline{\mathcal{L}}^{-1}\big)=Q_u \big(\underbrace{e^{-3u} e^{-u\rho*}\underline{\mathcal{L}}^{-1}}\big)\,,$$
where the term underbraced is bounded in $\mathcal{D}^\prime_{N^*(\{t=s\}\times \mathbf{d}_2)}(\mathbb{R}^2\times M^2)$
by the previous result on $\underline{\mathcal{L}}^{-1} $, therefore
$Q_u e^{-3u} e^{-u\rho*}\underline{\mathcal{L}}^{-1}$ is bounded in 
$\mathcal{D}^\prime_{N^*(\{t=s\}\times \mathbf{d}_2)}(\mathbb{R}^2\times M^2)$ since the action of bounded differential operators is bounded in the $\mathcal{D}^\prime_\Gamma$ topology, from which we can conclude that indeed $\Delta\underline{\mathcal{L}}^{-1}$ belongs to $\mathcal{S}^{-5}_{N^*(\{t=s\}\times \mathbf{d}_2)}$.
%To prove the statement, we pair $K$ with a test function probing the way it scales along the diagonal. Working in charts, a good choice is to take $\varphi_u(t,s,x,y):=e^{5u}\varphi\big(e^{2u}(t-s),e^u(x-y)\big)$, where $\varphi$ is smooth and compactly supported.
%\begin{equation*}
%    \Big\langle\Delta e^{(t-s)(\Delta-1)},\varphi_u\Big\rangle=  \Big\langle e^{(t-s)(\Delta-1)},\Delta\big(\varphi_u\big)\Big\rangle=  e^{2u}\Big\langle e^{(t-s)(\Delta-1)},\tilde\varphi_u\Big\rangle\,,
%\end{equation*}
%where $\tilde{\varphi}_u$ is defined as $\varphi_u$, but with $\varphi$ replaced with $\Delta\varphi$. Since $\Delta\varphi$ has the exact same properties as $\varphi$, using the fact which we just proved that $e^{(t-s)(\Delta-1)}\in \mathcal S^{-3}_{N^*(\{t=s\}\times \mathbf{d}_2)}$, we can now conclude that $|\langle e^{(t-s)(\Delta-1)},\tilde\varphi_u\rangle|\lesssim e^{3u}$ uniformly in $u>0$, 

Finally, the case of the kernel $[\odot_i](x,y,z)$ is treated in Lemma~\ref{lem:7_2}.
\end{proof}

\smallskip

The present work is organised as follows. In Section~\ref{SectionParaproducts}, we introduce some paraproducts on compact manifolds, and show that the usual properties of Besov spaces on the plane extend to this context. We also include in Section \ref{sect_LPS_meet_parabolic_cal} several commutator estimates that involve both pseudodifferential operators and Littlewood-Paley-Stein projectors. In Sections~\ref{s:paradiffcommutators} and \ref{SectionCommutationParamultiplication}, we prove a commutator estimate between the heat kernel and paraproducts. Theorem~\ref{thm_A_2_mfd} is proved in Section~\ref{SectionProofThmA2}, along with the equivalent statement in the vector bundle case. In Section~\ref{sect_microlocal_est}, we develop for the purpose of proving the estimates of Theorem~\ref{mainthm2} on $\underline{\mathcal L}^{-1}$ and $G^{(p)}$ a calculus of operators on closed Riemannian manifolds of any dimension whose Schwartz kernels have parabolic singularities, called parabolic calculus. The proof of Theorem~\ref{mainthm2} is completed in Section~\ref{SectionProofMainThm2}, where we prove the estimate on the kernel of the resonant product $[\odot_i]$. Finally, Section~\ref{sect_compostion_para_ker} is devoted to the proof of an estimate on the composition of elements of the parabolic calculus with elements of the classical heat calculus, which is used in Section 5 of \cite{BDFT} to obtain the explicit expressions~\eqref{EqDefnArBr} for the counterterms $a_r,b_r$.

\medskip

The following table of contents gives a detailed synthetic view of the organization of this work.

\medskip

\begin{center}
\begin{minipage}[t]{11cm}
\baselineskip =0.35cm
{\scriptsize 

\center{\it \textbf{Contents}}

\vspace{0.1cm}

\textit{2. Paraproducts on compact manifolds made simple\dotfill 
\pageref{SectionParaproducts}}

\textit{ 2.1 Paraproduct on $\mathbb{R}^d$\dotfill
\pageref{SubsectionParaproductRd}}

\textit{ 2.2 Besov spaces, Leibniz and Schauder\dotfill
\pageref{ss:Besovrecollection}}

\textit{ 2.3 Paraproduct decomposition on  manifolds\dotfill
\pageref{ss:definitionLPSprojectorsparaproduct}}

\textit{ 2.4 Triple product and commutator involving paraproducts\dotfill
\pageref{SubsectionTripleParamultiplication}}

\textit{ 2.5 From local to global principle\dotfill
\pageref{SectionLocalizationLemma}}

\textit{ 2.6 Littlewood-Paley-Stein projectors and pseudodifferential operators\dotfill
\pageref{sect_LPS_meet_parabolic_cal}}

\textit{3. Commutator estimates for paradifferential operators\dotfill 
\pageref{s:paradiffcommutators}}

\textit{ 3.1 Recollection on paradifferential operators on $\mathbb{R}^d$\dotfill
\pageref{SubsectionRecollectionParadiff}}

\textit{ 3.2 A simple pararegularization\dotfill
\pageref{SubsectionSimplePararegularization}}

\textit{ 3.3 Composition of paradifferential operators\dotfill
\pageref{CompositionParadiffOperators}}

\textit{4. Commuting the heat operator with a paraproduct\dotfill
\pageref{SectionCommutationParamultiplication}}

\textit{5. Proof of Theorem \ref{thm_A_2_mfd} and an extension\dotfill
\pageref{SectionProofThmA2}}

\textit{ 5.1 Proof of Theorem \ref{thm_A_2_mfd}\dotfill
\pageref{SubsectionConstructionScalarAnsatz}}

\textit{ 5.2 The $\Phi^4_3$ vectorial model in the bundle case\dotfill
\pageref{sect_vect_model}}

\textit{6. Microlocal estimates on generalized propagators\dotfill
\pageref{sect_microlocal_est}}

\textit{ 6.1 Parabolic kernels and the class $\Psi_P^a$\dotfill
\pageref{ss:parabolickernels}}

\textit{7. Estimate on the kernel of the resonant term\dotfill
\pageref{SectionProofMainThm2}}

\textit{8. Composing $\Psi_P^a$ with $\Psi_H^b$\dotfill
\pageref{sect_compostion_para_ker}}

\textit{A. A commutator identity on $\mathbb{R}^d$\dotfill
\pageref{SubsectionCommutatorIdentity}}

\textit{B. Paralinearization in the bundle case\dotfill
\pageref{SubsectionParaBundle}}

}\end{minipage}
\end{center}

\bigskip

\subsubsection*{Related works.}

We tried to reach a certain compromise in writing this work.
\begin{itemize}
	\item Our approach is less general than the works of Bailleul, Bernicot \& Frey ~\cite{BB1,BB2,BB3} in the sense we restrict our study to \textsl{smooth} compact Riemannian manifolds whereas the cited works 
works in the more general geometric background of metric measure spaces that have the volume doubling property. Another limitation compared to the mentioned work is that we only develop paracontrolled calculus in the first order setting. This is all we need in our study of the dynamics \eqref{EqSPDE}.   \vspace{0.15cm}

	\item Following an established classical tradition in microlocal analysis, we develop most objects first on $\mathbb{R}^d$ for operators with variable coefficients. Then, using partitions of unity and local charts, we explain what kind of results can be transferred to the manifold setting. This implies that most of the objects we define in our analysis, the quantizations, the projectors, are non-canonical with respect to the metric $g$. We also heavily rely on Fourier analysis. This makes some of our proofs easier than in~\cite{BB1,BB2,BB3} since we can use existing results on paradifferential and microlocal analysis on $\mathbb{R}^d$. We loose in generality, covariance and the intermediate analytical objects we use are not geometrically intrinsic. We gain in simplicity and flexibility, while a number of these results seems out of reach for the methods of \cite{BB1,BB2,BB3} as we can always compare our analytical objects with those coming from classical pseudodifferential analysis in the $C^\infty$ setting. This was for instance very useful in \cite{BDFT} where we used commutator estimates to give explicit expression for the counterterms. Our work includes a detailed microlocal study of all kernels appearing in the stochastic estimates necessary to prove the well-posedness of \eqref{EqSPDE}, which is where our work might differ from \cite{BB1,BB2,BB3}. For instance, we rely heavily on the notion of wavefront set in our smooth setting (this would not be obvious to define on metric measure spaces with doubling property) and control the wavefront set of Feynman amplitudes appearing in the estimate for the stochastic terms.	 \vspace{0.15cm}

\end{itemize}

The way we treat paradifferential analysis is quite close to the one of Guillarmou, Poyferré \& Bonthonneau~\cite{BGP} in the sense all the ``hard analysis'' is worked out on $\mathbb{R}^d$ and the globalization is done separately. For instance, our decomposition of the product of smooth functions in paraproducts and resonant products is closely related to~\cite[Prop 2.17]{BGP}. The results from section~\ref{s:paradiffcommutators} rely on results of Bony, H\"ormander and Meyer on the twisted class $\tilde{\Psi}_{1,1}$ and certain results from this section probably follow from~\cite[subsection 2.1]{BGP}. However we preferred to give detailed proofs which are hard to find in the literature and our approach is more pedestrial as we hope it may help non experts to enter the subject. 
However, since the tools of the present paper deal with \textbf{nonlinear} PDE's, we need stronger results than those in~\cite{BGP}: the paralinearization Lemma in the bundle case~\ref{SubsectionParaBundle} and the multiple products decompositions from subsection~\ref{SubsectionTripleParamultiplication}.

\medskip

\noindent \textit{Notation -- We will denote by $d$ the Riemannian distance on $(M,g)$.}

\bigskip

\noindent {\textbf{\textsf{Acknowledgements --}}} We would like to thank C. Bellingeri, C. Brouder, C. Dappiaggi, P. Duch, C. Gérard, C. Guillarmou, F. Hélein, K. L\^e, D. Manchon, A. Mouzard, P.T. Nam, S. Nonnenmacher, V. Rivasseau, G. Rivière, F. Vignes-Tourneret for interesting questions, remarks, comments on the present work when we were in some preliminary stage and also simply for expressing some interest and motivating us to pursue. Special thanks are due to Y. Bonthonneau, C. Brouder, J. Derezinski, M. Hairer and M. Wrochna for their very useful comments on a preliminary version of~\cite{BDFT}. N.V.D acknowledges the support of the Institut Universitaire de France. The authors would like to thank the ANR grant SMOOTH "ANR-22-CE40-0017", QFG "ANR-20-CE40-0018" and MARGE "ANR-21-CE40-0011-01" for support.

%-------------------------------------------------------------------------%
\section{Paraproducts on compact manifolds made simple}
\label{SectionParaproducts}
%-------------------------------------------------------------------------%

We provide in this section a direct construction of some (family of) paraproduct and resonant operators on $M$ from their $\bbR^d$ analogue. This approach has the advantage that we can directly import on $M$ the results known on $\bbR^d$ at low cost and is closely related to~\cite[Prop 2.17]{BGP}. There are obviously other approaches to the subject with different advantages. Bernicot's approach via the heat semigroup \cite{Bernicot, BernicotSire} probably has the most general geometric scope. It needs to be refined as in Bailleul \& Bernicot's work \cite{BB1} to deal with Besov spaces of negative regularity. See e.g. Mouzard's works \cite{MouzardWeyl, MouzardThesis} for an implementation of this approach in the setting of a smooth closed manifold.

\smallskip

We will use along some of the proof of this section some results stated and proved in Section \ref{s:paradiffcommutators} and Section \ref{SectionCommutationParamultiplication} that are independent of the content of the present section.

\smallskip

We recall in Section \ref{SubsectionParaproductRd} the definition of the paraproduct operator on $\bbR^d$. The Besov spaces over $M$ are introduced in Section \ref{ss:Besovrecollection}, where we extend to these spaces the well-known fractional Leibniz and interpolation estimates and prove some Schauder-type estimate for a certain class of pseudodifferential operators. A family of paraproduct and resonant operators is introduced in Section \ref{ss:definitionLPSprojectorsparaproduct}. These objects naturally come in family as they depend on partitions of unity and similar side functions. They have the analytic properties that we expect. Last, Section \ref{SubsectionTripleParamultiplication} deals with the iteration of two paraproduct operators and paralinearisation.

\medskip

%%-------------------------------------------------%%
\subsection{Paraproduct on $\mathbb{R}^d$$\boldmath{.}$ \hspace{0.1cm}}
\label{SubsectionParaproductRd}
%%-------------------------------------------------%%

Recall we can find functions $\chi$ and $\psi$ such that
$$
1 = \chi + \sum_{k=1}^\infty \psi(2^{-k}\cdot),
$$ 
wtih $\text{support}(\chi)\subset \{ 0\leq  \vert \xi\vert\leq  4\}$ and $\text{support}(\psi)\subset \{ 1\leq  \vert \xi\vert\leq  4\}$. We denote by $\Delta_0=\chi(D)$ and $\Delta_j=\psi(2^{-j}D)$ for $j\geqslant1$,  that is 
$$
\Delta_jf\defeq \mathcal{F}^{-1}\left(\psi(2^{-j}.)\widehat{f} \, \right) 
$$  
localizing (this is not a projector) $f$ on the Fourier dyadic shell of size $\big\{ 2^{j-1}\leq  \vert \xi\vert\leq  2^{j+1} \big\}$ -- a corona in Fourier space. The element $\Delta_jf$ is sometimes called a Littlewood-Paley block. The Littlewood-Paley decomposition of a distribution $f$ reads
$$ \Delta_0(f) +\sum_{j=1}^\infty \Delta_j(f) . $$
We define projectors on lower Fourier modes as: 
\begin{eqnarray*}
S_j(f) \defeq \Delta_0(f) + \sum_{k=1}^j \Delta_k(f).
\end{eqnarray*}
Then the paraproduct is defined by
\begin{eqnarray*} \label{eq_flat_paraproduct}
f\prec g=  \sum_j S_{j-2}(f)\Delta_j(g),
\end{eqnarray*}
where the product $S_{j-2}(f)\Delta_j(g)$ is supported in Fourier spacev in some enlarged  corona
$ \frac{1}{4}\,2^{j}\leq  \vert \xi\vert\leq  \frac{9}{4} \,2^{j} $. This observation relies on the fundamental fact that the Fourier support of $fg$ is contained in the sum of the Fourier supports of $f$ and $g$, we refer to~\cite[p280-291]{Meyer2} for more details. Recall the classical definition of Besov norms $ \Vert . \Vert_{B^s_{p,q}(\mathbb{R}^d)}$ for $s\in \mathbb{R}, (p,q)\in [1,+\infty]^2$
\begin{eqnarray*} \label{def:besovnormRd}
\Vert u \Vert_{B^s_{p,q}(\mathbb{R}^d)}\defeq \big\Vert \Vert 2^{js}\Delta_ju \Vert_{L^p(\mathbb{R}^d)} \big\Vert_{\ell^q(\mathbb{N})}.
\end{eqnarray*}
The corresponding Banach space $B^s_{p,q}(\bbR^d)$ is obtained by completion of $C^\infty(\mathbb{R}^d)$ using the above norms.

%%-------------------------------------------------------%%
\subsection{Besov, Leibniz and Schauder$\boldmath{.}$ \hspace{0.1cm}}
\label{ss:Besovrecollection}
%%-------------------------------------------------------%%

Since $M$ is compact we can define the Besov space on $M$ via a finite cover $(U_i,\kappa_i)_{i\in I}$  and partition of unity $(\chi_i)_{i\in I}$ subordinated to $(U_i)_{i\in I}$
\begin{equation*}
\mathcal{ B}^{s}_{p,q}(M)=\left\lbrace u\in \mathcal{D}'(M): \Vert  u\Vert_{\mathcal{B} ^s_{p,p}(M)} \defeq\sum_{i}^N  \Vert \kappa_{i*}(\chi_{i}u) \Vert_{B ^s_{p,q} (\mathbb{R}^d)} <\infty \right\rbrace.
\end{equation*}
This choice of norm depends on the cover and the partition of unity. Different choices lead to equivalent norms on the same space. The space $\mathcal{B}^\alpha_{\infty, \infty}(M)$, for $\alpha\in \bbR$, will be denoted by $\mathcal{C}^\alpha(M)$ and 
$$
\Vert u\Vert_\alpha\defeq\Vert  u\Vert_{\mathcal{B}^\alpha_{\infty, \infty}(M)}.
$$ 
From the above definition of Besov space we deduce the analogue statement in $\bbR^d$ the following statement on the fractional Leibniz rule.

\smallskip

\begin{prop} \label{PropLeibniz}
Let $\alpha>0, r\in \mathbb{N}$ and $ p,p_1,p_2,q \in [1,\infty]$ such that
\[  \frac{1}{p} =\frac{1}{p_1} + \frac{1}{p_2}.
\]
Then \[\| u^{r+1} \|_{\mcB^\alpha_{p,q}(M)}\lesssim \| u^r \|_{L^{p_1}(M)} \| u \|_{\mcB^{\alpha}_{p_2,q}(M)}.\]
\end{prop}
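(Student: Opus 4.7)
The plan is to reduce the statement to its Euclidean analogue through the chart-and-partition-of-unity definition of $\mcB^\alpha_{p,q}(M)$ recalled above. By that definition,
$$\|u^{r+1}\|_{\mcB^\alpha_{p,q}(M)} = \sum_{i=1}^N \|\kappa_{i*}(\chi_i u^{r+1})\|_{B^\alpha_{p,q}(\mathbb{R}^d)},$$
so it suffices to control each term in the finite sum by the right-hand side of the claimed inequality.

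For each chart index $i$, I would pick an auxiliary cutoff $\tilde\chi_i \in C^\infty_c(U_i)$ equal to $1$ on an open neighborhood of $\text{supp}(\chi_i)$. Since $\tilde\chi_i^{\,r}\equiv 1$ on $\text{supp}(\chi_i)$, the pointwise identity
$$\chi_i u^{r+1} = (\chi_i u)\,(\tilde\chi_i u)^r$$
holds on $M$. Pushing forward by $\kappa_i$, which is multiplicative on functions, yields on $\mathbb{R}^d$ a product of two compactly supported factors:
$$\kappa_{i*}(\chi_i u^{r+1}) = \kappa_{i*}(\chi_i u)\cdot\kappa_{i*}\bigl((\tilde\chi_i u)^r\bigr).$$
I would then invoke the flat one-sided fractional Leibniz bound
$$\|fg\|_{B^\alpha_{p,q}(\mathbb{R}^d)} \lesssim \|f\|_{L^{p_1}(\mathbb{R}^d)}\,\|g\|_{B^\alpha_{p_2,q}(\mathbb{R}^d)}$$
with $f=\kappa_{i*}((\tilde\chi_i u)^r)$ and $g=\kappa_{i*}(\chi_i u)$, use the standard equivalence of chart-pushed $L^{p_1}$ and Besov norms with their intrinsic counterparts on $M$, and sum over the finitely many indices~$i$.

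The main obstacle is to establish the flat one-sided bound in the third step, since the symmetric form $\|fg\|_{B^\alpha}\lesssim \|f\|_\infty\|g\|_{B^\alpha}+\|g\|_\infty\|f\|_{B^\alpha}$ is not what is needed. Applying Bony's decomposition $fg = f\prec g + f\succ g + f\odot g$, the paraproduct term $f\prec g$ is controlled directly by the standard continuity $\|f\prec g\|_{B^\alpha_{p,q}}\lesssim \|f\|_{L^{p_1}}\|g\|_{B^\alpha_{p_2,q}}$. The remaining pieces $f\succ g$ and the resonance $f\odot g$, which is well defined thanks to $\alpha>0$, are handled by exploiting the algebraic fact that $f$ is a power of the same function $u$ as $g$: a Moser-type composition estimate allows one to transfer the Besov regularity of $u$ to $u^r$ at the cost of a factor $\|u^{r-1}\|_{L^{p_1'}}$ compatible with the H\"older exponents in the hypothesis, so that the final bound retains the clean form $\|u^r\|_{L^{p_1}}\|u\|_{B^\alpha_{p_2,q}}$.

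For $\alpha\in\mathbb{N}$ this reduces to the classical chain rule combined with H\"older's inequality, and the fractional case follows by real interpolation between integer Besov scales; I would \emph{not} reproduce the argument here but rather refer to the classical treatments in Bahouri--Chemin--Danchin and Runst--Sickel, since once the flat statement is granted the manifold assertion follows straightforwardly from the partition of unity and cutoff trick described above.
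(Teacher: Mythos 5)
Your overall strategy --- reduce to $\mathbb{R}^d$ via the chart/partition-of-unity definition, apply a flat fractional Leibniz estimate, use equivalence of the chart-pushed norms with the intrinsic ones, and sum over the finitely many charts --- is the same as the paper's. The one substantive difference is the factorization of the localized power. The paper writes $\chi_i u^{r+1}=\kappa_{i*}(\chi_i)\cdot\kappa_{i*}\big((\widetilde\chi_i u)^{r+1}\big)$, peels off the smooth factor by continuity of multiplication by $C^\infty$ functions on Besov spaces, and then applies the flat estimate to the pure $(r+1)$-st power of the \emph{single} function $\widetilde\chi_i u$, for which the one-sided bound $\|v^{r+1}\|_{B^\alpha_{p,q}}\lesssim\|v^r\|_{L^{p_1}}\|v\|_{B^\alpha_{p_2,q}}$ is exactly the cited result of Mourrat--Weber. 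Your factorization $(\chi_i u)(\widetilde\chi_i u)^r$ destroys the pure-power structure, which is why you end up needing the general bilinear bound $\|fg\|_{B^\alpha_{p,q}}\lesssim\|f\|_{L^{p_1}}\|g\|_{B^\alpha_{p_2,q}}$ (false for arbitrary $f,g$) and must repair $f\succ g$ and $f\odot g$ by hand; this can be made to work (it amounts to reproving the multi-factor Leibniz rule by induction on $r$ with exponents $q_1=rp_1$, $s_1=rp_1/(r-1)$), but it is an avoidable detour. One justification you offer is not valid: the fractional case does \emph{not} ``follow by real interpolation between integer Besov scales,'' since $u\mapsto u^{r+1}$ is nonlinear and such estimates do not interpolate in $\alpha$; the correct route is the paraproduct proof in the references you cite, or \cite[Prop.~A.7, Cor.~A.8]{MW17}, which is what the paper invokes. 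Finally, the step you call ``standard equivalence of chart-pushed norms'' is where the paper does real work: bounding $\|\kappa_{i*}(\widetilde\chi_i u)\|_{B^\alpha_{p_2,q}(\mathbb{R}^d)}$ by $\|u\|_{\mcB^\alpha_{p_2,q}(M)}$ requires re-expanding $\widetilde\chi_i u=\sum_j\widetilde\chi_i\chi_j u$ and transporting each piece by $\kappa_j\circ\kappa_i^{-1}$ using diffeomorphism invariance of Besov spaces, because the manifold norm is defined through the fixed partition $(\chi_j)_j$ and not through your auxiliary cutoffs.
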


\smallskip

\begin{proof} 
Let $(\widetilde\chi_i)_{i\in I}$ be another partition of unity subordinated to $(U_i)_{i\in I}$ and such that $\widetilde\chi_i=1$ on the support of $\chi_i$. We have
\begin{eqnarray*}
\Vert u^{r+1} \Vert_{\mcB^\alpha_{p,q}(M)}=\sum_i \Vert  \kappa_{i*}\left(\chi_iu^{r+1}\right) \Vert_{B^\alpha_{p,q}(\mathbb{R}^d)}  \leq 
\sum_i \big\Vert  \kappa_{i*}\left(\chi_i \right)  \kappa_{i*}\left((\widetilde{\chi}_iu)^{r+1}\right) \big\Vert_{B^\alpha_{p,q}(\mathbb{R}^d)}   \\
\lesssim \sum_i\Vert  \kappa_{i*}  \left((\widetilde{\chi}_iu)^{r+1} \right)\Vert_{B^\alpha_{p,q}(\mathbb{R}^d)}\lesssim 
\sum_i\Vert   \kappa_{i*}\left( (\widetilde{\chi}_iu)^{r} \right)\Vert_{L^{p_1}(\mathbb{R}^d)}\Vert  \kappa_{i*} \left( \widetilde{\chi}_iu\right) \Vert_{B^\alpha_{p_2,q}(\mathbb{R}^d)},
\end{eqnarray*}
where we used the fact that the
multiplication by $C^\infty$ functions is continuous on Besov spaces and the fractional Leibniz estimate holds on $\mathbb{R}^d$~\cite[Proposition A 7 and Corollary A.8]{MW17}, the implicit constant in the above estimate only depends on $(\chi_i)_i$. For every fixed $i$, 
$$
\big\Vert  \kappa_{i*}  \left((\widetilde{\chi}_iu)^{r}\right) \big\Vert_{L^{p_1}(\mathbb{R}^d)}\lesssim \Vert (\widetilde{\chi}_iu)^r \Vert_{L^{p_1}(M)}
$$ 
where the implicit constant depends only on the Jacobian of $\kappa_i$ and also

\begin{equation*} \begin{split}
\Vert  \kappa_{i*} \left( \widetilde{\chi}_iu\right) \Vert_{B^\alpha_{p_2,q}(\mathbb{R}^d)} &= \big\Vert  \kappa_{i*} \left( \sum_j\widetilde{\chi}_i\chi_ju\right) \big\Vert_{B^\alpha_{p_2,q}(\mathbb{R}^d)} \lesssim \sum_j\Vert  \kappa_{i*} \left( \widetilde{\chi}_i\chi_ju\right) \Vert_{B^\alpha_{p_2,q}(\mathbb{R}^d)}   \\
&\lesssim \sum_j \big\Vert \left(\kappa_j\circ\kappa_i^{-1}\right)_* \kappa_{i*} \left( \widetilde{\chi}_i\chi_ju\right) \big\Vert_{B^\alpha_{p_2,q}(\mathbb{R}^d)}
=\sum_j \Vert  \kappa_{j*} \left( \widetilde{\chi}_i\chi_ju\right) \Vert_{B^\alpha_{p_2,q}(\mathbb{R}^d)}   \\
& \lesssim \sum_j \Vert  \kappa_{j*} \left( \chi_ju\right) \Vert_{B^\alpha_{p_2,q}(\mathbb{R}^d)} =\Vert u \Vert_{B^\alpha_{p_2,q}(M)}
\end{split} \end{equation*}
where we used in a crucial way the diffeomorphism invariance of Besov spaces~\cite{BCD}, the compact support in $U_i\cap U_j$ of each piece $  \widetilde{\chi}_i\chi_ju $ and where for every $j$ we transported the function by the local diffeomorphism $\kappa_j\circ \kappa_{i}^{-1}: \kappa_i(U_i)\mapsto \kappa_j(U_j) $. The last equality just follows from the definition. We finally get
\begin{eqnarray*}
\Vert u^{r+1} \Vert_{B^\alpha_{p,q}(M)}\lesssim \sum_i\Vert (\widetilde{\chi}_iu)^r \Vert_{L^{p_1}(M)}
\Vert u \Vert_{B^\alpha_{p_2,q}(M)}\lesssim \Vert u^r \Vert_{L^{p_1}(M)}
\Vert u \Vert_{B^\alpha_{p_2,q}(M)}
\end{eqnarray*}
since the multiplication by $\widetilde{\chi}_i$ is continuous on the $L^p$ spaces and since the sum over $i$ is finite as $M$ is compact.
\end{proof}

\smallskip

\begin{prop} \label{PropInterpolation}
Let $\alpha_1, \alpha_2 \in \mathbb{R}$ and $ p_1,p_2,q_1,q_2 \in [1,\infty]$ and $\theta\in [0,1]$. Define $\alpha= \theta \alpha_1+(1-\theta)\alpha_2 $, and $p,q\in [1,\infty]$ by
\[  \frac{1}{p} =\frac{\theta}{p_1} + \frac{1-\theta}{p_2}\, \text{ and  } \, \frac{1}{q} =\frac{\theta}{q_1} + \frac{1-\theta}{q_2}.
\]
Then \[  \| u \|_{\mcB^\alpha_{p,q}(M)} \lesssim \| u \|^\theta_{\mcB^{\alpha_1}_{p_1,q_1}(M)}  \| u \|^{1-\theta}_{\mcB^{\alpha_2}_{p_2,q_2}(M)}.
\]
\end{prop}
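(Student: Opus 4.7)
The plan is to reduce the statement to its Euclidean counterpart via coordinate charts and the partition of unity, exactly as in the proof of Proposition \ref{PropLeibniz}, and then combine with an elementary Hölder-type summation argument. Note that, in contrast with the Leibniz proof, no diffeomorphism invariance needs to be invoked here since we are comparing norms of the \emph{same} function $u$.

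First I would record the corresponding interpolation inequality on $\mathbb{R}^d$: for any tempered distribution $v$,
$$
\|v\|_{B^\alpha_{p,q}(\mathbb{R}^d)} \lesssim \|v\|^\theta_{B^{\alpha_1}_{p_1,q_1}(\mathbb{R}^d)} \, \|v\|^{1-\theta}_{B^{\alpha_2}_{p_2,q_2}(\mathbb{R}^d)}.
$$
This is a direct consequence of the dyadic characterization of the Besov norm together with Hölder's inequality applied twice: writing $2^{j\alpha}=(2^{j\alpha_1})^\theta(2^{j\alpha_2})^{1-\theta}$, Hölder in space with exponents $p_1/(\theta p)$ and $p_2/((1-\theta)p)$ controls $\|2^{j\alpha}\Delta_j v\|_{L^p(\mathbb{R}^d)}$, and a second application of Hölder in the dyadic summation with exponents $q_1/(\theta q)$ and $q_2/((1-\theta)q)$ yields the claim. (The endpoint cases where some exponent is infinite are handled by the usual convention.)

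Second, I would apply this $\mathbb{R}^d$ inequality to $v=\kappa_{i*}(\chi_i u)$ for each $i$ in the finite set $I$, obtaining
$$
\|\kappa_{i*}(\chi_i u)\|_{B^\alpha_{p,q}(\mathbb{R}^d)} \lesssim \|\kappa_{i*}(\chi_i u)\|^\theta_{B^{\alpha_1}_{p_1,q_1}(\mathbb{R}^d)} \, \|\kappa_{i*}(\chi_i u)\|^{1-\theta}_{B^{\alpha_2}_{p_2,q_2}(\mathbb{R}^d)}.
$$

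Finally, summing over the finite index $i$ and applying Hölder's inequality for finite sums (with exponents $1/\theta$ and $1/(1-\theta)$),
$$
\sum_i a_i^\theta b_i^{1-\theta} \leq \Bigl(\sum_i a_i\Bigr)^\theta \Bigl(\sum_i b_i\Bigr)^{1-\theta},
$$
with $a_i=\|\kappa_{i*}(\chi_i u)\|_{B^{\alpha_1}_{p_1,q_1}(\mathbb{R}^d)}$ and $b_i=\|\kappa_{i*}(\chi_i u)\|_{B^{\alpha_2}_{p_2,q_2}(\mathbb{R}^d)}$, the conclusion follows directly from the definition of the Besov norm on $M$. There is no real obstacle here: the only nontrivial input is the $\mathbb{R}^d$ interpolation inequality, which is a classical consequence of Hölder in the Littlewood–Paley decomposition.
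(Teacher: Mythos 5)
Your proof is correct and follows essentially the same route as the paper: localize with the charts and partition of unity, apply the classical Littlewood--Paley/H\"older interpolation inequality on $\mathbb{R}^d$ to each $\kappa_{i*}(\chi_i u)$, and recombine. Your final step (H\"older for the finite sum over $i$) is in fact slightly cleaner than the paper's, which simply invokes finiteness of the sum and continuity of multiplication by smooth functions, but the argument is the same in substance.
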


\smallskip

\begin{proof}
We have
\begin{eqnarray*}
 \Vert u \Vert_{\mcB^\alpha_{p,q}(M)} = \sum_i \Vert \kappa_{i*} \left(\chi_i u\right) \Vert_{B^\alpha_{p,q}(\mathbb{R}^d)} \leq  \sum_i \Vert \kappa_{i*} \left(\chi_i u\right) \Vert^\theta_{B^{\alpha_1}_{p_1,q_1}(\mathbb{R}^d)}\Vert \kappa_{i*} \left(\chi_i u\right) \Vert^{1-\theta}_{B^{\alpha_2}_{p_2,q_2}(\mathbb{R}^d)} \\   \leq 
 \sum_i \Vert \kappa_{i*} \left( u\right) \Vert^\theta_{B^{\alpha_1}_{p_1,q_1}(\mathbb{R}^d)}\Vert \kappa_{i*} \left( u\right) \Vert^{1-\theta}_{B^{\alpha_2}_{p_2,q_2}(\mathbb{R}^d)} 
 \lesssim 
\Vert  u \Vert^\theta_{\mcB^{\alpha_1}_{p_1,q_1}(M)}\Vert u \Vert^{1-\theta}_{\mcB^{\alpha_2}_{p_2,q_2}(M)}  
\end{eqnarray*}
where we used again finiteness of the sum over $i$, continuity of the multiplication by $C^\infty$ functions and the interpolation inequality on $\mathbb{R}^d$.
\end{proof}

\smallskip

These two above results play an essential role in the proof of the long time existence and the coming down from infinity for the dynamical $\Phi^4_3$ model \cite[Section 2.2]{BDFT}.

\smallskip
 We recall  here the weighted spaces $\llparenthesis \alpha,\beta \rrparenthesis$ for $\alpha>0$ and $ \beta \in \mathbb{R}$   (cf. \cite[Section 2.1]{BDFT}) made up of all functions $v\in C((0,T],C^\beta(M))$ such that
\begin{equation}\label{eq_lim_at_0}
t^\alpha \Vert u(t)\Vert_{L^\infty} \underset{t\rightarrow 0}{\longrightarrow} 0
\end{equation}
and
\begin{equation}\label{eq_w_norm}
\Vert v\Vert_{\llparenthesis \alpha,\beta \rrparenthesis} \defeq  \max\left\{ \underset{0<t\leq T}{\sup}\,t^\alpha\Vert v(t)\Vert_{C^\beta},\sup_{0\leq t\neq s\leq T} \frac{\| t^\alpha v(t)-s^\alpha v(s)  \|_{L^\infty}}{|s-t|^{\beta/2}} \right\} < \infty.
\end{equation}

Then we have the following lemma

\begin{lemm}
For all $\beta\in \mathbb{R}\setminus \mathbb{N}$, and $\alpha>0$,
then  we have the following  Schauder estimate
\begin{eqnarray}\label{eq_Schauder_w}
\| \mathcal{L}^{-1} u\|_{\llparenthesis \alpha,\beta \rrparenthesis} \lesssim \|u\|_{\llparenthesis \alpha,\beta \rrparenthesis},
\end{eqnarray}
where $ \mathcal{L}^{-1} u=\int_{0}^t e^{(t-s)(\Delta-1)} u(s, \cdot)\rmd s,$
and  for all $0\leq \delta\leq \min(\beta, 2\alpha)$
\begin{eqnarray}\label{eq_weighted_est}
\|u\|_{\llparenthesis \alpha-\delta/2, \beta-\delta  \rrparenthesis }\lesssim \|u\|_{\llparenthesis \alpha,\beta \rrparenthesis}.
\end{eqnarray}
\end{lemm}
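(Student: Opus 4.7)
\textbf{Plan for the Schauder estimate \eqref{eq_Schauder_w}.} The proof rests on two standard bounds for the heat semigroup on H\"older spaces of $M$, which lift from $\mathbb{R}^d$ by localization: the boundedness $\|e^{\tau(\Delta-1)}\|_{\mathcal{C}^\beta\to \mathcal{C}^\beta}\lesssim 1$ uniformly in $\tau\geq 0$, and the time-continuity estimate $\|(e^{\tau(\Delta-1)}-\mathrm{Id})f\|_{L^\infty}\lesssim \tau^{\beta/2}\|f\|_{\mathcal{C}^\beta}$. Combined with the pointwise control $\|u(s)\|_{\mathcal{C}^\beta}\leq s^{-\alpha}\|u\|_{\llparenthesis\alpha,\beta\rrparenthesis}$ built into the weighted norm, the spatial part of $\|\mathcal{L}^{-1}u\|_{\llparenthesis\alpha,\beta\rrparenthesis}$ is obtained at once from
\[
t^\alpha\|\mathcal{L}^{-1}u(t)\|_{\mathcal{C}^\beta}\leq t^\alpha\int_0^t\|u(s)\|_{\mathcal{C}^\beta}\,ds\lesssim \frac{T}{1-\alpha}\|u\|_{\llparenthesis\alpha,\beta\rrparenthesis}.
\]
For the H\"older-in-time seminorm, one splits the difference $t^\alpha\mathcal{L}^{-1}u(t)-s^\alpha\mathcal{L}^{-1}u(s)$ for $0<s<t\leq T$ as
\[
t^\alpha\!\!\int_s^t\! e^{(t-r)(\Delta-1)}u(r)\,dr+(t^\alpha-s^\alpha)\!\!\int_0^s\! e^{(t-r)(\Delta-1)}u(r)\,dr+s^\alpha\!\!\int_0^s\!\bigl(e^{(t-s)(\Delta-1)}-\mathrm{Id}\bigr)e^{(s-r)(\Delta-1)}u(r)\,dr,
\]
and bounds each piece by $|t-s|^{\beta/2}\|u\|_{\llparenthesis\alpha,\beta\rrparenthesis}$ using respectively the refined bound $\|u(r)\|_{L^\infty}\lesssim r^{\beta/2-\alpha}\|u\|_{\llparenthesis\alpha,\beta\rrparenthesis}$ (derived below) and a case split comparing $|t-s|$ with $s$; the mean-value bound $|t^\alpha-s^\alpha|\lesssim (t-s)\max(t,s)^{\alpha-1}$; and the time-continuity estimate with $\tau=t-s$.

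\textbf{Plan for the parabolic interpolation \eqref{eq_weighted_est}.} The cornerstone is the observation that letting $s\to 0^+$ in the time-H\"older seminorm of $\llparenthesis\alpha,\beta\rrparenthesis$ and using the limit \eqref{eq_lim_at_0} produces the refined pointwise bound
\[
\|u(t)\|_{L^\infty}\lesssim t^{\beta/2-\alpha}\|u\|_{\llparenthesis\alpha,\beta\rrparenthesis}.
\]
Interpolating this with $\|u(t)\|_{\mathcal{C}^\beta}\leq t^{-\alpha}\|u\|_{\llparenthesis\alpha,\beta\rrparenthesis}$ via Proposition \ref{PropInterpolation} at fixed $t$, applied with $p_i=q_i=\infty$ and convex parameter $1-\delta/\beta\in[0,1]$, yields
\[
\|u(t)\|_{\mathcal{C}^{\beta-\delta}}\lesssim \bigl(t^{-\alpha}\bigr)^{1-\delta/\beta}\bigl(t^{\beta/2-\alpha}\bigr)^{\delta/\beta}\|u\|_{\llparenthesis\alpha,\beta\rrparenthesis}=t^{\delta/2-\alpha}\|u\|_{\llparenthesis\alpha,\beta\rrparenthesis},
\]
which is the spatial bound of the reduced norm $\llparenthesis\alpha-\delta/2,\beta-\delta\rrparenthesis$. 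For the H\"older-in-time seminorm one sets $h(t):=t^\alpha u(t)$ and writes
\[
t^{\alpha-\delta/2}u(t)-s^{\alpha-\delta/2}u(s)=t^{-\delta/2}\bigl(h(t)-h(s)\bigr)+\bigl(t^{-\delta/2}-s^{-\delta/2}\bigr)h(s).
\]
The first summand is bounded using $t^{-\delta/2}\leq |t-s|^{-\delta/2}$ (valid since $|t-s|\leq t$) and $\|h(t)-h(s)\|_{L^\infty}\lesssim |t-s|^{\beta/2}\|u\|_{\llparenthesis\alpha,\beta\rrparenthesis}$, while the second is controlled by case-splitting on whether $t\leq 2s$ or $t\geq 2s$, using either the mean-value bound $|t^{-\delta/2}-s^{-\delta/2}|\lesssim s^{-\delta/2-1}|t-s|$ together with $|t-s|\leq s$, or the crude bound $|t^{-\delta/2}-s^{-\delta/2}|\leq s^{-\delta/2}$ combined with $s\leq|t-s|$.

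\textbf{Main obstacle.} The central difficulty is the bookkeeping of the three-term decomposition for \eqref{eq_Schauder_w}: each piece must produce the correct factor $|t-s|^{\beta/2}$ uniformly in the relative sizes of $s,t\in(0,T]$, which forces a case split of the same flavour as used for \eqref{eq_weighted_est}, and for $\beta\geq 2$ it requires iterating the argument after trading spatial regularity against time regularity via \eqref{eq_weighted_est} itself. The hypothesis $\beta\notin\mathbb{N}$ serves to apply the sharp heat kernel estimates on H\"older spaces without logarithmic losses at integer orders; near such values one would absorb the loss into a small parabolic interpolation. The limit condition \eqref{eq_lim_at_0} must finally be propagated through $\mathcal{L}^{-1}$ and through the norm reduction, which is routine by dominated convergence once the above quantitative bounds are secured.
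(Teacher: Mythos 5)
Your proposal is correct in substance, but for the Schauder estimate \eqref{eq_Schauder_w} it follows a genuinely different and strictly weaker route than the paper. The paper's proof is organised around the two-derivative smoothing estimate \eqref{eq_est_Schauder_1}, namely $t^{\alpha}\Vert\mathcal{L}^{-1}u\Vert_{\beta}\lesssim\sup_{s\in[0,t]}s^{\alpha}\Vert u(s)\Vert_{\beta-2}$, which it proves by pushing everything into charts, working Littlewood--Paley block by block, splitting the time integral at a parameter $\delta$ and optimising $\delta\sim 2^{-2k}$ against the block index; the statement \eqref{eq_Schauder_w} is then a corollary. You instead prove the literal (non-smoothing) inequality directly from $\Vert e^{\tau(\Delta-1)}\Vert_{\mathcal{C}^\beta\to\mathcal{C}^\beta}\lesssim 1$ together with the classical three-term decomposition of $t^\alpha\mathcal{L}^{-1}u(t)-s^\alpha\mathcal{L}^{-1}u(s)$. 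This does establish \eqref{eq_Schauder_w} as written, and is more elementary, but it forfeits the gain of two derivatives that the paper's argument actually delivers and uses downstream; it also silently needs $\alpha<1$ for $\int_0^t s^{-\alpha}\,ds$ to converge (a restriction the paper's proof shares, via $\int_\delta^t s^{-1-\epsilon}(t-s)^{-\alpha}\,ds$) and $0<\beta\leq 2$ for the heat-continuity bound $\Vert(e^{\tau(\Delta-1)}-\mathrm{Id})f\Vert_{L^\infty}\lesssim\tau^{\beta/2}\Vert f\Vert_{\mathcal{C}^\beta}$, which you acknowledge only in passing. One small slip: for $\alpha<1$ the mean-value bound reads $\vert t^\alpha-s^\alpha\vert\lesssim(t-s)\min(t,s)^{\alpha-1}$, not $\max$; this is harmless since your case split really runs on $\vert t^\alpha-s^\alpha\vert\leq(t-s)^\alpha$. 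For the interpolation estimate \eqref{eq_weighted_est} your argument coincides with the paper's in all essentials: both extract $\Vert u(t)\Vert_{L^\infty}\lesssim t^{\beta/2-\alpha}\Vert u\Vert_{\llparenthesis\alpha,\beta\rrparenthesis}$ from \eqref{eq_lim_at_0} and the time-H\"older seminorm and then interpolate against $t^{-\alpha}\Vert u\Vert_{\llparenthesis\alpha,\beta\rrparenthesis}$ in $\mathcal{C}^\beta$ --- the paper does this block-wise via $\min\{2^{-j\beta},t^{\beta/2}\}\leq 2^{-j(\beta-\delta)}t^{\delta/2}$ and cites Gubinelli--Perkowski for the temporal seminorm, whereas you invoke Proposition \ref{PropInterpolation} and supply the temporal case split explicitly; the content is the same.
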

\begin{proof}
The proof  of the Schauder estimate \eqref{eq_Schauder_w} follows from the same argument for  the one in the flat case (cf. the proof of Lemma 2.6  \cite[p. 265-266]{GubinelliPerkowskiKPZReloaded} where they used the following estimate  (cf.  \cite[Lemma A.9]{GIP}) 
\begin{eqnarray}\label{eq_est_Schauder_1}
t^{\alpha}\| \mathcal{L}^{-1}u  \|_{\beta} \lesssim \sup_{s\in [0,t]}( s^{\alpha}\|u\|_{\beta-2}).
\end{eqnarray}
For the reader's convenience, we  give  here a proof for the  estimate \eqref{eq_est_Schauder_1} in our case.  
Let $(\widetilde\chi_i)_{i\in I}$ be another partition of unity subordinated to $(U_i)_{i\in I}$ and such that $\widetilde\chi_i=1$ on the support of $\chi_i$.  Then 

\begin{eqnarray}
\|\Delta_k \kappa_{i*}(\chi_i  \mathcal{L}^{-1}u )\|_{L^\infty} &=&  \|\Delta_k \kappa_{i*}(\chi_i  \tilde{\chi}_i \mathcal{L}^{-1} (\tilde{\chi}_i u) )  \| _{L^\infty} \\
&= &\| \Delta_k \left[ \kappa_{i*} \chi_i   \int_{0}^t  \kappa_{i*} \left( \tilde{\chi}_i e^{(t-s)(\Delta-1)}  \kappa_i^{*} \right) \kappa_{i*} (\tilde{\chi}_i u(s, \cdot) )\rmd s \right] \| _{L^\infty} \\
&\lesssim&\| \int_{0}^t  \kappa_{i*} \left( \tilde{\chi}_i e^{(t-s)(\Delta-1)}  \kappa_i^{*} \right) \Delta_k \kappa_{i*} (\tilde{\chi}_i u(s, \cdot) )\rmd s\| _{L^\infty} ,
\end{eqnarray}
where we used the convolutional definition of $\Delta_k$. 
As in \cite[Lemma A.9]{GIP} we split the integral in two parts:
\begin{eqnarray*}
\int_{0}^t  \kappa_{i*} \left( \tilde{\chi}_i e^{(t-s)(\Delta-1)}  \kappa_i^{*} \right) \Delta_k \kappa_{i*} (\tilde{\chi}_i u(s, \cdot) )\rmd s&=&\int_{0}^\delta \kappa_{i*} \left( \tilde{\chi}_i e^{(t-s)(\Delta-1)}  \kappa_i^{*} \right) \Delta_k \kappa_{i*} (\tilde{\chi}_i u(s, \cdot) )\rmd s \\
&&+ \int_{\delta}^t \kappa_{i*} \left( \tilde{\chi}_i e^{(t-s)(\Delta-1)}  \kappa_i^{*} \right) \Delta_k \kappa_{i*} (\tilde{\chi}_i u(s, \cdot) )\rmd s,
\end{eqnarray*}
for some $\delta\in (0, t/2)$.
Denote $M= \sup_{s\in [0,t]} s^{\alpha}\|v(s)\|_{\beta-2}$, then we have
\begin{align*}
\|\int_{0}^\delta \kappa_{i*} &\left( \tilde{\chi}_i e^{(t-s)(\Delta-1)}  \kappa_i^{*} \right) \Delta_k \kappa_{i*} (\tilde{\chi}_i u(s, \cdot) )\rmd s \|_{L^\infty}  \lesssim \int  2^{-k(\beta-2)} \|u(t-s)\|_{\beta-2}ds \\
&\leqslant 2^{-k(\beta-2)}M \int_{0}^{\delta} (t-s)^{-\alpha}\rmd s=2^{-k(\beta-2)} M t^{1-\alpha}\int_0^{\delta/t} (1-s)^{-\alpha}{ds}\\
&\lesssim M 2^{-k(\beta-2)} t^{-\alpha}\delta. 
\end{align*}
For the second integral, using the smoothing properties of the heat kernel, we have
\begin{align*}
\|\int_{\delta}^t & \kappa_{i*}  \left( \tilde{\chi}_i e^{(t-s) (\Delta-1)}  \kappa_i^{*} \right) \Delta_k \kappa_{i*} (\tilde{\chi}_i u(s, \cdot) )\rmd s \|_{L^\infty}  \lesssim  \int^{t}_\delta s^{-1-\epsilon} \|  \Delta_k \kappa_{i*} (\tilde{\chi}_i  u(t-s) \|_{-2(1+\epsilon)}\rmd s\\
&\lesssim   \int^{t}_\delta s^{-1-\epsilon}  2^{ -k(\beta+2\epsilon)}\|  u(t-s) \|_{\beta-2}\rmd s \lesssim M  2^{ -k(\beta+2\epsilon)} \int^t_\delta  s^{-1-\epsilon } (t-s)^{-\alpha}ds \\
&=M  2^{ -k(\beta+2\epsilon)} t^{-\alpha-\epsilon} \int^1_{\delta/t}  s^{-1-\epsilon } (1-s)^{-\alpha}ds\\
& \lesssim M  2^{ -k(\beta+2\epsilon)} t^{-\alpha}\delta^{-\epsilon}= M  2^{ -k\beta} t^{-\alpha} (2^{2k} \delta)^{-\epsilon}. 
\end{align*}
Therefore $\|\Delta_k \kappa_{i*}(\chi_i  \mathcal{L}^{-1}u )\|_{L^\infty}  \lesssim M 2^{-k(\beta-2)} t^{-\alpha}\delta +  M  2^{ -k\beta} t^{-\alpha} (2^{2k} \delta)^{-\epsilon}. $
Then if $2^{- 2k}\leqslant t/2$ we take $\delta= 2^{-2k}$ to get $\|\Delta_k \kappa_{i*}(\chi_i  \mathcal{L}^{-1}u )\|_{L^\infty}  \lesssim M t^{-\alpha} 2^{-k\beta}$. Otherwise.  we have  $2^{-2k}>t/2>\delta$, hence $\|\Delta_k \kappa_{i*}(\chi_i  \mathcal{L}^{-1}u )\|_{L^\infty}  \lesssim M2^{-k(\beta-2)} t^{1-\alpha}  \lesssim M  2 t^{-\alpha} \lesssim Mt^{-\alpha} 2^{-k\beta} $, hence we get \eqref{eq_est_Schauder_1}. 

\medskip

\quad  The second estimate \eqref{eq_weighted_est}  follows from the same  argument of the  proof for Lemma 6.8 \cite[p. 266]{GubinelliPerkowskiKPZReloaded}. 
First, by the definition of $\llparenthesis \alpha,\beta \rrparenthesis$ we have $t^\alpha\|\Delta_j \kappa_{i*} (\chi_i u)\|_{L^\infty } \leqslant \min\{2^{-j\beta}, t^{\beta/2}\} \|u\|_{\llparenthesis \alpha,\beta \rrparenthesis} $.  Hence, for $0\leq \delta \leq \min \{\beta, 2\alpha\} $, we have
$t^\alpha\|\Delta_j \kappa_{i*} (\chi_i u)\|_{L^\infty} \leqslant 2^{-j(\beta-\delta)} t^{\delta/2} \|u\|_{\llparenthesis \alpha,\beta \rrparenthesis} .$ This implies $t^{\alpha-\delta/2} \| u\|_{\beta-\delta}\lesssim \|u\|_{\llparenthesis \alpha,\beta \rrparenthesis}$. Then the  estimate about the temporal regularity in \eqref{eq_w_norm} is deduced from the condition \eqref{eq_lim_at_0} using  \cite[Lemma A.1.]{GubinelliPerkowskiKPZReloaded}.
\end{proof}

\medskip

\begin{defi*}  %[Symbol class $S^m_{\rho,\delta}(U\times \mathbb{R}^d)$]
Given  $0\leq  \delta, \rho \leq  1, m\in \mathbb{R}$ and an open set $U\subset \mathbb{R}^d$, a function $a\in C^\infty(U\times \mathbb{R}^d)$ is said to be in the class $S^m_{\rho,\delta}(U\times \mathbb{R}^d)$ if  
$$
\sup_{x\in K} \big\vert \partial_x^\alpha\partial_\xi^\beta a(x,\xi)\big\vert \leq  C_{\alpha\beta, K}(1+\vert \xi\vert)^{m-\vert\beta\vert\delta+\vert\alpha\vert\rho} 
$$ 
for all multi--indices $(\alpha,\beta)$ and compact subset $K\Subset U$. We define $Op(a)$ as the operator acting on $u\in \mathcal{S}(\mathbb{R}^d)$ as
\begin{eqnarray*}
Op(a)(u) \defeq \frac{1}{(2\pi)^d}\int_{\mathbb{R}^d\times \mathbb{R}^d} \sigma(x,\xi)e^{i\xi.(x-y)}u(y)\rmd\xi dy.
\end{eqnarray*} 
When $a\in S^m_{\rho,\delta}(\mathbb{R}^d\times\mathbb{R}^d)$ the operator $Op(a)$ is said to belong to the class $\Psi^m_{\rho,\delta}(\mathbb{R}^d)$.
\end{defi*}

\smallskip

The next statement gives a Schauder-type estimate for a special kind of  pseudo-differential operators. (You will find more details on this class of operators, and the reason for considering them in this work, in Section \ref{s:paradiffcommutators}.)

\smallskip

\begin{prop}[Schauder estimates for some pararegularized $S^{m}_{1,1}$] \label{l:simplifiedprooftwistedclass}
Let $\alpha\in \mathbb{R}$, $(p,q)\in [1,+\infty]^2$, let  $c\in S^m_{1,1}(\mathbb{R}^d)$ be such that there is $0<K<1$ so that $\widehat{c}(\eta,\xi)$ is supported in $\{\vert \eta\vert\leq  K\vert\xi\vert\}$. Then the map
$$
Op(c):B_{p,q}^\alpha(\bbR^d) \mapsto B_{p,q}^{\alpha-m}(\bbR^d)
$$ 
is well-defined and continuous for all real numbers $\alpha$.
\end{prop}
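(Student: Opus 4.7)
The plan is to reduce this to a dyadic estimate block by block, exploiting the spectral support condition on $\widehat{c}$ to turn a potentially dangerous $S^{m}_{1,1}$ calculation into one that only involves $\xi$-integration by parts (where the symbol estimates are good), never $x$-integration by parts (where they are bad). This is the standard way to handle Bony's ``twisted'' class $\widetilde{S}^{m}_{1,1}$, so no genuinely new idea is needed; what I want to highlight is that the Fourier support condition $\{|\eta|\leq K|\xi|\}$ with $K<1$ is \emph{exactly} the structural property that rescues $L^p$ and Besov boundedness.

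First, I would establish a spectral localization lemma: if $v$ has $\widehat{v}$ supported in a dyadic annulus $\{|\xi|\sim 2^{j}\}$, then $Op(c)v$ has Fourier transform supported in $\{(1-K)\,2^{j-1}\leq |\zeta|\leq (1+K)\,2^{j+1}\}$. This follows from the identity
$$
\widehat{Op(c)v}(\zeta) = (2\pi)^{-d}\int \widehat{c}(\zeta-\xi,\xi)\,\widehat{v}(\xi)\,d\xi,
$$
together with the triangle inequality applied to $|\zeta-\xi|\leq K|\xi|$ on the support of $\widehat{c}(\zeta-\xi,\xi)$. In particular, $\Delta_k Op(c)\Delta_j u\equiv 0$ unless $|k-j|\leq C$ for a universal constant $C=C(K)$.

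Second, I would prove the block estimate
$$
\|Op(c)\Delta_j u\|_{L^p(\mathbb{R}^d)} \lesssim 2^{jm}\,\|\Delta_j u\|_{L^p(\mathbb{R}^d)}, \qquad 1\leq p\leq \infty,
$$
uniformly in $j$. Here pick a fattened cutoff $\widetilde{\psi}_j$ equal to $1$ on the Fourier support of $\Delta_j u$, and set $c_j(x,\xi)=c(x,\xi)\widetilde{\psi}_j(\xi)$, so that $Op(c)\Delta_j u = Op(c_j)\Delta_j u$. The kernel $K_j(x,z)=(2\pi)^{-d}\int e^{iz\cdot\xi}c_j(x,\xi)\,d\xi$ is controlled \emph{only by $\xi$-derivatives of $c$}, which by the $S^{m}_{1,1}$ bounds satisfy $|\partial_\xi^\beta c_j(x,\xi)|\lesssim 2^{j(m-|\beta|)}$ uniformly in $x$ on the support. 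Integrating by parts $N$ times in $\xi$ yields
$$
|K_j(x,z)|\lesssim 2^{j(m+d)}(1+2^{j}|z|)^{-N},
$$
uniformly in $x$, hence $\sup_x\int|K_j(x,z)|\,dz$ and $\sup_y\int|K_j(x,x-y)|\,dx$ are both bounded by $2^{jm}$. Schur's test then produces the claimed $L^p$-bound for every $p\in[1,\infty]$. This is the step I consider the main obstacle and precisely the one where $S^{m}_{1,1}$ (without spectral restriction) would fail: the forbidden $x$-derivatives, which would cost extra powers of $2^{j}$, never appear because we never integrate by parts in $x$.

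Finally, I would assemble the Besov norm. Writing $Op(c)u=\sum_j Op(c)\Delta_j u$ and using the spectral localization of the first step,
$$
\|\Delta_k Op(c) u\|_{L^p} \lesssim \sum_{|j-k|\leq C} \|Op(c)\Delta_j u\|_{L^p} \lesssim \sum_{|j-k|\leq C} 2^{jm}\|\Delta_j u\|_{L^p}.
$$
Multiplying by $2^{k(\alpha-m)}$, absorbing the bounded factor $2^{(k-j)(\alpha-m)}$, and taking $\ell^q(\mathbb{N})$-norms, Young's inequality for the convolution with the finitely-supported sequence $\mathbf{1}_{|k-j|\leq C}$ gives
$$
\|Op(c)u\|_{B^{\alpha-m}_{p,q}(\mathbb{R}^d)} \lesssim \|u\|_{B^{\alpha}_{p,q}(\mathbb{R}^d)},
$$
with the implicit constant uniform in $\alpha$ (and finite for every real $\alpha$), which is the desired conclusion.
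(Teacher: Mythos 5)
Your proof is correct and follows essentially the same route as the paper's: exploit the support condition $\{|\eta|\le K|\xi|\}$ to localize $Op(c)\Delta_j u$ to a dyadic annulus, bound the rescaled kernel using only $\xi$-derivatives of the $S^m_{1,1}$ symbol to get an $L^1$-in-$z$ bound uniform in $x$, apply Young/Schur, and sum the blocks. The only cosmetic difference is that you organize the argument as an almost-diagonal estimate on $\Delta_k Op(c)\Delta_j$ with Schur's test, whereas the paper computes $\psi(2^{-i}D)Op(c)v$ directly and inserts the frequency cutoff on the $\xi$-side before rescaling; the substance is identical.
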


\smallskip

\begin{proof}
Pick $v\in B^\alpha_{pq}(\bbR^d)$. Our goal is to control the $L^p$ norm of $\psi(2^{-i}\vert D_x\vert)Op(c)v$ when the index $i$ gets large. We start with the explicit identity: 
\begin{eqnarray*}
\psi(2^{-i}\vert D_x\vert)Op(c)v=\mathcal{F}^{-1}_\eta\left(\psi(2^{-i}\eta)\int_{\xi\in \mathbb{R}^d}  \widehat{c}(\eta-\xi,\xi)\widehat{v}(\xi)\rmd\xi \right).
\end{eqnarray*}
This follows from the definition of $Op$ and elementary computations with the Fourier transform. Note that the integrand in $\int_{\xi\in \mathbb{R}^d}  \widehat{c}(\eta-\xi,\xi) \widehat{v}(\xi)\rmd\xi$ is supported in $\vert \eta -\xi \vert\leq  K\vert \xi\vert$ by assumption on the support of $\widehat{c}$. Hence for fixed $\eta$, the integral restricts to the corona $ (1+K)^{-1}\vert\eta\vert\leq  \vert \xi\vert\leq  (1-K)^{-1}\vert\eta\vert  $ by the triangular inequality. Therefore the above identity for $\psi(2^{-i}D)Op(c)v$ rewrites
\begin{equation*} \begin{split}
\psi(2^{-i}D)Op(c)v &= \mathcal{F}^{-1}_\eta\left(\psi(2^{-i}\eta)\int_{(1+K)^{-1}\vert\eta\vert\leq  \vert \xi\vert\leq  (1-K)^{-1}\vert\eta\vert } \widehat{c}(\eta-\xi,\xi) \widehat{v}(\xi)\rmd\xi \right)   \\
&= \mathcal{F}^{-1}_\eta\left(\psi(2^{-i}\eta)\int_{\mathbb{R}^d}  \widehat{c}(\eta-\xi,\xi)  \widetilde{\chi}^2(2^{-i}\xi) \widehat{v}(\xi)\rmd\xi \right),
\end{split} \end{equation*}
where $\widetilde{\chi}(2^{-i}.)$ is an extra cut-off function which localizes the integral of $\xi$ on some larger corona of radius $\vert \xi\vert\sim 2^i$, $\widetilde{\chi}=1$ on the region $\big\{(1+K)^{-1}\vert\eta\vert\leq  \vert \xi\vert\leq  (1-K)^{-1}\vert\eta\vert\big\}$. For the moment we get a bound of the form 
\begin{equation*} \begin{split}
\big\Vert\psi(2^{-i}D)Op(c)v \big\Vert_{L^p(\mathbb{R}^d)} &= \Big\Vert \left(\widehat{\psi(2^{-i}.)}\star Op(c)v\right) \Big\Vert_{L^p(\mathbb{R}^d)}   \\
&= \Big\Vert \left(\left(2^{id} \widehat{\psi}(2^i.)\right)\star Op(c)v\right)(y) \Big\Vert_{L^p(\mathbb{R}^d)}   \\
&\leq \Vert \widehat{\psi}\Vert_{L^1(\mathbb{R}^d)}  \Big\Vert \int_{\mathbb{R}^d} c(x,\xi) e^{i\xi.x} \widetilde{\chi}^2(2^{-i}\xi) \widehat{v}(\xi)\rmd\xi \Big\Vert_{L^p_x(\mathbb{R}^d)}   \\
&\leq  \Vert \widehat{\psi}\Vert_{L^1(\mathbb{R}^d)}  \Big\Vert  c(x;D)\widetilde{\chi}(2^{-i}\vert D\vert )\left( \widetilde{\chi}(2^{-i}\vert D\vert ) v \right)  \Big\Vert_{L^p(\mathbb{R}^d)}.
\end{split} \end{equation*}
Set the sequence of Schwartz kernels
\begin{eqnarray*}
A_i(x,x-y) \defeq \int_{\mathbb{R}^d} c(x,\xi) e^{i\xi.(x-y)} \widetilde{\chi}(2^{-i}\xi) \rmd\xi;
\end{eqnarray*}
this is the Schwartz kernel of the operators $c(x;D)\widetilde{\chi}(2^{-i}\vert D\vert )$ whose symbol is localized on the frequency shell $\vert \xi\vert\simeq 2^i $ and also define a sequence of functions
\begin{eqnarray*}
B_i \defeq \widetilde{\chi}(2^{-i}\vert D\vert ) v=\mathcal{F}^{-1}_\xi \left(\widetilde{\chi}(2^{-i}.) \widehat{v}\right) 
\end{eqnarray*}
which also corresponds to $v$ localized to frequency shell $\vert \xi\vert\simeq 2^i$. First, we deal with the easier term $B_i$, $v\in B^\alpha_{p,q}$ means that
$$
\sum_{i=1}^\infty \big(2^{i\alpha} \big\Vert \widetilde{\chi}(2^{-i}\vert D_x\vert)v \big\Vert_{L^p(\mathbb{R}^d)}\big)^q=\sum_{i=1}^\infty \left(2^{i\alpha}\Vert B_i \Vert_{L^p(\mathbb{R}^d)}\right)^q <+\infty. 
$$
Next, we deal with the more subtle operator term $A_i$. Now the idea is to treat the operator $c(x;D)\widetilde{\chi}(2^{-i}\vert D\vert )$ as some semiclassical pseudodifferential operator and use the continuity properties of semiclassical pseudodifferentials acting on $L^p$ spaces.
First, we make a change of variables changing the position of the dyadic factor, the aim is to localize the frequency on the shell $\vert\xi\vert\simeq 2$
\begin{eqnarray*}
A_i(x,x-y)=\int_{\mathbb{R}^d} c(x,\xi) e^{i\xi.(x-y)} \widetilde{\chi}(2^{-i}\xi) \rmd\xi=2^{id}\int_{\mathbb{R}^d} c(x,2^i\xi) e^{i\xi.2^i(x-y)} \widetilde{\chi}(\xi) \rmd\xi\\
= 2^{-im} 2^{id}\int_{\mathbb{R}^d} \left(2^{im}c(x,2^i\xi)\widetilde{\chi}(\xi)\right) e^{i\xi.2^i(x-y)}  \rmd\xi.
\end{eqnarray*}
We make two crucial observations. First $c\in S^m_{1,1}(\bbR^d)$ therefore the sequence of cut-off symbols $\left(2^{im}c(x,2^i\xi)\widetilde{\chi}(\xi)\right)$ forms a bounded family of smooth functions in $\xi$ with compact support on some frequency shell $\{a\leq  \vert\xi\vert\leq  b\}, 0<a<b$ uniformly in $i$ and $x$. Second, for the family of rescaled kernels
\begin{eqnarray*}
K_i(x,h) = \int_{\mathbb{R}^d} \left(2^{im}c(x,2^i\xi)\widetilde{\chi}(\xi)\right) e^{i\xi\cdot h}  \rmd\xi,
\end{eqnarray*}
the above observation implies that
\begin{eqnarray*}
\vert K_i(x,h)\vert=  \int_{\mathbb{R}^d} \left((1-\sum_{i=1}^d\partial_{\xi_i}^2)^{\frac{[d+2]}{2}} \left(2^{im}c(x,2^i\xi)\widetilde{\chi}(\xi)\right)\right) (1+\vert h\vert^2)^{\frac{[d+2]}{2}} e^{i\xi\cdot h}  \rmd\xi \leq  C(1+\vert h\vert)^{-[d+2]}
\end{eqnarray*}
where the constant $C$ does not depend on $x$. We used the fact that
\begin{equation*} \begin{split}
\big\vert\partial_\xi\left(2^{im}c(x,2^i\xi)\widetilde{\chi}(\xi)\right) \big\vert &\lesssim  2^{im}2^i (1+2^i\vert\xi\vert)^{-m-1}\widetilde{\chi}(\xi) + 2^{im} \big\vert (1+2^i\vert\xi\vert)^{-m}\partial_\xi\widetilde{\chi}(\xi)\big\vert  \\
&\lesssim  2^{i(m+1)} (1+2^i)^{-m-1}+ 2^{im}2^{-im}  \lesssim 1,
\end{split} \end{equation*}
since the support of $\widetilde{\chi}$ in the shell $\{a\leq  \vert\xi\vert\leq  b\}$ forces some decay of $c(x,2^i\xi)\widetilde{\chi}(\xi)$ and its derivative in $\xi$. The above decay bound on the kernels $K_i$ implies that
\begin{eqnarray*}
\int_{\mathbb{R}^d}\sup_x \vert K_i(x,h)\vert \rmd h\lesssim C \int_{\mathbb{R}^d}(1+\vert h\vert)^{-[d+2]} \rmd h<+\infty
\end{eqnarray*}
The kernel $K_i$ is related to $A_i$ via the exact scaling relation
\begin{eqnarray*}
A_i(x,x-y) = 2^{-im} 2^{id} K_i\big(x,2^i(x-y)\big),
\end{eqnarray*}
therefore by scaling invariance of the $L^1$ norm together with Young inequality, we get
\begin{equation*} \begin{split}
\big\Vert  c(x;D)\widetilde{\chi}(2^{-i}\vert D\vert )\left( \widetilde{\chi}(2^{-i}\vert D\vert ) v \right) \big\Vert_{L^p(\mathbb{R}^d)} &= \Big\Vert \int_{\mathbb{R}^d} A_i(x,x-y)B_i(y) dy \Big\Vert_{L^p_x(\mathbb{R}^d)}   \\
&= 2^{-im} \Big\Vert \int_{\mathbb{R}^d} 2^{id}K_i(x,2^i(x-y))B_i(y) dy \Big\Vert_{L^p_x(\mathbb{R}^d)}   \\
&\leq  2^{-im} \Big\Vert \int_{\mathbb{R}^d}\sup_{z\in \mathbb{R}^d} \big\vert 2^{id}K_i(z,2^i(x-y)) \big\vert B_i(y) dy \Big\Vert_{L^p_x(\mathbb{R}^d)}  \\
&\leq  2^{-im} \big\Vert\sup_{z\in \mathbb{R}^d} K_i(z,\cdot)\big\Vert_{L^1(\mathbb{R}^d)} \Vert B_i\Vert_{L^p(\mathbb{R}^d)}.
\end{split} \end{equation*}
In the end we get a bound of the form
$$
\big\Vert\psi(2^{-i}D)Op(c)v\big\Vert_{L^p(\mathbb{R}^d)} \leq  \Vert \widehat{\psi}\Vert_{L^1(\mathbb{R}^d)} 2^{-im} \big\Vert\sup_{X\in \mathbb{R}^d} K_i(X,\cdot)\big\Vert_{L^1(\mathbb{R}^d)} \Vert B_i\Vert_{L^p(\mathbb{R}^d)}\lesssim 2^{-im} \Vert B_i\Vert_{L^p(\mathbb{R}^d)}
$$
which concludes since 
\begin{eqnarray*}
\sum_i \left(2^{i(m+\alpha)}\Vert\psi(2^{-i}D)Op(c)v\Vert_{L^p(\mathbb{R}^d)}\right)^q\lesssim \sum_{i=1}^\infty  \left(2^{i\alpha}\Vert B_i\Vert_{L^p(\mathbb{R}^d)}\right)^q\lesssim \Vert v\Vert_{B^{\alpha}_{p,q}}^q,
\end{eqnarray*}
and we are done.
\end{proof}

\smallskip

We learned this idea of using Young inequality for such proof from Bonthonneau and also Fermanian-Kammerer~\cite[Prop 3.2.1 p.~21]{FermKamlectures} -- we warmly thank them here. One can deduce from the above result that  $\Psi^m_{1,0}(M)$ sends $B^\alpha_{p,q}(M)$ to $B^{\alpha-m}_{p,q}(M)$ continuously in the manifold setting using charts and partitions of unity. It is a well-known fact that a classical pseudodifferential  $A\in \Psi^m_{1,0}(M)$ over $M$ can always be represented as~\cite{Horm3} 
\begin{eqnarray*}
A=\sum_{i\in I}  \chi_i \kappa_i^{*}  A_i \kappa_{i*} \widetilde{\chi}_i+R
\end{eqnarray*}
where $\chi_i$ is a partition of unity subordinated to
the cover $\cup_i U_i$, $\widetilde{\chi}_i\in C^\infty_c(U_i)$, $\widetilde{\chi}_i=1$ on the support of $\chi_i$,  $A_i\in \Psi^m_{1,0}(\mathbb{R}^d)$ and $R\in \Psi^{-\infty}(M)$ is a smoothing operator. 
Therefore if we are given some Besov distribution $u\in B^\alpha_{p,q}(M)$, then 
$$
Au=\sum_{i\in I}  \chi_i \kappa_i^{*}  A_i \kappa_{i*} \left(\widetilde{\chi}_iu\right) 
$$
and using the invariance of $B^\alpha_{p,q}(M)$ under diffeomorphisms and stability by multiplication by some smooth function one sees that $\kappa_{i*} \left(\widetilde{\chi}_iu\right)\in B^\alpha_{p,q}(\mathbb{R}^d)$, hence $A_i\kappa_{i*} \left(\widetilde{\chi}_iu\right)\in B^{\alpha-m}_{p,q}(\mathbb{R}^d) $, from Proposition \ref{l:simplifiedprooftwistedclass} and the diffeomorphism invariance and the stability by multiplication with smooth functions, we deduce that
$Au\in B^{\alpha-m}_{p,q}(M)$. 

\medskip

%%--------------------------------------------------------------%%
\subsection{Paraproduct decomposition on  manifolds$\boldmath{.}$ \hspace{0.1cm}}
\label{ss:definitionLPSprojectorsparaproduct}
%%--------------------------------------------------------------%%

The goal of the present paragraph is to present a simple method to decompose multilinear products of smooth functions as a sum of multilinear operations involving interactions of different frequencies. The ideas go back to Bony~\cite{Bony}, Coifman-Meyer~\cite{Meyer2}.
We start pedagogically by the simple case of a bilinear product $uv$ of smooth functions so that the reader can clearly see the mechanisms at work.

%%%---------------------------------------------------------------------------------%%%
\subsubsection{Paraproduct and resonant operators on closed manifolds$\boldmath{.}$ \hspace{0.1cm}}
\label{sss:decompositionproduct}
%%%---------------------------------------------------------------------------------%%%

Let $M$ be a compact manifold. Denote by $(U_i,\kappa_i)_i$  an open cover by charts of the manifold $M$ where $\kappa_i:U_i\subset M\mapsto \kappa_i(U_i)\subset \mathbb{R}^d$ and $\kappa_i$ is a smooth diffeomorphism. From the data of a smooth compact manifold and its open cover by charts, we may decompose as follows the product $uv$ of two smooth functions. We start from a partition of unity $\sum_i \chi_i=1$ with $\chi_i\in C^\infty_c(U_i)$, subordinated to $(U_i)_i$, and another family of smooth functions $ \widetilde \chi_i  \in C^\infty_c(U_i)$ with $\widetilde \chi_i=1$ on the support of $\chi_i$. We choose for every $i$ some function $\psi_i\in C_c^\infty(\kappa_i(U_i))$ such that $\psi_i|_{\text{supp}(\chi_i\circ \kappa_i^{-1})}=1$. {\bf From now on, we write  $\chi\ll \tilde{\chi}$ if $\tilde{\chi}=1$ on the support of $\chi$}. Then we have the identities
\begin{eqnarray*} 
uv&=&\sum_{i\in I} (u\chi_i) (v\widetilde \chi_i)
=\sum_{i\in I} \kappa_i^*(\kappa_i)_*(u\chi_i)   \kappa_i^*(\kappa_i)_*(v\widetilde \chi_i)\\
&=&\sum_{i\in I} \kappa_i^*\big((\kappa_i)_*(u\chi_i)  (\kappa_i)_*(v\widetilde\chi_i)\big) = \sum_{i\in I} \kappa_i^*\big( \psi_i \times (\kappa_i)_*(u\chi_i) \times (\kappa_i)_*(v\widetilde\chi_i)\big)  \\
&=&\sum_{i\in I} \kappa_i^*\Big( \psi_i\big( (\kappa_i)_*(u\chi_i) \odot (\kappa_i)_*(v\widetilde\chi_i)\big)\Big)
+  
\sum_{i\in I} \kappa_i^*\Big( \psi_i\big( (\kappa_i)_*(u\chi_i) \prec (\kappa_i)_*(v\widetilde\chi_i)\big)\Big)   \\
&&+  
\sum_{i\in I} \kappa_i^*\Big( \psi_i\big( (\kappa_i)_*(u\chi_i) \succ (\kappa_i)_*(v\psi_i)\big)\Big)   \\
&\eqdef& u\odot v+ u\prec v+ u\succ v
\end{eqnarray*}
This decomposition motivates the following more general definition of paraproduct and resonant operators which depend on the data of a smooth compact manifold, its open cover by charts and a collection of cut-off functions satisfying suitable compatibility conditions.

\smallskip

\begin{defi*} [Generalized paraproducts and resonant products]
We choose a family $\chi_i\in C^\infty_c(U_i)$  and another family of smooth functions $ \widetilde \chi_i  \in C^\infty_c(U_i)$ with   $\widetilde \chi_i=1$ on ${\rm supp} (\chi_i)$. Then for every $i$, we also choose some function $\psi_i\in C_c^\infty(\kappa_i(U_i))$ such that $\psi_i|_{\text{supp}(\chi_i\circ \kappa_i^{-1})}=1$. We define some generalized paraproduct and resonant operators setting
\begin{eqnarray*}
u \odot v &=&\sum_i \kappa_i^*\Big(\psi_i  \big(  \kappa_{i*}\left( \chi_{i}u \right)    \odot\kappa_{i*}\left( \widetilde{\chi}_{i}v \right)\big) \Big),\\
u \prec v &=&\sum_i \kappa_i^*\Big(\psi_i  \big(  \kappa_{i*}\left( \chi_{i}u \right)    \prec\kappa_{i*}\left( \widetilde{\chi}_{i}v \right)\big) \Big),\\
u\succ v &=&\sum_i \kappa_i^*\Big(\psi_i  \big(  \kappa_{i*}\left( \chi_{i}u \right)    \succ\kappa_{i*}\left( \widetilde{\chi}_{i}v \right)\big) \Big),
\end{eqnarray*}
where the operators $\prec,\succ,\odot$ on the right hand side are defined on $\mathbb{R}^d$. 
\end{defi*}

\smallskip

We do not impose that $\sum \chi_i=1$ so we do not necessarily have a decomposition of the product as $uv=u\prec v+u\succ v+u\odot v$. However when $ (\chi_i)_i$ is a partition of unity subordinated to 
$(U_i)_i$, i.e. $\sum_i\chi_i=1$, then the above definition yields a decomposition of the usual product of smooth functions as 
$$
uv=u \odot  v+ u \prec v+u\succ v.
$$
{\bf We will use the notation with numbers, e.g. $\prec_1, \prec_2, \odot_2,\succ_3 \ldots$,  to distinguish  paraproduct/resonant operators built from different cut-off functions.} 

\smallskip
 
Note the following subtle fact: our definition of $\prec,\succ,\odot$ is asymmetric in the choice of the cut-off functions, the collection $\widetilde{\chi}_i,i\in I$ does not form a partition of unity, so 
$$
u\prec v \neq v\succ u
$$ 
since we use different cut-off functions on the right or on the left of the paraproducts. The paraproduct operators $\prec,\succ$ and the resonant operator $\odot$ are \textbf{non-commutative} although their sum $\odot\;+\prec+\succ$ is the Young product of distributions, which is commutative. These products are not associative either. However, we can still justify that the two maps $\prec$ and $\succ$ have the expected analytical properties when acting on Besov spaces. The following estimates follow from the same estimates on $\mathbb{R}^d$ and from the diffeomorphism invariance of the H\"older-Besov spaces \cite{BCD}. One has
\begin{eqnarray*}
\Vert g\succ f \Vert_{\alpha+\beta}+\Vert f\prec g \Vert_{\alpha+\beta} \lesssim \Vert  f\Vert_{\alpha} \Vert g\Vert_{\beta},\qquad (\alpha< 0),
\end{eqnarray*}
and
\begin{eqnarray*}
\Vert f\odot  g \Vert_{\alpha+\beta} \lesssim \Vert  f\Vert_{\alpha} \Vert g\Vert_{\beta}, \qquad (\alpha+\beta>0).
\end{eqnarray*}
The proof of these estimates boils down to comparing $ u\chi_i \prec v\widetilde{\chi}_i $ and $ u\widetilde{\chi}_i \prec v\chi_i $ where we exchanged the cut-off functions and where $\prec$ is the paraproduct operator on $\mathbb{R}^d$. A first observation is that if $u,v$ belong to some Besov spaces of given regularity, their product with any smooth functions with compact support will belong to the Besov space of the exact same regularity. Therefore the position of the test functions do not affect the continuity properties of our paraproducts acting on Besov spaces. The same argument also applies to the resonant part.  

\smallskip

\begin{lemm} \label{lem:smt_comm}  
Let $M$ be a closed manifold. Let $\eta, \eta_1, \eta_2 \in \mathcal{C}^\infty(M),  f\in\mathcal{C}^\alpha(M), g \in \mathcal{C}^\beta(M)$ with $\alpha>0$ and $ \beta<0$. One has
\begin{equation*} \begin{split}
&\big\| \eta(f\prec g) -f\prec(\eta g)\big\|_{\alpha+\beta} \lesssim C(\eta) \|f\|_{\alpha}\|g\|_\beta,   \\
&\big\|\eta(f\prec g) -(\eta f)\prec g \big\|_{\alpha+\beta} \lesssim C(\eta) \|f\|_{\alpha}\|g\|_{{\beta}}, 
\end{split} \end{equation*}
and
$$
\big\|f\prec(\eta g) -(\eta f)\prec g \big\|_{\alpha+\beta} \lesssim C(\eta) \|f\|_{\alpha}\|g\|_{{\beta}}. 
$$
and
$$
\big\| \eta_1\eta_2(f\prec g)- (\eta_1 f)\prec(\eta_2 g) \big\|_{\alpha+\beta} \lesssim C(\eta_1, \eta_2)\|f\|_\alpha\|\|g\|_\beta.
$$
The same estimates also hold on $\mathbb{R}^d$ assuming the condition of compact support for all functions. The same estimates also hold  when we replace the paraproduct operator $ \prec $ by the resonant operator $\odot$ provided $\alpha+\beta>0$. 
\end{lemm}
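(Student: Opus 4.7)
The plan is to reduce each commutator estimate on $M$ to its Euclidean analogue (the ``same estimates on $\bbR^d$'' alluded to in the statement) via the chart-based definitions of $\prec$ and $\odot$, and then to establish the $\bbR^d$ versions with a direct Littlewood-Paley calculation.

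I would first unfold $\eta(f\prec g) - f\prec(\eta g)$ on $M$. Introducing for each $i$ a further cutoff $\widetilde{\widetilde\chi}_i\in C^\infty_c(U_i)$ equal to $1$ on the supports of $\chi_i$ and $\widetilde\chi_i$, and setting $\widetilde\eta_i\defeq\kappa_{i*}(\widetilde{\widetilde\chi}_i\eta)$, a direct computation gives
$$
\eta(f\prec g) - f\prec(\eta g) = \sum_{i\in I}\kappa_i^*\Big(\psi_i\bigl[\widetilde\eta_i(F_i\prec G_i) - F_i\prec(\widetilde\eta_i G_i)\bigr]\Big),
$$
with $F_i\defeq\kappa_{i*}(\chi_i f)$, $G_i\defeq\kappa_{i*}(\widetilde\chi_i g)$, and $\prec$ on the right-hand side being the Euclidean paraproduct. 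By the diffeomorphism invariance of Besov spaces and the continuity of multiplication by compactly supported smooth functions (as used in the proof of Proposition~\ref{PropLeibniz}), it suffices to bound each bracketed expression in $B^{\alpha+\beta}_{\infty,\infty}(\bbR^d)$, so the problem reduces to its Euclidean version.

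For the Euclidean estimate I would use
$$
\widetilde\eta(F\prec G) - F\prec(\widetilde\eta G) = \sum_j S_{j-2}F\cdot[\widetilde\eta,\Delta_j]G,
$$
together with the cancellation $\sum_j[\widetilde\eta,\Delta_j]G = [\widetilde\eta,\mathrm{Id}]G = 0$, which allows me to rewrite the right-hand side as $-\sum_j (F-S_{j-2}F)\cdot[\widetilde\eta,\Delta_j]G$. A first-order Taylor expansion of $\widetilde\eta$ inside the integral kernel of $\Delta_j$ exhibits $[\widetilde\eta,\Delta_j]$ as $2^{-j}$ times a Littlewood-Paley-type block with smooth bounded coefficients built from $\nabla\widetilde\eta$, giving the classical commutator bound $\|[\widetilde\eta,\Delta_j]G\|_{L^\infty}\lesssim 2^{-j(1+\beta)}\|\widetilde\eta\|_{C^1}\|G\|_\beta$, while Bernstein gives $\|F-S_{j-2}F\|_{L^\infty}\lesssim 2^{-j\alpha}\|F\|_\alpha$. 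Because the factors $F-S_{j-2}F$ and $[\widetilde\eta,\Delta_j]G$ are essentially spectrally concentrated on $|\xi|\geq 2^{j-2}$ and $|\xi|\sim 2^j$ respectively, the $j$-th summand is almost localized on $|\xi|\sim 2^j$, and a standard summation argument \textit{\`a la} Bony yields a bound in $B^{1+\alpha+\beta}_{\infty,\infty}$, hence in $B^{\alpha+\beta}_{\infty,\infty}$, with the required constant.

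The second inequality is proved in the same way starting from the identity $\widetilde\eta(F\prec G)-(\widetilde\eta F)\prec G = \sum_j[\widetilde\eta,S_{j-2}]F\cdot\Delta_j G$, the commutator $[\widetilde\eta,S_{j-2}]$ gaining a derivative by the same Taylor argument. The third follows from the first two by the triangle inequality, and the fourth from two applications of the second (once each for $\eta_1$ and $\eta_2$). For the resonant operator $\odot$ with $\alpha+\beta>0$, one starts instead from $\widetilde\eta(F\odot G)-F\odot(\widetilde\eta G) = \sum_{|j-k|\leq 1}\Delta_j F\cdot[\widetilde\eta,\Delta_k]G$ and uses the same commutator bounds; the hypothesis $\alpha+\beta>0$ is exactly what ensures convergence of the nearly-diagonal sum in $B^{\alpha+\beta}_{\infty,\infty}$. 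The main technical point I expect is the commutator bound for $[\widetilde\eta,\Delta_j]G$ when $G\in C^\beta$ with $\beta<0$, since $G$ is then genuinely a distribution; this is handled at the kernel level by pairing the explicit Schwartz profile of the Littlewood-Paley multiplier against $G$ after Taylor expansion of $\widetilde\eta$, exploiting crucially that $\widetilde\eta$ is smooth.
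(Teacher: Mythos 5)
Your chart-by-chart reduction to $\mathbb{R}^d$ matches the paper's, but your Euclidean mechanism is genuinely different: the paper views $P_F:v\mapsto F\prec v$ as a paradifferential operator in the class $B^0_\alpha$ and obtains the first estimate from the composition theorem (Lemma \ref{lemm:composition}) together with the Schauder estimate for pararegularized $S^{m}_{1,1}$ symbols (Proposition \ref{l:simplifiedprooftwistedclass}), then gets the second from a Bony decomposition of $\eta\cdot(F\prec G)$ combined with the triple-paraproduct commutator of Lemma \ref{lemm:comm_paramultiplication_flat}, and treats $\odot$ by decomposing the well-defined product $\eta fg$ in three ways. You instead run the direct Littlewood--Paley commutator computation. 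Your route is more elementary and self-contained; the identities you start from are correct, the cancellation $\sum_j[\eta,\Delta_j]G=0$ is a nice device, and you correctly flag that the bound on $[\eta,\Delta_j]G$ for distributional $G$ must be done at the kernel level (though the constant will involve higher derivatives of $\eta$ than $\Vert\eta\Vert_{C^1}$ once $\beta$ is very negative).

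There is, however, one step that does not survive scrutiny. You assert that the $j$-th summand $(F-S_{j-2}F)\cdot[\eta,\Delta_j]G$ is ``almost localized on $\vert\xi\vert\sim 2^j$''. It is not: $F-S_{j-2}F=\sum_{k\geq j-1}\Delta_kF$ carries arbitrarily high frequencies, so the product spreads over all $\vert\xi\vert\gtrsim 2^j$, and the sum over $j$ is not a sum of annulus-localized blocks. To sum it one must reorganize by output frequency, e.g. exchange the sums to get $\sum_k\Delta_kF\cdot H_k$ with $H_k=[\eta,S_{k+1}]G$; then $\Vert H_k\Vert_{L^\infty}\lesssim 2^{-k(1+\beta)}\Vert G\Vert_\beta$ and $H_k$ is (essentially) supported in a ball $\vert\xi\vert\lesssim 2^k$, so the $k$-th term has size $2^{-k(1+\alpha+\beta)}$ with only ball-type spectral support, and the Besov summation criterion then requires $1+\alpha+\beta>0$. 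The same resonant obstruction appears in your second identity. Your argument therefore proves the lemma only under the additional hypothesis $\alpha+\beta>-1$ --- harmless in every application in this paper, but absent from the statement --- whereas the paper's route works for all $\beta<0$ precisely because the commutator $[M_\eta,P_F]$ is shown to be a pararegularized operator, with Fourier transform supported in $\{\vert\eta\vert\leq K\vert\xi\vert\}$, which is what allows it to act on arbitrarily negative regularities. If you want to keep your approach in full generality you would need a higher-order Taylor expansion of the commutator together with iterated cancellations, at which point the paradifferential formulation is the cleaner bookkeeping.
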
 

\smallskip

\begin{proof} 
It suffices to prove the first two estimates. 
We choose a family $\chi_i\in C^\infty_c(U_i)$  and another family $ \widetilde \chi_i  \in C^\infty_c(U_i)$ with   $\widetilde \chi_i=1$ on ${\rm supp} \chi_i$. For every $i$, we also choose some function $\psi_i\in C_c^\infty(\kappa_i(U_i))$ such that $\psi_i|_{\text{supp}(\chi_i\circ \kappa_i^{-1})}=1$, and  $ \widetilde{\psi}_i \in C_c^\infty(\kappa_i(U_i))$ such that   $ \widetilde \psi_{i}=1$  on $ {\rm supp}\psi_{i} \cup {\rm supp}(\widetilde{\chi}_i\circ \kappa_i^{-1}) $. We have
\begin{eqnarray*}
\eta (f\prec g)&=&\sum_{j} \eta \Big\{ \psi_{j} \big[ (f\chi_{j})\circ\kappa_{j}^{-1} \prec  (g\widetilde\chi_{j})\circ \kappa_j^{-1}\big] \Big\} \circ \kappa_j   \\
&=&\sum_{j} \eta \widetilde{\psi}_j\circ \kappa_j \Big\{ \psi_{j} \big[(f\chi_{j})\circ\kappa_{j}^{-1} \prec  (g\widetilde\chi_{j})\circ \kappa_j^{-1}\big] \Big\} \circ \kappa_j   \\
&=&\sum_{j}\left\lbrace \psi_{j} ( \eta \widetilde{\psi}_j\circ \kappa_j )\circ \kappa_j^{-1} \Big[(f\chi_{j})\circ\kappa_{j}^{-1} \prec  (g\widetilde\chi_{j})\circ \kappa_j^{-1}\Big]\right\rbrace \circ \kappa_j.
\end{eqnarray*}
Set, for $u\in C^\alpha_{c}$, 
$$
P_u(v) \defeq u\prec v.
$$
It follows from the $\bbR^d$ version of Lemma \ref{lemm:composition} and Lemma \ref{l:simplifiedprooftwistedclass} that $P_u$ has the following property. For any $\rho\in C^\infty_c(\mathbb{R}^n)$ the operator 
$$
\rho P_u - P_u\rho: C^{\beta}\rightarrow C^{\alpha+\beta} 
$$ 
has a norm dominated by $C(\rho)\|f\|_\alpha$. Therefore 
\begin{equation*} \begin{split}
\Big\| ( \eta \widetilde{\psi}_j\circ \kappa_j )\circ \kappa_j^{-1} \Big[ (f\chi_{j})\circ\kappa_{j}^{-1} \prec  (g\widetilde\chi_{j})\circ \kappa_j^{-1}\Big]  &-  (f\chi_{j})\circ\kappa_{j}^{-1} \prec  (g \eta \widetilde{\psi}_j\circ \kappa_j \widetilde\chi_{j})\circ \kappa_j^{-1} \Big\|_{\alpha+\beta}   \\
&\lesssim  C(\eta)  \|f\|_\alpha\|g\|_{\beta}. 
\end{split} \end{equation*}
Since $\widetilde{\psi}_j\circ\kappa_j=1 $ on ${\rm supp}(\widetilde \chi_j)$, the second term in the previous estimate is  the one  in  the definition of $f\prec (\eta g)$, therefore we get the first estimate
\begin{eqnarray*}
\big\|\eta (f\prec g) - f\prec(\eta g) \big\|_{\alpha+\beta} \lesssim C(\eta) \|f\|_{\alpha}\|g\|_{\beta}. 
\end{eqnarray*}
For the second estimate, we remark that we have the flat decomposition
\begin{equation*} \begin{split}
( \eta \widetilde{\psi}_j\circ \kappa_j )\circ \kappa_j^{-1}  \big[ (f\chi_{j})\circ\kappa_{j}^{-1} &\prec  (g\widetilde\chi_{j})\circ \kappa_j^{-1} \big]   \\
& \quad \quad = ( \eta \widetilde{\psi}_j\circ \kappa_j )\circ \kappa_j^{-1} \prec \big[ (f\chi_{j})\circ\kappa_{j}^{-1} \prec  (g\widetilde\chi_{j})\circ \kappa_j^{-1} \big]   \\
& \quad \quad \quad + ( \eta \widetilde{\psi}_j\circ \kappa_j )\circ \kappa_j^{-1} \succ  \big[ (f\chi_{j})\circ\kappa_{j}^{-1} \prec  (g\widetilde\chi_{j})\circ \kappa_j^{-1}\big]   \\
& \quad \quad \quad + ( \eta \widetilde{\psi}_j\circ \kappa_j )\circ \kappa_j^{-1} \odot  \big[ (f\chi_{j})\circ\kappa_{j}^{-1} \prec  (g\widetilde\chi_{j})\circ \kappa_j^{-1}\big]. 
\end{split} \end{equation*}
The two last terms in the right hand side are  in $C^{\alpha+\beta}$   since $ \eta \in C^\infty$,  with their norm dominated by $ C \|\eta\|_{(\alpha-\beta)\vee (\epsilon-\beta)} \|f\|_\alpha\| g \|_\beta$ for any $\epsilon>0$. For the first term in the right hand side we use the $\bbR^d$ version of the paramultiplication estimate of Lemma \ref{lemm:comm_paramultiplication_flat} to get
\begin{eqnarray} \label{eq_lem_comm} \begin{split}
( \eta \widetilde{\psi}_j\circ \kappa_j )\circ \kappa_j^{-1} \prec &\; \big[ (f\chi_{j})\circ\kappa_{j}^{-1} \prec  (g\widetilde\chi_{j})\circ \kappa_j^{-1}\big]   \\
&- \big[ ( \eta \widetilde{\psi}_j\circ \kappa_j )\circ \kappa_j^{-1}   (f\chi_{j})\circ\kappa_{j}^{-1} ) \big] \prec  (g\widetilde\chi_{j})\circ \kappa_j^{-1} 
\end{split}  \end{eqnarray}
is in $C^{\alpha+\beta}$ with its norm dominated by $ C \|\eta\|_{\alpha}\| f\|_\alpha \|g\|_\beta$.  As above, the second term in \eqref{eq_lem_comm} is  the  one in the definition of $(\eta f)\prec  g$, hence we get the second estimate as required.

When $\alpha+\beta>0$, the product $\eta fg$  is well-defined, hence we can decompose the product $\eta fg$ in three ways of paraproduct with respect to $\eta(fg), (\eta f)g, f(\eta g)$.   Since $\beta<0$, we have $ f\succ g, f\succ (\eta g), (\eta f)\succ g \in C^{\alpha+\beta}$. Combining with the estimates above for paraproducts,  we imply the same estimates for $\odot$.
\end{proof} 

\smallskip

Finally, we have the following proposition which summarizes the results proved so far:  

\smallskip

\begin{prop} \label{p:youngproduct} 
 Let $\alpha,\beta$ be real numbers such that 
 $$
 \alpha<0, \quad \alpha+\beta > 0.
 $$
Then for all $(u,v)\in \mathcal{C}^\alpha(M)\times \mathcal{C}^\beta(M)$, the Young product $uv$ is well-defined in $\mathcal{D}^\prime(M)$ and can be decomposed as
$$
uv = u\odot v+u\prec v+u\succ v
$$
where the maps
$$
(u,v)\in \mathcal{C}^\alpha(M)\times \mathcal{C}^\beta(M)\mapsto u\odot v+u\prec v \in \mathcal{C}^{\alpha+\beta}(M)\\
(u,v)\in \mathcal{C}^\alpha(M)\times \mathcal{C}^\beta(M)\mapsto u\succ v \in \mathcal{C}^{\alpha}(M)
$$
are bilinear continuous.
\end{prop}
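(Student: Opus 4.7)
The plan is to establish the continuity of each of the three bilinear operations separately, then to check that they sum to the ordinary pointwise product on smooth inputs, and finally to extend the identity to $\mathcal{C}^{\alpha}(M)\times\mathcal{C}^{\beta}(M)$.

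First I would verify bilinear continuity of each piece. The estimates $\|u\prec v\|_{\alpha+\beta}\lesssim\|u\|_{\alpha}\|v\|_{\beta}$ (valid since $\alpha<0$) and $\|u\odot v\|_{\alpha+\beta}\lesssim\|u\|_{\alpha}\|v\|_{\beta}$ (valid since $\alpha+\beta>0$) are already in the excerpt; they transfer from the $\mathbb{R}^d$ versions via the chart-by-chart definition, using diffeomorphism invariance and continuity of multiplication by compactly supported smooth cut-offs on Besov spaces, exactly as in the proof of Proposition \ref{PropLeibniz}. The operator $u\succ v$ is \emph{not} covered by the $(\alpha<0)$-estimate recorded above (there the low-frequency factor must be rough). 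Instead I would argue as follows: on each chart, $(\kappa_{i})_{*}(\chi_{i}u)\succ(\kappa_{i})_{*}(\widetilde{\chi}_{i}v)=\sum_{j}\Delta_{j}(\kappa_{i})_{*}(\chi_{i}u)\cdot S_{j-2}(\kappa_{i})_{*}(\widetilde{\chi}_{i}v)$, the generic term being spectrally localized in $\{|\xi|\sim 2^{j}\}$. Since $\beta>-\alpha>0$, the embedding $\mathcal{C}^{\beta}\hookrightarrow L^{\infty}$ gives $\|S_{j-2}(\kappa_{i})_{*}(\widetilde{\chi}_{i}v)\|_{\infty}\lesssim\|v\|_{\beta}$ uniformly in $j$, while $\|\Delta_{j}(\kappa_{i})_{*}(\chi_{i}u)\|_{\infty}\lesssim 2^{-j\alpha}\|u\|_{\alpha}$, so the standard spectral-localization/summation of dyadic shells yields $\|u\succ v\|_{\alpha}\lesssim\|u\|_{\alpha}\|v\|_{\beta}$ after summing over $i$.

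Second, for smooth $u,v$, I would verify the identity $uv=u\odot v+u\prec v+u\succ v$ directly from the construction. The partition-of-unity assumption $\sum_{i}\chi_{i}=1$ together with $\widetilde{\chi}_{i}\equiv 1$ on $\operatorname{supp}\chi_{i}$ gives $uv=\sum_{i}(\chi_{i}u)(\widetilde{\chi}_{i}v)$; transferring each term to $\mathbb{R}^d$ via $\kappa_{i}$, inserting the cut-off $\psi_{i}$ which equals $1$ on $\operatorname{supp}(\chi_{i}\circ\kappa_{i}^{-1})$, and applying the flat decomposition $fg=f\prec g+f\succ g+f\odot g$ (which holds on $\mathbb{R}^d$ for smooth compactly supported inputs by the standard Littlewood--Paley rearrangement), then pulling back by $\kappa_{i}^{*}$, reproduces the three sums exactly in the form of the definitions of $u\prec v$, $u\succ v$, $u\odot v$ given in the excerpt.

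Finally, to define the Young product on the full $\mathcal{C}^{\alpha}(M)\times\mathcal{C}^{\beta}(M)$, I would simply \emph{set} $uv\defeq u\odot v+u\prec v+u\succ v$. By the continuity estimates of paragraph one, this is a bilinear continuous map into $\mathcal{D}^\prime(M)$, and by paragraph two it coincides with the ordinary pointwise product whenever both arguments are smooth. As any reasonable notion of product on the pair $(\mathcal{C}^{\alpha},\mathcal{C}^{\beta})$ that extends the pointwise product by bilinear continuity must agree with this, the assertion of the proposition follows. The main technical point is really the third-paragraph extension: one has to observe that there is nothing to check beyond the two preceding paragraphs, because the decomposition itself serves as the \emph{definition} of the Young product in this regime --- rather than proving that a pre-existing product decomposes, one constructs the product from the decomposition. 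The mild analytic subtlety is the $u\succ v\in\mathcal{C}^{\alpha}$ estimate, which is not the one recorded in the excerpt and requires the positivity of $\beta$ (guaranteed by $\alpha<0$ and $\alpha+\beta>0$) in order to invoke the embedding $\mathcal{C}^{\beta}\hookrightarrow L^{\infty}$.
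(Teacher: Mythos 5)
Your proof is correct and follows essentially the same route as the paper, which states this proposition as a summary of the preceding chart-by-chart transfer of the flat paraproduct/resonant estimates and of the smooth-function decomposition $uv=u\odot v+u\prec v+u\succ v$. Your explicit dyadic argument for $\|u\succ v\|_{\alpha}\lesssim\|u\|_{\alpha}\|v\|_{\beta}$ is a worthwhile detail, since the estimate displayed in the text before the proposition covers the case where the low-frequency factor is rough, whereas here one needs the companion flat estimate $\|f\succ g\|_{\alpha}\lesssim\|f\|_{\alpha}\|g\|_{L^{\infty}}$ together with $\mathcal{C}^{\beta}\hookrightarrow L^{\infty}$ for $\beta>0$, exactly as you observe.
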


\smallskip

%%%-------------------------------------------------------------------------------------------------------------------%%%
\subsubsection{Generalized Littlewood-Paley-Stein projectors, paraproduct and resonant operators$\boldmath{.}$ \hspace{0.1cm}}
\label{sss:definitionLPSproj}
%%%-------------------------------------------------------------------------------------------------------------------%%%

\begin{defi} [Generalized Littlewood-Paley-Stein projectors] \label{def:generalizedLPSproj}
From the above data, we can write explicitely a new formula for the manifold analog of the Littlewood-Paley-Stein projector as follows:
\begin{eqnarray*} \label{eq:LPSprojdefi}
P_{k}^i(\cdot)= \kappa_i^*\Big[\psi_i\Delta_k\big(\kappa_{i*}(\chi_i\cdot)\big)\Big], \qquad (k\geqslant 0).
\end{eqnarray*}
Note that our projectors are indexed by both a frequency $k$ and chart index $i\in I$. We introduce another auxiliary projector $\widetilde{P}^i_k$ in terms of the test function $\widetilde{\chi}_i\in C^\infty_c(U_i)$ which reads
\begin{eqnarray*} \label{eq:LPSprojdefi2}
\widetilde{P}_\ell^i(\cdot)= \kappa_i^*\Big[\widetilde{\psi}_i\Delta_\ell\big(\kappa_{i*}(\widetilde{\chi}_i\cdot)\big)\Big], \qquad (\ell\geqslant 0).
\end{eqnarray*}
where $\widetilde{\psi}_i\in C^\infty_c(\kappa_i(U_i))$ is any function which equals $1$ on the support of $\psi_i$ and $\widetilde{\chi}_i=1$ on the support of $\chi_i$. 
\end{defi}

\smallskip

If $\sum_{i\in I}\chi_i=1$ then we have the resolution of the identity $Id=\sum_{i\in I}\sum_{k=0}^\infty P^i_k$. Beware that the projectors $\widetilde{P}_k^i$ do not satisfy the resolution of the identity because we do not assume that the $(\widetilde{\chi}_i)_{i\in I}$ form a partition of unity. Their introduction is necessary since our paraproduct and resonant operators are asymmetrical. Then we can use the pair of Littlewood-Paley-Stein projectors $P^i_k,\widetilde{P}^i_\ell$ to write the definition of the resonant term as
\begin{eqnarray*} \label{eq:resonant}
u\odot v=\sum_{i\in I} \sum_{\vert k - \ell \vert\leq  1} P^i_k(u)\widetilde{P}^i_\ell(v)
\end{eqnarray*}
and the paraproduct term as
\begin{eqnarray*} \label{eq:paraprod}
u\prec v=\sum_{i\in I} \sum_{k\leq  \ell - 2} P^i_k(u)\widetilde{P}^i_\ell(v).
\end{eqnarray*}
$u\succ v$ is defined accordingly, with the condition $l\leq  k-2$. In the sequel, we shall also need a notion of localized paraproduct and resonant operators.

\smallskip

\begin{defi} [Localized paraproducts and resonant products] \label{def:localpararesonant}
 They are indexed by the chart index $i$ and take value in distributions supported in $U_i$. They are defined following the simple rule, for every chart index $i$
\begin{eqnarray*}\label{eq:resonantlocal}
u\odot_i v= \sum_{\vert k-\ell \vert\leq  1} P^i_k(u)\widetilde{P}^i_\ell(v) = \kappa_i^*\Big[ \psi_i\Big( \kappa_{i*} \left(\chi_iu\right)  \odot \kappa_{i*} \left(\widetilde{\chi}_iv\right)\Big)\Big],
\end{eqnarray*}
and 
\begin{eqnarray*} \label{eq:paraprodlocal}
u\prec_i v= \sum_{k\leq  \ell - 2} P^i_k(u)\widetilde{P}^i_\ell(v) = \kappa_i^*\Big[\psi_i\Big( \kappa_{i*} \left(\chi_iu\right)  \prec \kappa_{i*} \left(\widetilde{\chi}_iv\right)\Big)\Big].
\end{eqnarray*}
We define $u\succ_i v$ accordingly, with the condition $\ell\leq  k-2$.
\end{defi}

\smallskip

This localization is useful since we only need these localized resonant products for our stochastic estimates from the companion work~\cite[section 4]{BDFT}.

%%%---------------------------------------------------------------------------------------------------------%%%
\subsubsection{Decomposing scalar products of sections of some smooth Hermitian bundle$\boldmath{.}$ \hspace{0.1cm}}
%%%---------------------------------------------------------------------------------------------------------%%%

In this short paragraph, we shall outline how to mimick the above construction to decompose scalar products or tensorial products of distributional sections of Hermitian bundles. Let $E\mapsto M$ be a smooth hermitian bundle with Hermitian scalar products on sections denoted by $\left\langle .,. \right\rangle_E$. We shall denote by $h$ the vertical metric. We need to decompose carefully the scalar product of two sections in $C^\infty(E)$. For $s_1,s_2\in C^\infty(E)^2$, using the notations of Section \ref{ss:definitionLPSprojectorsparaproduct} and some trivialization of the bundle on each chart $U_i$ by some orthonormal frame,
we define
$\left\langle s_1\odot s_2\right\rangle_{E} $ as:
$$  
\left\langle s_1\odot s_2\right\rangle_{E}\defeq \sum_i \kappa_i^* \Big[ \psi_i (\kappa_{i*}h)^{\mu\nu}\Big( \kappa_{i*}(\chi_is_1)_\mu \odot \kappa_{i*}(\widetilde{\chi}_i s_2)_\nu\Big)\Big] 
$$
where $\kappa_i,\psi_i,\chi_i,\widetilde{\chi}_i$ come from our definition of the resonant operator, $(\kappa_{i*}h)$ is the vertical metric $h$ induced by the charts $\kappa_i:U_i\mapsto \kappa_i(U_i)\subset \mathbb{R}^d$ and $(s_\mu)_{\mu=1}^{\text{rk}(E)}$ stands for the decomposition of the section $s$ in the trivializing frame over $U_i$. Similarly we have 
$$
\left\langle s_1\prec s_2\right\rangle_{E}\defeq \sum_i \kappa_i^*  \Big[  \psi_i (\kappa_{i*}h)^{\mu\nu}\Big( \kappa_{i*}(\chi_is_1)_\mu \prec \kappa_{i*}(\widetilde{\chi}_i s_2)_\nu\Big)\Big],
$$
and we recover the usual decomposition: 
$$
\left\langle s_1,s_2\right\rangle_{E}=\left\langle s_1\odot s_2\right\rangle_{E}+\left\langle s_1\prec s_2\right\rangle_{E}+\left\langle s_1 \succ s_2\right\rangle_{E} 
$$ 
for the scalar product on sections of $E$.

%%------------------------------------------------------------------------------%%
\subsection{Triple product and  a commutator involving paraproducts$\boldmath{.}$ \hspace{0.1cm}}
\label{SubsectionTripleParamultiplication}
%%------------------------------------------------------------------------------%%
 
In this section, we shall decompose a triple product of the form $F(f)gh$
where $(f,g,h)\in C^\infty(M)^3$ and $F\in C^\infty(\mathbb{R})$ as a sum of quadrilinear operations involving both the paramultiplications, paralinearizations to deal with the composite term $F(f)$ and the usual product on manifolds.

%%%--------------------------------------------------------------------------------------------------------------------------------%%%
\subsubsection{Decomposition of triple products in terms of generalized paraproducts and resonant products$\boldmath{.}$ \hspace{0.1cm}}
%%%--------------------------------------------------------------------------------------------------------------------------------%%%

Let us localize on manifolds a triple product following the philosophy that we used for usual double products
\begin{eqnarray*}
uvw=\sum_i \left(\chi_{i1}u \right)\left( \chi_{i2}v \right)\left( \chi_{i3}w \right)  
\end{eqnarray*} 
where only $\sum_i\chi_{i1}=1$ is a partition of unity subordinated to the cover $( U_i)_{i\in I}$, and  $ \chi_{i2}=1$ (resp. $ \chi_{i3}=1 $) on the support of $\chi_{i1}$ (resp. of $\chi_{i2}$), the three functions $\chi_{i1},\chi_{i2},\chi_{i3}$ belong to $C^\infty_c(U_i)$. Now we pull-back on open domains of $\mathbb{R}^d$ using some charts
\begin{eqnarray*}
uvw&=&\sum_i \kappa_i^*\Big[  \kappa_{i*} \left(\chi_{i1}u \right)   \kappa_{i*}\left( \chi_{i2}v \right)    \kappa_{i*}\left( \chi_{i3}w \right)  \Big]\\\
&=&\sum_i \kappa_i^*\Big[ \psi_i \kappa_{i*} \left(\chi_{i1}u \right)   \kappa_{i*}\left( \chi_{i2}v \right)    \kappa_{i*}\left( \chi_{i3}w \right)  \Big]
\end{eqnarray*}  
where everything in parenthesis is happening in $\mathbb{R}^d$. Then we decompose the products inside the parenthesis using the flat paraproduct and resonant operators. So, if we choose $\psi_i=1$ on $\kappa_i(\text{supp}(\chi_{i3}))$, we get the following decomposition of the triple product: 
\begin{eqnarray*}
uvw
&=&\sum_i \kappa_i^*\Big[ \psi_i  \Big(\kappa_{i*} \left(\chi_{i1}u \right)\Big) (\prec+ \succ+\odot)   \Big(  \kappa_{i*}\left( \chi_{i2}v \right)     (\prec+ \succ+\odot) \kappa_{i*}\left( \chi_{i3}w \right)\Big)  \Big]
\\
&=&u\prec_1 \left( v\prec_2w \right)+ u\prec_1 \left( v\succ_2w \right)+ u\prec_1 \left( v\odot_2w \right) +\cdots
\end{eqnarray*} 
where the $\cdots$ means that we replaced $\prec_1$ with either $\succ_1,\odot_1$ and sum over all possibilities, we get nine terms in total. Thus we decomposed a triple product into a sum of $9$ trilinear operations involving paraproduct and resonant operators. The trilinear operation we are interested in reads
\begin{equation*} \label{eq:firsttrilinearidentity}
u\prec_1 \left( v\prec_2w \right)\defeq
\sum_i \kappa_i^*\Big[ \psi_i \Big( \kappa_{i*} \left(\chi_{i1}u \right) \Big)\prec   \Big(  \kappa_{i*}\left( \chi_{i2}v \right)    \prec\kappa_{i*}\left( \chi_{i3}w \right)\Big)  \Big].
\end{equation*}
The important fact is that the generalized paraproduct and resonant operators have the same continuity properties as in the flat case and can be decomposed in terms of generalized Littlewood-Paley-Stein projectors $P^i_k$ of general form in such a way that one can repeat for them word by word the stochastic estimates of the paper \cite{BDFT}.  

 For later use in the proof of Theorem \ref{thm_A_2_mfd}, we introduce now a trilinear operation defined by
$$
f\cdot_1 (g\odot_2 h) \defeq \sum_i \kappa_i^*\Big[ \psi_i\kappa_{i*} \left(\chi_{i1} f \right) \Big(   \kappa_{i*}\left( \chi_{i2} g \right) \odot\kappa_{i*}\left( \chi_{i3} h \right) \Big) \Big].
$$
Our use of a numbering subscript even for the multiplication of functions is deliberate, since we want to keep track of the partitions of unity and cut-off functions we are using.

\subsubsection{ Paralinearization of some trilinear operator$\boldmath{.}$ \hspace{0.1cm}}

Now for $F\in C^\infty(\mathbb{R})$,  $f\in C^{\alpha}(M),g\in C^\beta(M),h\in C^{\gamma}(M)$  with $\alpha+ \beta+\gamma>0$, $\beta+\gamma<0$ and  $2\alpha+\gamma>0$,  we define the trilinear operation:
\begin{eqnarray*} \label{eq:trilinearsecond}
  F(f)\odot_1 (g \prec_2 h)= \sum_i    \kappa_i^*\Big[ \psi_i \Big( \kappa_{i*} \big(\chi_{i1} F(f) \big) \Big)\odot   \Big(  \kappa_{i*}\left( \chi_{i2}g  \right)    \prec\kappa_{i*}\left( \chi_{i3} h \right) \Big)\Big],
\end{eqnarray*} 
 where $\chi_{i1}, \chi_{i2}, \chi_{i3}\in C^{\infty}_c(U_i)$ and $\psi\in C^{\infty}_{c}(\kappa_i(U_i))$ with $\chi_{i1}\ll \chi_{i2}\ll\chi_{i3}\ll \kappa_i^*\psi_i$.
\smallskip

\begin{lemm} \label{lemThmCorrector}
Let us consider $F\in C^\infty(\mathbb{R})$,  $f\in C^\alpha(M)$, $g\in C^\beta(M)$ and $h\in C^\gamma(M)$ for $\alpha+\beta+\gamma>0$, $ \beta+\gamma<0 $ and $2\alpha+\gamma>0$.  Then for any $\chi_{i4}\in C^\infty_c(U_i)$ with $\chi_{i1}\ll \chi_{i4}$, we have the regularity estimate
\begin{eqnarray*}
&& \Big( \kappa_{i*} \big(\chi_{i1} F(f) \big) \Big)\odot   \Big(  \kappa_{i*}\left( \chi_{i2}g  \right)    \prec\kappa_{i*}\left( \chi_{i3} h \right) \Big)\\
&&\quad=  \kappa_{i*} \left(\chi_{i1} F'(f) \right)   \kappa_{i*}\left( \chi_{i2}g  \right)  \Big(  \kappa_{i*}\left( \chi_{i3} h  \right) \odot \kappa_{i*} \left(\chi_{i4} f  \right) \Big)+C^{(2\alpha+\gamma)\wedge (\alpha+\beta+\gamma)}.
\end{eqnarray*} 
As consequence we have
\begin{eqnarray*} \label{ThmCorrector}
F(f)\odot_1\left(g\prec_2 h \right) = F'(f)\cdot_1(g\cdot_2(h\odot _3 f))+ C^{(2\alpha+\gamma)\wedge (\alpha+\beta+\gamma)},
\end{eqnarray*}
where 
\begin{eqnarray*}
F'(f)\cdot_1(g\cdot_2(h\odot _3 f))\defeq\sum_i    \kappa_i^*\Big[ \psi_i  \kappa_{i*} \left(\chi_{i1} F'(f) \right)   \kappa_{i*}\left( \chi_{i2}g  \right)  \Big(  \kappa_{i*}\left( \chi_{i3} h  \right) \odot \kappa_{i*} \left(\chi_{i4} f  \right) \Big)\Big].
\end{eqnarray*}

\end{lemm}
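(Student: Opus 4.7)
The identity is entirely local on $\mathbb{R}^d$, so the plan is to push everything down through the chart $\kappa_i$ and reduce to the flat paralinearization together with two classical commutators of paracontrolled calculus.

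Abbreviate $F_{i1}\defeq\kappa_{i*}(\chi_{i1}F(f))$, $f_{i4}\defeq\kappa_{i*}(\chi_{i4}f)$, $G_{i2}\defeq\kappa_{i*}(\chi_{i2}g)$, $H_{i3}\defeq\kappa_{i*}(\chi_{i3}h)$, and $w_i\defeq G_{i2}\prec H_{i3}$. Since $\chi_{i1}\ll\chi_{i4}$, we have $\chi_{i4}=1$ on $\mathrm{supp}\,\chi_{i1}$, so $F_{i1}=(\chi_{i1}\circ\kappa_i^{-1})\,F(f_{i4})$ (taking $F(0)=0$ without loss of generality). First I would apply Bony's flat paralinearization $F(f_{i4})=F'(f_{i4})\prec f_{i4}+R_{i}$ with $R_i\in C^{2\alpha}_c(\mathbb{R}^d)$, proved in Appendix \ref{SubsectionParaBundle}. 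Combining with the multiplication-by-a-smooth-function commutator (the flat analogue of Lemma \ref{lem:smt_comm}) yields
\begin{equation*}
F_{i1}=\kappa_{i*}(\chi_{i1}F'(f))\prec f_{i4}+r_i,\qquad r_i\in C^{2\alpha}_c(\mathbb{R}^d).
\end{equation*}

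Next I would take the resonance with $w_i$. The regularity of $w_i$ is at least $\beta\wedge 0+\gamma\geqslant\beta+\gamma$, hence $r_i\odot w_i$ makes sense in $C^{2\alpha+(\beta\wedge 0+\gamma)}$, which is contained in $C^{2\alpha+\gamma}$ since we assume $2\alpha+\gamma>0$. It remains to treat $[\kappa_{i*}(\chi_{i1}F'(f))\prec f_{i4}]\odot w_i$. For this I invoke the standard trilinear commutator
\begin{equation*}
(u\prec v)\odot w=u\,(v\odot w)+C^{s_u+s_v+s_w},
\end{equation*}
valid on $\mathbb{R}^d$ (this is the Gubinelli--Imkeller--Perkowski commutator whose proof is a direct consequence of the pararegularized $S^m_{1,1}$ analysis of Proposition \ref{l:simplifiedprooftwistedclass}, applied with $s_u=s_v=\alpha$, $s_w=\beta\wedge 0+\gamma$, and noting $\alpha+\beta\wedge 0+\gamma>0$ under our assumptions); this gives
\begin{equation*}
[\kappa_{i*}(\chi_{i1}F'(f))\prec f_{i4}]\odot w_i=\kappa_{i*}(\chi_{i1}F'(f))\cdot (f_{i4}\odot w_i)+C^{2\alpha+\gamma}.
\end{equation*}

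The final step is to commute the paraproduct against the resonance one more time. The companion commutator
\begin{equation*}
v\odot(g\prec h)=g\,(v\odot h)+C^{s_v+s_g+s_h},
\end{equation*}
proved by expanding $g\prec h$ into Littlewood--Paley blocks and repeating the $S^m_{1,1}$ estimate, applies with $v=f_{i4}$, $g=G_{i2}$, $h=H_{i3}$, producing a corrector in $C^{\alpha+\beta+\gamma}$ (this requires $\alpha+\gamma$ to give sense to $v\odot h$, which follows from $\alpha+\beta+\gamma>0$ together with $\beta<0$ in the relevant regime, and is otherwise automatic). Putting the three reductions together yields
\begin{equation*}
F_{i1}\odot w_i=\kappa_{i*}(\chi_{i1}F'(f))\cdot G_{i2}\cdot(f_{i4}\odot H_{i3})+C^{(2\alpha+\gamma)\wedge(\alpha+\beta+\gamma)},
\end{equation*}
which is the first displayed identity (using the symmetry of $\odot$ on $\mathbb{R}^d$ to rewrite $f_{i4}\odot H_{i3}=H_{i3}\odot f_{i4}$). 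The global statement then follows by multiplying by $\psi_i$, pulling back by $\kappa_i^*$ and summing over the finite index set $I$.

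The main obstacle is the careful handling of the nested cut-offs $\chi_{i1}\ll\chi_{i2}\ll\chi_{i3}\ll\kappa_i^*\psi_i$ and the extra $\chi_{i4}$ appearing only in the corrected resonance: one must repeatedly insert ``$\chi_{ij}=1$ on the support of the previous cut-off'' and use Lemma \ref{lem:smt_comm} (in its flat version) to move them across the $\prec$ and $\odot$ operators, each time at the cost of a remainder which is at worst $C^{(2\alpha+\gamma)\wedge(\alpha+\beta+\gamma)}$. All analytic content is contained in the two commutator estimates above and in the flat paralinearization; the sharpness of the regularity exponent $(2\alpha+\gamma)\wedge(\alpha+\beta+\gamma)$ mirrors exactly the two places where remainders enter the argument, namely the paralinearization (remainder $2\alpha$, resonating against $w_i$ of regularity $\gamma$) and the $\odot/\prec$ commutator (remainder $\alpha+\beta+\gamma$).
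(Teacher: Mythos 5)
Your proposal is correct and follows essentially the same route as the paper: write $\kappa_{i*}(\chi_{i1}F(f))$ as a composite $F_i(x,\kappa_{i*}(\chi_{i4}f)(x))$ using $\chi_{i4}=1$ on $\mathrm{supp}\,\chi_{i1}$, apply the flat Bony--Meyer paralinearization to get a $C^{2\alpha}$ remainder, and then use the Gubinelli--Imkeller--Perkowski commutator ${\bf C}(f,g,h)=(f\prec g)\odot h-f(g\odot h)$ twice (once producing the $C^{2\alpha+\gamma}$ corrector, once the $C^{\alpha+\beta+\gamma}$ one) before globalizing. The only cosmetic difference is that you attribute the trilinear commutator to Proposition \ref{l:simplifiedprooftwistedclass}, whereas the paper simply invokes it as the known flat estimate; both treat it as a black box.
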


\smallskip

\begin{proof}
The key observation  is that in flat space, for $F_i(x,y)= \left( \kappa_{i*} \chi_{i1} \right)(x)F(y)$, for any cut-off $\chi_{i4}$ such that $\chi_{i4}=1$ on
the support of $\chi_{i1}$, we have the two identities: 
\begin{eqnarray*}
&\kappa_{i*} \big(\chi_{i1} F(f) \big)(x) = \left( \kappa_{i*} \chi_{i1} \right)(x)F(\kappa_{i*} \left(\chi_{i4} f\right)(x)) = F_i\big(x,\kappa_{i*} \left(\chi_{i4} f\right)(x)\big),   \\ 
&\left(\kappa_{i*} \chi_{i1} \right)(x) F^\prime(\kappa_{i*} \big(\chi_{i4} f\big)(x) \big) = (\partial_yF_i)\big(x,\kappa_{i*} \left(\chi_{i4} f\right)(x)\big),
\end{eqnarray*}
since for every $x$ such that $\chi_{i4}(x)=1$ the term $\kappa_{i*} \chi_{i1}(x)$ in factor must vanish.
Hence the estimates of Bony~\cite[Prop 4.4 p.~230]{Bony}, Meyer~\cite[Thm 2 p.~281]{Meyer2} proved on $\mathbb{R}^d$ applies to $F_i\big(x,\kappa_{i*} \left(\chi_{i4} f\right)(x)\big)$ imply that:
\begin{eqnarray*}
F_i\big(\cdot,\kappa_{i*} \left(\chi_{i4} f\right)(\cdot)\big) - (\partial_yF_i)\big(\cdot,\kappa_{i*} \left(\chi_{i4} f\right)(\cdot)\big)\prec (\kappa_{i*} \left(\chi_{i4} f\right))\in  \mathcal{C}^{2\alpha}(\mathbb{R}^{d}).
\end{eqnarray*} 
By the two above identities, we get
$$
\kappa_{i*} \left(\chi_{i1} F(f) \right) - \kappa_{i*} \left(\chi_{i1} F'(f) \right)\prec  \kappa_{i*} \left(\chi_{i4} f\right) \in C^{2\alpha}. 
$$ 
Hence combining with the estimate $\|u\odot v\|_{\alpha_1+\alpha_2}\lesssim \|u\|_{\alpha_1}\|v\|_{\alpha_2} $ for $\alpha_1+\alpha_2>0$ yields 
\begin{equation*}  \begin{split}
& \Big( \kappa_{i*} \big(\chi_{i1} F(f) \big) \Big)\odot   \Big(  \kappa_{i*}\left( \chi_{i2}g  \right)    \prec\kappa_{i*}\left( \chi_{i3} h \right) \Big)  \\
&=  \Big( \kappa_{i*} \left(\chi_{i1} F'(f) \right)\prec  \kappa_{i*} \left(\chi_{i4} f\right) \Big)\odot    \Big(  \kappa_{i*}\left( \chi_{i2}g  \right)    \prec\kappa_{i*}\left( \chi_{i3} h \right)\Big)   +C^{2\alpha+ \gamma}.
\end{split} \end{equation*}
Denote by 
$$
{\bf C}(f, g, h)= (f\prec g)\odot h-f(g\odot h)
$$  
the (flat) commutator in $\mathbb{R}^d$. Then we  apply  two times the flat commutator estimate as follows:
\begin{eqnarray} \nonumber 
  && \Big( \kappa_{i*} \big(\chi_{i1} F(f) \big) \Big)\odot   \Big(  \kappa_{i*}\left( \chi_{i2}g  \right)    \prec\kappa_{i*}\left( \chi_{i3} h \right) \Big)   \\ \nonumber
  &&\quad = \Big( \kappa_{i*} \left(\chi_{i1} F'(f) \right)\prec  \kappa_{i*} \left(\chi_{i4} f\right)\Big) \odot    \Big(  \kappa_{i*}\left( \chi_{i2}g  \right)    \prec\kappa_{i*}\left( \chi_{i3} h \right)\Big)  +C^{2\alpha+ \gamma}   \\ \nonumber 
  &&\quad =    {\bf C}\Big(  \kappa_{i*} \left(\chi_{i1} F'(f) \right), \kappa_{i*} \left(\chi_{i4} f\right) ,     \kappa_{i*}\left( \chi_{i2}g  \right)    \prec\kappa_{i*}\left( \chi_{i3} h \right)  \Big) +C^{2\alpha+ \gamma} \\ \nonumber
  &&\quad \quad  +   \kappa_{i*} \left(\chi_{i1} F'(f) \right) \Big(  \kappa_{i*} \left(\chi_{i4} f\right) \Big) \odot \Big(  \kappa_{i*}\left( \chi_{i2}g  \right)    \prec\kappa_{i*}\left( \chi_{i3} h \right) \Big)\\ \nonumber
  &&\quad=  \kappa_{i*} \left(\chi_{i1} F'(f) \right) \Big(  \kappa_{i*} \left(\chi_{i4} f\right)\Big) \odot  \Big(  \kappa_{i*}\left( \chi_{i2}g  \right)    \prec\kappa_{i*}\left( \chi_{i3} h \right) \Big)+ C^{2\alpha+\gamma}\\ \nonumber
  &&\quad =  \kappa_{i*} \left(\chi_{i1} F'(f) \right)    \kappa_{i*}\left( \chi_{i2}g  \right)  \Big(  \kappa_{i*}\left( \chi_{i3} h  \right)\Big) \odot \Big(\kappa_{i*} \left(\chi_{i4} f\right)  \Big) \\ \nonumber
  &&\quad\quad  + {\bf C}\Big(\kappa_{i*}\left( \chi_{i2}g  \right) ,   \kappa_{i*}\left( \chi_{i3} h \right), \kappa_{i*} \left(\chi_{i4} f\right)\Big)+ C^{2\alpha+\gamma}\\ \nonumber
&&\quad =  \kappa_{i*} \left(\chi_{i1} F'(f) \right) \left(   \kappa_{i*}\left( \chi_{i2}g  \right)  \left(  \kappa_{i*}\left( \chi_{i3} h  \right) \odot \kappa_{i*} \left(\chi_{i4} f\right)  \right) \right)  + C^{ (2\alpha+ \gamma)\wedge(\alpha+\beta+\gamma)} \label{eq:use_Bony_comm}
\end{eqnarray}
Then we get the desired estimate.
\end{proof}
This estimate will be used in the proof of Theorem \ref{thm_A_2_mfd} in the next section.

%%----------------------------------------%%
\subsection{A local-to-global principle$\boldmath{.}$ \hspace{0.15cm}}
\label{SectionLocalizationLemma}
%%----------------------------------------%%

For the purpose of doing the stochastic estimates in our companion work \cite{BDFT}, we state here a key localization Lemma which allows us to isolate the singularities of a paraproduct $f\prec g$ in terms of the singularities of $g$. The reader can skip this section at first reading.

\smallskip

\begin{lemm}[Key localization Lemma]\label{l:keylocalization}
Let $f,g$ be two tempered distributions on $\mathbb{R}^d$ such that $g$ has compact support. Then for every $\chi\in C^\infty_c(\mathbb{R}^d)$ such that $\chi=1$ on the support of $g$, the differences
$$ 
f\prec g-\chi\left(f\prec g \right), \quad f\prec g-\left(\chi f \right) \prec g, \quad  f\odot g - \chi \left( f \odot g \right),  \quad  f\odot g-\left(\chi f \right) \odot g 
$$
all lie in $C^\infty(\mathbb{R}^d)$, with no regularity assumptions on $f$ and $g$.
\end{lemm}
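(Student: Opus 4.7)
The plan is the following. The four differences equal $(1-\chi)(f\prec g)$, $((1-\chi)f)\prec g$, $(1-\chi)(f\odot g)$ and $((1-\chi)f)\odot g$ respectively; I must show each is $C^\infty$. Set $\delta \defeq d(\mathrm{supp}(g), \mathrm{supp}(1-\chi))>0$, positive because $\chi\equiv 1$ on the compact set $\mathrm{supp}(g)$. My workhorse will be the following off-diagonal smoothing estimate. Writing $\Delta_j g(x)=\langle g, \psi_j(x-\cdot)\rangle$ with $\psi_j = 2^{jd}\check\psi(2^j\cdot)$ Schwartz and using that the compactly supported distribution $g$ has finite order, I claim
\[ \sup_{x\in F}(1+|x|)^M\,|\partial^\alpha \Delta_j g(x)|\;\lesssim_{M,N,k,\eta,g}\;2^{-jN} \]
for any $M,N,k$, any $|\alpha|\leq k$, and any closed $F\subset \mathbb{R}^d$ with $d(F, \mathrm{supp}(g))\geq\eta>0$; the bound comes from the lower bound $|x-y|\geq\eta$ for $y\in \mathrm{supp}(g)$ beating the Schwartz tails of $\psi_j$ and its derivatives. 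Dually, temperedness of $f$, and of $(1-\chi)f$ (which is tempered since $1-\chi$ is bounded smooth with bounded derivatives), gives polynomial-in-$j$ and polynomial-in-$|x|$ bounds on all derivatives of $S_{j-2}f$, $\Delta_j f$, $S_{j-2}((1-\chi)f)$ and $\Delta_j((1-\chi)f)$.

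For the two cases with an explicit cutoff, I expand
\[ (1-\chi)(f\prec g)=\sum_j (1-\chi)\,S_{j-2}f\cdot\Delta_j g,\qquad (1-\chi)(f\odot g)=\sum_{|j-k|\leq 1}(1-\chi)\,\Delta_j f\cdot\Delta_k g. \]
On $\mathrm{supp}(1-\chi)\subset\{d(\cdot,\mathrm{supp}(g))\geq\delta\}$ the off-diagonal estimate makes $\Delta_j g$ (resp. $\Delta_k g$) rapidly small in every polynomially weighted $C^k$ norm, dominating the polynomial growth of the other factor; every such weighted seminorm of the $j$-th term is $O(2^{-jN})$ for arbitrary $N$, so both series converge in $C^\infty(\mathbb{R}^d)$.

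For the two cases where the cutoff sits inside the product I dichotomize $x$. Set $F_1\defeq\{d(\cdot,\mathrm{supp}(g))\geq\delta/2\}$ and $F_2\defeq\mathbb{R}^d\setminus F_1$; the latter is bounded because $\mathrm{supp}(g)$ is compact. On $F_1$, $\Delta_j g$ (resp. $\Delta_k g$) is rapidly small by the off-diagonal estimate while the other factor has only polynomial growth, giving the desired smallness. On $F_2$ one has $d(x,\mathrm{supp}(1-\chi))>\delta/2$, and writing
\[ S_{j-2}((1-\chi)f)(x) = \bigl\langle f,\, (1-\chi)(\cdot)\,\Phi_{j-2}(x-\cdot)\bigr\rangle \]
with $\Phi_{j-2}$ the Schwartz convolution kernel of $S_{j-2}$, the test function $\varphi_{x,j}(y)=(1-\chi)(y)\Phi_{j-2}(x-y)$ is supported where $|x-y|>\delta/2$. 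Splitting $|y|$ bounded (use $|x-y|\geq\delta/2$ directly) from $|y|$ large (use $|x-y|\geq|y|/2$, valid because $F_2$ is bounded, to extract Schwartz decay in $|y|$), every Schwartz seminorm of $\varphi_{x,j}$ is $O(2^{-jN})$ uniformly in $x\in F_2$; temperedness of $f$ then yields the same rapid smallness for $S_{j-2}((1-\chi)f)(x)$ and its $x$-derivatives, and analogously for $\Delta_j((1-\chi)f)$ in the resonant case. In both regions the $j$-th term has weighted $C^k$ seminorms of order $2^{-jN}$, and the series converge in $C^\infty(\mathbb{R}^d)$.

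The main difficulty will be this $F_2$ analysis: extracting rapid smallness of $S_{j-2}((1-\chi)f)(x)$ (and $\Delta_j((1-\chi)f)(x)$) purely from off-diagonal decay of the Schwartz kernel, together with careful handling of the non-compactness of $\mathrm{supp}(1-\chi)$ at infinity. Leibniz in $y$ on $\varphi_{x,j}$ produces factors $\partial^\beta(1-\chi)$, which for $|\beta|\geq 1$ are compactly supported inside $\mathrm{supp}(1-\chi)$ and therefore still at distance $\geq\delta/2$ from $F_2$; so only the $\beta=0$ term genuinely requires the bounded/large $|y|$ split described above. Once these estimates are in hand, summing in $j$ is routine bookkeeping.
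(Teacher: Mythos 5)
Your proof is correct, but it follows a genuinely different route from the paper's. The paper inserts an intermediate cut-off $\widetilde{\chi}$ with $g \prec\hspace{-0.25em}\prec \widetilde{\chi} \prec\hspace{-0.25em}\prec \chi$ (in the sense of supports), rewrites $f\prec g-\chi(f\prec g)$ as $\sum_i S_{i-2}(f)\,[\Delta_i,M_\chi]M_{\widetilde{\chi}}(g)$, and invokes the semiclassical composition theorem (with $\hbar=2^{-i}$) to conclude that $[\Delta_i,M_\chi]M_{\widetilde{\chi}}$ is an $\mathcal{O}(\hbar^\infty\langle\xi\rangle^{-\infty})$ smoothing operator, hence $\Vert[\Delta_i,M_\chi]M_{\widetilde{\chi}}(g)\Vert_{C^N}\lesssim 2^{-mNi}$; the variants with $(\chi f)$ are reduced to commutators $[M_\chi,S_{i-2}]$ by the same device. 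You instead work directly with the convolution kernels: the support separation $\delta>0$ beats the Schwartz tails of $2^{jd}\check\psi(2^j\cdot)$, giving rapid decay of $\Delta_j g$ away from $\mathrm{supp}(g)$, and a dual estimate for $S_{j-2}((1-\chi)f)(x)$ and $\Delta_j((1-\chi)f)(x)$ near $\mathrm{supp}(g)$ via temperedness of $f$. What the paper's route buys is modularity (one black-box composition theorem does all the work); note however that its proof as written starts from ``$f$ belongs to some $C^\alpha(\mathbb{R}^d)$'', which a general tempered distribution need not satisfy, whereas your polynomially weighted seminorms genuinely cover all tempered $f$ as the statement demands. The price you pay is more bookkeeping: the $F_1/F_2$ dichotomy, the bounded-versus-large $|y|$ split, and the Leibniz terms $\partial^\beta(1-\chi)$; you correctly identify these as the delicate points and your treatment of each (in particular the observation that $\mathrm{supp}(\partial^\beta(1-\chi))\subset\mathrm{supp}(1-\chi)$ stays at distance $\geq\delta/2$ from $F_2$, and that $|x-y|\gtrsim|y|$ for $x$ in the bounded set $F_2$ and $|y|$ large) is sound, so summing the geometric tails in $j$ closes the argument.
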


\smallskip

Note that the two quantities $f\prec g-\chi\left(f\prec g \right), f\prec g-\left(\chi f \right) \prec g $ always exist. However if $f,g\in C^\alpha\times C^\beta$ but $\alpha+\beta\leq  0$, then the two difference terms
$f\odot g - \chi \left( f \odot g \right),  f\odot g-\left(\chi f \right) \odot g  $ are only defined by a mollification and limiting procedure but the limiting term is smooth as claimed in the statement of the Lemma. The proof uses a composition theorem for pseudofferential operators different from Proposition \ref{lemm:commutator}.

\smallskip

\begin{proof}
The distribution $f$ belongs to some H\"older $C^\alpha(\bbR^d)$ for some $\alpha\in \mathbb{R}$. We will later see that the regularity of $g$ is almost irrelevant in the arguments that follow.
The key idea is to make appear the Littlewood-Paley projectors
\begin{eqnarray*}
f\prec g=f\prec \left(\chi \widetilde{\chi} g\right)=\sum_{i\geqslant 2} S_{i-2}(f) \Delta_i(\chi \widetilde{\chi} f)
\end{eqnarray*}
for some function $\widetilde{\chi}$ which equals $1$ on the support of $g$ and such that $\chi=1$ in the support of $\widetilde{\chi}$. Hence $f\prec g-\chi\left(f\prec g \right)$ can be decomposed as
\begin{eqnarray*}
f\prec g-\chi\left(f\prec g \right)=\sum_{i\geqslant 2} S_{i-2}(f) \big(\Delta_i(\chi \widetilde{\chi} g)-\chi \Delta_i( \widetilde{ \chi} g)\big) = \sum_{i\geqslant 2} S_{i-2}(f) [\Delta_i,M_\chi]M_{\widetilde{\chi}} \left( g\right).
\end{eqnarray*}
We prove that the commutators $[\Delta_i,M_\chi]M_{\widetilde{\chi}}$ form a family of smoothing operators in the semiclassical sense. Let us explain in more detail. Recall that the Littlewood-Paley projector $\Delta_i=\psi(2^{-i}\vert D\vert)$ should be considered as a semiclassical pseudodifferential operator where $\hbar=2^{-i}$ whose symbol lies in the class $S(1)$~\cite[p.~72]{Zworski}. By the composition Theorem~\cite[Thm 4.14 and 4.18]{Zworski}, the respective symbols $c_1(x;\xi),c_2(x;\xi)$ of the composite operators $\Delta_i\circ M_\chi, M_\chi\circ \Delta_i $ equal $\psi(2^{-i}\vert \xi\vert)$ mod $\hbar^\infty\left\langle \xi\right\rangle^{-\infty}$ for all $x$ such that $\chi(x)=1$. (Beware that the multipliers $M_\chi,M_{\widetilde{\chi}}$ and $\Delta_i$ are semiclassical quantizations of symbols in the class $S(1)$~\cite[p.~72]{Zworski}.) Therefore the commutator $[ \Delta_i,M_\chi ]=Op_{2^{-i}}\left( c_1-c_2 \right)$ is the semiclassical quantization of a symbol $(c_1-c_2)(x;\xi)$ that is smoothing semiclassically exactly when $x\in \{\chi=1\}$. Finally set
$$
[ \Delta_i,M_\chi ]\circ M_{\widetilde{\chi}}=Op_{2^{-i}}(c),
$$ 
and note, again by the composition Theorem~\cite[Thm 4.14 and 4.18]{Zworski}, and from the fact that $\chi=1$ on the support of $\widetilde{\chi}$, that 
$$
c(x;\xi)=\mathcal{O}(\hbar^\infty \left\langle \xi\right\rangle^{-\infty})
$$ 
for all $x$. The operator $[ \Delta_i,M_\chi ]\circ M_{\widetilde{\chi}}$ is thus a semiclassical smoothing operator, and we have
$$
\big\Vert [\Delta_i,M_\chi]M_{\widetilde{\chi}} \left( g\right) \big\Vert_{C^N(\mathbb{R}^d)} \lesssim 2^{-mNi} 
$$ 
for all integers $N$ and $m$. We have as a consequence the estimate
\begin{equation*} \begin{split}
\big\Vert f\prec g-\chi\left(f\prec g \right) \big\Vert_{C^N(\bbR^d)} &\leq  \sum_{i\geqslant 2}  \big\Vert S_{i-2}(f) [\Delta_i,M_\chi]M_{\widetilde{\chi}} \left( g\right) \big\Vert_{C^N(\bbR^d)}   \\
&\lesssim \sum_{i\geqslant 2}  \Vert S_{i-2}(f) \Vert_{C^N(\bbR^d)} \big\Vert [\Delta_i,M_\chi]M_{\widetilde{\chi}} \left( g\right)\big\Vert_{C^N(\bbR^d)}   \\
&\lesssim \sum_{i=2}^\infty 2^{i(N-\alpha)} 2^{-2iN} \lesssim \sum_{i=2}^\infty 2^{-iN}<+\infty.
\end{split} \end{equation*}
The proof for the difference $f\odot g-\chi\left(f\odot g \right)$ is identical: Replace $S_{i-2}(f)$ by $\Delta_j(f)$ for $\vert i-j\vert\leq  1$. To control $f\prec g-(\chi f)\prec g$, write
\begin{equation*} \begin{split}
f\prec g-(\chi f)&\prec g =    \\
&\underset{\in C^\infty}{\underbrace{f\prec g - \widetilde{\chi}\chi(f\prec g)}} + \underset{\in C^\infty \text{ since } (\chi f)\prec g \,=\,(\chi f)\prec (\widetilde{\chi}g) }{\underbrace{\widetilde{\chi}\big( (\chi f)\prec g\big)- (\chi f)\prec g  }}  + \widetilde{\chi}\chi(f\prec g) - \widetilde{\chi}\big( (\chi f)\prec g\big)
\end{split}\end{equation*}
where we used twice the previous result, then
\begin{eqnarray*}
f\prec g-(\chi f)\prec g
=\sum M_{\widetilde{\chi}} [ M_\chi, S_{i-2}](f)  \Delta_i(g)  + C^\infty
\end{eqnarray*}
and we repeat the previous commutator arguments using both that
$M_{\widetilde{\chi}},M_\chi,S_{i-2}=\beta(2^{-i+2}\vert D\vert) $, $\psi=\beta(2^{-1}.)-\beta(.)$ are semiclassical operators obtained by quantizing symbols in the class $S(1)$ and the support properties of $\chi,\widetilde{\chi}$. A similar argument also yields that $f\odot g- (\chi f)\odot g $ is smooth.
\end{proof}

\medskip
\noindent {\bf From local to global principle:} Recall that $\left(\kappa_i,U_i\right)_i$ forms a collection of open charts and cover of the closed, compact manifold $M$. Consider the bilinear and trilinear operations 
$$
f\prec_1 g\defeq\sum_i \kappa_i^{*}\Big[\psi_i \Big( \kappa_{i*}\left(f\chi_{i1}\right)\prec \kappa_{i*}\left(g\chi_{i2}\right)  \Big) \Big]
$$
and 
$$ 
f\prec_1(g\prec_2 h)\defeq\sum_i \kappa_i^*\Big[ \psi_i \Big( \kappa_{i*} \left(\chi_{i1}f \right)\Big) \prec   \Big(  \kappa_{i*}\left( \chi_{i2}g \right)    \prec\kappa_{i*}\left( \chi_{i3}h \right)\Big) \Big] 
$$ 
where $\chi_{i1}, \chi_{i2}, \chi_{i3}$ are arbitrary cut-off functions supported in $U_i\subset M$ such that $\chi_{2i}=\chi_{i3}= 1 $ on ${\rm supp} (\chi_{i1})$. Assume we have a \textbf{local form of regularity} which means for every chart index $i$, the functions 
$$
\Big( \kappa_{i*}\left( \chi_{i1} f  \right) \prec \kappa_{i*}\left( \chi_{i2} g \right)\Big) 
$$
and
$$  
\Big( \kappa_{i*} \left(\chi_{i1}f \right)\Big) \prec   \Big(  \kappa_{i*}\left( \chi_{i2}g \right)    \prec\kappa_{i*}\left( \chi_{i3}h \right)\Big) 
$$
are $C^\alpha$ (resp. $C_TC^\alpha$) in some neighborhood of $\kappa_i({\rm supp}(\chi_{i2}))$ and $\kappa_i({\rm supp}(\chi_{i3}))$ respectively. Then $f\prec_1 g$ and $ f\prec_1(g\prec_2 h) $ are both $C^\alpha(M)$ (resp. $C_TC^\alpha(M)$) \textbf{globally and the result does not depend on the choice of cut-off functions} $\psi_i,\chi_{i1},\chi_{i2},\chi_{i3}$ provided they satisfy the compatibility condition on supports previously stated. We can show a similar property for $ \odot, \succ$ instead of $\prec$.

\smallskip

\begin{proof}
This is a trivial consequence of the localization Lemma~\ref{l:keylocalization} since both $\kappa_{i*}\left( \chi_{i1} f  \right) \prec \kappa_{i*}\left( \chi_{i2} g \right)$ and $\big( \kappa_{i*} \left(\chi_{i1}f \right)\big) \prec   \big(  \kappa_{i*}\left( \chi_{i2}g \right)    \prec\kappa_{i*}\left( \chi_{i3}h \right)\big) $ are smooth outside $\kappa_i({\rm supp}(\chi_{i2}))$ and $\kappa_i({\rm supp}(\chi_{i3}))$ respectively. Indeed, for any function $\psi$ which equals $1$ on the support of $\chi_{i2}$ (resp. $\chi_{i3}$), the difference $(1-\psi)\kappa_{i*}\left( \chi_{i1} f  \right) \prec \kappa_{i*}\left( \chi_{i2} g \right)$ (resp. $(1-\psi)\big( \kappa_{i*} \left(\chi_{i1}f \right)\big) \prec   \big(  \kappa_{i*}\left( \chi_{i2}g \right)    \prec\kappa_{i*}\left( \chi_{i3}h \right)\big) $) is smooth by Lemma~\ref{l:keylocalization}.  
\end{proof}
\smallskip

%%----------------------------------------------------------------------------------------------%%
\subsection{Littlewood-Paley-Stein projectors and pseudodifferential operators$\boldmath{.}$ \hspace{0.1cm}}
\label{sect_LPS_meet_parabolic_cal}
%%----------------------------------------------------------------------------------------------%%

We study in this section some commutator lemma involving both pseudodifferential operators and the generalized Littlewood-Paley projectors.

\smallskip

\begin{prop}\label{coro:boundedpsidosLP}
Let $(P^i_k,\widetilde{P}^i_\ell)$ be a pair of generalized Littlewood-Paley-Stein projectors in the sense of Definition \ref{def:generalizedLPSproj} where $i$ is a chart index and $k,\ell$ represents the frequencies $2^k,2^\ell$. For every pseudodifferential operator $A\in \Psi^m_{1,0}(M)$ the series of commutators  
\begin{eqnarray*}
\sum_{\vert k-\ell\vert\leq  1} \big( P^i_kA\widetilde{P}^i_\ell-AP^i_k\widetilde{P}^i_\ell \big)
\end{eqnarray*}
converges absolutely in $\Psi^{m-1}_{1,0}(M)$.
\end{prop}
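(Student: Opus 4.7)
The plan is to work in local coordinates and reduce to an explicit computation on $\mathbb{R}^d$. Modulo operators in $\Psi^{-\infty}(M)$ we may represent $A$ in the chart $(U_i,\kappa_i)$ as $\kappa_i^* a(x,D) \kappa_{i*}$ with $a\in S^m_{1,0}(\mathbb{R}^d)$ compactly supported in $x$, while $P_k^i = M_{\psi_i}\Delta_k M_{\chi_i}$ and $\widetilde P_\ell^i = M_{\widetilde\psi_i}\Delta_\ell M_{\widetilde\chi_i}$. Then a Leibniz-type expansion gives
\[
[P_k^i, A] \,=\, [M_{\psi_i},A]\,\Delta_k M_{\chi_i} \,+\, M_{\psi_i}\,[\Delta_k, A]\,M_{\chi_i} \,+\, M_{\psi_i}\Delta_k\,[M_{\chi_i},A],
\]
with $[M_{\psi_i},A],\,[M_{\chi_i},A]\in\Psi^{m-1}_{1,0}$ by standard symbolic calculus.

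The key analytic input is the following off-diagonal decay principle: for any $B\in\Psi^{m'}_{1,0}(\mathbb{R}^d)$ with symbol compactly supported in $x$,
\[
\sum_{|k-\ell|>1}\Delta_k B\Delta_\ell \,\in\, \Psi^{-\infty}(\mathbb{R}^d).
\]
I would establish this by noting that the Fourier-side kernel of $\Delta_k B\Delta_\ell$ is $\phi_k(\eta)\,\widehat b_1(\eta-\xi,\xi)\,\phi_\ell(\xi)$, with $\widehat b_1(\cdot,\xi)$ rapidly decreasing in its first argument since $b$ is smooth and compactly supported in $x$; when $|k-\ell|\ge 2$ the Fourier supports force $|\eta-\xi|\gtrsim 2^{\max(k,\ell)}$, producing kernel bounds $C_N 2^{-N\max(k,\ell)}\langle\xi\rangle^{m'}$ that are summable in $(k,\ell)$ to a smoothing kernel. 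Combined with the telescoping identity $\sum_{k,\ell}\Delta_k B\Delta_\ell = B$, this yields $\sum_{|k-\ell|\le 1}\Delta_k B\Delta_\ell = B$ modulo $\Psi^{-\infty}$.

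Applying this principle to the three Leibniz terms composed with $\widetilde P_\ell^i$: the first sums to $[M_{\psi_i},A]\,M_{\chi_i\widetilde\psi_i\widetilde\chi_i}\in\Psi^{m-1}_{1,0}$ (taking $B=M_{\chi_i\widetilde\psi_i}\in\Psi^0_{1,0}$) and the third to $M_{\psi_i}\,[M_{\chi_i},A]\,M_{\widetilde\psi_i\widetilde\chi_i}\in\Psi^{m-1}_{1,0}$ (taking $B=[M_{\chi_i},A]M_{\widetilde\psi_i}\in\Psi^{m-1}_{1,0}$), both modulo $\Psi^{-\infty}$. For the second term, writing $[\Delta_k,A]=\Delta_k A-A\Delta_k$ and applying the off-diagonal principle once with $B=AM_{\chi_i\widetilde\psi_i}\in\Psi^m_{1,0}$ and once with $B=M_{\chi_i\widetilde\psi_i}\in\Psi^0_{1,0}$ produces the common principal contribution $M_{\psi_i}AM_{\chi_i\widetilde\psi_i\widetilde\chi_i}$, which cancels, leaving a $\Psi^{-\infty}$ remainder. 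Assembling the three parts exhibits the limit as an element of $\Psi^{m-1}_{1,0}(M)$; the absoluteness of the convergence follows from the Fourier-localization of each summand's symbol in the shell $|\xi|\sim 2^k$ (only finitely many terms are nonzero at each $(x,\xi)$) together with the quantitative decay in $\max(k,\ell)$ of the off-diagonal tails controlled in each seminorm of $S^{m-1}_{1,0}$.

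The main obstacle is the cancellation in the second term: each $M_{\psi_i}\,[\Delta_k,A]\,M_{\chi_i\widetilde\psi_i}\Delta_\ell M_{\widetilde\chi_i}$ is only uniformly bounded in $\Psi^{m-1}_{1,0}$ with no summability in $k$ on its own, so the net gain of one order of regularity over the whole series is produced entirely by the cancellation between the two principal contributions coming from $\Delta_k A$ and $A\Delta_k$ after the off-diagonal principle has been applied at two different orders of composition.
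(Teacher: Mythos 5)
Your route is genuinely different from the paper's, and the comparison is instructive. The paper never Leibniz-expands the commutator: it observes that $B_\ell\defeq\sum_{k=\ell-1}^{\ell+1}[P^i_k,A]$ is a \emph{bounded family} in $\Psi^{m-1}_{1,0}(M)$ by the standard commutator calculus (the $P^i_k$ being uniformly bounded in $\Psi^0_{1,0}$), and then derives convergence of $\sum_\ell B_\ell\widetilde P^i_\ell$ from the fact that the symbol of $\widetilde P^i_\ell$ carries the factor $\psi(2^{-\ell}\xi)$, so that $\sum_\ell b_\ell(x,\xi)\psi(2^{-\ell}\xi)$ is a locally finite sum in $\xi$ and is bounded in $S^{m-1}_{1,1}$-type seminorms (plus a separate treatment of the off-diagonal smoothing part $(1-\chi_i)B_\ell\widetilde\chi_i$). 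In other words, the gain of one order comes for free from the uniform commutator estimate, and all the work is in the summation over $\ell$; no cancellation between $\Delta_kA$ and $A\Delta_k$ is exploited.

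The gap in your argument is the off-diagonal decay principle with threshold $\vert k-\ell\vert>1$. For $\sum_{\vert k-\ell\vert\ge2}\Delta_kB\Delta_\ell$ to be smoothing you need the Fourier supports of the multipliers of $\Delta_k$ and $\Delta_\ell$ to be separated by a distance $\gtrsim2^{\max(k,\ell)}$ as soon as $\vert k-\ell\vert\ge2$. With the conventions of this paper ($\mathrm{supp}(\psi)\subset\{1\le\vert\xi\vert\le4\}$ in Section 2.1, and $\mathrm{supp}(\psi)\subset\{\tfrac12\le\vert\xi\vert\le4\}$ in Section 3.1), the blocks $\Delta_k$ and $\Delta_{k\pm2}$ have touching or overlapping supports, so the shifted-diagonal sums $\sum_k\Delta_kB\Delta_{k\pm2}$ are a priori operators of the same order as $B$, not smoothing. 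This is fatal exactly where it matters: after correcting the threshold, your middle Leibniz term leaves the residual $\sum_{2\le\vert k-\ell\vert\le N_0}[\Delta_k,A]M_{\chi_i\widetilde\psi_i}\Delta_\ell$, which your bookkeeping only bounds at order $m$ — precisely the loss the cancellation was meant to remove. (Terms one and three survive, since there the prefactors $[M_{\psi_i},A]$ and $[M_{\chi_i},A]$ already lie in $\Psi^{m-1}_{1,0}$ and an order-zero error in the resummation is harmless.) To repair this you must prove that a uniformly bounded family of order-$(m-1)$ operators composed with $\Delta_\ell$ sums to an operator of order $m-1$ — which is exactly the paper's locally-finite-symbol argument, and which you invoke only in passing in your final sentence about absoluteness. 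Once that lemma is available, the whole Leibniz expansion and cancellation become unnecessary: apply it directly to $B_\ell$ as above. Note also that the summability is the delicate half of the statement even in the paper, where the partial sums are bounded in $S^{m-1}_{1,0}$ but convergence is only obtained in $S^{m-1+\varepsilon}_{1,0}$; your proposal treats this half most lightly.
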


\smallskip

\begin{proof}
We work in the same chart $\kappa_i:U_i\mapsto \kappa_i(U_i)$ used to define the pair $P,\widetilde{P}$ of generalized Littlewood-Paley-Stein projectors whose representation reads:
$$
P^i_k=\kappa_i^*\psi \Delta_k\kappa_{i*}\chi, \widetilde{P}^i_k=\kappa_i^*\widetilde{\psi} \Delta_k\kappa_{i*}\widetilde{\chi}.
$$

First we use the fact that the sequence $(\sum_{k=\ell-1}^{\ell+1} P_k^i)_\ell$ is bounded in $\Psi^0_{1,0}(M)$. The proof follows
from considering the symbol of $P_k^i$ in the chart $\kappa_i$ which reads
$$
p_k(x;\xi) = \psi(2^{-k}\xi)(\psi\kappa_{i*}\chi)(x).
$$
Since we have the estimate
\begin{equation*} \begin{split}
\big\vert\partial_x^\alpha\partial_\xi^\beta \psi(2^{-k}\xi)(\psi\kappa_{i*}\chi)(x) \big\vert &= \big\vert\partial_\xi^\beta \psi(2^{-k}\xi)  \partial_x^\alpha(\psi\kappa_{i*}\chi)(x)\big\vert   \\
&\lesssim 2^{-k\vert\beta\vert}\Vert \partial_x^\alpha(\psi\kappa_{i*}\chi)\Vert_{L^\infty} \Vert \partial^\beta_\xi\psi \Vert_{L^\infty}\lesssim \vert \xi\vert^{-\vert\beta\vert}
\end{split} \end{equation*}
we deduce that the sequence of symbols $(p_k)_k$ is bounded in $S^0_{1,0}(\mathbb{R}^d)$, hence the sequence $(P_k^i)_k$ and $(\sum_{k=\ell-1}^{\ell+1} P_k^i)_\ell$ are bounded in $\Psi^0_{1,0}(M)$.

Setting the sequence of commutators 
$$
\bigg(B_\ell = \sum_{k=\ell-1}^{\ell+1}\big[P_k^i,A\big]\bigg)_{\ell\geq 1},
$$ 
we deduce by the usual commutator estimates in the pseudodifferential calculus that the sequence $(B_\ell)_\ell$ is bounded in $\Psi^{m-1}_{1,0}(M)$. 

Secondly, the series $\sum_{\vert k-\ell\vert\leq  1} P^i_kA\widetilde{P}^i_\ell-AP^i_k\widetilde{P}^i_\ell$ rewrites as
\begin{eqnarray*}
\sum_\ell B_\ell \widetilde{P}_\ell^i
\end{eqnarray*}
where the sequence $(B_\ell)_\ell$ is bounded in $\Psi^{m-1}_{1,0}(M)$.
We consider the symbol of each composite operator $B_\ell \widetilde{P}_\ell^i$ in the same chart $\kappa_i:U_i\mapsto \kappa_i(U_i)$ used to define the pair $P,\widetilde{P}$ of generalized Littlewood-Paley-Stein projectors. Recall that
each $\widetilde{P}_\ell^i$ is supported in $U_i\times U_i$. 
We choose some functions $\chi_i,\widetilde{\chi}_i\in C^\infty_c(U_i)^2$ which both equal $1$ on the support of $\kappa_i^{*}\widetilde{\psi}$, $\chi_i=1$ on the support of $\widetilde{\chi}_i$.  
Then using the pair of cut-off functions, we can decompose the previous series into two pieces of different natures: 
\begin{eqnarray*}
\sum_\ell B_\ell \widetilde{P}_\ell^i=\sum_\ell \chi_i B_\ell \widetilde{\chi}_i \widetilde{P}_\ell^i+\sum_\ell (1-\chi_i) B_\ell \widetilde{\chi}_i \widetilde{P}_\ell^i
\end{eqnarray*}
where the operator $(1-\chi_i) B_\ell \widetilde{\chi}_i$ is supported outside the diagonal therefore it is a smoothing operator.
We study the two pieces separately. To study the first piece $\sum_\ell \chi_i B_\ell \widetilde{\chi}_i \widetilde{P}_\ell^i$ precisely, we need to consider the operator $\chi_i B_\ell \widetilde{\chi}_i \widetilde{P}_\ell^i$. We first
conjugate it by $\kappa_i$ to reduce to compactly supported pseudodifferential operators on $\mathbb{R}^d$. This yields:
\begin{eqnarray*}
\kappa_{i*}\left( \chi_i B_\ell \widetilde{\chi}_i \widetilde{P}_\ell^i \right)\kappa_i^* = \left(\kappa_{i*} \chi_i B_\ell \widetilde{\chi}_i \kappa_i^*\right)\left( \kappa_{i*}\widetilde{P}_\ell^i \kappa_i^*\right)
=\left(\kappa_{i*} \chi_i B_\ell \widetilde{\chi}_i \kappa_i^*\right)\widetilde{\psi}\Delta_\ell \kappa_i^*\widetilde{\chi}
\end{eqnarray*}
where $\left(\kappa_{i*} \chi_i B_\ell \widetilde{\chi}_i \kappa_i^*\right)\widetilde{\psi}\in \Psi^{m-1}_{1,0}(\mathbb{R}^d)$ is a bounded sequence of compactly supported pseudodifferential operators on $\mathbb{R}^d$, we used the fact that multiplication by smooth function are pseudodifferential operators of order $0$ and the composition for pseudodifferential operators is bounded.
Therefore there exists a bounded sequence $b_\ell$ of symbols in $S^{m-1}_{1,0}(\mathbb{R}^d)$ such that for all $\ell$:
$\left(\kappa_{i*} \chi_i B_\ell \widetilde{\chi}_i \kappa_i^*\right)\widetilde{\psi}=Op\left( b_\ell\right)$,
\begin{eqnarray*}
\vert \partial_x^\alpha\partial_\xi^\beta b_\ell(x;\xi) \vert\leq  C_{\alpha,\beta}(1+\vert \xi\vert)^{m-\vert\beta\vert}
\end{eqnarray*}
where the constant $C_{\alpha,\beta}$ does not depend on $\ell$.
Note that the composition $ \left(\kappa_{i*} \chi_i B_\ell \widetilde{\chi}_i \kappa_i^*\right)\widetilde{\psi}\Delta_\ell $ also reads:
\begin{eqnarray*}
Op(b_\ell)\Delta_\ell=Op\left(b_\ell\psi(2^{-\ell}.) \right)=\frac{1}{(2\pi)^d}\int_{\mathbb{R}^d} e^{i\xi.(x-y)} b_\ell(x,\xi)\psi(2^{-\ell}\xi) \rmd\xi.
\end{eqnarray*}
Therefore, to prove the convergence of the series
$\sum_\ell \left(\kappa_{i*} \chi_i B_\ell \widetilde{\chi}_i \kappa_i^*\right)\widetilde{\psi}\Delta_\ell$, it suffices to show that
the partial sums 
$\sum_{\ell\leq  N} b_\ell(x,\xi)\psi(2^{-\ell}\xi)$ are bounded in the space $S^{m-1}_{1,0}(\mathbb{R}^d)$ of symbols of order $m-1$ but the series converges in the space $S^{m-1+\varepsilon}_{1,0}(\mathbb{R}^d)$ for all $\varepsilon>0$.
This is a consequence of the partition of unity identity
$1=\psi_0(\xi)+\sum_\ell \psi(2^{-\ell}\xi)$,
\begin{eqnarray*}
\big\vert \partial_x^\alpha\partial_\xi^\beta \sum_\ell b_\ell(x,\xi)\psi(2^{-\ell}\xi) \big\vert \leq  \sum_{\ell, \vert \xi\vert\simeq 2^\ell} \big\vert \partial_\xi^\beta (\partial_x^\alpha b_\ell)(x,\xi)\psi(2^{-\ell}\xi) \big\vert \lesssim \sum_{\ell, \vert \xi\vert\simeq 2^\ell} 2^{j(m-\vert\beta \vert)}\lesssim (1+\vert\xi\vert)^{m-\vert\beta\vert}
\end{eqnarray*}
where we used the Leibniz rule and also the fact that given $\xi$, the series $\sum_\ell b_\ell(x,\xi)\psi(2^{-\ell}\xi)$ reduces to a finite sum
$\sum_{\ell, \vert \xi\vert\simeq 2^\ell} b_\ell(x,\xi)\psi(2^{-\ell}\xi)$.
Therefore the series
\begin{eqnarray*}
\sum_\ell \left(\kappa_{i*} \chi_i B_\ell \widetilde{\chi}_i \kappa_i^*\right)\widetilde{\psi}\Delta_\ell=\sum_\ell Op(b_\ell)\Delta_\ell=
\frac{1}{(2\pi)^d}\int_{\mathbb{R}^d} e^{i\xi.(x-y)} \sum_\ell\left( b_\ell(x,\xi)\psi(2^{-\ell}\xi)\right) \rmd\xi
\end{eqnarray*}
defines a pseudodifferential operators in $\Psi^{m-1}_{1,0}(\mathbb{R}^d)$.
Again by the invariance of pseudodifferential operators under diffeomorphisms, we get  
$\sum_\ell \chi_i B_\ell \widetilde{\chi}_i \widetilde{P}_\ell^i\in \Psi^{m-1}_{1,0}(M)$.

It remains to deal with the term $\sum_\ell (1-\chi_i) B_\ell \widetilde{\chi}_i \widetilde{P}_\ell^i $. First note that the sequence $\big((1-\chi_i) B_\ell \widetilde{\chi}_i\big)_\ell$ is bounded in $\Psi^{-\infty}(M)$. It suffices to prove that for any smooth function with sufficiently small support, the operator $\chi_2 \left(\sum_\ell (1-\chi_i) B_\ell \widetilde{\chi}_i \widetilde{P}_\ell^i\right)$ is smoothing. For any chart $\Psi: V\mapsto \Psi(V)$, choose any function $\chi_2\in C^\infty_c(V)$, then we reduce the study of $\chi_2 \left(\sum_\ell (1-\chi_i) B_\ell \widetilde{\chi}_i \widetilde{P}_\ell^i\right)$ to
\begin{equation*} \begin{split}
\Psi_{*}\left(\chi_2(1- \chi_i) B_\ell \widetilde{\chi}_i \widetilde{P}_\ell^i \right)\kappa_i^* &= \left(\Psi_{*} \chi_2(1- \chi_i) B_\ell \widetilde{\chi}_i \kappa_i^*\right)\left( \kappa_{i*}\widetilde{P}_\ell^i \kappa_i^*\right)   \\
&=\left(\Psi_{*} \chi_2(1- \chi_i) B_\ell \widetilde{\chi}_i \kappa_i^*\right)\widetilde{\psi}\Delta_\ell \kappa_i^*\widetilde{\chi}.
\end{split} \end{equation*}
Now, it is an immediate consequence of the composition theorem for pseudodifferential operator that the operator $\left(\Psi_{*} \chi_2(1- \chi_i) B_\ell \widetilde{\chi}_i \kappa_i^*\right)\widetilde{\psi}$ is smoothing on $\mathbb{R}^d$, so arguing as above, we can conclude that the series
$\sum_\ell \left(\Psi_{*} \chi_2(1- \chi_i) B_\ell \widetilde{\chi}_i \kappa_i^*\right)\widetilde{\psi}\Delta_\ell$ converges in $\Psi^{-\infty}(\mathbb{R}^d)$ which concludes the proof of Proposition \ref{coro:boundedpsidosLP}.
\end{proof}

\smallskip

%----------------------------------------------------------------------------%
\section{Commutator estimates for paradifferential operators}
\label{s:paradiffcommutators}
%----------------------------------------------------------------------------%

The goal of this section is to recall the strict minimum material in paradifferential calculus to control in the following section the commutator $[e^{-tP},P_u]$ where $P_u$ is the paramultiplication operator $u\prec$ for some H\"older function $u$. A simple idea for a simple goal: If we are able to see $P_u$ as an operator in some well-behaved class with a good composition theorem the control of the commutator $[e^{-tP},P_u]$ will be a direct consequence of this composition theorem.

The paraproduct operators are examples of paradifferential operators. After some recollection on this class of operators in Section \ref{SubsectionRecollectionParadiff} we introduce in Section \ref{SubsectionSimplePararegularization} a useful regularization procedure and prove in Section \ref{CompositionParadiffOperators} a composition result for some paradifferential operators. With end this section with a key localization lemma that somehow allows to isolate the singularities of a paraproduct $f\prec g$ in terms of the singularities of $g$ -- see Section \ref{SectionLocalizationLemma}.

\smallskip

%%----------------------------------------------------------------------------------------%%
\subsection{Recollection on paradifferential operators on $\mathbb{R}^d$$\boldmath{.}$ \hspace{0.1cm}}
\label{SubsectionRecollectionParadiff}
%%----------------------------------------------------------------------------------------

We mostly follow the notations and terminology of Meyer \cite{Meyer2}. To illustrate the notion of paradifferential operator we take a new look at the paraproduct operator. For $u\in C^\alpha(\mathbb{R}^d)$ with $\alpha>0$ we define the linear operator
$$
P_u:v\in \mathcal{S}^\prime(\mathbb{R}^d)\mapsto u\prec v  
$$
where $u\prec v=\sum_{i\geqslant 5} S_{ i-5}(u) \Delta_j(v)$. The operator $P_u$ has symbol
\begin{eqnarray*}
\sigma(x;\xi)=\sum_{i=5}^\infty S_{ i-5}(u)(x)\psi(2^{-i}|\xi|) 
\end{eqnarray*} 
where $\psi$ generates the Littlewood-Paley-Stein partition of unity. From now on we assume without loss of generality that $\psi$ vanishes outside the corona $ \frac{1}{2}\leq  \vert \xi\vert \leq  4$ and equals $1$ on the smaller corona $1\leq  \vert\xi \vert\leq  2$. For a function $u$ of positive H\"older regularity it is proved in \cite[p.~292]{Meyer2} that the above symbol $\sigma$ belongs to the class $A^0_\alpha$ that we define following \cite[Definition 1 p.~286]{Meyer2}.

\smallskip

\begin{defi*} %[The class $A^m_\alpha$]\label{def:classA}
 A symbol $\sigma
\in C^\infty(\mathbb{R}^d\times \mathbb{R}^d)$ belongs to the space $A^m_\alpha\left( \mathbb{R}^d\right)$ for $m\in \mathbb{R}$ and $\alpha>0$ if:
\begin{enumerate}
\item 
 For every multiindex $\gamma$, there exists a constant $C_{\gamma}$ such that
\begin{eqnarray} \label{ineq1}
\Vert\partial_\xi^\gamma \sigma(x;\xi)\Vert_{C^\alpha_x\left(\mathbb{R}^d\right)} \leq  C_{\gamma} \left(1+\vert\xi\vert\right)^{m-\vert \gamma\vert}.
\end{eqnarray}
\item  For every multiindices $(\beta,\gamma)$, there exists a constant $C_{\beta, \gamma}$ such that if $\vert\beta\vert >\alpha$ then
\begin{eqnarray} \label{ineq2}
\Vert\partial_x^\beta\partial_\xi^\gamma \sigma(x;\xi)\Vert_{C^\alpha_x\left(\mathbb{R}^d\right)}
\leq  C_{\beta,\gamma }  \left(1+\vert\xi\vert\right)^{m-\vert \gamma\vert+\vert\beta\vert-\alpha}.
\end{eqnarray} 
\end{enumerate}
\end{defi*}

\smallskip

We have 
$$
\bigcap_{\alpha\in \mathbb{R}_{\geqslant 0}} A^m_\alpha(\mathbb{R}^d)=S^m_{1,0}(\mathbb{R}^d)
$$ 
and the inclusion 
$$
S^m_{1,0}(\mathbb{R}^d)\subset A^m_\alpha(\mathbb{R}^d).
$$ 
Then \cite{Meyer2} introduces a second class denoted by $B^m_\alpha(\mathbb{R}^d)$ as follows.

\smallskip

\begin{defi} [The class $B^m_\alpha $] \label{def:classB}
A symbol $\sigma$ is said to be in the class $B^m_\alpha(\mathbb{R}^d)$ if \eqref{ineq1} holds and there exists $0<K<1$ such that for each fixed $\xi$ the partial Fourier transform $\widehat{\sigma}(\eta,\xi)$ in $x$ of the symbol $\sigma$ is supported in the set $\big\{\vert\eta\vert\leq   K \vert \xi\vert\big\}$. 
\end{defi}

\smallskip

Then it is claimed that~\cite[bottom p.~286]{Meyer2} (see also~\cite[p.~292]{Meyer2}):

\smallskip

\begin{lemm}\label{l:BinA}
We have the inclusion $B^m_\alpha(\mathbb{R}^d)\subset A^m_\alpha(\mathbb{R}^d)$ and $P_u\in B^0_\alpha(\mathbb{R}^d)$.
\end{lemm}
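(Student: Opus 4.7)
The plan splits naturally along the two halves of the claim. For the inclusion $B^m_\alpha(\mathbb{R}^d)\subset A^m_\alpha(\mathbb{R}^d)$, I would first observe that condition \eqref{ineq1} in the definition of $A^m_\alpha$ coincides verbatim with the first requirement of $B^m_\alpha$, so the only nontrivial point is to verify \eqref{ineq2}. The extra piece of information carried by $B^m_\alpha$ is the $x$-spectral support: $\widehat{\sigma}(\cdot,\xi)$ vanishes outside $\{|\eta|\leq K|\xi|\}$ with $K<1$. Since $\xi$-differentiations do not affect the $x$-Fourier transform, $\partial_\xi^\gamma\sigma(\cdot,\xi)$ remains band-limited in $x$ inside that same ball, and this is precisely the configuration in which Bernstein-type inequalities let $x$-derivatives be paid with explicit powers of $|\xi|$.

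Concretely, I would use the Littlewood-Paley characterization $\mathcal{C}^\alpha(\mathbb{R}^d)=B^\alpha_{\infty,\infty}(\mathbb{R}^d)$ and decompose $\partial_\xi^\gamma\sigma(\cdot,\xi)=\sum_j \Delta_j^x\partial_\xi^\gamma\sigma(\cdot,\xi)$, in which only the finite range $2^j\lesssim K|\xi|$ contributes. The bound \eqref{ineq1} yields $\|\Delta_j^x\partial_\xi^\gamma\sigma\|_{L^\infty_x}\lesssim 2^{-j\alpha}(1+|\xi|)^{m-|\gamma|}$, and applying $\partial_x^\beta$ (which preserves the $x$-spectral localization) costs a Bernstein factor $2^{j|\beta|}$ per block. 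The dyadic sum is dominated by its top term as soon as $|\beta|>\alpha$, which produces the target polynomial growth $(1+|\xi|)^{m-|\gamma|+|\beta|-\alpha}$ in $L^\infty_x$. The $C^\alpha_x$-form of \eqref{ineq2} is then read off directly from the same dyadic estimates by reorganising them into the $B^\alpha_{\infty,\infty}$-norm of the derivative, rather than post-composing with a second Bernstein inequality.

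For the second half of the claim, $P_u\in B^0_\alpha(\mathbb{R}^d)$, I would compute the symbol directly as
\[
\sigma(x,\xi) \;=\; \sum_{i\geq 5} S_{i-5}(u)(x)\,\psi(2^{-i}|\xi|).
\]
Condition \eqref{ineq1} follows because at any fixed $\xi$ only $O(1)$ terms (those with $2^i\sim|\xi|$) contribute, $S_{i-5}$ is uniformly bounded as an operator $\mathcal{C}^\alpha\to\mathcal{C}^\alpha$, and $\xi$-differentiations hit only the radial multiplier $\psi(2^{-i}|\xi|)$ and bring down the required $(1+|\xi|)^{-|\gamma|}$. For the Fourier-support condition, the low-frequency cutoff $\widehat{S_{i-5}(u)}$ is supported in $\{|\eta|\leq c\,2^{i-5}\}$ while $\psi(2^{-i}|\xi|)$ is supported in $\{2^{i-1}\leq|\xi|\leq 2^{i+1}\}$; combining these two constraints shows that $\widehat{\sigma}$ lives in $\{|\eta|\leq 2^{-4}|\xi|\}$, giving the $B^m_\alpha$-condition with $K=2^{-4}<1$.

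The main technical obstacle sits in the second paragraph above: a naive Bernstein bounding $L^\infty_x$ first and then inflating to $C^\alpha_x$ would cost an extra factor $(K|\xi|)^\alpha$ and land at the suboptimal exponent $(1+|\xi|)^{m-|\gamma|+|\beta|}$. The sharp exponent $(1+|\xi|)^{m-|\gamma|+|\beta|-\alpha}$ is only recovered by performing the dyadic analysis directly at the $B^\alpha_{\infty,\infty}$-level, which is the step at which the balance between the $x$-differentiation cost and the $C^\alpha$-regularity of the symbol must be carefully accounted. Everything else is then standard Littlewood-Paley arithmetic, essentially as in Meyer~\cite{Meyer2}.
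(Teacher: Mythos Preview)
The paper does not prove the inclusion $B^m_\alpha\subset A^m_\alpha$; it merely cites Meyer~\cite{Meyer2} and, ``for pedagogical purposes'', only verifies the Fourier-support half of $P_u\in B^0_\alpha$. Your treatment of $P_u$ matches this and adds the (routine) check of \eqref{ineq1}, so on that part you are aligned with and slightly more detailed than the paper.

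For the inclusion, your Bernstein/Littlewood--Paley argument is the standard one and correctly delivers the $L^\infty_x$-bound: summing $\|\partial_x^\beta\Delta_j^x\partial_\xi^\gamma\sigma\|_{L^\infty}\lesssim 2^{j(|\beta|-\alpha)}(1+|\xi|)^{m-|\gamma|}$ over $2^j\lesssim K|\xi|$ gives $\|\partial_x^\beta\partial_\xi^\gamma\sigma\|_{L^\infty_x}\lesssim(1+|\xi|)^{m-|\gamma|+|\beta|-\alpha}$ for $|\beta|>\alpha$. Your final paragraph, however, contains a genuine slip. Reassembling those same block estimates into the $B^\alpha_{\infty,\infty}$-norm yields
\[
\sup_{2^k\lesssim K|\xi|}2^{k\alpha}\cdot 2^{k(|\beta|-\alpha)}(1+|\xi|)^{m-|\gamma|}\;\sim\;(1+|\xi|)^{m-|\gamma|+|\beta|},
\]
so the $-\alpha$ is \emph{not} recovered in $C^\alpha_x$; the ``reorganisation'' you allude to does not exist. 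The paraproduct symbol itself witnesses this: if $\|\Delta_j u\|_{L^\infty}\sim 2^{-j\alpha}$ then $\|\partial_x^\beta S_{i-5}(u)\|_{C^\alpha}\sim 2^{i|\beta|}$, not $2^{i(|\beta|-\alpha)}$. This is not a defect of your strategy but points to a transcription issue in the paper's \eqref{ineq2}: in Meyer's Definition~1 the second condition is an $L^\infty_x$-bound, and with that reading your argument is correct and complete. If you insist on the $C^\alpha_x$-form as written, the inclusion as stated fails, so the fix is in the definition, not in your proof.
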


\smallskip

We check for pedagogical purposes that, for $u\in C^\alpha(\mathbb{R}^d)$, the paramultiplication operator $P_u$ belongs to the class $B^0_\alpha(\mathbb{R}^d)$. Recall that its symbol reads $\sigma(x,\xi)=\sum_{i=5}^\infty S_{ i-5}(u)(x)\psi(2^{-i}|\xi|) $, hence the Fourier transform with respect to  the variable $x$ reads 
$$
\widehat{\sigma}(\eta,\xi)=\sum_{i=5}^\infty \widehat{S_{ i-5}(u)}(\eta)\psi(2^{-i}|\xi|).
$$ 
Note that by definition of our dyadic decomposition, $\widehat{S_{ i-5}(u)}$ is supported on a ball of radius $\leq  2^{i-3}$ and $\psi(2^{-i}.) $ is supported in the corona $\big\{ 2^{i-1}\leq \vert \xi\vert\leq  2^{i+2}\big\}$
so the Fourier vanishing condition is satisfied.

%%------------------------------------------%%
\subsection{A simple pararegularization$\boldmath{.}$ \hspace{0.1cm}}
\label{SubsectionSimplePararegularization}
%%------------------------------------------%%

Despite its usefulness in several nonlinear problems, it is well-known since the work of Bourdeau, Stein~\cite[Chapter IX]{Horm97} that the class $S^m_{1,1}(\bbR^d)$ is ill-defined when acting on Sobolev or H\"older spaces of negative regularity. Since in the study of SPDEs the operators act on Besov spaces of negative regularity we need to modify the symbols in the class $S^m_{1,1}(\bbR^d)$ by some cut-off function to make them well behaved on Besov spaces of singular distributions. For this, we first define a specific class of cut-off functions.

\smallskip

\begin{defi} [cut-off function] \label{def:parareg}
\textbf{In the sequel, given $0<K_1<K_2<1$, we choose some  bounded cut-off function $\chi\in C^\infty(\mathbb{R}^d\times\mathbb{R}^d)$ such that $\chi=0$ near 
$(\eta,0)$, $\chi=0$ when $\vert \eta\vert>K_2\vert\xi\vert$ and $\chi=1 $ on $ \vert\eta\vert\leq  K_1\vert \xi\vert$.}
\end{defi}

\smallskip

We next define a kind of smoothing procedure for symbols called \emph{pararegularization} which is a simplified version of what can be found in Section 10.2 of H\"ormander's book \cite{Horm97}.

\smallskip

\begin{defi}[Pararegularization] \label{def:parareg2}
With this choice of cut-off functions, starting from any $\sigma\in C^\infty(\mathbb{R}^d\times \mathbb{R}^d)$ satisfying \eqref{ineq1}, our pararegularized symbols $\sigma_\chi$ 
is defined from the condition 
$$
\widehat{\sigma_\chi}(\eta,\xi)=\widehat{\sigma}(\eta,\xi)\chi(\eta,\xi).
$$
\end{defi}

\smallskip

This operation of Fourier cut-off will always produce some symbol $\sigma_\chi$ which belongs to the class $B^m_\alpha(\mathbb{R}^d)$ of Definition \ref{def:classB}. It is obvious by construction that our paramultiplication operator $P_u$ is exactly a pararegularized operator of the form $Op(\sigma_\chi)$ for some cut-off $\chi\in C^\infty(\mathbb{R}^d\times \mathbb{R}^d)$ since its symbol 
$$
\sigma(x,\xi) = \sum_{i=5}^\infty S_{ i-5}(u)(x) \, \psi(2^{-i}|\xi|)
$$ 
vanishes near $\xi=0$, by the support property of the Littlewood-Paley-Stein partition of unity function $\varphi(.)$; and its Fourier transform  $ \widehat{\sigma}(\eta,\xi)=\sum_{i=5}^\infty \widehat{S_{ i-5}(u)}(\eta) \, \psi(2^{-i}|\xi|) $ also vanishes near the  twisted diagonal $\{(-\xi,\xi)\}\subset \mathbb{R}^d\times \mathbb{R}^d$
since $ \widehat{\sigma}(\eta,\xi) $
vanishes when $\vert\eta \vert> \frac{1}{4} \vert \xi\vert$. (Indeed $\vert\eta \vert\leq  2^{i-3}$ and $\vert \xi\vert\geqslant 2^{i-1}$ imply that $ \frac{\vert \eta\vert}{\vert \xi\vert}\leq  \frac{2^{i-3}}{2^{i-1}}=2^{-2}$.) Now we shall use the fact that the paradifferential regularization of a classical pseudodifferential operator preserves its properties~\cite[p.~236]{Horm97}: 

\smallskip

\begin{lemm} [Paradifferential regularization of classical symbols] \label{lem:pararegclassicalsymbols}
Let $a\in C^\infty(\mathbb{R}^d\times \mathbb{R}^d)$ be a classical symbol in $S^m_{1,0}(\mathbb{R}^d)$, $\chi$ is a cut-off function from Definition \ref{def:parareg} and $a_\chi$ the cut-off symbol as defined in Definition \ref{def:parareg2}. Then the difference
$a-a_\chi\in S^{-\infty}_{1,0}(\mathbb{R}^d)$.
\end{lemm}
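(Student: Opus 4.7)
The strategy is to realise $a_\chi$ as a $\xi$-dependent convolution of $a$ in the $x$-variable whose kernel has all its moments vanishing when $|\xi|$ is large; the desired smoothing property of $r\defeq a-a_\chi$ then falls out from a high-order Taylor expansion. Concretely, I would set
$$
K(z,\xi) \defeq \frac{1}{(2\pi)^d}\int_{\mathbb{R}^d} e^{iz\cdot\eta}\chi(\eta,\xi)\,d\eta.
$$
For each $\xi \neq 0$, $\chi(\cdot,\xi)$ is smooth and compactly supported in $\{|\eta|\leq K_2|\xi|\}$, so $K(\cdot,\xi)$ is Schwartz in $z$, and the defining prescription of $a_\chi$ rewrites as $a_\chi(x,\xi) = \int K(z,\xi)a(x-z,\xi)\,dz$ for all $|\xi|$ away from $0$.

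The critical input is the moment structure of $K$. Since $\chi(\eta,\xi)=1$ on the neighbourhood $\{|\eta|\leq K_1|\xi|\}$ of $\eta=0$ once $|\xi|$ is large enough, one has $\chi(0,\xi)=1$ and $\partial_\eta^\gamma \chi(0,\xi)=0$ for every multi-index $\gamma\neq 0$. Dualising through the Fourier transform translates this into $\int K(z,\xi)\,dz = 1$ and $\int z^\gamma K(z,\xi)\,dz = 0$ for every $\gamma\neq 0$. Plugging a Taylor expansion of $a(x-z,\xi)$ around $z=0$ at order $N$ into the convolution formula, the constant term against $\int K\,dz=1$ cancels $a(x,\xi)$ in the difference $r(x,\xi)$, and all the polynomial Taylor terms of degree $1\leq |\gamma|<N$ are killed by the vanishing moments, leaving a pure remainder that only involves the derivatives $\partial_x^\gamma a$ with $|\gamma|=N$ weighted by $z^\gamma K(z,\xi)$.

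To estimate the remainder I would use the natural dyadic rescaling $K(z,\xi) = |\xi|^d \widetilde{K}(|\xi|z,\xi)$, where $\widetilde{K}(\cdot,\xi)$ is the Fourier transform of $\chi(|\xi|\,\cdot\,,\xi)$. The latter is bounded with all derivatives bounded in its first argument, uniformly in $\xi$ for $|\xi|\geq 1$, so $\widetilde{K}(\cdot,\xi)$ is Schwartz uniformly in $\xi$; in particular $\int |z|^N |K(z,\xi)|\,dz \lesssim (1+|\xi|)^{-N}$. Combined with the classical symbol bound $\|\partial_x^\gamma a(\cdot,\xi)\|_\infty \lesssim (1+|\xi|)^m$, this yields $|r(x,\xi)| \lesssim (1+|\xi|)^{m-N}$ for every $N$. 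On any bounded set of $\xi$'s the conclusion is automatic, as $r$ is smooth and $(1+|\xi|)^{-N}$ is bounded below.

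The main obstacle is to propagate the argument to arbitrary $\partial_x^\alpha\partial_\xi^\beta$ derivatives. The $x$-derivatives are harmless since they pass onto $a$ inside the convolution. The $\xi$-derivatives however hit both $a$ and $K$ through the $\xi$-dependence of $\chi$, and a $\xi$-derivative applied to $K$ produces a new kernel of similar transition-annular type with the same scaling law but a modified moment structure. The bookkeeping needed to check that the modified moments are still compatible with the Taylor cancellation, and that the resulting kernel estimates carry the expected extra power of $|\xi|^{-1}$ per $\xi$-derivative, is the only genuinely delicate point of the argument.
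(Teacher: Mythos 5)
Your argument is a genuinely different route from the paper's. The paper stays entirely on the Fourier side in the $x$-variable: it writes the difference symbol as $\widehat{a}(\eta,\xi)\,(1-\chi)(\eta,\xi)$, observes that $1-\chi$ is supported where $\vert\eta\vert\gtrsim\vert\xi\vert$, and uses the rapid decay of $\widehat{a}$ in $\eta$ (integration by parts in $x$) to get joint rapid decay in $(\eta,\xi)$, hence a Schwartz kernel for the difference. Your physical-space version — $a_\chi=K(\cdot,\xi)\ast_x a$ with $K$ an approximate identity at scale $\vert\xi\vert^{-1}$ whose higher moments vanish, followed by a Taylor expansion to order $N$ — is the other standard proof of this lemma and is equally valid. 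What the paper's route buys is that it never differentiates $\chi$: only the support of $1-\chi$ and its boundedness enter. Your route implicitly needs $\chi$ to satisfy symbol-type bounds $\vert\partial_\eta^\alpha\partial_\xi^\beta\chi\vert\lesssim(\vert\eta\vert+\vert\xi\vert)^{-\vert\alpha\vert-\vert\beta\vert}$ (e.g.\ $\chi(\eta,\xi)=\theta(\eta/\vert\xi\vert)$ for large $\vert\xi\vert$) so that $\chi(\vert\xi\vert\,\cdot\,,\xi)$ is bounded in $C^\infty_c$ uniformly in $\xi$ and your rescaled kernel $\widetilde K$ is Schwartz uniformly; Definition \ref{def:parareg} only states support and value conditions, so you should record this (standard) extra hypothesis. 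What your route buys in exchange is that it localizes the loss cleanly: the difference is controlled purely by $N$ derivatives of $a$ in $x$, which is the version one needs when $a$ has only finite H\"older regularity in $x$.

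On the step you defer: it does close, and the reason is simpler than your phrasing suggests. After Leibniz, the terms where no $\xi$-derivative falls on $K$ are handled by your argument applied to $\partial_\xi^{\beta}a\in S^{m-\vert\beta\vert}_{1,0}$, with the moments of $K$ unchanged. For the terms where $\partial_\xi^{\beta_1}$ with $\beta_1\neq 0$ hits $K$, note that $\partial_\xi^{\beta_1}\chi(\cdot,\xi)$ vanishes identically on a neighbourhood of $\eta=0$ (since $\chi\equiv 1$ on $\{\vert\eta\vert\leq K_1\vert\xi\vert\}$), so \emph{every} moment of $\partial_\xi^{\beta_1}K(\cdot,\xi)$ vanishes, including the zeroth; the Taylor cancellation is therefore total and only the order-$N$ remainder survives, estimated by $\int\vert z\vert^N\vert\partial_\xi^{\beta_1}K(z,\xi)\vert\,dz\lesssim(1+\vert\xi\vert)^{-N-\vert\beta_1\vert}$ under the same scaling hypothesis on $\chi$. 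With that paragraph added (and a word about the trivial region of bounded $\vert\xi\vert$, which you already note), your proof is complete.
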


\smallskip

This means that $a=a_\chi$ modulo smoothing operators.

\smallskip

\begin{proof}
Assume without loss of generality  that the Schwartz kernel of $Op(a)$, which is $\mathcal{F}^{-1}_\xi(a)\in \mathcal{S}^\prime(\mathbb{R}^d\times \mathbb{R}^d)$, is compactly supported in $(x,y)$. Up to multiplying $\chi$ with another cut-off function $\chi_2\in C^\infty(\mathbb{R}^d\times \mathbb{R}^d)$ such that $\chi_2=1$ when $\vert \xi\vert\geqslant 2$ and $\chi_2=0$ when $\vert\xi\vert\leq  1$, 
the operators whose Schartz kernels are 
$\mathcal{F}^{-1}_\xi\left( \chi a\right)\in \mathcal{S}^\prime(\mathbb{R}^d\times \mathbb{R}^d) $
and 
$\mathcal{F}^{-1}_\xi\left( \chi\chi_2 a\right)\in \mathcal{S}^\prime(\mathbb{R}^d\times \mathbb{R}^d)  $
differ from a smoothing operator. Indeed
\begin{eqnarray*}
\mathcal{F}^{-1}_\xi\left( \chi a\right)-\mathcal{F}^{-1}_\xi\left( \chi\chi_2 a\right)=
\mathcal{F}^{-1}_\xi\left( \chi(1-\chi_2) a\right).
\end{eqnarray*}
Note that the cut-off symbol $\chi(1-\chi_2)a\in C^\infty(\mathbb{R}^d\times \mathbb{R}^d)$ vanishes both when $\vert\xi\vert\geqslant 2 $ and also when $ \vert \eta\vert > K\vert\xi\vert $ for some $K\in (0,1)$ which means that $\chi(1-\chi_2)a$ is supported in $\{\vert \xi\vert\leq  2, \vert \eta\vert\leq  2\}$, so it is smooth with compact support in both $\eta,\xi$. The difference $\mathcal{F}^{-1}_\xi\left( \chi(1-\chi_2) a\right)$ is therefore analytic and Schwartz on $\mathbb{R}^d\times \mathbb{R}^d$. So we may assume, without loss of generality, that $\chi=0$ when $\vert \xi\vert\leq  2$. 

\smallskip

We need to prove that $\mathcal{F}^{-1}_\xi (a(1-\chi)) $ is a smooth function in $\mathcal{S}(\mathbb{R}^d\times \mathbb{R}^d)$. Because of the support properties of $\chi$, the symbol $\widehat{a}(\eta,\xi)(1-\chi)(\eta,\xi)$ is non-vanishing only when $\vert \eta\vert> K\vert \xi\vert$ for some $K>0$. On this subset, we have an inequality of the form $$(1+\vert \eta\vert)^{-1}\leq  (1+K\vert\xi\vert)^{-1}\lesssim (1+\vert \xi\vert)^{-1}, $$ therefore $\widehat{a}(\eta,\xi)(1-\chi)(\eta,\xi)$ satisfies, for all $N$, the estimate
\begin{eqnarray*}
\big\vert \widehat{a}(1-\chi)(\eta,\xi) \big\vert &\lesssim & \sup_\xi \big\Vert(1+\vert\xi\vert)^{-m} a(\cdot,\xi) \big\Vert_{C^{2N+m}_x}
(1+\vert \eta\vert)^{-2N-m}(1+\vert \xi\vert)^m (1-\chi)(\eta,\xi) \\
&\lesssim &(1+\vert\xi\vert)^{-N}(1+\vert \eta\vert)^{-N}.
\end{eqnarray*}
(For the second inequality, we used the fact that for any $U\in C^\infty_c(\mathbb{R}^d)$, one has $ \vert\widehat{U}(\xi) \vert  \lesssim \Vert U \Vert_{C^m(\mathbb{R}^d)} (1+\vert \xi\vert)^{-m}$, which follows from  integration by parts.) Therefore the inverse Fourier transform in $(\xi,\eta)$ of $\widehat{a}(1-\chi)(\eta,\xi)$ yields a smooth kernel. 
\end{proof}

\smallskip

We obtain the following result as a consequence of Lemma \ref{lem:pararegclassicalsymbols} and the Schauder estimate from Proposition \ref{l:simplifiedprooftwistedclass}.

\smallskip

\begin{coro}
Each operator in the class $\Psi^m_{1,0}(\mathbb{R}^d)$ sends continuously $B_{p,q}^\alpha(\mathbb{R}^d)$ into $B_{p,q}^{\alpha-m}(\mathbb{R}^d)$.
\end{coro}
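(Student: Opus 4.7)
The plan is to reduce the statement to Proposition \ref{l:simplifiedprooftwistedclass} by means of the pararegularization procedure of Definition \ref{def:parareg2}. Given $A = Op(a)$ with $a \in S^m_{1,0}(\mathbb{R}^d)$, fix a cut-off $\chi$ as in Definition \ref{def:parareg} and consider the associated pararegularized symbol $a_\chi$ defined by $\widehat{a_\chi}(\eta,\xi) = \widehat{a}(\eta,\xi)\chi(\eta,\xi)$. I would decompose the operator as
$$
Op(a) = Op(a_\chi) + Op(a - a_\chi)
$$
and treat the two pieces separately.

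For the remainder $Op(a-a_\chi)$, Lemma \ref{lem:pararegclassicalsymbols} asserts that $a - a_\chi \in S^{-\infty}_{1,0}(\mathbb{R}^d)$, so $Op(a-a_\chi)$ is a smoothing operator whose Schwartz kernel lies in $\mathcal{S}(\mathbb{R}^d\times\mathbb{R}^d)$. Such an operator maps any tempered distribution into $\mathcal{S}(\mathbb{R}^d)$, and in particular sends $B^\alpha_{p,q}(\mathbb{R}^d)$ continuously into $B^\beta_{p,q}(\mathbb{R}^d)$ for every real exponent $\beta$, so in particular into the target space $B^{\alpha-m}_{p,q}(\mathbb{R}^d)$.

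For the main piece $Op(a_\chi)$, by construction $\widehat{a_\chi}(\eta,\xi)$ is supported in $\{|\eta|\leq K_2|\xi|\}$ for some $0<K_2<1$, which is precisely the Fourier support hypothesis of Proposition \ref{l:simplifiedprooftwistedclass}. It remains to check that $a_\chi$ belongs to the larger class $S^m_{1,1}(\mathbb{R}^d)$; this is immediate since $a \in S^m_{1,0}\subset S^m_{1,1}$ and convolving the $x$-variable with the smooth symbol $\mathcal{F}^{-1}_\eta\chi(\cdot,\xi)$ (which is a bounded family of Schwartz functions in $x$, rescaled at frequency $|\xi|$) preserves the symbol estimates of $S^m_{1,1}$ type. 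Applying Proposition \ref{l:simplifiedprooftwistedclass} then yields the continuous mapping
$$
Op(a_\chi): B^\alpha_{p,q}(\mathbb{R}^d) \longrightarrow B^{\alpha-m}_{p,q}(\mathbb{R}^d),
$$
and summing the two contributions concludes the proof.

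The only non-trivial point is the verification that $a_\chi\in S^m_{1,1}$, which is the crux of why the pararegularization procedure is useful: it is precisely designed so that the Fourier cut-off both produces the support condition needed by Proposition \ref{l:simplifiedprooftwistedclass} and preserves the $S^m_{1,1}$ symbol bounds inherited from $S^m_{1,0}$. Once this is in place the rest of the argument is a direct assembly of the two previously established results, Lemma \ref{lem:pararegclassicalsymbols} and Proposition \ref{l:simplifiedprooftwistedclass}.
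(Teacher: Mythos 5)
Your proposal is correct and follows exactly the route the paper intends: decompose $Op(a)=Op(a_\chi)+Op(a-a_\chi)$, dispose of the remainder via Lemma \ref{lem:pararegclassicalsymbols}, and apply Proposition \ref{l:simplifiedprooftwistedclass} to the pararegularized piece after noting that $a_\chi$ retains $S^m_{1,1}$ bounds and has the required Fourier support. The paper states this corollary as an immediate consequence of those two results without further detail, and your write-up supplies precisely the missing assembly, correctly flagging the $S^m_{1,1}$ verification as the only point needing care.
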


\smallskip

The same result holds for operators and Besov spaces on $M$.

\smallskip

%%-------------------------------------------------------------%%
\subsection{Composition of paradifferential operators$\boldmath{.}$ \hspace{0.1cm}}
\label{CompositionParadiffOperators}
%%-------------------------------------------------------------%%

The following commutator result is useful.

\smallskip

\begin{prop}[Commutator Lemma]\label{lemm:commutator}
Let $\alpha\in (0,1)$, $(m_1,m_2)\in \mathbb{R}^2$. If $a\in S^{m_1}_{1,0}(\mathbb{R}^d)$ and $b\in B^{m_2}_\alpha(\mathbb{R}^d)$ then the commutator 
$$
\big[Op(a) , Op(b)\big] = Op(c)+R
$$ 
where $R\in \Psi^{-\infty}(\mathbb{R}^d)$ is smoothing and $c$ lies in $S^{m_1+m_2-\alpha}_{1,1}(\bbR^d)$ and has a \textbf{pararegularized symbol} $\widehat{c}(\eta,\xi)$ supported on $\vert\eta\vert\leq  K\vert\xi\vert$ for some $K<1$.
\end{prop}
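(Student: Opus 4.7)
The plan is to reduce everything to the standard oscillatory-integral asymptotic expansion of a composition of pseudodifferential operators, and to exploit the two defining features of $b\in B^{m_2}_\alpha$, namely the Fourier-support condition $\widehat b(\eta,\xi)\subset\{|\eta|\leq K|\xi|\}$ and the $C^\alpha$ regularity in $x$. Starting from
\[
a\#b(x,\xi) = (2\pi)^{-d}\iint e^{-iy\cdot\eta}\,a(x,\xi+\eta)\,b(x+y,\xi)\,dy\,d\eta,
\]
I would Taylor expand $a(x,\xi+\eta)$ in $\eta$ to order $N$ with integral remainder, and do the same for $b\#a$. Subtracting the two expansions kills the $\gamma=0$ term and yields
\[
[Op(a),Op(b)] = Op(c_N) + \widetilde R_N, \qquad c_N := \sum_{1\leq|\gamma|<N}\frac{(-i)^{|\gamma|}}{\gamma!}\bigl(\partial_\xi^\gamma a\,\partial_x^\gamma b - \partial_\xi^\gamma b\,\partial_x^\gamma a\bigr),
\]
with $\widetilde R_N$ an explicit oscillatory remainder to be shown smoothing.

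To check that $c_N\in S^{m_1+m_2-\alpha}_{1,1}(\mathbb{R}^d)$, I would combine the classical $S^{m_1}_{1,0}$-estimates on $a$ with a Littlewood-Paley decomposition of $b(\cdot,\xi)$ in the $x$-variable. The Fourier-support condition forces only dyadic blocks $b_k$ with $2^k\lesssim|\xi|$ to contribute, and the H\"older bound gives $\|b_k(\cdot,\xi)\|_\infty\lesssim 2^{-k\alpha}(1+|\xi|)^{m_2}$. Bernstein then yields
\[
\|\partial_x^\beta\partial_\xi^\delta b(\cdot,\xi)\|_\infty \lesssim (1+|\xi|)^{m_2+|\beta|-|\delta|-\alpha} \qquad (|\beta|\geq 1),
\]
which is precisely the $\alpha$-gain built into the class $A^{m_2}_\alpha$ from Lemma \ref{l:BinA}. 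Leibniz then places each summand of $c_N$ in $S^{m_1+m_2-\alpha}_{1,1}$, with $x$-Fourier support inherited from $b$, hence contained in $\{|\eta|\leq K|\xi|\}$ (up to dilation). Applying the cut-off $\chi$ of Definition \ref{def:parareg} to $c_N$ then produces the pararegularized symbol $c$ of the statement; the difference $c_N-c$ is smoothing by the argument of Lemma \ref{lem:pararegclassicalsymbols}, hence can be absorbed into $R$.

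The main obstacle is showing that $\widetilde R_N$ is a smoothing operator once $N$ is taken large enough. Because $b$ is only $C^\alpha$ in $x$, one cannot freely integrate by parts in $y$ against the phase $e^{-iy\cdot\eta}$ using $x$-derivatives of $b$. My strategy is to split the $\eta$-integration into the regions $|\eta|\leq K'|\xi|$ and $|\eta|\geq K'|\xi|$ with $K<K'<1$: on the outer region the Fourier support of $b(\cdot,\xi)$ (which in the composition integral dictates the effective range of $\eta$) localizes the integrand away from the critical set of the phase, so iterated integration by parts in $y$ gains arbitrary powers of $(1+|\xi|)^{-1}$ and produces a smoothing contribution; on the inner region the integral Taylor remainder is pointwise bounded by $(1+|\xi|)^{m_1-N}|\eta|^N$, and repeated integration by parts in $\eta$ trades $y^\gamma$-factors for $x$-derivatives of $b$ controlled by the Bernstein/H\"older estimate above. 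Taking $N\to\infty$ then yields $\widetilde R_N\in\Psi^{-\infty}(\mathbb{R}^d)$, which, together with the cut-off argument above, assembles into the decomposition $[Op(a),Op(b)] = Op(c) + R$ of the statement.
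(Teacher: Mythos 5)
Your overall strategy---the classical oscillatory-integral expansion of $a\#b$ and $b\#a$ to order $N$, with the remainder $\widetilde R_N$ pushed into $\Psi^{-\infty}$ by taking $N$ large---does not work in this limited-regularity setting, and this is the central gap. The class $B^{m_2}_\alpha$ with $\alpha\in(0,1)$ only buys a gain of $\alpha$ in the composition, not a full asymptotic expansion. Concretely: by the Bernstein-type estimate you yourself record, $\|\partial_x^\gamma b(\cdot,\xi)\|_\infty\lesssim(1+|\xi|)^{m_2+|\gamma|-\alpha}$ for $|\gamma|\geq1$, so every term $\partial_\xi^\gamma a\,\partial_x^\gamma b$ of your expansion lies in $S^{m_1+m_2-\alpha}_{1,1}$ \emph{with no improvement as $|\gamma|$ grows}: the loss of $(1+|\xi|)^{|\gamma|}$ from the $x$-derivatives of $b$ exactly cancels the gain from the $\xi$-derivatives of $a$. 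The same cancellation occurs in the integral Taylor remainder: on the effective region $|\eta|\leq K|\xi|$ one has $(1+|\xi|)^{m_1-N}|\eta|^N\lesssim(1+|\xi|)^{m_1}$, and pairing with the dyadic block of $b$ at scale $2^j\leq K|\xi|$ only contributes the single factor $2^{-j\alpha}$; summing $\sum_{2^j\lesssim|\xi|}2^{j(N-\alpha)}(1+|\xi|)^{m_1+m_2-N}\sim(1+|\xi|)^{m_1+m_2-\alpha}$ for every $N\geq1$. So $\widetilde R_N$ stays of order $m_1+m_2-\alpha$ for all $N$; it is never smoothing, and your "main obstacle" is in fact insurmountable along this route. (A secondary issue: your appeal to Lemma \ref{lem:pararegclassicalsymbols} to absorb $c_N-(c_N)_\chi$ into $R$ is not licit as stated, since that lemma is proved for classical $S^m_{1,0}$ symbols, whereas the summands of $c_N$ are only in $S_{1,1}$.)

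The fix is to stop the expansion at order zero, which is what the paper does. One first replaces $a$ by its pararegularization $a_\chi\in B^{m_1}_\alpha$ (Lemma \ref{lem:pararegclassicalsymbols}, difference smoothing), and then invokes the Composition Lemma \ref{lemm:composition}: $Op(a_\chi)\circ Op(b)=Op(a_\chi b)+Op(c_1)$ and $Op(b)\circ Op(a_\chi)=Op(b\,a_\chi)+Op(c_2)$, so the commutator equals $Op(c_1-c_2)$ since the symbols commute pointwise. The whole content is then to show that the \emph{single} error term $c(x;\xi)=(2\pi)^{-d}\int\big(a(x,\xi+\eta)-a(x,\xi)\big)e^{i\eta\cdot x}\,\widehat b(\eta,\xi)\,d\eta$ lies in $S^{m_1+m_2-\alpha}_{1,1}$ with the correct Fourier support. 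This is done by a dyadic decomposition in $\eta$ (restricted to $2^j\lesssim|\xi|$ by the support of $\widehat b$) and a Young/Wiener-algebra bound $\|A_j\|_{L^1}\|B_j\|_{L^\infty}$, where the fundamental theorem of calculus applied to $a(x,\xi+\eta)-a(x,\xi)$ yields $\|A_j\|_{L^1}\lesssim2^j(1+|\xi|)^{m_1-1}$ and the $C^\alpha_x$ regularity of $b$ yields $\|B_j\|_{L^\infty}\lesssim2^{-j\alpha}(1+|\xi|)^{m_2}$; the sum $\sum_{2^j\lesssim|\xi|}2^{j(1-\alpha)}\lesssim|\xi|^{1-\alpha}$ produces exactly the gain of $\alpha$ and no more. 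Your Littlewood--Paley/Bernstein estimates on $b$ are the right ingredients, but they must be deployed on the zeroth-order error term rather than on a putative high-order remainder.
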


\smallskip

The fact that the commutator $Op(c)$ has pararegularized symbol $c$ has central importance for us since it will allow $Op(c)$ to act on some Besov spaces of non-positive regularities. Proposition \ref{lemm:commutator} follows from the following more general composition result.

\smallskip

\begin{prop}[Composition Lemma] \label{lemm:composition}
Let $\alpha\in (0,1)$, $(m_1,m_2)\in \mathbb{R}^2$. If $a\in B^{m_1}_{\alpha}(\mathbb{R}^d)$ and $b\in B^{m_2}_\alpha(\mathbb{R}^d)$ with the constant $K$ appearing in Definition \ref{def:classB} satisfies $K\leq  \frac{1}{4}$. 
Then 
$$
Op(a)\circ Op(b)=  Op(ab)+Op(c)
$$ 
where $c\in S^{m_1+m_2-\alpha}_{1,1}(\mathbb{R}^d)$ and there is some $0<\widetilde{K}<1$ such that $\widehat{c}(\eta,\xi)$ is supported on $\vert\eta\vert\leq  \widetilde{K}\vert\xi\vert$.
\end{prop}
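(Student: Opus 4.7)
The plan is to combine the standard composition formula from pseudodifferential calculus with a first-order Taylor expansion in $\xi$, and to exploit the Fourier support conditions on $\widehat{a}$ and $\widehat{b}$ throughout. A direct computation gives
$$
Op(a)\circ Op(b) = Op(c_0), \qquad c_0(x,\xi) = \frac{1}{(2\pi)^d}\int a(x,\xi+\zeta)\,\widehat{b}(\zeta,\xi)\,e^{ix\cdot\zeta}\,d\zeta,
$$
where $\widehat{b}$ denotes the Fourier transform of $b$ in its first variable. The integral is absolutely convergent since $\widehat{b}(\cdot,\xi)$ is compactly supported in $\{|\zeta|\leq K|\xi|\}$. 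I then set $c \defeq c_0 - ab$, which can be written as
$$
c(x,\xi) = \frac{1}{(2\pi)^d}\int \bigl[a(x,\xi+\zeta) - a(x,\xi)\bigr]\,\widehat{b}(\zeta,\xi)\,e^{ix\cdot\zeta}\,d\zeta.
$$

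The Fourier support claim for $c$ is then purely combinatorial. Taking the Fourier transform in $x$ yields
$$
\widehat{c}(\mu,\xi) = \frac{1}{(2\pi)^d}\int \bigl[\widehat{a}(\mu-\zeta,\xi+\zeta) - \widehat{a}(\mu-\zeta,\xi)\bigr]\,\widehat{b}(\zeta,\xi)\,d\zeta.
$$
The integrand vanishes unless $|\zeta|\leq K|\xi|$ and either $|\mu-\zeta|\leq K|\xi|$ or $|\mu-\zeta|\leq K|\xi+\zeta|\leq K(1+K)|\xi|$. The triangle inequality then gives $|\mu| \leq K(2+K)|\xi|$, so that with the hypothesis $K\leq 1/4$ one can take $\widetilde{K} = K(2+K) \leq 9/16 < 1$.

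The heart of the proof, and the main obstacle, is to show that $c \in S^{m_1+m_2-\alpha}_{1,1}(\mathbb{R}^d)$. Writing $a(x,\xi+\zeta) - a(x,\xi) = \int_0^1 \zeta\cdot(\nabla_\xi a)(x,\xi+s\zeta)\,ds$ and noting that $|\zeta|\leq K|\xi|$ on the support of $\widehat{b}(\cdot,\xi)$, the factor $(\nabla_\xi a)(x,\xi+s\zeta)$ is uniformly bounded by $(1+|\xi|)^{m_1-1}$. The remaining piece $\int \zeta\,\widehat{b}(\zeta,\xi)\,e^{ix\cdot\zeta}\,d\zeta$ represents $-i\nabla_x b(x,\xi)$, which is well-defined since $b(\cdot,\xi)$ is band-limited at frequency $K|\xi|$. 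Bernstein's inequality applied to band-limited $C^\alpha$ functions gives $\|\nabla_x b(\cdot,\xi)\|_{L^\infty}\lesssim (1+|\xi|)^{m_2+1-\alpha}$, from which $\|c(\cdot,\xi)\|_{L^\infty}\lesssim (1+|\xi|)^{m_1+m_2-\alpha}$. The same scheme, iterated with higher $\xi$- and $x$-derivatives and combined with the Bernstein estimate $\|\partial_x^\beta b(\cdot,\xi)\|_{L^\infty}\lesssim (1+|\xi|)^{m_2+(|\beta|-\alpha)_+}$ (and its analogue for $a$), yields the full family of bounds defining $S^{m_1+m_2-\alpha}_{1,1}$. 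The delicate bookkeeping is that the single $\zeta$ factor produced by the Taylor remainder is precisely what absorbs one $x$-derivative of $b$ at the cost of only $1-\alpha$ orders in $\xi$ rather than the naive $1$, and all subsequent differentiations must preserve this improvement through the Leibniz expansion.
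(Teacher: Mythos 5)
Your reduction to the symbol
$$
c(x,\xi)=\frac{1}{(2\pi)^d}\int \big[a(x,\xi+\zeta)-a(x,\xi)\big]\,\widehat{b}(\zeta,\xi)\,e^{ix\cdot\zeta}\,d\zeta
$$
and your computation of the Fourier support of $\widehat{c}$ are both correct (the latter is in fact a little cleaner than the paper's case analysis), and the overall strategy — Taylor-expand $a$ in $\xi$, let the single factor of $\zeta$ act as an $x$-derivative on $b$, and recover the loss $1-\alpha$ rather than $1$ from the band-limitation of $\widehat{b}(\cdot,\xi)$ — is exactly the mechanism of the paper's proof. The Bernstein estimate $\Vert\nabla_x b(\cdot,\xi)\Vert_{L^\infty}\lesssim(1+\vert\xi\vert)^{m_2+1-\alpha}$ is also a correct consequence of $\Vert b(\cdot,\xi)\Vert_{C^\alpha}\lesssim(1+\vert\xi\vert)^{m_2}$ together with the support condition $\vert\zeta\vert\leq K\vert\xi\vert$.

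The gap is in the step where you pass from the integral to the product of the two bounds. After the Taylor expansion the integrand is $\int_0^1\zeta\cdot(\nabla_\xi a)(x,\xi+s\zeta)\,ds\cdot\widehat{b}(\zeta,\xi)\,e^{ix\cdot\zeta}$, and the amplitude $(\nabla_\xi a)(x,\xi+s\zeta)$ still depends on the integration variable $\zeta$. You cannot bound $\big\vert\int m(\zeta)\,\widehat{g}(\zeta)\,e^{ix\cdot\zeta}\,d\zeta\big\vert$ by $\Vert m\Vert_{L^\infty}\cdot\big\vert\int\widehat{g}(\zeta)e^{ix\cdot\zeta}d\zeta\big\vert$: a sup bound on a Fourier multiplier does not control its action on $L^\infty$, and $\Vert\widehat{g}\Vert_{L^1}$ is not dominated by $\Vert g\Vert_{L^\infty}$ even for band-limited $g$. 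What is actually needed is Young's inequality, i.e. an $L^1$ bound on the inverse Fourier transform in $\zeta$ of the (cut-off) amplitude $(\nabla_\xi a)(x,\xi+s\zeta)$, uniformly in the frozen variables. This is precisely the part of the paper's proof you have skipped: there the integral is split dyadically in $\zeta$, each piece is estimated by $\Vert A_j\Vert_{L^1}\Vert B_j\Vert_{L^\infty}$, the factor $\Vert B_j\Vert_{L^\infty}\lesssim 2^{-j\alpha}(1+\vert\xi\vert)^{m_2}$ encodes the H\"older gain (your Bernstein step in disguise), and the crucial bound $\Vert A_j\Vert_{L^1}\lesssim 2^{j}(1+\vert\xi\vert)^{m_1-1}$ is obtained by rescaling and the embedding $H^{s}\hookrightarrow\mathcal{F}L^1$ for $s>d/2$, which in turn uses the full symbol estimates on $\partial_\xi^\beta a$ (not just on $\nabla_\xi a$) to control the derivatives of the rescaled amplitude. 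Your argument can be repaired along these lines — either dyadically as in the paper, or in one stroke by checking that $\zeta\mapsto(\nabla_\xi a)(x,\xi+s\zeta)\chi(\zeta/K\vert\xi\vert)$ has Fourier transform in $L^1$ with norm $\lesssim(1+\vert\xi\vert)^{m_1-1}$ uniformly in $x,s$ — but as written the central inequality is not valid.
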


\smallskip

We deduce Proposition \ref{lemm:commutator} from the composition result applied to the pararegularized $a_\chi$ instead of $a$ -- they differ from a smoothing operator, from Lemma \ref{lem:pararegclassicalsymbols}, and since 
$$
[Op(a),Op(b)]= [Op(a_\chi),Op(b)] \text{ mod} \left(\Psi^{-\infty}\right) = Op(c)  \text{ mod}\left( \Psi^{-\infty} \right)
$$ 
for some $c\in S^{m_1+m_2-\alpha}_{1,1}(\mathbb{R}^d)$ and $\widehat{c}(\eta,\xi)$ is supported on $\vert\eta\vert\leq  \widetilde{K}\vert\xi\vert$ for some $0<\widetilde{K}<1$.

\smallskip

\begin{proof}
We give here a self-contained proof of Proposition \ref{lemm:composition} essentially following Meyer's exposition in \cite[Theorem 4]{Meyer2}. Therein the remainder term belongs to $S^{-\alpha}_{1,1}(\bbR^d)$ since one of the symbols is only in $S^0_{1,1}(\bbR^d)$. In our case, we make the stronger assumption that the symbols are in $B^{m_1}_\alpha(\bbR^d)$ and $B^{m_2}_\alpha(\bbR^d)$. Hence we need to check that our symbol $c$ is in fact a pararegularized symbol, which means $\widehat{c}$ vanishes outside $\{ \vert \eta\vert\leq  \widetilde{K}\vert\xi\vert \}$ for some $\widetilde{K}\in (0,1)$. This is sufficient for $Op(c)$ to act on some Besov distribution $v$ of negative regularity.

\smallskip

As usual, we start from the Fourier representation formula for the commutator which reads: 
\begin{eqnarray*}
c(x;\xi)=\frac{1}{(2\pi)^d}\int_{\mathbb{R}^d} \big(a(x,\xi+\eta)-a(x;\xi)\big) e^{i\eta\cdot x}\,\widehat{b}(\eta,\xi) d\eta
\end{eqnarray*}
where $\widehat{b}$ is supported on $\vert \eta\vert\leq \frac{\vert\xi\vert}{10}$.
Now we rewrite the representation formula for $c$ but inserting a dyadic decomposition in the integration variable $\eta$
\begin{eqnarray*}
c(x;\xi) = \frac{1}{(2\pi)^d}\sum_{j;2^{j-1}\leq \frac{\vert\xi\vert}{10}} 
\int_{\mathbb{R}^d} \big(a(x,\xi+\eta)-a(x;\xi)\big) \psi(2^{-j}\eta) e^{i\eta\cdot x} \, \widehat{b}(\eta,\xi)d\eta,
\end{eqnarray*}
the summation is over some finite number of $j$ since for fixed $\xi$ the integrand vanishes when $\vert\eta\vert > \frac{\vert\xi\vert}{10}$.
Choose some cut-off function $\widetilde{\chi}\in C^\infty_c(\mathbb{R}^d\setminus\{0\})$ such that $\widetilde{\chi}=1$ on the support of $\psi$.
\begin{eqnarray*}
\vert c(x;\xi)\vert&\leq &\frac{1}{(2\pi)^d}\sum_{j;2^{j-1}\leq \frac{\vert\xi\vert}{10}} 
\bigg\vert \int_{\mathbb{R}^d} \big(a(x,\xi+\eta)-a(x;\xi)\big)\widetilde{\chi}(2^{-j}\eta) \psi(2^{-j}\eta) e^{i\eta\cdot x}\,\widehat{b}(\eta,\xi)d\eta \bigg\vert   \\
&\leq &
\frac{1}{(2\pi)^d} \sum_{j;2^{j-1}\leq \frac{\vert\xi\vert}{10}} \Vert A_j\Vert_{L^1(\mathbb{R}^d)}\Vert B_j \Vert_{L^\infty(\mathbb{R}^d)} 
\end{eqnarray*}
where 
$$
A_j=\mathcal{F}^{-1}_\eta\left(\left(a(x,\xi+\eta)-a(x;\xi)\right)\widetilde{\chi}(2^{-j}\eta) \right), \quad B_j=\mathcal{F}^{-1}_\eta\left( \psi(2^{-j}\eta)e^{i\eta\cdot x} \widehat{b}(\eta,\xi) \right)
$$ 
and the variables $(x;\xi)$ are treated like parameters. By the definition of $b\in B_\alpha^{m_2}(\bbR^d)$, we have the bound
\begin{eqnarray*}
\Vert B_j\Vert_{L^\infty(\mathbb{R}^d)}\lesssim 2^{-j\alpha} (1+\vert \xi\vert)^{m_2}.
\end{eqnarray*}
For the control of $\Vert A_j\Vert_{L^1(\mathbb{R}^d)}$, we use some ideas from the Wiener algebra. Here is the key observation: By scale invariance, the $L^1_y$ norm of $A_j$ is equal to the $L^1$ norm of the rescaled function 
$$
\mathcal{F}^{-1}_\eta\Big(\left(a(x,\xi+2^j\eta)-a(x;\xi)\right)\widetilde{\chi}(\eta) \Big).
$$
Therefore, we have
$$
\Vert A_j\Vert_{L^1(\mathbb{R}^d)} = \big\Vert\mathcal{F}^{-1}_\eta\left(\left(a(x,\xi+2^j\eta)-a(x;\xi)\right)\widetilde{\chi}(\eta) \right)\big\Vert_{L^1(\mathbb{R}^d)}\lesssim \big\Vert
\left(a(x,\xi+2^j\eta)-a(x;\xi)\right)\widetilde{\chi}(\eta) \big\Vert_{H^s_\eta}  
$$
for all $s>\frac{d}{2}$. We used the following fundamental fact, for any function $U$:
\begin{eqnarray*}
&&\Vert \widehat{U}\Vert_{L^1(\mathbb{R}^d)} = \int_{\mathbb{R}^d}  
\big\vert (1+\vert\xi\vert)^{-s} (1+\vert\xi\vert)^s\widehat{U}(\xi) \big\vert \rmd\xi \\
&& \leq  \left(\int_{\mathbb{R}^d} (1+\vert\xi\vert)^{-2s} \rmd\xi\right)^{\frac{1}{2}} \left( \int_{\mathbb{R}^d}  
\vert  (1+\vert\xi\vert)^s\widehat{U}(\xi) \vert^2 \rmd\xi \right)^{\frac{1}{2}} \lesssim \Vert U \Vert_{H^s} 
\end{eqnarray*}
where we used Cauchy--Schwartz in the second estimate and 
the definition of Sobolev norms, the right hand side is finite as soon as $s>\frac{d}{2}$. So we need to estimate the Sobolev regularity in $\eta$ of
$$
g_j(\eta)= \left(a(x,\xi+2^j\eta)-a(x;\xi)\right)\widetilde{\chi}(\eta)
$$ 
by just bounding the derivatives. We remind the reader that $(x,j,\xi)$ appearing in the definition of $g_j$ are treated as parameters. To control the difference $ \left(a(x,\xi+2^j\eta)-a(x;\xi)\right)$, we will use the fundamental Theorem of calculus. Recall that $2^j\leq  \frac{\vert\xi\vert}{5}$ if $2^{j-1}\leq  \frac{\vert\xi\vert}{10}$, therefore on the support of $\widetilde{\chi}$ that we assume is contained in $ \{  \vert\eta\vert\leq  4,5 \} $, we always have inequalities of the form
\begin{eqnarray*}
\vert 2^j\eta\vert\leq  2^j(4,5) \leq  \frac{9}{10}\vert\xi\vert \implies  \frac{1}{10} \vert \xi\vert \leq  \vert \xi+2^j\eta\vert\leq  \frac{19}{10}\vert \xi\vert.
\end{eqnarray*}
We use the fundamental Theorem of calculus and the regularity of $a$ in $\xi$
\begin{eqnarray*}
\vert a(x;\xi)-a(x;2^j\eta+\xi) \vert\lesssim \underset{\lesssim (1+\vert \xi\vert)^{m_1-1}}{ \underbrace{ \left( \sup_{ \frac{1}{10}\vert\xi\vert\leq  \vert\widetilde{\xi}\vert \leq  \frac{19}{10}\vert\xi\vert}\big\vert \partial_{\widetilde{\xi}} a(x;\widetilde{\xi})\big\vert \right)}}  2^j\vert\eta\vert \lesssim 2^j(1+\vert \xi\vert)^{m_1-1}
\end{eqnarray*}
where we used estimate (\ref{ineq1}) to control the derivative $\partial_{\widetilde{\xi}} a(x;\widetilde{\xi})$. For all multiindices $\vert\beta \vert\geqslant 1$, we again use the Fundamental Theorem of calculus to obtain
\begin{equation*} \begin{split}
\big\vert \partial_\eta^\beta \left(a(x;\xi)-a(x;2^j\eta+\xi)\right)\big\vert &= \bigg\vert \partial_\eta^\beta\left( 2^j\int_0^1 d_\xi a(x;\xi+u2^j\eta) (\eta)  du \right) \bigg\vert   \\
&\lesssim (1+\vert \xi\vert)^{m_1-\vert \beta\vert-1}2^{j(1+\vert \beta\vert)}+(1+\vert \xi\vert)^{m_1-\vert \beta\vert}2^{j\vert \beta\vert}
\end{split} \end{equation*}
from a careful application of the Leibniz rule and where we again used estimate (\ref{ineq1}) to control the derivative of $a$ in the second variable. The above estimate is uniform in $x$. We use the crucial fact that $2^j\lesssim \vert\xi\vert$ hence for all multiindex $\vert\beta\vert\geqslant 1$, we have 
$$
(1+\vert \xi\vert)^{m_1-\vert \beta\vert}2^{j\vert \beta\vert}\lesssim (1+\vert \xi\vert)^{m_1-1}2^{j}
$$ 
which allows us to simplify the previous bound as 
$$
\big\vert \partial_\eta^\beta \left(a(x;\xi)-a(x;2^j\eta+\xi)\right) \big\vert \lesssim (1+\vert \xi\vert)^{m_1-1}2^{j}.
$$
Therefore, the decay we can get for the $H_\eta^{[\frac{d}{2}]+1}(\mathbb{R}^d)$ norm w.r.t. $\eta$ of $g_j$ would have the simple form:
\begin{eqnarray*}
\Vert g_j\Vert_{H^{[\frac{d}{2}]+1}_\eta(\mathbb{R}^d)}\lesssim 2^j(1+\vert \xi\vert)^{m_1-1}
\end{eqnarray*}
since the support of $\widetilde{\chi}$ in $\eta$ is compact.
Going back to our initial goal of bounding the symbol $c$, we get:
\begin{eqnarray*}
\vert c(x;\xi)\vert&\leq  &\sum_{j,2^j\leq \frac{\vert\xi\vert}{5}} \Vert A_j\Vert_{L^1}\Vert B_j \Vert_{L^\infty}\lesssim (1+\vert \xi\vert)^{m_2+m_1}  \sum_{j,2^j\leq \frac{\vert\xi\vert}{5}}  2^{-j\alpha}(1+\vert\xi\vert)^{-1}2^j\\
&\lesssim &(1+\vert \xi\vert)^{m_1+m_2-1} \sum_{j,2^j\leq \frac{\vert\xi\vert}{5}}2^{j(1-\alpha)} \lesssim  (1+\vert \xi\vert)^{m_1+m_2-\vert\alpha\vert}
\end{eqnarray*}
since $\alpha\in (0,1)$.
Bounding the derivatives of $c$ in $\xi$ is similar and left to the reader. For the moment, we just proved our symbol $c$ belongs to the class $S^{m_1+m_2-\alpha}_{1,1}(\bbR^d)$. It remains to check that our symbol has the correct vanishing properties of pararegularized operators.

\smallskip

It remains to check the vanishing properties of the symbol $\widehat{c}(\eta_1,\xi)$ when the norms of $\xi$ and $\eta_1$ get close to each other. We start with the explicit formula
\begin{eqnarray*}
\widehat{c}(\eta_1,\xi)=\frac{1}{(2\pi)^d}\int_{\mathbb{R}^d} \big(\widehat{a}(\eta_1-\eta,\xi+\eta)-\widehat{a}(\eta_1-\eta,\xi) \big) \widehat{b}(\eta,\xi)d\eta.
\end{eqnarray*}
Let us assume, without loss of generality, that one can take the constant $K$ from Definition \ref{def:classB} equal to $1/4$. Now observe that the integrand in 
$$
I_1 = \int_{\mathbb{R}^d} \big(\widehat{a}(\eta_1-\eta,\xi+\eta)\big) \widehat{b}(\eta,\xi)d\eta
$$ 
vanishes when 
$ \vert \eta_1-\eta\vert\geqslant \frac{1}{4}\vert \xi+\eta \vert $ (by the constraint from $a$) and since we integrate on $\vert \eta\vert\leq  \frac{1}{4}\vert \xi\vert$ (by the constraint from $b$).
Hence $I_1=0$ when $\vert \eta_1\vert-\vert \eta\vert\geqslant \frac{1}{4}\vert \xi\vert + \frac{1}{4}\vert \eta\vert $, which is in particular the case when
$$
 \vert \eta_1\vert\geqslant \frac{1}{4}\vert \xi\vert-\frac{3}{4}\vert \eta\vert \geqslant \frac{1}{4}\vert \xi\vert-\frac{3}{4}\frac{1}{4}\vert \xi\vert \geqslant \frac{1}{16}\vert\xi\vert.
 $$ 
The integrand 
$$
I_2=\int_{\mathbb{R}^d}-\widehat{a}(\eta_1-\eta,\xi) \widehat{b}(\eta,\xi)d\eta=\int_{\mathbb{R}^d}-\widehat{a}(-\eta,\xi) \widehat{b}(\eta+\eta_1,\xi)d\eta
$$ 
vanishes when $ \vert\eta+\eta_1\vert\geqslant \frac{1}{4}\vert \xi\vert $ and we integrate on $\vert \eta\vert\leq  \frac{1}{4}\vert \xi\vert$. Hence the integral $I_2$ vanishes as soon as
$ \vert \eta_1\vert\geqslant \frac{3}{4}\vert\xi\vert $. Indeed, 
$$ 
\vert \eta_1\vert\geqslant \frac{3}{4}\vert\xi\vert \implies \vert\eta+\eta_1\vert \geqslant \frac{3}{4}\vert\xi\vert-\vert \eta\vert\geqslant \frac{3}{4}\vert\xi\vert-\frac{1}{4}\vert \xi\vert >\frac{1}{4}\vert \xi\vert.
$$
So $\widehat{c}$ is supported on $\vert \eta_1\vert\leq  \frac{3}{4}\vert\xi\vert$.
\end{proof}

\medskip

%------------------------------------------------------------------------%
\section{Commuting the heat operator with a paraproduct}
\label{SectionCommutationParamultiplication}
%------------------------------------------------------------------------%

Recall $P=1-\Delta$ stands for the negative massive Laplace-Beltrami operator on functions on $M$. Our goal in this section is to control analytically some commutator $[e^{-tP},f\prec_i]$ of the heat operator $e^{-tP}$ with some paramultiplication operator $f\prec_i$. We use the tools from Section \ref{s:paradiffcommutators} developed on $\mathbb{R}^d$ to achieve our goal. This naturally leads to a continuity result on the commutator of a paraproduct operator with the resolvent operator of $P$.

\medskip

One key idea we need to control commutators of the form $[e^{-tP},P_u]$ is to think of the heat kernel $e^{-tP}$ as a parameter-dependent pseudodifferential operator of order $m\leq  0$, but $e^{-tP}$ grows like $t^{\frac{m}{2}}$ in $\Psi^m_{1,0}(M)$. We need to pay some price under the form of the exploding weight $t^{\frac{m}{2}}$ if we require more smoothing properties.

\smallskip

\begin{lemm}\label{l:heatpsido}
\textbf{(The heat kernel viewed as a parameter-dependent pseudodifferential operator)} Pick $m\leq  0$. Then $(t^{\frac{-m}{2}}e^{-tP})_{t\in [0,1]}$ is a bounded family of pseudodifferential operators in $\Psi^m_{1,0}(M)$.
\end{lemm}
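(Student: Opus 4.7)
The plan is to realize $e^{-tP}$ as a parameter-dependent pseudodifferential operator via the functional calculus of the positive elliptic self-adjoint operator $P$, and then verify the $\Psi^m_{1,0}$ symbol estimates by a direct computation on the resulting contour integral. As a preliminary localization, I would introduce a finite chart cover $(U_i,\kappa_i)$ with partition of unity $\sum_i\chi_i=1$, $\chi_i\in C_c^\infty(U_i)$, and cutoffs $\widetilde\chi_i\in C_c^\infty(U_i)$ with $\widetilde\chi_i=1$ on $\mathrm{supp}(\chi_i)$, then split
$$
e^{-tP}=\sum_i \widetilde\chi_i\,e^{-tP}\chi_i+\sum_i(1-\widetilde\chi_i)\,e^{-tP}\chi_i.
$$
The off-diagonal pieces have kernels supported away from the diagonal in $M\times M$, so the Gaussian bound $|e^{-tP}(x,y)|\lesssim_N t^N$ for $d(x,y)\geq\delta$ shows they lie uniformly in $\Psi^{-\infty}(M)$, in particular uniformly in $t^{m/2}\Psi^m_{1,0}(M)$. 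Only the diagonal pieces need serious analysis.

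In a fixed chart, I would represent
$$
e^{-tP}=\frac{1}{2\pi i}\int_\Gamma e^{-t\lambda}(\lambda-P)^{-1}\,d\lambda,
$$
where $\Gamma$ is a contour encircling the spectrum $\mathrm{Spec}(P)\subset[1,\infty)$. Using Seeley's parameter-dependent calculus, the resolvent can be written as $(\lambda-P)^{-1}=Op(r_\lambda)+R_\lambda$ with $r_\lambda(x,\xi)$ a classical symbol of order $-2$ whose principal part is $(p(x,\xi)-\lambda)^{-1}$, with $p(x,\xi)=1+g^{ij}(x)\xi_i\xi_j$, and with $R_\lambda$ smoothing with controlled growth in $\lambda$. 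Inserting this into the contour integral and deforming $\Gamma$ across the pole at $\lambda=p(x,\xi)$ gives a local symbol for the diagonal piece of the form
$$
\sigma_t(x,\xi)=e^{-tp(x,\xi)}+\text{(subprincipal terms)}+\text{(smoothing)}.
$$

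The symbol bound for the principal part is the heart of the proof, and is a parabolic-rescaling computation. By Faà di Bruno applied to $e^{-tp(x,\xi)}$ with $|\partial_\xi^\gamma p|\lesssim(1+|\xi|)^{2-|\gamma|}$, one gets
$$
|\partial_x^\alpha\partial_\xi^\beta e^{-tp(x,\xi)}|\lesssim\sum_k t^k(1+|\xi|)^{2k-|\beta|}e^{-ct|\xi|^2}
$$
for some $c>0$, uniformly in $x$. In the regime $t|\xi|^2\geq 1$ one uses $(t|\xi|^2)^{N}e^{-ct|\xi|^2}\lesssim 1$ for arbitrary $N\geq 0$ to absorb all positive powers of $t|\xi|^2$; in the regime $t|\xi|^2\leq 1$ one just uses $t^a\leq|\xi|^{-2a}$ for $a\geq 0$. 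Picking $N$ suitably in each monomial produces the bound
$$
t^{-m/2}|\partial_x^\alpha\partial_\xi^\beta\sigma_t(x,\xi)|\leq C_{\alpha\beta}(1+|\xi|)^{m-|\beta|}
$$
uniformly in $t\in[0,1]$ for every $m\leq 0$, which is exactly the seminorm bound for a bounded family in $\Psi^m_{1,0}(M)$. The subprincipal and smoothing terms are handled the same way and give better (smaller) contributions, since each extra subprincipal degree in $r_\lambda$ supplies an extra $(|\xi|^2+|\lambda|)^{-1/2}$ that translates, after the contour integral, into an extra gain $t^{1/2}$ or a decay in $|\xi|$. The main obstacle is the careful bookkeeping in this last step, i.e.\ showing that the contour integrals of the lower-order symbols in $r_\lambda$, which involve polynomial growth in $\lambda$, still yield symbols obeying the same parabolic scaling $\sim t^{m/2}(1+|\xi|)^{m-|\beta|}$ after integration against $e^{-t\lambda}$; this requires deforming $\Gamma$ onto a parabola adapted to the parameter $t$ and using the standard parameter-dependent resolvent estimates.
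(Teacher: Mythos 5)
Your argument is sound and would yield the lemma, but it takes a genuinely different route from the paper. You rebuild the symbol of $e^{-tP}$ from scratch via the contour integral $\frac{1}{2\pi i}\int_\Gamma e^{-t\lambda}(\lambda-P)^{-1}\,d\lambda$ and Seeley's parameter-dependent resolvent calculus, which is the classical heat-parametrix construction; the paper instead quotes the heat calculus representation of Theorem~\ref{thm:heatcalculus}, namely $\kappa\circ K_t\circ\kappa^{-1}(x,y)=t^{-d/2}\widetilde A\bigl(t,\tfrac{x-y}{\sqrt t},x\bigr)$ with $\widetilde A$ uniformly Schwartz in the middle variable, takes a Fourier transform in that variable to identify the operator as $Op\bigl(\widehat A(t,\sqrt t\,\xi,x)\bigr)$, and reads off the $S^m_{1,0}$ seminorms from $t^{-m/2}(1+\sqrt t\,|\xi|)^{-N}\lesssim(1+|\xi|)^{-m}$ together with the fact that each $\xi$-derivative costs a factor $\sqrt t$. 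The endgame is the same parabolic-rescaling estimate in both cases (your split into the regimes $t|\xi|^2\gtrless 1$ is exactly the dichotomy hidden in that one-line bound, and your use of $t^{-m/2}\le|\xi|^{m}$ when $t|\xi|^2\le1$ is correct since $-m/2\ge0$). What the paper's route buys is brevity: given the black box of Theorem~\ref{thm:heatcalculus} (Melrose--Grieser--Taylor), the proof is three lines and there is no remainder to track. What your route buys is independence from that black box, at the price of the bookkeeping you yourself flag at the end — controlling the contour integrals of the lower-order terms of $r_\lambda$ and of the smoothing remainder $R_\lambda$ uniformly in $t$, which requires the full parameter-dependent resolvent estimates and is only sketched in your write-up. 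Neither step would fail, but if you follow your route you should carry out that last verification rather than leave it as an announced obstacle.
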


\smallskip

\begin{proof}
The proof is obvious using the local representation of the heat kernel 
in charts that we shall use several times in the present work, we refer to Theorem~\ref{thm:heatcalculus} for a precise statement:
\begin{eqnarray*}
\kappa \circ K_t\circ\kappa^{-1}(x,y)=t^{-\frac{d}{2}}\widetilde{A}(t,\frac{x-y}{\sqrt{t}},x)
\end{eqnarray*}
and $\widetilde{A}\in C^\infty([0,+\infty)_{\frac{1}{2}}\times \mathbb{R}^d\times U)$ satisfies the estimate
\begin{eqnarray*}
\sup_{ (t,X,x)\in [0,a]\times \mathbb{R}^d \times U} \Big\vert \left( D^{\alpha}_{\sqrt{t},X,x}\widetilde{A}\right)(t,X,x) \Big\vert \leq  C_{N,\alpha,\kappa(U)}\left(1+\Vert X\Vert\right)^{-N}.
\end{eqnarray*}
Then it suffices to Fourier transform $\widetilde{A}(t,X,x)$ in the middle variable $X$ to get  $\widehat{A}(t,\xi,x)$ which is Schwartz in the middle variable $\xi$ uniformly in $(t,x)$ in compact sets and
$ t^{-\frac{d}{2}}\widetilde{A}(t,\frac{x-y}{\sqrt{t}}, x)$ is the kernel of $Op\left( \widehat{A}(t,\sqrt{t}\xi,x) \right)=\widehat{A}(t,\sqrt{t}D,x) $.

Now it remains to check that 
$t^{-\frac{m}{2}}\widetilde{A}(t,\sqrt{t}\xi ,x)$ is a symbol in $S^m_{1,0}(U\times \mathbb{R}^d)$ uniformly in $t\in [0,1]$, $\vert \xi\vert\geqslant 1$:
\begin{eqnarray*}
\big\vert t^{-\frac{m}{2}}\widetilde{A}(t,\sqrt{t}\xi ,x)\big\vert \lesssim t^{-\frac{m}{2}}(1+ t^{\frac{1}{2}} \vert \xi\vert  )^{-N}\lesssim (1+\vert \xi\vert)^{-m}.
\end{eqnarray*}
Furthermore, for the derivatives one checks that
\begin{equation*} \begin{split}
\big\vert\partial_x^\alpha\partial_\xi^\beta t^{-\frac{m}{2}}\widehat{A}(t,\sqrt{t}\xi ,x) \big\vert &= \big\vert t^{\frac{-m+\vert \beta\vert}{2}} \left(\partial_x^\alpha\partial_\xi^\beta\widehat{A}\right)(t,\sqrt{t}\xi ,x)\big\vert   \\
&\lesssim t^{\frac{-m+\vert \beta\vert}{2}} (1+ t^{\frac{1}{2}} \vert \xi\vert)^{-N} \lesssim (1+\vert \xi\vert)^{-m-\vert\beta\vert}.
\end{split} \end{equation*}
We have controlled all the seminorms of $S^m_{1,0}$ uniformly in $t\in [0,1]$ in local charts and we are done.
\end{proof}

\smallskip

\begin{coro} \label{lemma2}
Let $\alpha\in\mathbb{R}$, $u\in \mathcal{C}^{\alpha}(M)$ and $t>0$. For all $\epsilon>0$, we have:
$$
    \Vert e^{-tP}u\Vert_{\mathcal{C}^{\alpha+\epsilon}(M)}\leq e^{-t}t^{-\frac{\epsilon}{2}}\Vert u\Vert_{\mathcal{C}^\alpha(M)}\,.
$$
\end{coro}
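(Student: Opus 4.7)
The plan is to read $e^{-tP}$ as a member of the parameter-dependent family of pseudodifferential operators treated in Lemma \ref{l:heatpsido}, and then combine this with the Schauder-type mapping property of Proposition \ref{l:simplifiedprooftwistedclass} (globalized to $M$ via the localization recipe spelled out right after Lemma \ref{lem:pararegclassicalsymbols}).

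Concretely, I would first apply Lemma \ref{l:heatpsido} with the choice $m=-\epsilon<0$. This gives that the family $\bigl(t^{\epsilon/2}e^{-tP}\bigr)_{t\in[0,1]}$ is uniformly bounded in $\Psi^{-\epsilon}_{1,0}(M)$. Since any operator in $\Psi^{-\epsilon}_{1,0}(M)$ sends $\mathcal{C}^\alpha(M)=\mathcal{B}^\alpha_{\infty,\infty}(M)$ continuously into $\mathcal{C}^{\alpha+\epsilon}(M)=\mathcal{B}^{\alpha+\epsilon}_{\infty,\infty}(M)$ (this is the corollary following Lemma \ref{lem:pararegclassicalsymbols}), composing the two facts yields
$$
\Vert e^{-tP}u\Vert_{\mathcal{C}^{\alpha+\epsilon}(M)} \lesssim t^{-\epsilon/2}\Vert u\Vert_{\mathcal{C}^\alpha(M)},\qquad t\in(0,1].
$$

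To produce the explicit exponential prefactor $e^{-t}$, I would exploit $e^{-tP}=e^{-t}e^{t\Delta}$, which follows from $P=1-\Delta$. In the local heat-kernel representation $t^{-d/2}\widetilde{A}\bigl(t,(x-y)/\sqrt{t},x\bigr)$ used in the proof of Lemma \ref{l:heatpsido}, the amplitude $\widetilde{A}$ factors as $e^{-t}\,\widetilde{A}_\Delta$, so every $S^{-\epsilon}_{1,0}$-seminorm of the symbol of $t^{\epsilon/2}e^{-tP}$ carries the same overall factor $e^{-t}$. This upgrades the display above to the claimed inequality, with the implicit constant absorbed into the norm conventions on $\mathcal{C}^\alpha(M)$ and $\mathcal{C}^{\alpha+\epsilon}(M)$.

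The main point to check carefully is that the passage from Proposition \ref{l:simplifiedprooftwistedclass} (stated on $\bbR^d$ for pararegularized $S^m_{1,1}$-symbols) to pseudodifferential operators in $\Psi^{-\epsilon}_{1,0}(M)$ is legitimate: the classical symbol of $t^{\epsilon/2}e^{-tP}$ lies in $S^{-\epsilon}_{1,0}\subset S^{-\epsilon}_{1,1}$, and Lemma \ref{lem:pararegclassicalsymbols} ensures that it differs from its pararegularized version only by a smoothing operator, uniformly in $t\in[0,1]$. For times $t>1$, Lemma \ref{l:heatpsido} no longer applies directly; the estimate then follows from the semigroup identity $e^{-tP}=e^{-(t-1)P}\circ e^{-P}$, using the boundedness of $e^{-P}:\mathcal{C}^\alpha(M)\to\mathcal{C}^{\alpha+\epsilon}(M)$ proved above together with the contraction of $e^{-(t-1)P}$ on $\mathcal{C}^{\alpha+\epsilon}(M)$ coming from the lower bound $P\geq 1$. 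This last step is the only real obstacle, since it cannot be read off directly from the pseudodifferential calculus and must be justified either through kernel positivity and maximum principle on $L^\infty$-based Besov spaces, or through a direct spectral/semigroup argument.
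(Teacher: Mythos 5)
Your proposal is correct and follows essentially the same route as the paper: Lemma \ref{l:heatpsido} with $m=-\epsilon$ gives uniform boundedness of $(t^{\epsilon/2}e^{-tP})_{0<t\leq 1}$ in $\Psi^{-\epsilon}_{1,0}(M)$, and the Schauder estimate of Proposition \ref{l:simplifiedprooftwistedclass} (globalized to $M$) then yields the bound, with large times handled by the spectral gap of the massive operator exactly as you describe. The extra details you supply on the exponential prefactor and the semigroup splitting for $t>1$ are consistent with the paper's one-line remark on this point.
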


\smallskip

\begin{proof}
The main difficulty in proving the statement is the small time $t\in (0,1]$ since we can deal with the large times using the spectral gap of the massive operator $P$. We know from Schauder estimates, Proposition \ref{l:simplifiedprooftwistedclass}, that a bounded family of elements $(A_u)_u\in \Psi^m(M)$ is bounded in $L\big(\mathcal{C}^{\alpha}(M), \mathcal{C}^{\alpha-m}(M)\big)$. Since the family $(t^{\epsilon/2}e^{-tP})_{0<t\leq 1}$ is bounded in $\Psi^{-\epsilon}_{10}(M)$, from Lemma \ref{l:heatpsido}, the conclusion follows by writing
\begin{eqnarray*}
\Vert e^{-tP}u\Vert_{\mathcal{C}^{\alpha+\epsilon}}\leq  t^{-\frac{\varepsilon}{2}} \Vert t^{\frac{\varepsilon}{2}} e^{-tP}u\Vert_{L(\mathcal{C}^{\alpha}(M) , \mathcal{C}^{\alpha+\varepsilon}(M))}.
\end{eqnarray*}
\end{proof}

\smallskip

Next, we establish some manifold version of~\cite[Lemma 5.3.20]{JP}. For an arbitrary $i\in I$, denote by $\prec_i$ the localized paraproduct from Definition \ref{def:localpararesonant}.

\smallskip

\begin{lemm}\label{lemma3} 
Let $\alpha<1$, $\beta\in\mathbb{R}$, $u\in\mathcal{C}^{\alpha}(M), v\in\mathcal{C}^{\beta}(M)$ and $t>0$ be given. For all $\epsilon>0$, we have
$$
\big\Vert  e^{-tP}(u\prec_i v)-u\prec_i( e^{-tP}v) \big\Vert_{\mathcal{C}^{\alpha+\beta+\epsilon}(M)} \leq e^{-t} t^{-\epsilon/2} \Vert u\Vert_{\mathcal{C}^\alpha(M)}  \Vert v\Vert_{\mathcal{C}^\beta(M)} 
$$
and
$$
\big\Vert  e^{-tP}(u\prec v)-u\prec( e^{-tP}v) \big\Vert_{\mathcal{C}^{\alpha+\beta+\epsilon}(M)} \leq e^{-t} t^{-\epsilon/2} \Vert u\Vert_{\mathcal{C}^\alpha}  \Vert v\Vert_{\mathcal{C}^\beta}.
$$
\end{lemm}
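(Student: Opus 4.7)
The plan is to realize the commutator as a pseudodifferential operator with pararegularized symbol in a chart, then apply the commutator lemma of Section~\ref{CompositionParadiffOperators} together with the Schauder estimate of Proposition~\ref{l:simplifiedprooftwistedclass}. Since the global paraproduct $u\prec v = \sum_i u\prec_i v$ is a finite sum of localized ones, the second assertion will follow from the first by summing over the chart indices $i$; we therefore focus on $\prec_i$.

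\textbf{Step 1 (Reduce to $t\in (0,1]$ and extract $e^{-t}$).} Since the spectrum of $P=1-\Delta$ is bounded below by $1$, we have $\|e^{-tP}\|_{\mathcal{C}^\gamma\to\mathcal{C}^\gamma}\lesssim e^{-t}$ for any $\gamma$; combined with Corollary~\ref{lemma2}, the statement is trivial for $t\geqslant 1$. Writing $e^{-tP}=e^{-t}\,H_t$ with $H_t=e^{t\Delta}$, it remains to prove that, uniformly for $t\in(0,1]$,
\[
\bigl\|H_t(u\prec_i v) - u\prec_i(H_t v)\bigr\|_{\alpha+\beta+\epsilon} \,\lesssim\, t^{-\epsilon/2}\,\|u\|_\alpha\,\|v\|_\beta.
\]
By Lemma~\ref{l:heatpsido} the family $(t^{\epsilon/2}H_t)_{t\in(0,1]}$ is bounded in $\Psi^{-\epsilon}_{1,0}(M)$.

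\textbf{Step 2 (Work in the chart $\kappa_i$).} By definition,
\[
u\prec_i v \,=\, \kappa_i^*\bigl[\psi_i\,(f\prec g)\bigr], \qquad f\defeq \kappa_{i*}(\chi_i u)\in C^\alpha_c(\mathbb{R}^d),\quad g\defeq \kappa_{i*}(\widetilde{\chi}_i v)\in C^\beta_c(\mathbb{R}^d),
\]
so that the localized paramultiplication operator $P_u^{(i)}\defeq u\prec_i(\cdot)$ equals $\kappa_i^*\,M_{\psi_i}\,P_f\,\kappa_{i*}M_{\widetilde{\chi}_i}$, where $P_f$ is the flat paramultiplication operator on $\mathbb{R}^d$. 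Its symbol is in the class $B^0_\alpha(\mathbb{R}^d)$ (Lemma~\ref{l:BinA}), with a pararegularized Fourier support $\{|\eta|\leq K|\xi|\}$ for some $K<1$, and with symbol seminorms dominated by $\|f\|_{C^\alpha(\mathbb{R}^d)}\lesssim \|u\|_\alpha$. Using a standard decomposition of pseudodifferential operators via charts and partitions of unity, write
\[
H_t \,=\, \sum_{j}\widetilde{\chi}_j\,\kappa_j^*\,H_{j,t}\,\kappa_{j*}\chi_j \,+\, R_t,
\]
where $(t^{\epsilon/2}H_{j,t})_{t\in(0,1]}$ is bounded in $\Psi^{-\epsilon}_{1,0}(\mathbb{R}^d)$ and $R_t$ is a smoothing family with kernel bounds uniform in $t\in(0,1]$ coming from the off-diagonal Gaussian decay of the heat kernel.

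\textbf{Step 3 (Apply the commutator lemma).} The contribution of $R_t$ to $[H_t,P_u^{(i)}]$ is an operator whose Schwartz kernel is smooth and uniformly controlled in $t$ (since $R_t$ is smoothing), hence maps $\mathcal{C}^\beta\to\mathcal{C}^{\alpha+\beta+\epsilon}$ with the required bound. For the main part, we are reduced to estimating, in the chart $\kappa_i$ (using the compact support of $\psi_i,\widetilde{\chi}_i$ and a localization via Lemma~\ref{l:keylocalization} to merge the chart indices $j$ with $i$ modulo a smoothing error), commutators of the form
\[
[A, P_f]\,, \qquad A\in t^{-\epsilon/2}\,\Psi^{-\epsilon}_{1,0}(\mathbb{R}^d),\quad P_f\in \Psi(B^0_\alpha(\mathbb{R}^d)).
\]
By Proposition~\ref{lemm:commutator}, $[A,P_f]=Op(c)+\widetilde{R}$, where $\widetilde{R}$ is smoothing (with norms bounded by the corresponding seminorms of $A$ and $P_f$) and $c\in S^{-\alpha-\epsilon}_{1,1}(\mathbb{R}^d)$ has $\widehat{c}(\eta,\xi)$ supported in $\{|\eta|\leq \widetilde{K}|\xi|\}$ for some $\widetilde{K}<1$; the relevant seminorms of $c$ are dominated by $t^{-\epsilon/2}\|f\|_{C^\alpha}\lesssim t^{-\epsilon/2}\|u\|_\alpha$.

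\textbf{Step 4 (Schauder for pararegularized $S^{\bullet}_{1,1}$ symbols).} Since $c$ is pararegularized, Proposition~\ref{l:simplifiedprooftwistedclass} gives that $Op(c)$ maps $\mathcal{C}^\beta(\mathbb{R}^d)$ continuously into $\mathcal{C}^{\alpha+\beta+\epsilon}(\mathbb{R}^d)$ with operator norm $\lesssim t^{-\epsilon/2}\|u\|_\alpha$. Applying this to $g$ (whose $\mathcal{C}^\beta(\mathbb{R}^d)$ norm is controlled by $\|v\|_\beta$) and pulling back by $\kappa_i^*$ (which, together with the multiplication by smooth compactly supported cut-offs, is continuous between H\"older spaces by diffeomorphism invariance), we obtain
\[
\bigl\|H_t(u\prec_i v) - u\prec_i(H_t v)\bigr\|_{\alpha+\beta+\epsilon} \,\lesssim\, t^{-\epsilon/2}\,\|u\|_\alpha\,\|v\|_\beta,
\]
and reinserting the $e^{-t}$ factor yields the claim. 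The main obstacle is the bookkeeping in Step~3: the conjugated operator $\kappa_{i*}\,H_t\,\kappa_i^*$ is not exactly a pseudodifferential operator on $\mathbb{R}^d$, so one must carefully separate the contribution of off-diagonal (smoothing) terms from the principal (pseudodifferential) part using that all paraproducts in the statement are localized in $U_i$; Lemma~\ref{l:keylocalization} is what makes this separation clean. Finally the global estimate for $\prec$ is obtained by summing the local bounds over the finitely many $i\in I$.
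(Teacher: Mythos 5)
Your proposal is correct and follows essentially the same route as the paper's proof: both realize $P_u$ as a pararegularized operator in the class $B^0_\alpha(\mathbb{R}^d)$, treat $(t^{\epsilon/2}e^{-tP})_{t\in(0,1]}$ as a bounded family in $\Psi^{-\epsilon}_{1,0}$ via Lemma~\ref{l:heatpsido}, apply the commutator Proposition~\ref{lemm:commutator} to land in a pararegularized $S^{-\alpha-\epsilon}_{1,1}$ class, and conclude with the Schauder estimate of Proposition~\ref{l:simplifiedprooftwistedclass}. The only cosmetic difference is in the localization bookkeeping (you decompose $H_t$ over charts plus a smoothing remainder, whereas the paper inserts cut-offs $\chi_{i,3}$ and uses the commutators $t^{\epsilon/2}[e^{-tP},\chi_{i,3}]\in\Psi^{-1-\epsilon}$), which amounts to the same argument.
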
 

\smallskip

\begin{proof}
The key conceptual idea is to think of the operator $P_u:v\mapsto u\prec v$ as a pararegularized operator where the threshold regularity is imposed by the H\"older 
regularity of $u$.

\smallskip

{\it 1.} Let us do the proof on $\mathbb{R}^d$ with some operator $H_t$ in the heat calculus in the sense of Theorem \ref{thm:heatcalculus}; it has the same analytic properties as the heat kernel and Lemma \ref{l:heatpsido} and Corollary \ref{lemma2} hold for $H_t$. Then we shall use charts and localization to make the proof global and on manifolds. Now note that $P_u\in B_\alpha^0(\bbR^d)$. We treat the heat operator $H_t$ as an element of $\Psi^{-\varepsilon}_{1,0}(\mathbb{R}^d)$ and we need to measure its growth in the Fréchet space $\Psi^{-\varepsilon}(\mathbb{R}^d)$ when $t\rightarrow 0^+$. In fact $t^{\frac{\varepsilon}{2}}H_t$ is bounded in $\Psi^{-\varepsilon}(\mathbb{R}^d)$ uniformly in $t\in [0,1]$ by Lemma \ref{l:heatpsido}. Therefore, Proposition \ref{lemm:commutator} on commutators shows that the commutator 
$$
[t^{\frac{\varepsilon}{2}}H_t,P_u]\in Op\left(\widetilde{S}^{-\alpha-\varepsilon}_{1,1}\right)
$$ 
is regularizing of order $\alpha+\varepsilon$, uniformly in $t\in [0,1]$. We thus have
$$
[H_t,P_u]=\mathcal{O}(t^{-\frac{\varepsilon}{2}})\in \widetilde{\Psi}^{-\alpha-\varepsilon}_{1,1}. 
$$
and 
$$
[H_t,P_u]v =\mathcal{O}(t^{-\frac{\varepsilon}{2}}) \in C^{\alpha+\beta+\varepsilon}
$$ 
since $\widetilde{\Psi}^{-\alpha-\varepsilon}_{1,1}$ maps $C^s(\bbR^d)$ into $C^{s-\alpha-\varepsilon}(\bbR^d)$, byProposition \ref{l:simplifiedprooftwistedclass}.

\smallskip

{\it 2.} The next step is to localize, then globalize, the proof on $\mathbb{R}^d$ to extend it to $M$. As usual, denote by $\left(\kappa_i,U_i,\chi_{i,1}\right)_{i\in I}$ the local charts plus the subordinated partition of unity.
We choose some functions $ \chi_{i,2}\in C^\infty_c(U_i),\, \chi_{i,2}=1$ on support of $\chi_{i,1}$ and $\psi_i\in C^\infty_c(\kappa_i(U_i))$, $\psi_i=1$ on support of $\chi_{i,2}$. 

Recall that
\begin{eqnarray*}
\left(u\prec_i v \right)=
 \kappa_i^{*}\big[ \psi_{i}\big(\kappa_{i*}\left(\chi_{i,1}u\right)\prec  \kappa_{i*}\left( \chi_{i,2}  v\right)\big)\big].
\end{eqnarray*}
The key idea to put the estimate on $M$ is to use commutator estimates plus cut-off functions to localize. We write carefully the first term we are studying
\begin{eqnarray*}
e^{-tP}\left(u\prec_i v \right)=
e^{-tP}\big( \kappa_i^{*}\big[ \psi_i\left(\kappa_{i*}\left(\chi_{i,1}u\right)\prec  \kappa_{i*}\left( \chi_{i,2}  v\right) \right)\big]\big)
\end{eqnarray*}
Choose $\chi_{i,3}\in C^\infty_c(U_i)$ which equals $1$ on support of $\kappa_i^*\psi_i$. One has
\begin{equation*} \begin{split}
e^{-tP}\big( \kappa_i^{*}\big[ \psi_i\big(\kappa_{i*}\left(\chi_{i,1}u\right)\prec &\kappa_{i*}\left( \chi_{i,2}  v\right) \big)\big]\big)   \\
&= e^{-tP}\chi^2_{i,3} \big( \kappa_i^{*}\big[ \psi_i\left(\kappa_{i*}\left(\chi_{i,1}u\right)\prec  \kappa_{i*}\left( \chi_{i,2}  v\right) \right)\big]\big)   \\
&= \chi_{i,3}e^{-tP}\chi_{i,3}\big( \kappa_i^{*}\big[ \psi_i\left(\kappa_{i*}\left(\chi_{i,1}u\right)\prec  \kappa_{i*}\left( \chi_{i,2}  v\right) \right)\big]\big) + \mathcal{O}_{\mathcal{C}^{1+\beta+\varepsilon}}(t^{-\frac{\varepsilon}{2}})   \\
&\sim \kappa_i^{*}\big( \kappa_{i*}  \left(\chi_{i,3}e^{-tP}\chi_{i,3}\right) \kappa_i^* \left[ \psi_i \left( \kappa_{i*}\left(\chi_iu\right)\prec  \kappa_{i*}\left( \chi_{i,2}  v\right)\right) \right] \big)\big)   \\
&= \kappa_i^{*}\big( \kappa_{i*}  \left(\chi_{i,3}e^{-tP} \right) \kappa_i^* \left[ \psi_i \left( \kappa_{i*}\left(\chi_iu\right)\prec  \kappa_{i*}\left( \chi_{i,2}  v\right)\right) \right] \big)\big).
\end{split} \end{equation*} 
We used the commutator estimate $t^{\frac{\varepsilon}{2}}[e^{-tP},\chi_{i,3}]\in \Psi^{-1-\varepsilon}$ uniformly in $t\in [0,1]$ ( $t^{\frac{\varepsilon}{2}}e^{-tP}\in \Psi^{-\varepsilon}_{1,0}$ and $\chi_{i,3}\in \Psi^0_{1,0} $) and the Schauder estimates for pseudodifferential operators. We also used the support property of $\chi_{i,3}$ to identify $\kappa_{i*}\chi_{i,3}\psi_i=\psi_i$. Now we also write in detail the term $u\prec_i (e^{-tP}v)$
\begin{eqnarray*}
u\prec_i (e^{-tP}v) &=& \kappa_i^{*}\left( \psi_{i}[\kappa_{i*}\left(\chi_{i,1}u\right)\prec  \kappa_{i*}\left( \chi_{i,2} e^{-tP} v\right)]\right)\\
& =&
 \kappa_i^{*}\left( \psi_{i} [\kappa_{i*}\left(\chi_{i,1}u\right)\prec  \kappa_{i*}\left( \chi_{i,2}\chi_{i,3} e^{-tP} v\right)]\right)\\
&\sim& \kappa_i^{*}\left( \psi_{i}[\kappa_{i*}\left(\chi_{i,1}u\right)\prec  \kappa_{i*}\left(\chi_{i,3} e^{-tP} \chi_{i,2} v\right)]\right)\\
&=&
 \kappa_i^{*}\left( \psi_{i}[\kappa_{i*}\left(\chi_{i,1}u\right)\prec    \left(\kappa_{i*}\chi_{i,3} e^{-tP} \kappa_i^*\right) \kappa_{i*}\left(\chi_{i,2} v\right)]\right)\\ 
&=&
 \kappa_i^{*}\left( \psi_{i}[\kappa_{i*}\left(\chi_{i,1}u\right)\prec    \left(\kappa_{i*}\chi_{i,3} e^{-tP} \kappa_i^*\right)\left(\psi_i \kappa_{i*}\left(\chi_{i,2} v\right)\right)]\right)
\end{eqnarray*} 
where we used again the commutator estimate and the support properties of all the cut-off functions. Now we recognize a commutator in $\mathbb{R}^d$ between an element in the heat calculus and some paradifferential operator
\begin{eqnarray*}
e^{-tP}\left(u\prec_i v\right)- u\prec_i (e^{-tP}v)\sim 
 \kappa_i^{*}\left( \psi_{i} [\left(\kappa_{i*}\chi_{i,3} e^{-tP} \kappa_i^*\right)\psi_i ,M_{\kappa_{i*}\left(\chi_{i,1}u\right)}]     \kappa_{i*}\left(\chi_{i,2} v\right)\right)
\end{eqnarray*} 
and, using the first part, we obtain
$$
\big\Vert e^{-tP}(u\prec_i v)-u\prec_i( e^{-tP}v) \big\Vert_{\mathcal{C}^{\alpha+\beta+\epsilon}(M)} \leq e^{-t} t^{-\epsilon/2} \Vert u\Vert_{\mathcal{C}^\alpha}  \Vert v\Vert_{\mathcal{C}^\beta(M)}
$$
as required. 
\end{proof}

\smallskip

We now prove an analogue of Lemma A3 in \cite{JP} describing the commutator of a paraproduct operator and the resolvent operator of $P$.
 
\smallskip
 
\begin{prop}\label{p:commutatorheat}
Let $0<\alpha<1$, $\beta\in\mathbb{R}$, $f\in C_T\mathcal{C}^\alpha(M)\cap C_T^{\frac{\alpha}{2}}L_\infty(M)$ and $g\in C_T\mathcal{C}^\beta(M)$. For all $0<\delta<2$, for all chart index $i$ we have
\begin{equation*} \begin{split}
\Big\Vert t\mapsto\int_0^{t}\big(e^{-(t-s)P}(f_s\prec_i g_s) - f_t\prec_i(e^{-(t-s)P}&g_s)\big)ds \Big\Vert_{C_T\mathcal{C}^{\alpha+\beta+\delta}(M)}   \\
&\leq C \big(\Vert f\Vert_{C_T\mathcal{C}^\alpha(M)} +\Vert f\Vert_{C_T^{\frac{\alpha}{2}}L_\infty(M)}\big)\Vert g \Vert_{C_T\mathcal{C}^\beta(M)}
\end{split} \end{equation*}
for a positive constant $C$ independant of $T$. The same estimate holds for the global paraproduct by summing over $i\in I$.
\end{prop}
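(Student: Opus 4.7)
The plan is to add and subtract $f_s \prec_i (e^{-(t-s)P} g_s)$ inside the integrand so as to split it into a \emph{commutator part} and a \emph{time-regularity part}:
\begin{align*}
e^{-(t-s)P}(f_s \prec_i g_s) - f_t \prec_i(e^{-(t-s)P} g_s)
&= \underbrace{e^{-(t-s)P}(f_s \prec_i g_s) - f_s \prec_i(e^{-(t-s)P} g_s)}_{=:\,\mathrm{I}(t,s)} \\
&\quad + \underbrace{(f_s - f_t) \prec_i (e^{-(t-s)P} g_s)}_{=:\,\mathrm{II}(t,s)}.
\end{align*}
The first piece is exactly a heat-paraproduct commutator of the sort treated in Lemma~\ref{lemma3}, while the second exploits the time H\"older regularity of $f$.

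For the commutator term $\mathrm{I}$, I would apply Lemma~\ref{lemma3} with $\epsilon = \delta$, giving
$$
\|\mathrm{I}(t,s)\|_{\mathcal{C}^{\alpha+\beta+\delta}(M)} \;\lesssim\; e^{-(t-s)}(t-s)^{-\delta/2} \|f_s\|_{\mathcal{C}^\alpha(M)} \|g_s\|_{\mathcal{C}^\beta(M)}.
$$
For the time-regularity term $\mathrm{II}$, I would use the basic fact that paraproducts by an $L^\infty$ function preserve H\"older regularity (this follows at once from the dyadic definition, since $\|S_{j-2}(u)\Delta_j(v)\|_\infty \leq \|u\|_\infty \|\Delta_j v\|_\infty$, transported to $M$ via the charts defining $\prec_i$), so that
$$
\|\mathrm{II}(t,s)\|_{\mathcal{C}^{\alpha+\beta+\delta}(M)} \;\lesssim\; \|f_s - f_t\|_{L_\infty(M)} \,\|e^{-(t-s)P} g_s\|_{\mathcal{C}^{\alpha+\beta+\delta}(M)}.
$$
The time-regularity assumption gives $\|f_s - f_t\|_{L_\infty} \leq \|f\|_{C_T^{\alpha/2}L_\infty}|t-s|^{\alpha/2}$, while Corollary~\ref{lemma2} applied with smoothing index $\alpha+\delta > 0$ yields $\|e^{-(t-s)P}g_s\|_{\mathcal{C}^{\alpha+\beta+\delta}} \lesssim e^{-(t-s)}(t-s)^{-(\alpha+\delta)/2}\|g_s\|_{\mathcal{C}^\beta}$. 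Multiplying these estimates, the exponents in $(t-s)$ combine to $\alpha/2 - (\alpha+\delta)/2 = -\delta/2$, matching the bound for $\mathrm{I}$.

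It remains to integrate in $s$. Both pieces are bounded by $e^{-(t-s)}(t-s)^{-\delta/2}$ times the product of the relevant norms of $f$ and $g$, and since $\delta<2$ one has
$$
\int_0^t e^{-(t-s)}(t-s)^{-\delta/2}\,ds \;\leq\; \int_0^\infty e^{-r}r^{-\delta/2}\,dr \;=:\; C_\delta \;<\;\infty,
$$
uniformly in $t\in [0,T]$ and in $T$ itself (this is where the exponential factor, coming from the mass in $P=1-\Delta$, is essential). Taking the supremum over $t\in [0,T]$ gives the asserted bound, with the global version obtained simply by summing the estimate over the finite collection of chart indices $i\in I$. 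The only non-routine step is the commutator estimate~\ref{lemma3}, which has already been established by reducing to the composition lemma for pararegularized symbols on $\mathbb{R}^d$ and localizing; given that tool, the splitting above is essentially a pairing of a spatial commutator bound with the time H\"older regularity of $f$, with the exponents balancing exactly.
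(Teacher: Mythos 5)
Your proposal is correct and follows essentially the same route as the paper's proof: the same splitting into a spatial commutator term (handled by Lemma~\ref{lemma3} with $\epsilon=\delta$) and a time-increment term (handled by the $L^\infty$-paraproduct bound followed by Corollary~\ref{lemma2} with $\epsilon=\alpha+\delta$), and the same integration of $e^{-(t-s)}(t-s)^{-\delta/2}$ using $\delta<2$ and the mass term for $T$-uniformity. Nothing to add.
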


\smallskip

\begin{proof}
To prove this proposition, we rely on Corollary \ref{lemma2} and Lemma \ref{lemma3}. Let $\left(\kappa_i,U_i,\chi_{i,1}\right)_{i\in I}$ the local charts plus the subordinated partition of unity and  $ \chi_{i,2}\in C^\infty_c(U_i),\, \chi_{i,2}=1$ on support of $\chi_{i,1}$  and $\psi_i\in C^\infty_c(\kappa_i(U_i))$, $\psi_i=1$ on support of $\chi_{i,2}$. Recall that for fixed $i\in I$
\begin{eqnarray*}
\left(u\prec_i v \right)=
 \kappa_i^{*}\big[ \psi_{i}\big(\kappa_{i*}\left(\chi_{i,1}u\right)\prec  \kappa_{i*}\left( \chi_{i,2}  v\right)\big)\big].
\end{eqnarray*}
 
We start by rewriting
\begin{align*}
&\big\Vert  e^{-(t-s)P}(f_s\prec_i g_s)-f_t\prec_i(e^{-(t-s)P}g_s) \big\Vert_{\mathcal{C}^{\alpha+\beta+\delta}}\\
= &\big\Vert  e^{-(t-s)P}(f_s\prec_i g_s)-f_s\prec_i (e^{-(t-s)P}g_s) - (f_t-f_s)\prec_i (e^{-(t-s)P}g_s) \big\Vert_{\mathcal{C}^{\alpha+\beta+\delta}}\\
\leq&\underbrace{\big\Vert  e^{-(t-s)P}(f_s\prec_i g_s)-f_s\prec_i(e^{-(t-s)P}g_s) \big\Vert_{\mathcal{C}^{\alpha+\beta+\delta}}}_A + \underbrace{\big\Vert (f_t-f_s)\prec_i(e^{-(t-s)P}g_s) \big\Vert_{\mathcal{C}^{\alpha+\beta+\delta}}}_B\,.
\end{align*}
To bound the term A, let us use Lemma~\ref{lemma3} at time $t-s$ with $u=f_s$, $v=g_s$ and $\epsilon=\delta$. This yields
\begin{align*}
    A\leq e^{-(t-s)} (t-s)^{-\frac{\delta}{2}}\Vert f_s\Vert_{\mathcal{C}^\alpha} \Vert g_s\Vert_{\mathcal{C}^\beta}\leq e^{-(t-s)} (t-s)^{-\frac{\delta}{2}}\Vert f\Vert_{C_T\mathcal{C}^\alpha} \Vert g\Vert_{C_T\mathcal{C}^\beta}\,.
\end{align*}
To bound the term B, let us first use a paraproduct estimate, that gives
\begin{align*}
    B\leq \Vert f_t-f_s \Vert_{L_\infty} \Vert (e^{-(t-s)P}g_s) \Vert_{\mathcal{C}^{\alpha+\beta+\delta}}\leq (t-s)^{\frac\alpha2}\Vert f\Vert_{C_T^{\frac\alpha2}L_\infty} \Vert (e^{-(t-s)P}g_s) \Vert_{\mathcal{C}^{\alpha+\beta+\delta}}\,.
\end{align*}
Then, let us use Lemma~\ref{lemma2} at times $t-s$ with $u=g_s$ and $\epsilon=\alpha+\delta$. This yields
\begin{align*}
    B\leq e^{-(t-s)} (t-s)^{\frac\alpha2-\frac{\alpha+\delta}{2}}\Vert f\Vert_{C_T^{\frac\alpha2}L_\infty}\Vert g_s \Vert_{\mathcal{C}^\beta}\leq e^{-(t-s)}(t-s)^{-\frac{\delta}{2}}\Vert f\Vert_{C_T^{\frac\alpha2}L_\infty}\Vert g \Vert_{C_T\mathcal{C}^\beta}\,.
\end{align*}
We can now conclude since 
\begin{equation*} \begin{split}
\Big\Vert \int_0^t\big(e^{-(t-s)P}(f_s&\prec_i g_s) - f_t\prec_i (e^{-(t-s)P}g_s)\big)ds \Big\Vert_{\mathcal{C}^{\alpha+\beta+\delta}}\\ \leq &\int_0^t \big\Vert  e^{-(t-s)P}(f_s\prec_i g_s)-f_t\prec_i(e^{-(t-s)P}g_s)  \big\Vert_{\mathcal{C}^{\alpha+\beta+\delta}}\rmd s   \\
   \leq &\int_0^t (A+B)ds=\int_0^te^{-(t-s)}(t-s)^{-\frac\delta2}ds\, \Big(\Vert f\Vert_{C_T\mathcal{C}^\alpha} +\Vert f\Vert_{C_T^{\frac{\alpha}{2}}L_\infty}\Big) \Vert g \Vert_{C_T\mathcal{C}^\beta}\,,
\end{split} \end{equation*}
and the integral over $s$ is convergent since $\delta<2$ by hypothesis. Moreover, the sup over $t\in [0,T]$ can then be bounded independently of $T$ thanks to the exponential decay of the massive Laplacian. 
\end{proof}

\smallskip

%--------------------------------------------------------------------------------%
\section{Proof of Theorem \ref{thm_A_2_mfd} and an extension}
\label{SectionProofThmA2}
%--------------------------------------------------------------------------------%

Recall Theorem \ref{thm_A_2_mfd} and the notations 
$$
\X_r \defeq \underline{\mathcal{L}}^{-1}(\xi_r), \quad\Xtwo_r \defeq :\X^2\hspace{-0.08cm}:_r,\quad \IXtwo_r \defeq \underline{\mathcal{L}}^{-1} (\Xtwo_r), \quad \IXthree_r \defeq \underline{\mathcal{L}}^{-1}( :\X^3\hspace{-0.08cm}:_r ).
$$ 
from the introduction. Theorem \ref{thm_A_2_mfd} is used in \cite{BDFT} to make sense of Jagannath \& Perkowski's formulation \eqref{EqJPFOrmulation} of the parabolic $\Phi^4$ equation \eqref{EqRenormalizedSPDE} in the limit where $r>0$ goes to $0$. We prove Theorem \ref{thm_A_2_mfd} in Section \ref{SubsectionConstructionScalarAnsatz} following the reasoning used by Jagannath \& Perkowski in their proof of Lemma A.2 in \cite{JP}.

We extend this result in Section \ref{sect_vect_model} to a setting where the equation is set on a space of sections of a vector bundle over $M$. Some non-trivial modifications are needed in this case compared to the scalar case.

\smallskip

%%-------------------------------------------------------%%
\subsection{Proof of Theorem \ref{thm_A_2_mfd}$\boldmath{.}$ \hspace{0.1cm}}
\label{SubsectionConstructionScalarAnsatz}
%%-------------------------------------------------------%%

{\it 1. Control of the regularity of $v_{\textrm{\emph{ref}},r}$.} Recall we set
\begin{eqnarray*} 
f\cdot_1 (g\odot_2 h  - k) &\defeq& \sum_i \kappa_i^*\Big[ \psi_i  \kappa_{i*} \left(\chi_{i1} f \right) \Big(  \kappa_{i*}\left( \chi_{i2} g \right)    \odot\kappa_{i*}\left( \chi_{i3} h \right)- \kappa_{i*}\left( \chi_{i2} k \right)\Big)  \Big],
\end{eqnarray*}
and use a similar definition of the terms that appear in the right hand side of \eqref{EqDecomposition} below. We start from the triple product 
$$
3e^{3\IXtwo_r} \left(\IXthree_r \Xtwo_r - b_r(\X_r + \IXthree_r)\right)
$$ 
and we decompose it as a sum of trilinear operations  defined in  Section \ref{SubsectionTripleParamultiplication}
\begin{equation} \label{EqDecomposition} \begin{split}
3e^{3\IXtwo_r} \Big(\IXthree_r \Xtwo_r - b_r(\X_r &+ \IXthree_r )\Big)   \\
&= 3e^{3\IXtwo_r}\cdot_1   ( \IXthree_r \succ_2 \Xtwo_r ) + 3e^{3\IXtwo_r} \cdot_1 \big( \IXthree_r \odot_2 \Xtwo_r - 3e^{3\IXtwo_r} b\X_r\big)   \\
&\quad+ 3e^{3\IXtwo_r} \prec_1 ( \IXthree_r  \prec_2 \Xtwo_r) + \big[ 3e^{3\IXtwo_r} \odot_1 ( \IXthree_r \prec_2 \Xtwo_r) - 3e^{3\IXtwo_r} b \IXthree_r \big]   \\
&\quad+ 3e^{3\IXtwo_r} \succ_1 ( \IXthree_r \prec_2 \Xtwo_r).  
\end{split} \end{equation}
We know from Section 4.3 of \cite{BDFT} that $\IXtwo_r\in C_TC^{1-\epsilon}(M)$, uniformly in $r>0$ in any $L^p(\mathbb{P})$ space, and for every chart index $i$ and every $\psi_i\in C^\infty_c(\kappa_i(U_i))$ such that $\psi_i=1$ on the support of $\chi_{i3}$, we have
$$
\psi_i\left(\kappa_{i*} \left(\chi_{i2} \IXthree_r \right)\odot \kappa_{i*} \left(\chi_{i3} \Xtwo_r\right) - \kappa_{i*} \left(\chi_{i2}  b_r\X_r\right)\right) \in C_TC^{-1/2-\epsilon}(\kappa_i(U_i))
$$
uniformly in $r>0$ in any $L^p(\mathbb{P})$ space. The estimate is formulated on $\mathbb{R}^d$. Formulate the estimate on $M$ is equivalent to proving that
$$ 
\IXthree_r \odot_i \Xtwo_r - \chi_{i2}b_r\X_r\defeq\kappa_i^*\Big[ \psi_i\left(\kappa_{i*} \left(\chi_{i2} \IXthree_r \right)\odot \kappa_{i*} \left(\chi_{i3} \Xtwo_r\right) - \kappa_{i*} \left(\chi_{i2}  b_r\X_r\right)\right)\Big] \in C_TC^{-1/2-\epsilon}(U_i),
$$ 
a result that was proved in Section 4 of \cite{BDFT}. By the local-to-global principle, we deduce that $3e^{3\IXtwo_r} \cdot_1( \IXthree_r  \odot_2 \Xtwo_r - b_r\,\X_r)$ is well-defined and in $C_TC^{-1/2-\epsilon}(M)$, uniformly in $r\in[0,1]$, in $\mathbb{P}$-probability.  (We do not have stronger estimate in $L^p(\mathbb{P})$ spaces as the exponential term $e^{3\IXtwo_r} \in C_TC^{1-\epsilon}(M)$ is not known to be $\mathbb{P}$-integrable.)

Similarly $\IXthree_r \in C_TC^{1/2-\epsilon/2}(M)$ and  $\Xtwo_r\in C_TC^{-1-\epsilon/2}(M)$ the paraproduct estimates imply that $\IXthree_r \succ_{i,2} \Xtwo_r\in C_TC^{-1/2-\epsilon}(M)$ for some local paraproduct $\succ_{i,2}$ for every chart index $i$, hence  $3e^{3\IXtwo_r} \cdot_1  ( \IXthree_r  \succ_2 \Xtwo_r ) \in C_TC^{-1/2-\epsilon}(M) $ globally.  Again  it follows from the flat paraproduct estimates that $3e^{3\IXtwo_r} \succ_1 ( \IXthree_r \prec_2 \Xtwo_r) \in  C_TC^{-1/2-\epsilon}(M)$, with estimates that holds uniformly in $0<r\leq 1$ in $\mathbb{P}$-probability.

\smallskip

The most complicated term is 
$$
3e^{3\IXtwo_r} \odot_1 ( \IXthree_r \prec_2 \Xtwo_r) - 3e^{3\IXtwo_r} b_r \IXthree_r.
$$ 
We use the identity \eqref{eq:use_Bony_comm}, with $f= \IXtwo_r, \alpha=1-\epsilon, g=\IXthree_r , \beta= 1/2-\epsilon, h=\Xtwo_r, \gamma= -1-\epsilon$, to infer that 
\begin{eqnarray*}
3e^{3\IXtwo_r} \odot_1 ( \IXthree_r \prec_2 \Xtwo_r) - 3e^{3\IXtwo_r}  b_r \IXthree_r  = 9e^{\IXtwo_r} \cdot_1 \left(  \IXthree_r \cdot_2 (\Xtwo_r \odot_{3} \IXtwo_r - \frac{b_r}{3}) \ \right) + C^{1/2-3\epsilon}(M),
\end{eqnarray*}
It follows from \cite[Section 4.2]{BDFT} that for every chart index $i$ and every $\psi_i\in C^\infty_c(\kappa_i(U_i))$ such that $\psi_i=1$ on the support of $\chi_{i4}$, 
\begin{eqnarray*}
\psi_i \left( \kappa_{i*} \left(\chi_{i3} \Xtwo_r \right)\odot \kappa_{i*} \left(\chi_{i4} \IXtwo_r \right) - \kappa_{i*} \left(\chi_{i3}  \frac{b_r}{3} \right) \right) \in C_TC^{-\epsilon}(\bbR^d),
\end{eqnarray*}
hence $3e^{3\IXtwo_r} \odot_1 ( \IXthree_r \prec_2 \Xtwo_r) - 3e^{3\IXtwo_r}  b_r \IXthree_r \in C_TC^{-1/2-\epsilon}(M)$. The estimates are $0<r\leq 1$ uniform in $\mathbb{P}$-probability.

{\it 2. Control of the gradient term.} We aim at controlling the regularity of 
$$
\nabla\IXtwo_r \cdot \nabla v_{\textrm{ref},r} - b_r(e^{3\IXtwo_r}\IXthree_r),
$$ 
where
$$
v_{\textrm{ref},r}\simeq\mathcal{L}^{-1}\left( 3e^{3\IXtwo_r}\Xtwo_r - b_r(\X_r+\IXthree_r ) \right).
$$ 
The proof is simple but the fact we are writing huge products of functions makes it look a bit combinatorial. First using the fact that the inverse heat operator $\mathcal{L}^{-1}$ is smoothing off-diagonal which implies that for any compactly supported distribution $U\in \mathcal{D}^\prime_c(U_i)$ and $\widetilde{\chi}_i\in C^\infty_c(U_i)$ such that $\widetilde{\chi}_i=1$ on support of $U$
\begin{eqnarray*}
\mathcal{L}^{-1}(U) - \widetilde{\chi}_i \, \mathcal{L}^{-1}(U) \in C^\infty(M)
\end{eqnarray*}
and using a trivialization of the tangent bundle $TM$ over each open chart $U_i$, we get the expression
\begin{equation*} \begin{split}
&\nabla\IXtwo_r \cdot \nabla v_{\textrm{ref},r} - b_r(e^{3\IXtwo_r}\IXthree_r ) =   \\
&\sum_i \kappa_i^*\bigg[\psi_i (\kappa_{i*}g)^{\mu\nu} \partial_\mu \kappa_{i*}\bigg(\widetilde{\chi}_i \mathcal{L}^{-1}\kappa_i^*   \\
&\hspace{2cm}\times \underbrace{\left( \kappa_{i*}(\chi_{i1}3e^{3\IXtwo_r}) \left[ \kappa_{i*}(\chi_{i2}\IXthree_r )\kappa_{i*}(\chi_{i3}\Xtwo_r) - b_r\kappa_{i*} \left( \chi_{i1}b(\X_r+\IXthree_r ) \right) \right] \right)} \bigg)   \\
&\hspace{3.5cm}\times\partial_\nu\kappa_{i*}(\chi_{i4}\IXtwo_r) \bigg]    \\
&- b_r (e^{3\IXtwo_r}\IXthree_r ) + C^{-2\varepsilon}(M)   
\end{split} \end{equation*}
where the localization of the heat kernel avoided a double sum over the partition of unity indices. The error term in $C^{-2\varepsilon}(M)$ comes from the irregularity of $\partial_\nu\kappa_{i*}(\chi_{i4}\IXtwo_r)$.
We need that $\chi_{i1}\ll\chi_{i2}\ll\chi_{i3}\ll\chi_{i4}\ll\widetilde{\chi}_i\ll\kappa_{i*}\psi_i$, where $\sum_i \chi_{i1}=1$ and $s_\mu(x)$ denotes the $\mu$ component of $s(x)\in T_xM$. The term underbraced was already defined in the first part and equals
\begin{eqnarray*}
\kappa_{i*}(\chi_{i1}3e^{3\IXtwo_r}) \prec\left( \kappa_{i*}(\chi_{i2}\IXthree_r ) \prec \kappa_{i*}(\chi_{i3}\Xtwo_r)\right) + C^{-1/2-5\varepsilon}(M)
\end{eqnarray*}
where we singled out the most singular term which has regularity $C^{-1-2\varepsilon}$; it is the term with the lowest regularity of the list of terms that contribute to the singularities of the scalar product. Applying $\mathcal{L}^{-1}$ to the  $C^{-1/2-5\varepsilon}$ error term yields a term of regularity $C^{\frac{3}{2}-5\varepsilon}$ and differentiating with respect to $\partial_\mu$ yields a term of regularity  $C^{\frac{1}{2}-5\varepsilon}$ and multiplying with $\partial_\nu\kappa_{i*}(\chi_{i5}\IXtwo_r)\in C^{-2\varepsilon}$ yields a well-defined term of regularity $C^{-2\varepsilon}$. By the result of Proposition \ref{lemm:comm_paramultiplication_flat}, we rewrite the underbraced term as
\begin{eqnarray*}
\kappa_{i*}\big(\chi_{i1}3e^{3\IXtwo}\IXthree_r \big) \prec \kappa_{i*}(\chi_{i3}\Xtwo_r) + C^{-1/2-5\varepsilon}(M).
\end{eqnarray*}
The next step is to commute the heat inverse and the paramultiplication operator 
$$
P_{\chi_{i1}3e^{3\IXtwo_r}\IXthree_r }(f) \defeq \chi_{i1}3e^{3\IXtwo_r}\IXthree_r \prec f.
$$ 
Since $\chi_{i1}3e^{3\IXtwo_r}\IXthree_r \in C^{\frac{1}{2}-5\varepsilon}$, we have
\begin{equation*} \begin{split}
&\nabla\IXtwo_r \cdot \nabla v_{\textrm{ref},r} - b_r(e^{3\IXtwo_r}\IXthree_r )   \\
&= \sum_i \kappa_i^*\left[\psi_i (\kappa_{i*}g)^{\mu\nu}  \partial_\mu\kappa_{i*}\left(\kappa_i^* \underbrace{ \left( \kappa_{i*}(\chi_{i1}3e^{3\IXtwo_r}\IXthree_r )\right) \prec \kappa_{i*}(\widetilde{\chi}_i \mathcal{L}^{-1}\chi_{i3}\Xtwo_r) }  \right)  \partial_\nu\kappa_{i*}(\chi_{i4}\IXtwo_r) \right]   \\
&\quad- b_r(e^{3\IXtwo_r}\IXthree_r ) + C^{-2\varepsilon}
\end{split} \end{equation*}
where we use the commutator estimate from Proposition \ref{p:commutatorheat}. Its use requires some information on the regularity of $3e^{3\IXtwo_r} \IXthree_r$. However, we proved in \cite[Section 4]{BDFT} that $\IXtwo_r\in C^{1-2\varepsilon}([0,T]\times M), \IXthree_r \in C^{\frac{1}{2}-3\varepsilon}([0,T]\times M)$, where the regularity is measured in space-time parabolic H\"older-Zygmund spaces, with estimates that are uniform in $0<r\leq 1$ in $\mathbb{P}$-probability. We thus have $3e^{3\IXtwo_r} \IXthree_r \in C^{\frac{1}{2}-3\varepsilon}([0,T]\times M)$, and this implies that
\begin{eqnarray*}
\kappa_{i*}\widetilde{\chi}_i \mathcal{L}^{-1}\kappa_i^*\left( \left( \kappa_{i*}(\chi_{i1}3e^{3\IXtwo_r}\IXthree_r )\right)\prec \kappa_{i*}(\chi_{i3}\Xtwo_r)  \right)= \left( \kappa_{i*}(\chi_{i1}3e^{3\IXtwo}\IXthree_r )\right) \prec \kappa_{i*}(\widetilde{\chi}_i \mathcal{L}^{-1}\chi_{i4}\Xtwo_r)   \\
+ \Big[ \kappa_{i*} \widetilde{\chi}_i \mathcal{L}^{-1}\kappa_i^*, \kappa_{i*}(\chi_{i1}3e^{3\IXtwo_r}\IXthree_r )\prec\Big]\left( \kappa_{i*}(\chi_{i3}\Xtwo_r)  \right)
\end{eqnarray*}
where the term with the commutator 
$$ 
\Big[\kappa_{i*} \widetilde{\chi}_i \mathcal{L}^{-1}\kappa_i^*, \kappa_{i*}(\chi_{i1}3e^{3\IXtwo_r}\IXthree_r )\prec\Big] \left( \kappa_{i*}(\chi_{i3}\Xtwo_r)  \right) 
$$ 
belongs to $C_TC^{\frac{3}{2}-3\varepsilon}(M)$. So for the moment, the term we need to study simplifies as
\begin{equation*} \begin{split}
&\nabla\IXtwo_r \cdot \nabla v_{\textrm{ref},r} - b_r(e^{3\IXtwo_r}\IXthree_r )   \\
&= \sum_i \kappa_i^*\left[\psi_i (\kappa_{i*}g)^{\mu\nu}  \partial_\mu \kappa_{i*}\left(\kappa_i^* \widetilde{\psi}_i \underbrace{ \left( \kappa_{i*}(\chi_{i1}3e^{3\IXtwo_r}\IXthree_r )\right) \prec \kappa_{i*}(\widetilde{\chi}_i \mathcal{L}^{-1}\Xtwo_r) }  \right)  \partial_\nu\kappa_{i*}(\chi_{i4}\IXtwo_r) \right]   \\
&\quad- b(e^{3\IXtwo_r}\IXthree_r ) + C^{-2\varepsilon}(M)
\end{split} \end{equation*}
where we inserted a cut-off function $\widetilde{\psi}_i$ and removed the $\chi_{i3}$ in front of $\Xtwo_r$ which does not affect regularities thanks to Lemma \ref{l:keylocalization} and the localization Lemma for the heat operator.

\smallskip

Our next goal will be to commute the partial derivative $\partial_\mu$ with the paramutiplication operator, we can already commute $\partial_\mu$ with $\widetilde{\psi}_i$ which yields a first-order differential operator $L$ with smooth coefficients and compactly supported in $\kappa_i(U_i)$. So everything boils down to studying the regularizing properties of some commutator on $\mathbb{R}^d$ of the form
$$
[L,P_U]  
$$ 
where 
$$
U=\kappa_{i*}(\chi_{i1}3e^{3\IXtwo}\IXthree ), \quad P_U = \kappa_{i*}(\chi_{i1}3e^{3\IXtwo}\IXthree )\prec
$$ 
is a paramultiplication operator on $\mathbb{R}^d$. By the results of Lemma \ref{lem:pararegclassicalsymbols}, the paramultiplication operator $M_{U}$ is an element in the class $B^0_{\frac{1}{2}-3\varepsilon}(\mathbb{R}^d)$ and so we have
$$
[L,P_U] = A + \Psi^{-\infty}
$$ 
where $A\in S^{1-(\frac{1}{2}-3\varepsilon)}_{1,1}(\bbR^d)$ is a pararegularized operator. This implies that 
$$
[L,M_{U}] \kappa_{i*}(\widetilde{\chi}_i \mathcal{L}^{-1}V) \in C^{\frac{1}{2}-5\varepsilon}(\bbR^d)
$$ 
 which is under control. We are thus reduced to the study of  
\begin{equation*} \begin{split}
\sum_i \kappa_i^* &\left[\psi_i (\kappa_{i*}g)^{\mu\nu}  \kappa_{i*}\left(\kappa_i^* \widetilde{\psi}_i \underbrace{ \left( \kappa_{i*}(\chi_{i1}3e^{3\IXtwo}\IXthree_r )\right) \prec \partial_\mu \kappa_{i*}(\widetilde{\chi}_i \mathcal{L}^{-1}\Xtwo_r) }  \right)  \partial_\nu\kappa_{i*}(\chi_{i4}\IXtwo_r) \right]   \\ 
&\simeq \sum_i \kappa_i^*\left[\psi_i (\kappa_{i*}g)^{\mu\nu} \kappa_{i*}\left(\kappa_i^* \widetilde{\psi}_i \underbrace{ \left( \kappa_{i*}(\chi_{i1}3e^{3\IXtwo_r}\IXthree_r )\right) \prec \partial_\mu \kappa_{i*}(\widetilde{\chi}_i \IXtwo_r) }  \right) \odot \partial_\nu\kappa_{i*}(\chi_{i4}\IXtwo_r) \right],
\end{split} \end{equation*}
with equality up to an element in $C^{-4\varepsilon}(M)$. We can use the Lemma \ref{lemThmCorrector} on Gubinelli, Imkeller \& Perkowski's corrector to control in $C^{\frac{1}{2}-3\varepsilon-2\varepsilon-2\varepsilon}(\bbR^d)$ the difference
\begin{equation*} \begin{split}
\left( \left( \kappa_{i*}(\chi_{i1}3e^{3\IXtwo_r}\IXthree_r )\right) \prec \partial_\mu \kappa_{i*}(\widetilde{\chi}_i \IXtwo_r)   \right) &\odot \partial_\nu\kappa_{i*}(\chi_{i4}\IXtwo_r)   \\
&- \kappa_{i*}(\chi_{i1}3e^{3\IXtwo_r}\IXthree_r )\left( \partial_\mu \kappa_{i*}(\widetilde{\chi}_i\IXtwo_r)\odot \partial_\nu\kappa_{i*}(\chi_{i4}\IXtwo_r)\right) 
\end{split} \end{equation*}
So, finally, the expression we need to simplify reads
\begin{equation*} \begin{split}
&\nabla\IXtwo_r \cdot \nabla v_{\textrm{ref},r} - b_r(e^{3\IXtwo_r}\IXthree_r )   \\
&\simeq \sum_i \kappa_i^*\left[\psi_i (\kappa_{i*}g)^{\mu\nu} \kappa_{i*}(\chi_{i1}3e^{3\IXtwo_r}\IXthree_r ) \left(  \partial_\mu \kappa_{i*}(\widetilde{\chi}_{i} \IXtwo_r)   \odot \partial_\nu\kappa_{i*}(\chi_{i4}\IXtwo_r) \right)\right] - b_r(e^{3\IXtwo_r}\IXthree_r )
\end{split} \end{equation*}
up to a term in $C^{-4\varepsilon}(M)$ -- where we could remove $\widetilde{\psi}_i$ again thanks to Lemma~\ref{l:keylocalization}. Now we can conclude thanks to the results of \cite[Section 4]{BDFT} which tell us that $\left\langle \nabla \IXtwo_r\odot_i \nabla\IXtwo_r\right\rangle$ has the same regularity as $\IXtwo\odot_i \Delta\IXtwo $ and must be renormalized with the same counterterm. More precisely, for every chart index $i$ the term
\begin{eqnarray*}
(\kappa_{i*}g)^{\mu\nu} \left(  \partial_\mu \kappa_{i*}(\widetilde{\chi}_{i} \IXtwo_r)   \odot \partial_\nu\kappa_{i*}(\chi_{i4}\IXtwo_r)\right) - \chi_{i4}b_r
\end{eqnarray*}
has a limit in $C^{-4\varepsilon}(M)$ in $\mathbb{P}$-probability.
\smallskip

%%----------------------------------------------------------------------%%
\subsection{The $\Phi^4$ vectorial model in the bundle case$\boldmath{.}$ \hspace{0.1cm}}
\label{sect_vect_model}
%%----------------------------------------------------------------------%%

We describe in this section a general vector bundle framework for the vectorial $\phi^4_3$ measures -- this model is sometimes called the $O(N)$-vector model in the physics literature. We summarize what changes need to be done in the bundle case. 

\smallskip

First, we consider a Hermitian vector bundle $E\mapsto M$, smooth, respectively $\mathcal{C}^\alpha$ or distributional, sections of $E$ is denoted by $\Gamma^\infty(M,E)=C^\infty(E)$, respectively $\Gamma^{\alpha}(M,E)=\mathcal C^\alpha(E)$ or $\Gamma^{-\infty}(M,E)=\mathcal{D}^\prime(E)$. We are given some generalized Laplacian $\Delta_g$; this means $-\Delta_g$ is a symmetric differential operator acting on $C^\infty(E)$  such that its principal symbol is positive-definite, symmetric, diagonal. In any local chart this symbol reads $g_{\mu\nu}(x)\xi^\mu\xi^\nu\otimes Id_{End(E_x)}$ as a function 
on $C^\infty(T^*M,End(E))$ where $g_{\mu\nu}$ is the induced Riemannian cometric on $T^*M$. We furthermore assume that 
$$
-\left\langle \varphi,\Delta_g\varphi \right\rangle_{L^2(E)}\geqslant 0
$$ 
for all $\varphi\in C^\infty(E)$, so $-\Delta_g$ is a non-negative, elliptic, second-order operator. The corresponding heat operator now reads 
$$
\mathcal L=\partial_t+1-\Delta_g.
$$ 
It is well-known from elliptic theory that $P\defeq1-\Delta_g$ has a self-adjoint extension as an operator from $H^2(E)$ into $L^2(E)$, that it has a compact self-adjoint resolvent with discrete real spectrum in $(-\infty,0]$ and that the eigenfunctions of $P$ form an $L^2$-basis of the space $L^2(E)$ of $L^2$ sections of $E$. In this case, we can define some $E$-valued space (resp. space-time) white noise as 
$$
\xi=\sum_{\lambda\in \sigma(P)} \gamma_\lambda f_\lambda
$$ 
where the sum runs over the eigenvalues of $P$, the functions $f_\lambda$ are the eigensections of $P$ and $\gamma_\lambda \sim \mathcal{N}(0,1)$ are independent Gaussian random variables (resp. one dimensional white noises). The $E$-valued Gaussian free field reads $P^{-\frac{1}{2}}(\xi)$ with $\xi$ a space white noise. The goal is to make sense of the Gibbs measure
$$
F\mapsto\frac{ \mathbb{E}_{GFF}\left(e^{-\int_M\lambda \langle \varphi,\varphi \rangle_E^2 } F(\varphi)\right)}{\mathbb{E}_{GFF}\left(e^{-\int_M\lambda \langle \varphi,\varphi \rangle_E^2 } \right)}
$$
where $\left\langle \cdot,\cdot \right\rangle_E$ denotes the Hermitian scalar product of $E$, the interaction term now reads $\langle \varphi,\varphi \rangle_E^2 $, and $\lambda\in C^\infty(M,\mathbb{R}_{>0})$ stands for the coupling function. The corresponding vectorial $\Phi^4_3$ renormalized regularized stochastic PDE reads
\begin{equation*}
\mathcal{L} u_r = \xi_r - \lambda \langle u_r, u_r \rangle u_r + \big(\mathrm{rk}(E)+2\big)(\lambda a_r-\lambda^2 b_r)u_r
\end{equation*}
where $u_r$ is an $E$-valued random distribution over space time $\mathbb{R}\times M$ and $\xi_r= e^{-rP}(\xi)$ with $\xi$ a space-time white noise. All $E$-valued Besov (resp. H\"older, Sobolev) distributions are defined almost exactly like in the scalar case using local charts on $M$ and local trivializations of $E\mapsto M$. We denote them by $\mathcal{B}_{p,q}^s(E)$, respectively $\mathcal{C}^s(E),H^s(E)$. Because the analytical properties of the heat kernel $(e^{-tP})_{t\geqslant 0}$ acting on sections of $E$ are the same as in the scalar case, both inverses $\mathcal{L}^{-1}$ and $\underline{\mathcal{L}}^{-1}$ are well-defined with the same definitions and they have the same analytical properties as in the scalar case. The symbol $\X$ still denotes $\underline{\mathcal{L}}^{-1}\xi_r$. Recall the classical results on the asymptotic expansion of the heat kernel in the bundle case~\cite{BerlineGetzlerVergne, Gilkey, Roe}. The key idea is that the singularities are valued in diagonal elements in $C^\infty(End(E))$. We immediately find that the covariant Wick renormalization for the cubic power reads 
$$
\begin{tikzpicture}[scale=0.3,baseline=0cm]
\node at (0,0) [dot] (0) {};
\node at (-0.4,0.5)  [noise] (noise1) {};
\node at (0,0.7)  [noise] (noise2) {};
\node at (0.4,0.5)  [noise] (noise3) {};
\draw[K] (0) to (noise1);
\draw[K] (0) to (noise2);
\draw[K] (0) to (noise3);
\end{tikzpicture}_r\defeq \langle \begin{tikzpicture}[scale=0.3,baseline=0cm] \node at (0,0)  [dot] (1) {}; \node at (0,0.8)  [noise] (2) {}; \draw[K] (1) to (2); \end{tikzpicture}_{r},\begin{tikzpicture}[scale=0.3,baseline=0cm] \node at (0,0)  [dot] (1) {}; \node at (0,0.8)  [noise] (2) {}; \draw[K] (1) to (2); \end{tikzpicture}_{r} \rangle_E \begin{tikzpicture}[scale=0.3,baseline=0cm] \node at (0,0)  [dot] (1) {}; \node at (0,0.8)  [noise] (2) {}; \draw[K] (1) to (2); \end{tikzpicture}_{r}
- \big(\text{rk}(E)+2\big) a_r
\begin{tikzpicture}[scale=0.3,baseline=0cm] \node at (0,0)  [dot] (1) {}; \node at (0,0.8)  [noise] (2) {}; \draw[K] (1) to (2); \end{tikzpicture}_{r}
$$ 
for the {\it same universal constant} $a_r$ as in the scalar case and $\text{rk}(E)$ is the rank of the vector bundle $E$. Beware that the cubic vertex has a new meaning, it is a Hermitian scalar product in the fibres of $E$ times an element of a fibre of $E$. The new stochastic tree now reads
$$
\begin{tikzpicture}[scale=0.3,baseline=0cm] \node at (0,0) [dot] (0) {}; \node at (0,0.5) [dot] (1) {}; \node at (-0.4,1)  [noise] (noise1) {}; \node at (0,1.2)  [noise] (noise2) {}; \node at (0.4,1)  [noise] (noise3) {}; \draw[K] (0) to (1); \draw[K] (1) to (noise1); \draw[K] (1) to (noise2); \draw[K] (1) to (noise3); \end{tikzpicture}_{r,\lambda}\defeq\underline{\mathcal{L}}^{-1}(\lambda\begin{tikzpicture}[scale=0.3,baseline=0cm]
\node at (0,0) [dot] (0) {};
\node at (-0.4,0.5)  [noise] (noise1) {};
\node at (0,0.7)  [noise] (noise2) {};
\node at (0.4,0.5)  [noise] (noise3) {};
\draw[K] (0) to (noise1);
\draw[K] (0) to (noise2);
\draw[K] (0) to (noise3);
\end{tikzpicture}_r)\defeq\underline{\mathcal{L}}^{-1}\Big(\lambda \left( \langle
\begin{tikzpicture}[scale=0.3,baseline=0cm] \node at (0,0)  [dot] (1) {}; \node at (0,0.8)  [noise] (2) {}; \draw[K] (1) to (2); \end{tikzpicture}_{r}, 
\begin{tikzpicture}[scale=0.3,baseline=0cm] \node at (0,0)  [dot] (1) {}; \node at (0,0.8)  [noise] (2) {}; \draw[K] (1) to (2); \end{tikzpicture}_{r} 
\rangle_E \begin{tikzpicture}[scale=0.3,baseline=0cm] \node at (0,0)  [dot] (1) {}; \node at (0,0.8)  [noise] (2) {}; \draw[K] (1) to (2); \end{tikzpicture}_{r} - \big(\text{rk}(E)+2\big) a_r
\begin{tikzpicture}[scale=0.3,baseline=0cm] \node at (0,0)  [dot] (1) {}; \node at (0,0.8)  [noise] (2) {}; \draw[K] (1) to (2); \end{tikzpicture}_{r} \right) \Big).
$$
As in the scalar case, we first decompose $u_r$ as
$$
u_r = \begin{tikzpicture}[scale=0.3,baseline=0cm] \node at (0,0)  [dot] (1) {}; \node at (0,0.8)  [noise] (2) {}; \draw[K] (1) to (2); \end{tikzpicture}_{r}-\begin{tikzpicture}[scale=0.3,baseline=0cm] \node at (0,0) [dot] (0) {}; \node at (0,0.5) [dot] (1) {}; \node at (-0.4,1)  [noise] (noise1) {}; \node at (0,1.2)  [noise] (noise2) {}; \node at (0.4,1)  [noise] (noise3) {}; \draw[K] (0) to (1); \draw[K] (1) to (noise1); \draw[K] (1) to (noise2); \draw[K] (1) to (noise3); \end{tikzpicture}_{r,\lambda} + R_r.
$$ 
Writing the equation satisfied by the remainder term $R_r$, we see that the new term we need to eliminate in the bundle case is the borderline ill-defined product 
$$
-\lambda \langle
\begin{tikzpicture}[scale=0.3,baseline=0cm] \node at (0,0)  [dot] (1) {}; \node at (0,0.8)  [noise] (2) {}; \draw[K] (1) to (2); \end{tikzpicture}_{r}, 
\begin{tikzpicture}[scale=0.3,baseline=0cm] \node at (0,0)  [dot] (1) {}; \node at (0,0.8)  [noise] (2) {}; \draw[K] (1) to (2); \end{tikzpicture}_{r} 
\rangle_E R_r - 2\lambda \langle R_r, 
\begin{tikzpicture}[scale=0.3,baseline=0cm] \node at (0,0)  [dot] (1) {}; \node at (0,0.8)  [noise] (2) {}; \draw[K] (1) to (2); \end{tikzpicture}_{r} 
\rangle_E
\begin{tikzpicture}[scale=0.3,baseline=0cm] \node at (0,0)  [dot] (1) {}; \node at (0,0.8)  [noise] (2) {}; \draw[K] (1) to (2); \end{tikzpicture}_{r}.
$$ 
One major difference from the scalar case in defining the  Cole--Hopf transform  is that  we need to introduce some random endomorphism $\begin{tikzpicture}[scale=0.3,baseline=0cm]
\node at (0,0) [dot] (0) {};
\node at (0.3,0.6)  [noise] (noise1) {};
\node at (-0.3,0.6)  [noise] (noise2) {};
\draw[K] (0) to (noise1);
\draw[K] (0) to (noise2);
\end{tikzpicture}_r$ acting on smooth sections $C^\infty(E)$ as
$$ 
\begin{tikzpicture}[scale=0.3,baseline=0cm]
\node at (0,0) [dot] (0) {};
\node at (0.3,0.6)  [noise] (noise1) {};
\node at (-0.3,0.6)  [noise] (noise2) {};
\draw[K] (0) to (noise1);
\draw[K] (0) to (noise2);
\end{tikzpicture}_r: T\in C^\infty(E)\mapsto \langle
\begin{tikzpicture}[scale=0.3,baseline=0cm] \node at (0,0)  [dot] (1) {}; \node at (0,0.8)  [noise] (2) {}; \draw[K] (1) to (2); \end{tikzpicture}_{r}, 
\begin{tikzpicture}[scale=0.3,baseline=0cm] \node at (0,0)  [dot] (1) {}; \node at (0,0.8)  [noise] (2) {}; \draw[K] (1) to (2); \end{tikzpicture}_{r} 
\rangle_E T+ \langle T, 
\begin{tikzpicture}[scale=0.3,baseline=0cm] \node at (0,0)  [dot] (1) {}; \node at (0,0.8)  [noise] (2) {}; \draw[K] (1) to (2); \end{tikzpicture}_{r} 
\rangle_E
\begin{tikzpicture}[scale=0.3,baseline=0cm] \node at (0,0)  [dot] (1) {}; \node at (0,0.8)  [noise] (2) {}; \draw[K] (1) to (2); \end{tikzpicture}_{r} - \big(\mathrm{rk}(E)+2\big)a_rT \in \mathcal{D}^\prime(E).
$$
Observe that with this definition, one has indeed 
$$
3\begin{tikzpicture}[scale=0.3,baseline=0cm]
\node at (0,0) [dot] (0) {};
\node at (-0.4,0.5)  [noise] (noise1) {};
\node at (0,0.7)  [noise] (noise2) {};
\node at (0.4,0.5)  [noise] (noise3) {};
\draw[K] (0) to (noise1);
\draw[K] (0) to (noise2);
\draw[K] (0) to (noise3);
\end{tikzpicture}_r = \begin{tikzpicture}[scale=0.3,baseline=0cm]
\node at (0,0) [dot] (0) {};
\node at (0.3,0.6)  [noise] (noise1) {};
\node at (-0.3,0.6)  [noise] (noise2) {};
\draw[K] (0) to (noise1);
\draw[K] (0) to (noise2);
\end{tikzpicture}_r \begin{tikzpicture}[scale=0.3,baseline=0cm] \node at (0,0)  [dot] (1) {}; \node at (0,0.8)  [noise] (2) {}; \draw[K] (1) to (2); \end{tikzpicture}_r - 2\big(\mathrm{rk}(E)+2\big) \begin{tikzpicture}[scale=0.3,baseline=0cm] \node at (0,0)  [dot] (1) {}; \node at (0,0.8)  [noise] (2) {}; \draw[K] (1) to (2); \end{tikzpicture}_r;
$$ 
this is consistent with the fact that $\Xtwo_r$ is the renormalized version of $3\begin{tikzpicture}[scale=0.3,baseline=0cm] \node at (0,0)  [dot] (1) {}; \node at (0,0.8)  [noise] (2) {}; \draw[K] (1) to (2); \end{tikzpicture}_r^2$. The bundle morphism $\begin{tikzpicture}[scale=0.3,baseline=0cm]
\node at (0,0) [dot] (0) {};
\node at (0.3,0.6)  [noise] (noise1) {};
\node at (-0.3,0.6)  [noise] (noise2) {};
\draw[K] (0) to (noise1);
\draw[K] (0) to (noise2);
\end{tikzpicture}_r$ 
is local since it is $C^\infty(M)$-linear and \textbf{symmetric} in the sense that for every pair of sections $T_1,T_2\in C^\infty(E)^2$, $$ \left\langle T_2, \begin{tikzpicture}[scale=0.3,baseline=0cm]
\node at (0,0) [dot] (0) {};
\node at (0.3,0.6)  [noise] (noise1) {};
\node at (-0.3,0.6)  [noise] (noise2) {};
\draw[K] (0) to (noise1);
\draw[K] (0) to (noise2);
\end{tikzpicture}_rT_1\right\rangle_E= \left\langle \begin{tikzpicture}[scale=0.3,baseline=0cm]
\node at (0,0) [dot] (0) {};
\node at (0.3,0.6)  [noise] (noise1) {};
\node at (-0.3,0.6)  [noise] (noise2) {};
\draw[K] (0) to (noise1);
\draw[K] (0) to (noise2);
\end{tikzpicture}_rT_2, T_1\right\rangle_E\in \mathcal{D}^\prime(M).$$ Hence, it can be identified canonically with some random element in $\mathcal{D}^\prime(M,End(E))$. This random element allows us to introduce a new vectorial Cole-Hopf transform in the bundle case.

\smallskip

\begin{defi*} [Vectorial Cole--Hopf transform]
 Our \textbf{vectorial Cole-Hopf transform} is expressed in terms of the above random endomorphism $\begin{tikzpicture}[scale=0.3,baseline=0cm]
\node at (0,0) [dot] (0) {};
\node at (0.3,0.6)  [noise] (noise1) {};
\node at (-0.3,0.6)  [noise] (noise2) {};
\draw[K] (0) to (noise1);
\draw[K] (0) to (noise2);
\end{tikzpicture}_r$ as
$$ 
R_r = e^{ -\underline{\mathcal{L}}^{-1}(\lambda \begin{tikzpicture}[scale=0.3,baseline=0cm]
\node at (0,0) [dot] (0) {};
\node at (0.3,0.6)  [noise] (noise1) {};
\node at (-0.3,0.6)  [noise] (noise2) {};
\draw[K] (0) to (noise1);
\draw[K] (0) to (noise2);
\end{tikzpicture}_r)} (v_r)
$$ 
where similar stochastic estimates as in the scalar case allow proving that $\underline{\mathcal{L}}^{-1}(\lambda \begin{tikzpicture}[scale=0.3,baseline=0cm]
\node at (0,0) [dot] (0) {};
\node at (0.3,0.6)  [noise] (noise1) {};
\node at (-0.3,0.6)  [noise] (noise2) {};
\draw[K] (0) to (noise1);
\draw[K] (0) to (noise2);
\end{tikzpicture}_r)$ 
is almost surely in 
$\mathcal{C}^{1-\varepsilon}\big(M, End(E)\big)$
for all $\varepsilon>0$ and is also symmetric in the above sense. 
\end{defi*}

\smallskip

Accordingly, one also defines
$$
v_{r,\textrm{ref}} \defeq  \, \underline{\mathcal{L}}^{-1}\Big(e^{\underline{\mathcal{L}}^{-1}(\lambda\begin{tikzpicture}[scale=0.3,baseline=0cm]
\node at (0,0) [dot] (0) {};
\node at (0.3,0.6)  [noise] (noise1) {};
\node at (-0.3,0.6)  [noise] (noise2) {};
\draw[K] (0) to (noise1);
\draw[K] (0) to (noise2);
\end{tikzpicture}_r)} \Big\{   \begin{tikzpicture}[scale=0.3,baseline=0cm]
\node at (0,0) [dot] (0) {};
\node at (0.3,0.6)  [noise] (noise1) {};
\node at (-0.3,0.6)  [noise] (noise2) {};
\draw[K] (0) to (noise1);
\draw[K] (0) to (noise2);
\end{tikzpicture}_r(\begin{tikzpicture}[scale=0.3,baseline=0cm]
\node at (0,0) [dot] (0) {};
\node at (0,0.5) [dot] (1) {};
\node at (-0.4,1)  [noise] (noise1) {};
\node at (0,1.2)  [noise] (noise2) {};
\node at (0.4,1)  [noise] (noise3) {};
\draw[K] (0) to (1);
\draw[K] (1) to (noise1);
\draw[K] (1) to (noise2);
\draw[K] (1) to (noise3);
\end{tikzpicture}_{r,\lambda} ) - \big(\mathrm{rk}(E)+2\big) b_r\big( \begin{tikzpicture}[scale=0.3,baseline=0cm]
\node at (0,0)  [dot] (1) {};
\node at (0,0.8)  [noise] (2) {};
\draw[K] (1) to (2);
\end{tikzpicture}_r + \begin{tikzpicture}[scale=0.3,baseline=0cm]
\node at (0,0) [dot] (0) {};
\node at (0,0.5) [dot] (1) {};
\node at (-0.4,1)  [noise] (noise1) {};
\node at (0,1.2)  [noise] (noise2) {};
\node at (0.4,1)  [noise] (noise3) {};
\draw[K] (0) to (1);
\draw[K] (1) to (noise1);
\draw[K] (1) to (noise2);
\draw[K] (1) to (noise3);
\end{tikzpicture}_{r,\lambda}\,\big) \Big\}\Big).
$$
This quantity enjoys the same estimates in the bundle case as in the scalar case, and it is almost surely in $C_T\mathcal{C}^{1-\varepsilon}\big(M, End(E)\big)$ for all $\varepsilon>0$. Similarly, define 
\begin{equation*} \begin{split}
&\tau_{1r} :T\in C^\infty(E)\mapsto \langle \begin{tikzpicture}[scale=0.3,baseline=0cm]
\node at (0,0)  [dot] (1) {};
\node at (0,0.8)  [noise] (2) {};
\draw[K] (1) to (2);
\end{tikzpicture}_r \odot \begin{tikzpicture}[scale=0.3,baseline=0cm] \node at (0,0) [dot] (0) {}; \node at (0,0.5) [dot] (1) {}; \node at (-0.4,1)  [noise] (noise1) {}; \node at (0,1.2)  [noise] (noise2) {}; \node at (0.4,1)  [noise] (noise3) {}; \draw[K] (0) to (1); \draw[K] (1) to (noise1); \draw[K] (1) to (noise2); \draw[K] (1) to (noise3); \end{tikzpicture}_{r,\lambda} \rangle_E T+ \langle T,(\begin{tikzpicture}[scale=0.3,baseline=0cm]
\node at (0,0)  [dot] (1) {};
\node at (0,0.8)  [noise] (2) {};
\draw[K] (1) to (2);
\end{tikzpicture}_r\rangle_E \odot \begin{tikzpicture}[scale=0.3,baseline=0cm] \node at (0,0) [dot] (0) {}; \node at (0,0.5) [dot] (1) {}; \node at (-0.4,1)  [noise] (noise1) {}; \node at (0,1.2)  [noise] (noise2) {}; \node at (0.4,1)  [noise] (noise3) {}; \draw[K] (0) to (1); \draw[K] (1) to (noise1); \draw[K] (1) to (noise2); \draw[K] (1) to (noise3); \end{tikzpicture}_{r,\lambda})+\langle T,(\begin{tikzpicture}[scale=0.3,baseline=0cm] \node at (0,0) [dot] (0) {}; \node at (0,0.5) [dot] (1) {}; \node at (-0.4,1)  [noise] (noise1) {}; \node at (0,1.2)  [noise] (noise2) {}; \node at (0.4,1)  [noise] (noise3) {}; \draw[K] (0) to (1); \draw[K] (1) to (noise1); \draw[K] (1) to (noise2); \draw[K] (1) to (noise3); \end{tikzpicture}_{r,\lambda}\rangle_E \odot \begin{tikzpicture}[scale=0.3,baseline=0cm]
\node at (0,0)  [dot] (1) {};
\node at (0,0.8)  [noise] (2) {};
\draw[K] (1) to (2);
\end{tikzpicture}_r)   \\
&\tau_{2r}:T\in C^\infty(E)\mapsto
\begin{tikzpicture}[scale=0.3,baseline=0cm]
\node at (0,0) [dot] (0) {};
\node at (0.3,0.6)  [noise] (noise1) {};
\node at (-0.3,0.6)  [noise] (noise2) {};
\draw[K] (0) to (noise1);
\draw[K] (0) to (noise2);
\end{tikzpicture}_r \odot \underline{\mathcal{L}}^{-1}(\lambda\begin{tikzpicture}[scale=0.3,baseline=0cm]
\node at (0,0) [dot] (0) {};
\node at (0.3,0.6)  [noise] (noise1) {};
\node at (-0.3,0.6)  [noise] (noise2) {};
\draw[K] (0) to (noise1);
\draw[K] (0) to (noise2);
\end{tikzpicture}_r(T)) - \big(\mathrm{rk}(E)+2\big) b_r T   \\
&\tau_{3r}:T\in C^\infty(E)\mapsto\nabla\underline{\mathcal{L}}^{-1}(\lambda
\begin{tikzpicture}[scale=0.3,baseline=0cm]
\node at (0,0) [dot] (0) {};
\node at (0.3,0.6)  [noise] (noise1) {};
\node at (-0.3,0.6)  [noise] (noise2) {};
\draw[K] (0) to (noise1);
\draw[K] (0) to (noise2);
\end{tikzpicture}_r)\odot\big(\nabla\underline{\mathcal{L}}^{-1}(\lambda\begin{tikzpicture}[scale=0.3,baseline=0cm]
\node at (0,0) [dot] (0) {};
\node at (0.3,0.6)  [noise] (noise1) {};
\node at (-0.3,0.6)  [noise] (noise2) {};
\draw[K] (0) to (noise1);
\draw[K] (0) to (noise2);
\end{tikzpicture}_r(T))\big) - \big(\mathrm{rk}(E)+2\big) b_r T   \\
&\tau_{4r}\defeq \begin{tikzpicture}[scale=0.3,baseline=0cm]
\node at (0,0) [dot] (0) {};
\node at (0.3,0.6)  [noise] (noise1) {};
\node at (-0.3,0.6)  [noise] (noise2) {};
\draw[K] (0) to (noise1);
\draw[K] (0) to (noise2);
\end{tikzpicture}_r\odot \begin{tikzpicture}[scale=0.3,baseline=0cm] \node at (0,0) [dot] (0) {}; \node at (0,0.5) [dot] (1) {}; \node at (-0.4,1)  [noise] (noise1) {}; \node at (0,1.2)  [noise] (noise2) {}; \node at (0.4,1)  [noise] (noise3) {}; \draw[K] (0) to (1); \draw[K] (1) to (noise1); \draw[K] (1) to (noise2); \draw[K] (1) to (noise3); \end{tikzpicture}_{r,\lambda} - \big(\mathrm{rk}(E)+2\big) b_r \begin{tikzpicture}[scale=0.3,baseline=0cm] \node at (0,0)  [dot] (1) {}; \node at (0,0.8)  [noise] (2) {}; \draw[K] (1) to (2); \end{tikzpicture}_{r} \,.
\end{split} \end{equation*}
The map $\tau_{1r}$ is local and belongs to $C_T\mathcal{C}^{0-}(M,End(E))$, while $\tau_{2r}$ and $\tau_{3r}$ are not local, and only belong to $C_TL(C^{\infty}(E),\mathcal{C}^{0-}(E))$. Finally, it holds $\tau_{4r}\in C_T\mathcal{C}^{-1/2-}(E)$. Contrary to the second Wick power, we do not need $\tau_{2r}$ and $\tau_{3r}$ to be local, since we do not aim to raise them to some power or take their exponential; we always evaluate them at some $T\in \mathcal{C}^\alpha(E)$.

\subsubsection{Proof of the counterterms for the stochastic objects$\boldmath{.}$ \hspace{0.1cm}}

In this section, we aim to prove that we correctly defined our stochastic objects, by subtracting the correct divergent part. We prove that this is indeed the case for $\Xtwo_r$ and $\tau_{2r}$, while the proofs for the other objects are similar, and left to the reader. 

In the sequel, we always localize the functions in some open $U_i$, multiplying them by $\chi_i$. Moreover, by locality, we have that $\Xtwo_r\in C_T\mathcal{C}^{-1-}(M,End(E))$. In particular, using the local trivialization $E|_{U_i}\simeq U_i\times \mathbb R^{\mathrm{rk}(E)}$, we have that $\chi_i\Xtwo_r\in C_T\mathcal{C}^{-1-}(U_i,End(\mathbb R^{\mathrm{rk}(E)}))$, and we can work in coordinates, so that we rather work with $[\chi_i\Xtwo_r]_{ab}\in C_T\mathcal{C}^{-1-}(U_i,\mathbb R)$. We have the following expression for $[\chi_i\Xtwo_r]_{ab}$
\begin{equation} \label{eq:aux} \begin{split}
[\chi_i\Xtwo_r]_{ab} &= [\chi_i\X_r]_c[\widetilde{\chi}_i\X_r]_c\delta_{ab} + 2[\chi_i\X_r]_a[\widetilde{\chi}_i\X_r]_b - \big(\mathrm{rk}(E)+2\big) a_r \delta_{ab}   \\
&=\; :\hspace{-0.1cm}[\chi_i\X_r]_c[\widetilde{\chi}_i\X_r]_c\hspace{-0.07cm}: \delta_{ab} + 2:\hspace{-0.1cm}[\chi_i\X_r]_a[\widetilde{\chi}_i\X_r]_b\hspace{-0.07cm}:
\end{split} \end{equation}
where as usual $\widetilde{\chi_i}=1$ on supp$(\chi_i)$. This confirms the coefficient $\mathrm{rk}(E)+2$ in front of $a_r$, since indeed we need two $a_r $'s to renormalize the product $2[\chi_i\X_r]_a[\widetilde{\chi}_i\X_r]_b$ and $\mathrm{rk}(E)$ $a_r $'s to  renormalize the product $[\chi_i\X_r]_c[\widetilde{\chi}_i\X_r]_c$, since the sum over $c$ contains $\mathrm{rk}(E)$ terms. Finally, note the fact that $\chi_i\Xtwo_r$ is a symmetric endomorphism, which will be of importance in the next paragraph.

\smallskip

Let us now deal with $\tau_{2r}$. We would like to establish a similar expression for $\underline{\mathcal{L}}^{-1}(\lambda \Xtwo_r)$. A small twist is given by the fact that contrary to $\Xtwo_r$, this last object is non-local, in the sense that even if we localize $\Xtwo_r$ in the open $U_i$, the convolution with $\mathcal{L}^{-1}$ might smear around $U_i$, so that we might lose the local trivialization. It turns out that we will prove that this does not happen at the level of the divergent part, which confirms that renormalization is local. Indeed, if we localize $\underline{\mathcal{L}}^{-1}(\lambda \Xtwo_r)$ as $\sum_i \chi_i\underline{\mathcal{L}}^{-1}(\lambda \Xtwo_r)$, one has, using the commutator $ \underline{\mathcal{L}}^{-1}(f\prec g)\simeq f\prec\underline{\mathcal{L}}^{-1}( g)$,
\begin{align}\nonumber
    \underline{\mathcal{L}}^{-1}(\lambda \Xtwo_r) &= \sum_i \chi_i\underline{\mathcal{L}}^{-1}(\lambda \Xtwo_r)\simeq_\infty\sum_i\chi_i\big(\widetilde\chi_i\prec\underline{\mathcal{L}}^{-1}(\lambda \Xtwo_r)\big)\simeq_\infty \sum_i\chi_i\underline{\mathcal{L}}^{-1}(\widetilde\chi_i\prec(\lambda \Xtwo_r))\\&\simeq_\infty\sum_i\chi_i\underline{\mathcal{L}}^{-1}(\widetilde\chi_i\lambda \Xtwo_r)\label{eq:aux2}  \,.
\end{align}
Since $\widetilde\chi_i\lambda \Xtwo_r\in C_T\mathcal{C}^{-1-}\big(U_i,End(\mathbb R^{\mathrm{rk}(E)})\big), \chi_i\underline{\mathcal{L}}^{-1}(\widetilde\chi_i\lambda \Xtwo_r)\in C_T\mathcal{C}^{1-}\big(U_i,End(\mathbb R^{\mathrm{rk}(E)})\big)$ and by linearity, we can have $\sum_i\chi_i\underline{\mathcal{L}}^{-1}(\widetilde\chi_i\lambda \Xtwo_r) \in C_T\mathcal{C}^{1-}\big(M,End(\mathbb R^{\mathrm{rk}(E)})\big)$, even if the target space may rotate with the different local trivializations that we obtain when varying $i$. 

In the sequel we refer to $A\in End(\mathbb R^{\mathrm{rk}(E)})$ as $(A_{ab})_{ab}$. The important fact with this definition is that we have the decomposition
\begin{align*}       
\underline{\mathcal{L}}^{-1}(\lambda \Xtwo_r) = \sum_i\Big[\chi_i\big(\underline{\mathcal{L}}^{-1}([\widetilde\chi_i\lambda \Xtwo_r]_{ab})\big)_{ab} + C_TC^\infty(U_i,End(E))\Big],
\end{align*}
with $\big(\underline{\mathcal{L}}^{-1}([\chi_i\lambda \Xtwo_r]_{ab})\big)_{ab}\in C_T\mathcal{C}^{1-}(M,End(\mathbb R^{\mathrm{rk}(E)}))$. With this observation, we can now deal with the renormalization of $\Xtwo_r\odot\underline{\mathcal{L}}^{-1}(\lambda \Xtwo_r)$ (here the notation $\odot$ obscures the fact that the resonant product is also a composition of operators). Indeed, we can write 
\begin{align*}    
\Xtwo_r\odot\underline{\mathcal{L}}^{-1}(\lambda \Xtwo_r) &= \sum_i(\chi_{1i}\Xtwo_r) \odot \big({\chi}_{2i}\underline{\mathcal{L}}^{-1}(\lambda \Xtwo_r)\big)   \\
 &\hspace{-1.5cm}=  \sum_i(\chi_{1i}\Xtwo_r)\odot    \big(\chi_{2i}\underline{\mathcal{L}}^{-1}([\widetilde\chi_{2i}\lambda \Xtwo_r]_{ab})\big)_{ab}
 +C_T\mathcal{C}^{-1-}(U_i,End(E))\circ C_TC^\infty\big(U_i,End(E)\big)
\end{align*}
The second term of the right-hand side is well-posed in $C_T\mathcal{C}^{-1-}\big(U_i,End(E)\big)$. We now focus on the divergent part, and we can write the first in coordinates. We define some functions $A^i_{ab}\in C_T\mathcal D^\prime(U_i,\mathbb R)$ by
\begin{align*}
A^i_{ab} \defeq [\chi_{1i}\Xtwo_r]_{ac}\odot\big(\chi_{2i}\underline{\mathcal{L}}^{-1}([\widetilde\chi_{2i}\lambda \Xtwo_r]_{cb})\big).
\end{align*}
where we use the convention that repeated indices are summed. We aim to extract their divergent part, since we have 
$$ 
\Xtwo_r\odot\big(\underline{\mathcal{L}}^{-1}(\lambda \Xtwo_r)\big)\simeq_{-1-} \sum_i\big(A^i_{ab}\big)_{ab}
$$
where the remainder term in $C_T\mathcal{C}^{-1-}(M,End(E))$ is well-defined. Now, using \eqref{eq:aux} and \eqref{eq:aux2}, we have
\begin{equation*} \begin{split}
&A^i_{ab} =   \\
&\big(: [\chi_{1i}\X_r]_d[\widetilde{\chi}_{1i}\X_r]_d:\delta_{ac}+2:[\chi_{1i}\X_r]_a[\widetilde{\chi}_{1i}\X_r]_c:\big)   \\
&\hspace{4cm}\odot \big(\chi_{2i}\underline{\mathcal{L}}^{-1}\big(\lambda    :[\widetilde\chi_{2i}\X_r]_e[\widehat{\chi}_{2i}\X_r]_e:\delta_{cb}+2:[\widetilde\chi_{2i}\X_r]_c[\widehat{\chi}_{2i}\X_r]_b:
    \big)\big)   \\
    &\eqdef [\chi_{1i}\X_r]_d[\widetilde{\chi}_{1i}\X_r]_d: \odot\big(\chi_{2i} \underline{\mathcal{L}}^{-1}\big( \lambda: [\widetilde\chi_{2i}\X_r]_e[\widehat{\chi}_{2i}\X_r]_e:\big)\big)\delta_{ac}\delta_{cb} \\
    &\quad+2:[\chi_{1i}\X_r]_a[\widetilde{\chi}_{1i}\X_r]_c:\odot\big(\chi_{2i}\underline{\mathcal{L}}^{-1}\big(\lambda    :[\widetilde\chi_{2i}\X_r]_e[\widehat{\chi}_{2i}\X_r]_e:\big)\big)\delta_{cb}\\
    &\quad+2
    :[\chi_{1i}\X_r]_d[\widetilde{\chi}_{1i}\X_r]_d:\odot\big(\chi_{2i}\underline{\mathcal{L}}^{-1}\big(   \lambda :[\widetilde\chi_{2i}\X_r]_c[\widehat{\chi}_{2i}\X_r]_b:\big)\big)\delta_{ac}\\
    &\quad+4:[\chi_{1i}\X_r]_a[\widetilde{\chi}_{1i}\X_r]_c:\odot\big(\chi_{2i} \underline{\mathcal{L}}^{-1}\big( \lambda   :[\widetilde\chi_{2i}\X_r]_c[\widehat{\chi}_{2i}\X_r]_b:\big)\big)\,.
\end{split} \end{equation*}
Here, we leverage our knowledge from the fact that the divergences arise when computing the expectation. For the first term, there are two ways of contracting the four noise, and the contraction creates a $\delta_{de}$, but there is still a sum over $d$ left, so that the required counterterm is $2\times\mathrm{rk}(E)\times\frac{b_r}{6}$. For the second term, there are also two ways to contract the noise, and the contractions create a $\delta_{ac}$ and destroy the sum over $e$, so that the required counterterm is $2\times2\times\frac{b_r}{6}$. For the third term, there are still two ways to contract the noise, and the contractions create a $\delta_{cb}$ and destroy the sum over $d$, so that the required counterterm is again $2\times2\times\frac{b_r}{6}$. The fourth term is more subtle and gives rise to two different contributions: either the two $c$ contract and $a$ and $b$ contract, which yields a $\delta_{ab}$ and requires a counterterm $4\times\mathrm{rk}(E)\times\frac{b_r}{6}$ (since there is still a sum of $c$) or $a$ and $b$ both contract with the two $c$'s, in which case the sum over $c$ is destroyed and we need to add the counterterm $4\times\frac{b_r}{6}$. Gathering all the previous together, we have that the divergent part of $A^i_{ab}$ is
\begin{align*}
    \Big(2\,\mathrm{rk}(E)+4+4+4\mathrm{rk}(E)+4\Big)\frac{b_r}{6}\,\delta_{ab} = \big(1+2\,\mathrm{rk}(E)\big) b_r\delta_{ab}\,.
\end{align*}
Apart from the expression of the counterterm, we learn that the divergent part of $\tau_{2r}$ is indeed proportional to $Id_E$, which reads $\delta_{ab}$ above any open chart. This concludes the proof.

%%%-----------------------------------------------------------------------------------%%%
\subsubsection{Proof that $v_{\mathrm{ref},r}$ verifies the same estimates$\boldmath{.}$\hspace{0.1cm}}
%%%-----------------------------------------------------------------------------------%%%

With the stochastic objects in hand, one can introduce the ansatz
$$ 
v_r \defeq e^{\underline{\mathcal{L}}^{-1}(\lambda \begin{tikzpicture}[scale=0.3,baseline=0cm]
\node at (0,0) [dot] (0) {};
\node at (0.3,0.6)  [noise] (noise1) {};
\node at (-0.3,0.6)  [noise] (noise2) {};
\draw[K] (0) to (noise1);
\draw[K] (0) to (noise2);
\end{tikzpicture}_r)} \Big(u_r - \begin{tikzpicture}[scale=0.3,baseline=0cm]
\node at (0,0)  [dot] (1) {};
\node at (0,0.8)  [noise] (2) {};
\draw[K] (1) to (2);
\end{tikzpicture}_r + \begin{tikzpicture}[scale=0.3,baseline=0cm]
\node at (0,0) [dot] (0) {};
\node at (0,0.5) [dot] (1) {};
\node at (-0.4,1)  [noise] (noise1) {};
\node at (0,1.2)  [noise] (noise2) {};
\node at (0.4,1)  [noise] (noise3) {};
\draw[K] (0) to (1);
\draw[K] (1) to (noise1);
\draw[K] (1) to (noise2);
\draw[K] (1) to (noise3);
\end{tikzpicture}_r\Big) - v_{\textrm{ref},r}
$$
that verifies an equation similar to the scalar case. To check this, one just has to verify the regularity properties of $v_{\mathrm{ref},r}$. To do so, we localize it in some charts $(U_i)_i$ with four functions $\chi_{1i},\dots,\chi_{4i}$ so that we have
\begin{align*}
    \mathcal{L} v_{\mathrm{ref},r}  = \sum_i \big(\chi_{1i}\chi_{2i}e^{\underline{\mathcal{L}}^{-1}(\lambda \Xtwo_r)}\big)\bigg\{ \big(\chi_{4i} \Xtwo_r\big)\big(\chi_{3i}\IXthree_{r,\lambda}\big) - \chi_{3i}\chi_{4i} \big(\mathrm{rk}(E)+2\big) b_r \big(\X+\IXthree_{r,\lambda}\big)\bigg\}.
\end{align*}
Thanks to this localisation, we can know pull-back in $U_i$ using the chart $\kappa_i$ and use the local trivialization of $E$ above $U_i$ to write all the operators in coordinates. We have 
$$
\mathcal{L} v_{\mathrm{ref},r} =\sum_i \kappa_i^*[\psi_iV_{ir}]
$$ 
with
\begin{align*}
V_{ir} \defeq \kappa_{i*}\big(\chi_{1i}\chi_{2i}e^{\underline{\mathcal{L}}^{-1}(\lambda \Xtwo_r)}\big) \bigg\{\kappa_{i*} &\big(\chi_{4i} \Xtwo_r\big)\kappa_{i*}\big(\chi_{3i}\IXthree_{r,\lambda}\big)   \\
 &- \big(\mathrm{rk}(E)+2\big) b_r \kappa_{i*}\big(\chi_{3i}\chi_{4i}(\X+\IXthree_{r,\lambda})\big)\bigg\}.
\end{align*}
In the previous section, we have established the decomposition
\begin{align*}
   \chi_{i} \underline{\mathcal{L}}^{-1}(\lambda \Xtwo_r) = C_T\mathcal{C}^{1-}\big(U_i,End(\mathbb R^{\mathrm{rk}(E)})\big) + C_TC^\infty(U_i,End(E)).
\end{align*}
Using the local trivialization, we thus have $ \chi_{i} \underline{\mathcal{L}}^{-1}(\lambda \Xtwo_r)\in C_T\mathcal{C}^{1-}\big(U_i,End(\mathbb R^{\mathrm{rk}(E)})\big)$, and we denote by $ [\chi_{i} \underline{\mathcal{L}}^{-1}(\lambda \Xtwo_r)]_{ab}$ its coordinates. With this notation, we can localize the exponential as
\begin{align*}
    \chi_{1i} \, e^{\underline{\mathcal{L}}^{-1}(\lambda \Xtwo_r)} = \chi_{1i}\sum_n\frac{1}{n!}
    \widetilde \chi_{1i}^n\underline{\mathcal{L}}^{-1}(\lambda \Xtwo_r)^n = \chi_{1i} \, e^{\widetilde\chi_{1i} \underline{\mathcal{L}}^{-1}(\lambda \Xtwo_r)} \in C_T\mathcal{C}^{1-}\big(U_i,End(\mathbb R^{\mathrm{rk}(E)})\big),
\end{align*}
so that we can use some component-wise notations. Using the same reasoning for $\IXthree_{r,\lambda}$ that we applied to $\underline{\mathcal{L}}^{-1}(\lambda \Xtwo_r)$, we have $\chi_i\IXthree_{r,\lambda}\in C_T\mathcal{C}^{1/2-}(U_i,\mathbb R^{\mathrm{rk}(E)})$ so that $V_{ir}\in\mathcal{D}^\prime(\mathbb R^3,\mathbb R^{\mathrm{rk}(E)}) $ and in coordinates,
\begin{align*}
    V_{ir}^a\simeq_{\infty}
    &   \big[\kappa_{i*}\big(\chi_{1i}\chi_{2i} \, e^{\widetilde\chi_{1i}\underline{\mathcal{L}}^{-1}(\lambda \Xtwo_r)}\big)\big]_{ab}   \\
    &\bigg( \big[\kappa_{i*}\big(\chi_{4i}\Xtwo_r\big)\big]_{bd} \big[\kappa_{i*}\big(\chi_{3i}\IXthree_{r,\lambda}\big)\big]_{d} - \big(\mathrm{rk}(E)+2\big) b_r \big[\kappa_{i*}\big(\chi_{3i}\chi_{4i}(\X+\IXthree_{r,\lambda})\big)\big]_b\bigg)\,,
\end{align*}
where the repeated indices are contracted with $(\kappa_*g)_{ab}=\delta_{ab}$.  We first identify
\begin{align*}    
\big[\kappa_{i*}\big(\chi_{4i}\Xtwo_r\big)\big]_{bd} \odot \big[\kappa_{i*}\big(\chi_{3i}\IXthree_{r,\lambda}\big)\big]_{d} - \big(\mathrm{rk}(E)+2\big) b_r \big[\kappa_{i*}\big(\chi_{3i}\chi_{4i}\X_r\big)\big]_b
\end{align*}
which is the localized version of $\tau_{4r}$,  hence it  is in $C_TC^{-1/2-\epsilon} (\mathbb R^3,\mathbb R)$, and thus 
$$ \big[\kappa_{i*}\big(\chi_{1i}\chi_{2i} \, e^{\widetilde\chi_{1i}\underline{\mathcal{L}}^{-1}(\lambda \Xtwo_r)}\big)\big]_{ab} \left( \big[\kappa_{i*}\big(\chi_{4i}\Xtwo_r\big)\big]_{bd} \odot \big[\kappa_{i*}\big(\chi_{3i}\IXthree_{r,\lambda}\big)\big]_{d} - \big(\mathrm{rk}(E)+2\big) b_r \big[\kappa_{i*}\big(\chi_{3i}\chi_{4i}\X_r\big)\big]_b\right)  $$
is well-defined and in $C_TC^{-1/2-\epsilon}(\mathbb R^3,\mathbb R)$. We are left with 
\begin{align*}
    V_{ir}^a\simeq_{-1/2-\epsilon}
    &   \big[\kappa_{i*}\big(\chi_{1i}\chi_{2i} \, e^{\widetilde\chi_{1i}\underline{\mathcal{L}}^{-1}(\lambda \Xtwo_r)}\big)\big]_{ab}(\prec+\;\succ+\;\odot) \left(
\big[\kappa_{i*}\big(\chi_{4i}\Xtwo_r\big)\big]_{bd} (\prec+\;\succ) \big[\kappa_{i*}\big(\chi_{3i}\IXthree_{r,\lambda}\big)\big]_{d}  \right. \\
&\left.- \big[\kappa_{i*}\big(\chi_{1i}\chi_{2i} \, e^{\widetilde\chi_{1i}\underline{\mathcal{L}}^{-1}(\lambda \Xtwo_r)}\big)\big]_{ab} \big(\mathrm{rk}(E)+2\big) b_r \big[\kappa_{i*}\big(\chi_{3i}\chi_{4i}\IXthree_{r,\lambda}\big)\big]_b\right).
\end{align*}
In the first line, the paraproduct is well-defined, and dictates the regularity of $V^a_i$, so that we only have to deal with the resonant term
\begin{align}\label{eq_resonant}
     \big[\kappa_{i*}\big(\chi_{1i}\chi_{2i} \, e^{\widetilde\chi_{1i}\underline{\mathcal{L}}^{-1}(\lambda \Xtwo_r)}\big)\big]_{ab}\odot  & \left(
\big[\kappa_{i*}\big(\chi_{4i}\Xtwo_r\big)\big]_{bd} \succ \big[\kappa_{i*}\big(\chi_{3i}\IXthree_{r,\lambda}\big)\big]_{d}  \right. \\
&\quad \left.- \big[\kappa_{i*}\big(\chi_{1i}\chi_{2i} \, e^{\widetilde\chi_{1i}\underline{\mathcal{L}}^{-1}(\lambda \Xtwo_r)}\big)\big]_{ab} \big(\mathrm{rk}(E)+2\big) b_r \big[\kappa_{i*}\big(\chi_{3i}\chi_{4i}\IXthree_{r,\lambda}\big)\big]_b\right). \nonumber
\end{align}
Now define $F\left(x, \kappa_{i*}  \left(\widetilde\chi_{1i}\underline{\mathcal{L}}^{-1}(\lambda \Xtwo_r) \right)\right)= \kappa_{i*} (\chi_{1i})(x) \kappa_{i*}( \chi_{2i} )(x) \, e^{\kappa_{i*}  \left(\widetilde\chi_{1i}\underline{\mathcal{L}}^{-1}(\lambda \Xtwo_r) \right)}$. Then we can apply the  Bony-Meyer paralinearization for the vectorial case whose proof is recalled in Appendix~\ref{SubsectionParaBundle} to get
\begin{eqnarray*}
F\left(x, \kappa_{i*}  \left(\widetilde\chi_{1i}\underline{\mathcal{L}}^{-1}(\lambda \Xtwo_r) \right)\right)_{ab}\simeq  _{2-2\epsilon}  \left[ G^{cd}_{ab} \right]\prec \big[\kappa_{i*}  \left(\widetilde\chi_{1i}\underline{\mathcal{L}}^{-1}(\lambda \Xtwo_r) \right)\big]_{cd},
\end{eqnarray*}
where $$ G^{k\ell}_{ab}:=\frac{\partial   F_{ab} }{\partial m_{k\ell} }\left(x, \kappa_{i*}  \left(\widetilde\chi_{1i}\underline{\mathcal{L}}^{-1}(\lambda \Xtwo_r) \right)\right).$$
Then we  use the paralinearization above and then use twice the commutator estimate ${\bf C}(f,g,h)$ in the flat case to get
\begin{align*}
     \big[\kappa_{i*}\big(\chi_{1i}\chi_{2i} \, & e^{\widetilde\chi_{1i}\underline{\mathcal{L}}^{-1}(\lambda \Xtwo_r)}\big)\big]_{ab}\odot   \left(
\big[\kappa_{i*}\big(\chi_{4i}\Xtwo_r\big)\big]_{bd} \succ \big[\kappa_{i*}\big(\chi_{3i}\IXthree_{r,\lambda}\big)\big]_{d}  \right)\\
&\simeq \left([G^{k\ell}_{ab}]\prec \big[\kappa_{i*}  \left(\widetilde\chi_{1i}\underline{\mathcal{L}}^{-1}(\lambda \Xtwo_r) \right)\big]_{k\ell} \right)\odot  \left(
\big[\kappa_{i*}\big(\chi_{4i}\Xtwo_r\big)\big]_{bd} \succ \big[\kappa_{i*}\big(\chi_{3i}\IXthree_{r,\lambda}\big)\big]_{d}  \right)\\
&\simeq {\bf C }\left(G^{k\ell}_{ab},  \big[\kappa_{i*}  \left(\widetilde\chi_{1i}\underline{\mathcal{L}}^{-1}(\lambda \Xtwo_r) \right)\big]_{k\ell} , 
\big[\kappa_{i*}\big(\chi_{4i}\Xtwo_r\big)\big]_{bd} \succ \big[\kappa_{i*}\big(\chi_{3i}\IXthree_{r,\lambda}\big)\big]_{d}  \right) \\
& \quad + G^{k\ell}_{ab}\left(  \big[\kappa_{i*}  \left(\widetilde\chi_{1i}\underline{\mathcal{L}}^{-1}(\lambda \Xtwo_r) \right)\big]_{k\ell} \odot  \left(
\big[\kappa_{i*}\big(\chi_{4i}\Xtwo_r\big)\big]_{bd} \succ \big[\kappa_{i*}\big(\chi_{3i}\IXthree_{r,\lambda}\big)\big]_{d}  \right) \right)\\
& \simeq_{1-3\epsilon}  G^{k\ell}_{ab} \left(  {\bf C} \left( \big[\kappa_{i*}  \left(\widetilde\chi_{1i}\underline{\mathcal{L}}^{-1}(\lambda \Xtwo_r) \right)\big]_{k\ell}, \big[\kappa_{i*}\big(\chi_{4i}\Xtwo_r\big)\big]_{bd},  \big[\kappa_{i*}\big(\chi_{3i}\IXthree_{r,\lambda}\big)\big]_{d}\right)\right) \\
& \quad + G^{k\ell}_{ab}\left(  \big[\kappa_{i*}  \left(\widetilde\chi_{1i}\underline{\mathcal{L}}^{-1}(\lambda \Xtwo_r) \right)\big]_{k\ell}  \odot
\big[\kappa_{i*}\big(\chi_{4i}\Xtwo_r\big)\big]_{bd}    \right) \big[\kappa_{i*}\big(\chi_{3i}\IXthree_{r,\lambda}\big)\big]_{d} \\
& \simeq 
 G^{k\ell}_{ab}\left(  \big[\kappa_{i*}  \left(\widetilde\chi_{1i}\underline{\mathcal{L}}^{-1}(\lambda \Xtwo_r) \right)\big]_{\ell k }  \odot
\big[\kappa_{i*}\big(\chi_{4i}\Xtwo_r\big)\big]_{bd}    \right) \big[\kappa_{i*}\big(\chi_{3i}\IXthree_{r,\lambda}\big)\big]_{d}
\end{align*}
where in the last equality we use the fact that the endomorphism $\Xtwo$ is symmetric, which implies that $ \big[\kappa_{i*}  \left(\widetilde\chi_{1i}\underline{\mathcal{L}}^{-1}(\lambda \Xtwo_r) \right)\big]$ is symmetric too.
 %{\bf There is a subtlety here: when we consider the term \eqref{eq_resonant}, using the estimate above we need to control 
%\begin{align*}
% &G^{k\ell}_{ab}\left(  \big[\kappa_{i*}  \left(\widetilde\chi_{1i}\underline{\mathcal{L}}^{-1}(\lambda \Xtwo_r) \right)\big]_{k\ell}  \odot
%\big[\kappa_{i*}\big(\chi_{4i}\Xtwo_r\big)\big]_{bd}    \right) \big[\kappa_{i*}\big(\chi_{3i}\IXthree_{r,\lambda}\big)\big]_{d}  \\
%&\quad - \big[\kappa_{i*}\big(\chi_{1i}\chi_{2i} \, e^{\widetilde\chi_{1i}\underline{\mathcal{L}}^{-1}(\lambda \Xtwo_r)}\big)\big]_{ad} \big(\mathrm{rk}(E)+2\big) b_r \big[\kappa_{i*}\big(\chi_{3i} \IXthree_{r,\lambda}\big)\big]_d,
%\end{align*}
%then we get an apparent problem here since at first sight, one cannot have a common factor of the form $\big[\kappa_{i*}\big(\chi_{1i}\chi_{2i} \, e^{\widetilde\chi_{1i}\underline{\mathcal{L}}^{-1}(\lambda \Xtwo_r)}\big)\big]_{ab}$ so one does not see how the counterterm $b_r$ is going to cancel out the divergence !}
The key observation is then that in the resonant product
$$\left(  \big[\kappa_{i*}  \left(\widetilde\chi_{1i}\underline{\mathcal{L}}^{-1}(\lambda \Xtwo_r) \right)\big]_{\ell k }  \odot
\big[\kappa_{i*}\big(\chi_{4i}\Xtwo_r\big)\big]_{bd}    \right) $$
the only divergence occurs when we have the equality of indices $k=b$ (which corresponds to the fact that the divergent part of the tensor product $\Xtwo_r\otimes \underline{\mathcal{L}}^{-1}(\lambda \Xtwo_r) $ is the divergent part of the composition $\Xtwo_r\circ\underline{\mathcal{L}}^{-1}(\lambda \Xtwo_r) $) and $\ell=d$ (which corresponds to the fact that the counterterm of $\tau_2$ is proportional to the identity). Therefore, the only divergence is contained in the term
%. This comes from the construction of the renormalized stochastic term $ (\Xtwo_r)_{kl}\odot\big(\underline{\mathcal{L}}^{-1}(\lambda \Xtwo_r)\big)_{bd}$ where the divergent part is contained in the fully diagonal term $(\Xtwo_r)_{kk}\odot\big(\underline{\mathcal{L}}^{-1}(\lambda \Xtwo_r)\big)_{kk}$.
\begin{eqnarray*}
G^{bd}_{ab}\left(  \big[\kappa_{i*}  \left(\widetilde\chi_{1i}\underline{\mathcal{L}}^{-1}(\lambda \Xtwo_r) \right)\big]_{db}  \odot
\big[\kappa_{i*}\big(\chi_{4i}\Xtwo_r\big)\big]_{bd}    \right) \big[\kappa_{i*}\big(\chi_{3i}\IXthree_{r,\lambda}\big)\big]_{d}.  
\end{eqnarray*}
%Go back to the definition of $$F\left(x, \kappa_{i*}  \left(\widetilde\chi_{1i}\underline{\mathcal{L}}^{-1}(\lambda \Xtwo_r) \right)\right)= \kappa_{i*} (\chi_{1i})(x) \kappa_{i*}( \chi_{2i} )(x) \, e^{\kappa_{i*}  \left(\widetilde\chi_{1i}\underline{\mathcal{L}}^{-1}(\lambda \Xtwo_r) \right)},$$
%then we deduce that
%\begin{eqnarray*}G^{kk}_{ak} \left(  \big[\kappa_{i*}  \left(\widetilde\chi_{1i}\underline{\mathcal{L}}^{-1}(\lambda \Xtwo_r) \right)\big]_{kk}  \odot\big[\kappa_{i*}\big(\chi_{4i}\Xtwo_r\big)\big]_{kk}    \right) \big[\kappa_{i*}\big(\chi_{3i}\IXthree_{r,\lambda}\big)\big]_{k}.\end{eqnarray*} 

We isolate the divergent part of $\left(  \big[\kappa_{i*}  \left(\widetilde\chi_{1i}\underline{\mathcal{L}}^{-1}(\lambda \Xtwo_r) \right)\big]_{db}  \odot
\big[\kappa_{i*}\big(\chi_{4i}\Xtwo_r\big)\big]_{bd}    \right)$ as
$\big(\mathrm{rk}(E)+2\big) b_r$ then the divergent part of the above expression rewrites as~:
\begin{eqnarray*}
&&G^{bd}_{ab} \big(\mathrm{rk}(E)+2\big) b_r \big[\kappa_{i*}\big(\chi_{3i}\IXthree_{r,\lambda}\big)\big]_{d}\\
&=&\kappa_{i*} (\chi_{1i})\kappa_{i*}( \chi_{2i} )(x) \, \frac{\partial \left(e^{\kappa_{i*}  \left(\widetilde\chi_{1i}\underline{\mathcal{L}}^{-1}(\lambda \Xtwo_r) \right)}\right)_{ab}}{\partial M_{bd}}\big(\mathrm{rk}(E)+2\big) b_r \big[\kappa_{i*}\big(\chi_{3i}\IXthree_{r,\lambda}\big)\big]_{d}
\end{eqnarray*}
where $M=\kappa_{i*}  \left(\widetilde\chi_{1i}\underline{\mathcal{L}}^{-1}(\lambda \Xtwo_r) \right)$.
We use the following simple matrix identity
\begin{lemm}[Magic identity]
We have the equation, for $M\in M_n(\mathbb{R})$ a symmetric matrix and some vector $(V_k)_{k=1}^n\in \mathbb{R}^n$~:
\begin{eqnarray*}
\sum_{b,d=1}^n \frac{\partial (e^M)_{ab}}{\partial m_{bd}} V_d=\sum_{d=1}^n (e^M)_{ad}V_d\,.
\end{eqnarray*}
\end{lemm}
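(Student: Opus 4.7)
The plan is straightforward: use Duhamel's formula for the derivative of the matrix exponential and then exploit the symmetry of $M$. First I would recall the identity
\[
\frac{d}{dt}e^{M+tE}\bigg|_{t=0} = \int_0^1 e^{sM}\,E\,e^{(1-s)M}\,ds,
\]
valid for any square matrices $M$ and $E$. Applying this with $E$ equal to the elementary matrix $e_be_d^\top$ (a single $1$ at position $(b,d)$, zeros elsewhere), and extracting the $(a,b)$ entry, yields
\[
\frac{\partial (e^M)_{ab}}{\partial m_{bd}} = \int_0^1 (e^{sM})_{ab}\,(e^{(1-s)M})_{db}\,ds.
\]

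Next I would sum over $b$ to obtain
\[
\sum_{b=1}^n \frac{\partial (e^M)_{ab}}{\partial m_{bd}} = \int_0^1 \bigl(e^{sM}\,(e^{(1-s)M})^\top\bigr)_{ad}\,ds.
\]
Using $M = M^\top$, the transpose $(e^{(1-s)M})^\top$ coincides with $e^{(1-s)M}$, and the semigroup property $e^{sM}e^{(1-s)M} = e^M$ collapses the integrand to the $s$-independent quantity $(e^M)_{ad}$. Hence the integral evaluates to $(e^M)_{ad}$. Multiplying by $V_d$ and summing over $d$ then gives the stated identity.

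There is essentially no obstacle here: the only conceptual subtlety is that we treat the entries $m_{bd}$ as independent variables when computing the partial derivative, and impose the symmetry of $M$ only afterwards — equivalently, extend $M \mapsto e^M$ to all square matrices, differentiate in that larger space, and then specialize at the end. Once this convention is fixed, the rest is a one-line computation.
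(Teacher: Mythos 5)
Your proof is correct and follows essentially the same route as the paper's: both compute $\partial (e^M)_{ab}/\partial m_{bd}$ as a weighted average of products $(e^{sM})_{ab}(e^{(1-s)M})_{db}$ (the paper writes this in power-series form, $\sum_n \frac{1}{(n+1)!}\sum_{i=0}^n (M^i)_{ab}(M^{n-i})_{db}$, while you use the equivalent Duhamel integral), then contract over $b$, invoke the symmetry of $M$, and collapse via the semigroup property — which in the paper's discrete version is the identity $\frac{1}{n+1}\sum_{i=0}^n 1=1$. Your explicit remark about treating the entries $m_{bd}$ as independent before imposing symmetry is a welcome clarification that the paper leaves implicit.
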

\begin{proof}
We start from
\begin{align*}
     \frac{\partial (e^M)_{ab}}{\partial M_{cd}} =\sum_{n\geq 0}\frac{1}{n!}\Big(\frac{1}{n+1}\sum_{i=0}^n (M^{i})_{ac} (M^{n-i})_{db}\Big)\,.
\end{align*}
Identifying the indices and using the symmetry of $M$, we obtain $(M^{i})_{ab} (M^{n-i})_{db}=( M^n)_{ad}$. The proof follows using $\frac{1}{n+1}\sum_{i=0}^n=1$.
\end{proof}

%Start from $e^M=\sum_{p=0}^\infty \frac{M^p}{p!}$, then 
%\begin{eqnarray*}
%&&\sum_{k=1}^n b_k\frac{\partial (e^M)_{ak}}{\partial m_{kk}}=
%\sum_{p=0}^\infty \frac{1}{p!} \sum_{k=1}^n b_k\frac{\partial (M^p)_{ak}}{\partial m_{kk}}=\sum_{p=0}^\infty \frac{1}{p!} \sum_{k=1}^n b_k \sum_{i=1}^p \left(M^{i-1}\frac{\partial M}{\partial m_{kk}}M^{p-i}\right)_{ak}\\
%&=&
%\sum_{p=0}^\infty \frac{1}{p!} a \sum_{i=1}^p M^{i-1}Id M^{p-i}=
%\sum_{p=0}^\infty \frac{1}{p!} a p M^{p-1}=ae^M.
%\end{eqnarray*}
%\end{proof}

% To do so we use paralinearization to rewrite the exponential as
%\begin{equation*} \begin{split}
%\big[\kappa_{i*}&\big(\chi_{1i}\chi_{2i} \, e^{\widetilde\chi_{1i}\underline{\mathcal{L}}^{-1}(\lambda \Xtwo_r)}\big)\big]_{ab}   \\
%&\simeq_{2-\epsilon}
%\big[\kappa_{i*}\big(\chi_{1i} \, e^{\widetilde\chi_{1i}\underline{\mathcal{L}}^{-1}(\lambda \Xtwo_r)}\big)\big]_{ac} \prec
%\big[\kappa_{i*}\big(\chi_{2i}\widetilde\chi_{1i}\underline{\mathcal{L}}^{-1}(\lambda \Xtwo_r)\big)\big]_{cb} \in
% C_T\mathcal{C}^{1-}(\mathbb R^3,\mathbb R).
%\end{split} \end{equation*}
Ultimately, we have obtained that the sum of the worst term precisely rewrite as 
\begin{align*}    
\big[\kappa_{i*}\big(\chi_{1i} \, e^{\widetilde\chi_{1i}\underline{\mathcal{L}}^{-1}(\lambda \Xtwo_r)}\big)\big] \circ \tau_{2r} \circ
\big[\kappa_{i*}\big(\chi_{3i}\IXthree_{r,\lambda}\big)\big]
\end{align*}
which is well-defined, so that we do have, using the commutator $f\prec(g\prec h)\simeq (fg)\prec h$ in flat space 
\begin{align*}    
V_i^a \simeq_{0-} \bigg( \big[\kappa_{i*}\big(\chi_{1i}\chi_{2i} \, e^{\widetilde\chi_{1i}\underline{\mathcal{L}}^{-1}(\lambda \Xtwo_r)}\big)\big]_{ab} \big[\kappa_{i*}\big(\chi_{3i}\IXthree_{r,\lambda}\big)\big]_{d}\bigg) \prec  
\big[\kappa_{i*}\big(\chi_{4i}\Xtwo_r\big)\big]_{bd} \in C_T\mathcal{C}^{-1-}(\mathbb R^3,\mathbb R)\,.
\end{align*}
With this final expression, we can check that the divergent part of $\nabla\tau_{2r}\odot\nabla v_{\mathrm{ref},r}$ is as expected, since we can once more localize the product, and identify the object $\tau_{3r}$. Finally, we process as in the proof of Theorem \ref{thm_A_2_mfd} for the scalar $\Phi^4_3$ to end up the proof.

%------------------------------------------------------------------------%
\section{Microlocal estimates on generalized propagators}
\label{sect_microlocal_est}
%------------------------------------------------------------------------%

 In this second part of the work we study the fine properties of all the two-point functions appearing in the Feynman amplitudes from the stochastic estimates of our paper \cite{BDFT}. These functions are usually called {\sl propagators} in the physics literature. Recall $P=1-\Delta_g$  is the massive Laplacian and the fundamental operators we are concerned with are $e^{-tP}$, $\Delta e^{-tP}$ and $ e^{-\vert t-s\vert P}P^{-1}$. They have kernels with parabolic singularities that we shall describe precisely.  

\medskip

Before we enter the subject of that section we recall the following notations from \cite{BDFT}. Assume we are given a Riemannian manifold $\mathcal{X}$ and an open subset $U\subset \mathcal{X}$. In applications below $\mathcal{X}$ will be $M^k$ for some $k$ or a submanifold of that space, endowed with the induced Riemannian metric. For a closed conic set $\Gamma$ in $T^*U\backslash\{0\}$, we denote by $\mathcal{D}'_\Gamma(U)$ the space of distributions on $U$ whose wave front set is contained in $\Gamma$. This is a locally convex topological vector space endowed with a natural \emph{normal} topology associated with the seminorms
\begin{eqnarray*}
\Vert \Lambda\Vert_{N,V,\chi,\kappa} = \sup_{\xi\in V} \big\vert (1+\vert \xi\vert)^N\widehat{ (\kappa_*\Lambda)\chi}(\xi)\big\vert
\end{eqnarray*}
for all  chart $\kappa:\Omega\subset U\mapsto \mathbb{R}^{\dim(\mathcal{X})}$, integer $N$, $\chi\in C^\infty_c(\kappa(\Omega))$, cone $V\subset \mathbb{R}^{n*}$ such that
$$
\text{supp}(\chi)\times V\cap \kappa^{-1*}\Gamma=\emptyset, \text{ where } \kappa^{-1*}\Gamma = \big\{ (\kappa(x); (^td\kappa)^{-1}(\xi) ); (x;\xi)\in \Gamma\big\}.
$$
And we also need the seminorms of the strong topology of distributions
\begin{eqnarray*}
\sup_{\chi\in B}\vert\langle \Lambda,\chi\rangle\vert
\end{eqnarray*}
where $B$ is a bounded set of $C^\infty_c(\mathcal{X})$ which means that there is some compact $C$ such that $\text{supp}(B)\subset C$ and for any differential operator $P$, $\sup_{\chi\in B}\Vert P\chi\Vert_{L^\infty(K)}<+\infty$. To be bounded in $\mathcal{D}^\prime_\Gamma(U)$ will always mean that all the above seminorms are bounded. Recall from Section 3.1 of \cite{BDFT} the notion of (parabolic) scaling field $\rho$ for a submanifold $\mathcal{Y}\subset\mathcal{X}$. We assume that $(e^{-s\rho})^*\Gamma\subset\Gamma$, for all $s\geq 0$. Denote by $K_t^{\mathcal{X}}$ the heat kernel of $\mathcal{X}$. 

\smallskip

\begin{defi*}
For $\alpha<0$ and $a\in\bbR$ we define the space $\mathcal{S}^{\alpha,(a,\rho)}_\Gamma(U)$ of distributions $\Lambda\in\mathcal{D}'(U)$ with the following property. For all pseudodifferential operators $Q$ with Schwarz kernel compactly supported in $U\times U$ and whose symbol vanishes on $\Gamma$, for each compact set $C\subset U$, there is a finite positive constant $m_{C,Q}$ such that
$$
\underset{s\geq 1}{\sup}\,\underset{x\in C}{\sup}\,\underset{0<t\leq 1}{\sup}\, e^{as} t^{-\alpha/2} \big\vert \big\langle (e^{-s\rho})^*\Lambda , (I+Q)K_t^{\mathcal{X}}(x,\cdot) \big\rangle\big\vert \leq m_{C,Q} < \infty.
$$
\end{defi*}

\smallskip

We define $\mathcal{S}^a_\Gamma(U)$  as the union over $\alpha$ of all the spaces $\mathcal{S}^{\alpha,(a,\rho)}_\Gamma(U)$, for $a\in\bbR$ fixed and $\rho$ a scaling field for the inclusion $\mathcal{Y}\subset\mathcal{X}$ whose backward semiflows leave $\Gamma$ fixed. The letter `$\mathcal{S}$' is chosen for {\sl scaling}. The exponent $a$ retains the scaling property and $\Gamma$ information on the wavefront set. Note that the space $\mathcal{S}^a_\Gamma(U)$ is a priori larger than conormal distributions with wavefront set in $N^*\left(\mathcal{Y}\subset U \right)$ since elements in $\mathcal{S}^a_\Gamma(U)$ might have some wavefront set contained in the cone $\Gamma$ which is not necessarily included in $N^*\left(\mathcal{Y}\subset U \right)$.

\medskip

\subsection{Parabolic kernels and the class $\Psi_P^a$$\boldmath{.}$\hspace{0.1cm}}
\label{ss:parabolickernels}
%%---------------------------------------------------------------%%

$\;$ We define in this section the parabolic calculus which describes the singularities of the propagators that appear in the analysis of the dynamical $\Phi^4$ equation on a $3$-dimensional closed manifold.

\subsubsection{The elements in $\Psi_P(\mathbb{R}\times \mathbb{R}^d)$$\boldmath{.}$\hspace{0.1cm}}

Following a practical approach in microlocal analysis, we start by defining the parabolic calculus on $\mathbb{R}\times \mathbb{R}^d$. Then we prove a change of variables formula for the parabolic operators. A way to give an intrinsic definition of the class of parabolic operators is to work in position space and test the growth of the Schwartz kernel near diagonals by vector fields in the module of vector fields tangent to the diagonal. This is inspired by results of Beals~\cite{Beals}, Bony~\cite{Bony2,Bony3}, H\"ormander~\cite[p100-104]{Horm3}, Joshi~\cite{JoshiPAMS,Joshi}, Melrose--Ritter~\cite{MelRitt}, and also Taylor~\cite[Prop 2.2 p.~6]{Taylor2} that define pseudodifferential kernels by their diagonal behaviour and under testing with vector fields. Let $\mathcal{M}$ be the $C^\infty(\mathbb{R}_t\times \mathbb{R}_x^d\times \mathbb{R}_h^d)$--module of vector fields tangent to $\{t=0,h=0\}$; it is generated by the vector fields
\begin{eqnarray*}
t\partial_t, \quad t\partial_{x^i} , \quad  h^i\partial_t , \quad \partial_{x^i},h^i\partial_{h^k},\qquad (1\leq  (i,k)\leq  d).
\end{eqnarray*}

\smallskip

\begin{defi} [$\Psi_P(\mathbb{R}\times \mathbb{R}^d)$] \label{defi:flatparabolic}
Pick a negative real number $a$. An operator $A: C^\infty(\mathbb{R}\times \mathbb{R}^d)\mapsto \mathcal{D}^\prime(\mathbb{R}\times \mathbb{R}^d)$ belongs to $ \Psi_P^a(\mathbb{R}\times \mathbb{R}^d)$ if its Schwartz kernel $K\in \mathcal{D}^\prime( \mathbb{R}^{1+d}\times \mathbb{R}^{1+d} )$ satisfies the following properties.
\begin{enumerate}
	\item There is a function $A\in C^\infty([0,+\infty)\times \left(\mathbb{R}^d\setminus\{0\}\right)\times \mathbb{R}^d)$ such that either $K(t,s,x,y)=A(\vert t-s\vert,x-y,x)$ or $K(t,s,x,y)= {\bf 1}_{[0,\infty)}(t-s) A( t-s,x-y,x)$.   \vspace{0.1cm}
	
	\item There is $R>0$ such that for all $(t,h)$ with $t > 0$ and $\vert  t\vert+\Vert h\Vert^2\leq  R$, for all vector fields $L_1,\dots,L_k\in \mathcal{M}$
\begin{eqnarray*}
\sup_{x\in \mathbb{R}^d} \big\vert \left(L_1\dots L_k A\right)(t,h,x) \big\vert \lesssim_{L_1,\dots,L_k} \left\{\begin{array}{ll} \left( \vert t\vert+\Vert h\Vert^2\right)^{-\frac{2+2a+d}{2}} & \textrm{if } d+2+2a > 0,   \\
\vert\hspace{-0.05cm}\log \left( \vert t\vert+\Vert h\Vert^2\right) \hspace{-0.05cm}\vert & \textrm{if } d+2+2a = 0.
\end{array} \right.
\end{eqnarray*}

	\item There exists $\delta>0$ such that for all $t\geqslant \frac{R}{2}$, we have the decay estimate
\begin{eqnarray*}
\sup_{x\in \mathbb{R}^d} \big\vert\hspace{-0.07cm} \left(\partial_{t,x,h}^\alpha A\right)\hspace{-0.05cm}(t,h,x) \big\vert \leq C_{\alpha}e^{-\delta t}.
\end{eqnarray*}
\end{enumerate} 
\end{defi}

\smallskip

If in the above definition, we add the extra assumption that $A$ is compactly supported in the last variable $x$, then we get a proper operator. This might be necessary to compose elements in the parabolic calculus. However, since we only work on closed manifolds, we can take without loss of generality proper kernels as models for the parabolic kernels on manifolds.  The following example illustrates the potential difficulties of a Fourier transform approach to parabolic calculus.

\smallskip

\begin{ex}
We work on flat space $\mathbb{R}^d$. 
The inverse heat operator
$\mathcal{L}^{-1}$ is a  Fourier multiplier by $\big(i\tau+1+\vert \xi\vert^2\big)^{-1}$ which is given by the well-defined symbol if we are allowing parabolic scalings. However the operator 
$$ 
\varphi\mapsto \int_{-\infty}^t e^{-\vert t-s\vert P}P^{-1}\varphi(s,\cdot)ds
$$ 
is a Fourier multiplier by
$$ 
\big(i\tau+1+\vert \xi\vert^2\big)^{-1} (1+\vert \xi\vert^2)^{-1} 
$$
which is \textbf{not a smooth symbol in the usual H\"ormander classes} even viewed as a parabolic symbol. The problem of $\big(i\tau+1+\vert \xi\vert^2\big)^{-1} (1+\vert \xi\vert^2)^{-1} $ lies in the order of the symbol. One considers the parabolic compactification of $\mathbb{R}^{1+d}$ by the parabolic sphere at infinity then the parabolic order has a jump on the $\xi=0$ hypersurface at the parabolic sphere at infinity. 
\end{ex}

\smallskip

It is elementary to note the following facts. First, for any kernel $K$ as in Definition \ref{defi:flatparabolic}, for any function $\chi\in C^\infty(\mathbb{R}^d\times \mathbb{R}^d)$, the product $\chi K$ also satisfies De finition \ref{defi:flatparabolic}. Second, the module $\mathcal{M}$ is the Lie algebra of vector fields tangent to the submanifold $\{t=0,x=y\}$ in $[0,+\infty)\times \mathbb{R}^d\times \mathbb{R}^d$. This forms a finitely generated  module over $C^\infty([0,+\infty)\times \mathbb{R}^d\times \mathbb{R}^d)$ and can be interpreted as the smooth derivation of the algebra $C^\infty\big([0,+\infty)\times \mathbb{R}^d\times \mathbb{R}^d\big)$ leaving fixed the ideal 
$$
\mathcal{I} = t\,C^\infty([0,+\infty)\times \mathbb{R}^d\times \mathbb{R}^d) + \sum_{i=1}^d (x^i-y^i) \, C^\infty\big([0,+\infty)\times \mathbb{R}^d\times \mathbb{R}^d\big)
$$ 
of functions vanishing over $\{t=0,x=y\}$. More generally, given some submanifold $Y\subset X$ in some ambient manifold $X$, let $\mathcal{I}_Y$ denote the ideal of smooth functions vanishing over $Y$. Then the module $\mathcal{M}_Y\subset C^\infty(TX)$ of vector fields tangent to $Y$ is defined as
\begin{eqnarray*}
L\in \mathcal{M}_Y \text{ if }\big(\forall f\in \mathcal{I}_Y, Lf\in \mathcal{I}_Y\big),
\end{eqnarray*}
where $L\in C^\infty(TX)$ acts as a Lie derivative. (We refer to H\"ormander's treatment \cite[Lemm 18.2.5 p.~100]{Horm3} for a careful definition in coordinates. Now we need to verify some form of diffeomorphism invariance.) 

\smallskip

Let $\Phi:\mathbb{R}^d\mapsto \mathbb{R}^d$ denote a diffeomorphism which is the identity outside some compact subset. The lifted diffeomorphism 
$$
\widetilde{\Phi}:(t,x,y)\in \mathbb{R}\times \mathbb{R}^d\times \mathbb{R}^d\mapsto \big(t,\Phi(x),\Phi(y)\big)\in \mathbb{R}\times \mathbb{R}^d\times \mathbb{R}^d
$$ 
leaves the submanifold $\{t=0,x=y\}$ invariant. Hence the ideal $\mathcal{I}$ of functions vanishing over $\{t=0,x=y\}$ is invariant by pull-back: $\widetilde{\Phi}^*\mathcal{I}=\mathcal{I}$, and for all vector field $L\in \mathcal{M}$ one has $\widetilde{\Phi}_*L\in \mathcal{M}$. As a consequence, the elements in $\Psi_P^a(\mathbb{R}\times \mathbb{R}^d)$ enjoy the following invariance property: for any pair of test functions $\chi_1,\chi_2\in C^\infty_c(\mathbb{R}^d)$, for any kernel $K$ satisfying Definition \ref{defi:flatparabolic} the new kernel
\begin{eqnarray*}
K\big(t,s,\Phi(x),\Phi(y)\big) \chi_1(x) \chi_2(y)
\end{eqnarray*}
also satisfies Definition \ref{defi:flatparabolic}. This invariance property immediately allows us to globalize Definition \ref{defi:flatparabolic} to the setting of $\mathbb{R}\times M$, where $M$ is a smooth closed manifold. In the sequel, we work on $\mathbb{R}^2\times M^2$, since operators such as $e^{-\vert t-s\vert P} P^{-1}$ depend on two time variables. We can give an intrinsic definition of parabolic kernels on $\bbR^\times M$ as follows. We denote by 
$$
\mathcal{M}_P\subset C^\infty\big(T(\mathbb{R}\times M^2)\big)
$$ 
the tangent Lie algebra of the submanifold $\{0\}\times \text{Diag} \subset \mathbb{R}\times M^2$. 

\smallskip

\begin{defi} [Kernels with parabolic singularities] \label{def:paraboliccalculus}
Pick a negative real number $a$. Elements of the parabolic calculus in $\Psi_P^a(\mathbb{R}^2\times M^2)$ are operators whose  kernels $K\in C^\infty(\mathbb{R}^2\times M^2\setminus \text{Space time diagonal })$ for which there exists a function $A\in C^\infty\left( [0,+\infty)\times \left(M^2\setminus \text{Space diagonal}\right)\right)$ such that one has either  
$$
K(t,s,x,y) = A\big(\vert t-s\vert,x,y\big)
$$ 
or 
$$
K(t,s,x,y)={\bf 1}_{[0,\infty)}(t-s) A( t-s,x,y)
$$
and the following property hold. There exists $R>0$ such that for all $0<t<R$ and all $L_1,\dots,L_k\in \mathcal{M}_P$ one has
\begin{eqnarray} \label{ineq:Aderivatives}
\big\vert \left(L_1\dots L_k A\right)(t,x,y) \big\vert \lesssim_{L_1,\dots,L_k} \left\{ \begin{array}{ll} 
\left( \vert t\vert+d(x,y)^2\right)^{-\frac{2+2a+d}{2}} & \textrm{ if } 2+d+2a > 0,   \\
\big\vert\hspace{-0.05cm}\log\left( \vert t\vert+d(x,y)^2\right)\hspace{-0.07cm}\big\vert & \textrm{ if } 2+d+2a = 0.
\end{array} \right.
\end{eqnarray}
\end{defi}

\smallskip

We would like to precise two things in the above definition. First, the fact that the function $A$ is smooth up to $\{0\}\times \left(M^2\setminus \text{Space diagonal}\right)$ is important for our application to stochastic estimates. This implies, for instance, that when space points $x\neq y$ are distinct, the kernels $K$ in $\Psi_P^a$ are smooth on the manifold with boundary $t\geqslant s$ and also when $t\leq  a$. It allowed us to consider only smoothing of the white noise $\xi$ in \cite{BDFT} in space rather than in space and time. Second, if we want to put some locally convex topology on $\Psi_P^a(\mathbb{R}^2\times M^2)$ then this topology is the weakest topology defined from the seminorms of $C^\infty\left( [0,+\infty)\times \left(M^2\setminus \text{Space diagonal}\right)\right)$ and by taking the best constants in the estimate \eqref{ineq:Aderivatives}. Third, Definition \ref{def:paraboliccalculus} is intrinsic, and we see the central role played by the parabolic distance $\left( \vert t\vert+\text{dist}(x,y)^2\right)^{\frac{1}{2}}$. The kernels in $\Psi_P^a$ are stable by differentiation by elements of the module $\mathcal{M}$. 

\smallskip

A concrete version of the above estimates. Near every $p$, there is an open subset $U$ near $p$ and a coordinate system $(x^i)_{i=1}^d$ on $U$ in which the generator of parabolic scaling is the weighted Euler vector field $\rho=2t\partial_t+\sum_{i=1}^d(x^i-y^i)\partial_{x^i}$ and in which we identify the Lie algebra of vector fields tangent to $\{0\}\times \text{Diag}$ as the $C^\infty$-module generated by
\begin{eqnarray*}
t\partial_t, \quad t(\partial_{x^i}+\partial_{y^i}) , \quad  (x^i-y^i)\partial_t ,\partial_{x^i}+\partial_{y^i},  \quad (x^i-y^i)\partial_{x^k},(x^i-y^i)\partial_{y^k} \qquad (1\leq  i,k\leq  d).
\end{eqnarray*}
In these local coordinates, the kernel $K$ can be represented non-uniquely as $K(t,s,x,y)=A(\vert t-s \vert,x,x-y)$ or $K(t,s,x,y)={\bf 1}_{[0,\infty)}(t-s) A(t-s,x,x-y)$, where $A(t,x,X)\in C^\infty([0,+\infty)\times U\times \mathbb{R}^d)$ satisfies the estimates
\begin{eqnarray*}
\vert L_1\dots L_k A  \vert \lesssim_{L_1,\dots,L_k} \left( \vert t\vert+\vert X\vert^2\right)^{-\frac{2+2a+d}{2}}
\end{eqnarray*}
uniformly in $t$ in some compact set, any of the vector fields $L_1,\dots,L_k$ generating the tangent Lie algebra.

\smallskip

From the above definition, it is immediate that the Schwartz kernels of elements in $\Psi_P$ of the form
\begin{equation} \label{eq:formkernels}
A\big(\vert t-s\vert,x,y\big), A\big(t-s,x,y\big) {\bf 1}_{[0,\infty)}(t-s) ,
\end{equation}
have conormal singularities in the union of conormals 
$$
N^*\left(\{s= t\}\subset \mathbb{R}^2\times M^2\right)\cup N^*\left(\{s=t,x=y\} \subset \mathbb{R}^2\times M^2\right).
$$ 
Namely for any kernel $K\in \Psi_P^a$, we have the wave front set bound:
$$
WF(K)\subset N^*\left(\{s= t\}\subset \mathbb{R}^2\times M^2\right)\cup N^*\left(\{s=t,x=y\} \subset \mathbb{R}^2\times M^2\right) .
$$
In all applications, we use parabolic kernels of the above forms described by equations~\ref{eq:formkernels} where we allow for a discontinuity in the time variables but this discontinuity is controlled.

\smallskip

We next give a reformulation of the estimate \eqref{ineq:Aderivatives} appearing in the definition of kernel elements in $\Psi_P^a$. This examines how the kernel grows when we differentiate at arbitrarily high order and we do not necessarily differentiate in the tangent direction to the diagonal but in all directions.

\smallskip

\begin{lemm}\label{l:gainderivative}
We use the notations of Definition \ref{def:paraboliccalculus} and consider some parabolic kernel $K\in \Psi_P^a$. Every point $p\in M$ has a neighbourhood $U$ and a coordinate system $(x^i)_{i=1}^d$ on $U$ such that the kernel $K$ can be represented either as $K(t,s,x,y)=A(\vert t-s \vert,x,x-y)$ or $K(t,s,x,y)={\bf 1}_{[0,\infty)}(t-s) A(t-s,x,x-y)$, where the function $A(t,x,h)\in C^\infty([0,+\infty)\times U\times \mathbb{R}^d)$ satisfies the estimates
\begin{eqnarray*}
\big\vert \partial_{\sqrt{t}}^{\alpha_1}\partial_h^{\alpha_2}\partial_x^\beta A\big\vert \leq C \left( \vert t\vert+\vert h\vert^2\right)^{-\frac{2+2a+d+\vert\alpha_1\vert+\vert\alpha_2\vert}{2}}.
\end{eqnarray*}
\end{lemm}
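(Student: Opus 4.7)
The plan is to upgrade the module-derivative bound \eqref{ineq:Aderivatives} to standard partial-derivative bounds adapted to the parabolic scaling (where $\sqrt t$ and $h$ both have weight $1$ and $x$ has weight $0$) by a parabolic rescaling argument. First I pick a local chart around the given point so that the space diagonal becomes $\{x = y\}$, yielding the representation $K(t, s, x, y) = A(|t-s|, x, x-y)$ (or its causal version) with $A \in C^\infty$ away from the singular set $\{(t, h) = (0, 0)\}$ satisfying \eqref{ineq:Aderivatives}. In the coordinates $(t, x, h)$, the module $\mathcal{M}_P$ is generated by $t\partial_t$, $h^i\partial_t$, $t\partial_{x^i}$, $\partial_{x^i}$, $h^i\partial_{x^k}$, $h^i\partial_{h^k}$. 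Since $\partial_{x^i}$ itself is a generator, iterating \eqref{ineq:Aderivatives} immediately yields $|\partial_x^\beta A| \lesssim (t + |h|^2)^{-(d+2+2a)/2}$, so it remains to handle $\partial_{\sqrt t}^{\alpha_1}\partial_h^{\alpha_2}$.

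For a scaling parameter $\lambda \in (0, 1]$ I define the rescaled function $B_\lambda(t, h, x) \defeq \lambda^{d+2+2a}\, A(\lambda^2 t, x, \lambda h)$. A direct chain-rule computation shows that for each generator $L \in \mathcal{M}_P$ one has
\[
  L B_\lambda(t, h, x) = \lambda^{w(L)}\,(LA)(\lambda^2 t, x, \lambda h),
\]
where $w(L) \in \{0, 1\}$ is $0$ when $L$ has parabolic weight $0$ (i.e.\ $t\partial_t$, $\partial_{x^i}$, $h^i\partial_{h^k}$, $t\partial_{x^i}$) and $1$ when $L$ has parabolic weight $-1$ (i.e.\ $h^i\partial_t$, $h^i\partial_{x^k}$). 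Invoking \eqref{ineq:Aderivatives} on the right after the substitution $(t', h') = (\lambda^2 t, \lambda h)$ produces a factor $\lambda^{-(d+2+2a)}(t + |h|^2)^{-(d+2+2a)/2}$, and combining with the prefactor gives $|L B_\lambda(t, h, x)| \lesssim \lambda^{w(L)}(t+|h|^2)^{-(d+2+2a)/2} \leq (t+|h|^2)^{-(d+2+2a)/2}$ uniformly in $\lambda \in (0, 1]$. By iteration, the family $(B_\lambda)_{\lambda}$ is uniformly bounded in the same module seminorms as $A$.

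Next I extract pure derivative bounds at a reference point on the parabolic unit sphere $\{t+|h|^2=1\}$. At any point $(t_\ast, h_\ast)$ with all components of $h_\ast$ and with $t_\ast$ itself nonzero, the frozen module generators $t_\ast\partial_t$, $h_\ast^i\partial_t$, $h_\ast^i\partial_{h^k}$ form an invertible linear combination of the pure derivatives $\partial_t, \partial_{h^k}$, so any pure derivative can be written as a smooth $C^\infty$ combination of module derivatives on a neighborhood of $(t_\ast, h_\ast)$; iterating yields uniform-in-$\lambda$ bounds $|\partial_{\sqrt t}^{\alpha_1}\partial_h^{\alpha_2}\partial_x^\beta B_\lambda(t_\ast, h_\ast, x)| \leq C_{\alpha, \beta}$. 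Given an arbitrary $(\bar t, \bar h, \bar x)$ with $\bar r \defeq (\bar t+|\bar h|^2)^{1/2} \leq 1$, I choose $\lambda = \bar r$ and the reference point $(t_\ast, h_\ast) = (\bar t/\bar r^2, \bar h/\bar r)$ on the unit sphere; covering the unit sphere by finitely many such ``good'' charts handles the case where some components vanish (or a short Taylor expansion in $h$ brings us back to a good point). The chain-rule identity
\[
  (\partial_{\sqrt t}^{\alpha_1}\partial_h^{\alpha_2}\partial_x^\beta A)(\bar t, \bar x, \bar h) = \bar r^{-(d+2+2a)-|\alpha_1|-|\alpha_2|}\,(\partial_{\sqrt t}^{\alpha_1}\partial_h^{\alpha_2}\partial_x^\beta B_{\bar r})(t_\ast, h_\ast, \bar x)
\]
then yields the desired estimate, the factor $\bar r^{-|\alpha_1|-|\alpha_2|}$ reflecting that $\sqrt t$ and $h$ each have parabolic weight $1$; for $\bar r \geq 1$ the exponential decay in Definition \ref{defi:flatparabolic}(3) closes the argument. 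The main obstacle I expect is the bookkeeping in the rescaling step — tracking the parabolic weight of every generator of $\mathcal{M}_P$ and checking the deficit $w(L)$ is always nonnegative — together with the covering argument needed at the ``bad'' reference points where $t_\ast$ or some $h_\ast^k$ vanishes, since there the span of module generators degenerates and one must either switch chart or Taylor-expand.
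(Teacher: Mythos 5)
Your rescaling argument is a genuinely different route from the paper's proof, which works pointwise: it covers the complement of $\{t=0,h=0\}$ by parabolic sectors $V^0=\{\sqrt t\gtrsim (t+|h|^2)^{1/2}\}$ and $V^j=\{|h^j|\gtrsim (t+|h|^2)^{1/2}\}$ and, in $V^j$, simply writes $\partial_{h^i}=\tfrac{1}{h^j}\,(h^j\partial_{h^i})$, so that the module bound \eqref{ineq:Aderivatives} immediately yields the pure-derivative bound with the extra factor $|h^j|^{-1}\lesssim (t+|h|^2)^{-1/2}$. Your detour through the rescaled family $B_\lambda$ and the compact parabolic unit sphere encodes exactly the same mechanism (one module generator traded for one power of the parabolic radius) with more machinery. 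Two points need attention. First, the weight bookkeeping is not as you state: under $B_\lambda(t,h,x)=\lambda^{d+2+2a}A(\lambda^2 t,x,\lambda h)$ the generators $t\partial_{x^i}$ and $h^i\partial_{x^k}$ acquire weights $-2$ and $-1$ respectively (and $t\partial_{h^k}$, which belongs to $\mathcal{M}_P$ though it is omitted from the paper's generating list, acquires weight $-1$), so the assertion that $w(L)\geq 0$ for every listed generator is false. This is harmless only because these are $C^\infty$-multiples, by $t$ or $h^i$ which are bounded near the unit sphere, of the remaining generators; you should therefore argue with a reduced generating set rather than claim nonnegativity across the board.

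Second, the degenerate point you flag, $(t_*,h_*)=(1,0)$, where every frozen generator $h_*^i\partial_{h^k}$ vanishes and no pure $\partial_{h^k}$ lies in the span, is a genuine issue, and neither of your proposed fixes resolves it. A finite cover of the parabolic sphere by ``good'' charts cannot avoid this point, since every $(\bar t,\bar h)$ with $\bar h=0$ and $\bar t>0$ rescales exactly to it; and a Taylor expansion in $h$ from a nearby good point $|h|\sim\sqrt{t}$ requires a second-derivative bound along the segment, which from the module bounds alone gives only $|\partial^2_{h^jh^i}A(t,x,s\sqrt{t}\,e_j)|\lesssim s^{-2}t^{-1-(d+2+2a)/2}$ and hence a divergent integral at $s=0$. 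The only module generator producing $\partial_{h^k}$ near $h=0$ is $t\partial_{h^k}$, which costs $t^{-1}\sim r^{-2}$ rather than $r^{-1}$. To be fair, the paper's own proof has the same blind spot: the division trick is carried out only in the sectors $V^j$ with $j\geq 1$ and fails in $V^0$ for the space increments. So you have matched the level of rigour of the original; but a complete argument must either restrict the pure $h$-derivative estimate to the region $|h|\gtrsim\sqrt t$, or invoke extra structure of the kernels actually used (for instance, for the heat calculus the smoothness of $\widetilde A(t,X,x)$ in the rescaled variable $X=h/\sqrt t$ supplies the missing bound at $h=0$ directly).
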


\smallskip

\begin{proof}
In $\mathbb{R}_t\times U_x\times \mathbb{R}_h^d$,
cover the complement of $ \{h=0,t=0\} $ by conic sets of the form $V^ i= \big\{\vert h^i\vert\geqslant \frac{1}{2+2d} \Vert (t,h)\Vert \big\}$, $V^0 = \big\{ \vert \sqrt{t}\vert\geqslant  \frac{1}{2+2d} \Vert (t,h) \Vert\big\}$, $1\leq  i\leq  d$~\footnote{it looks like covers of projective spaces in some sense}. We reduce the proof 
to $P=\partial_{h^i}$, the more general case is similar.
For any $(t,x,h)$ in one of these conical sets say $V^j$, we 
note that
\begin{equation*} \begin{split}
\vert \partial_{h^i}A\vert &\leq   \frac{1}{\vert h^j\vert} \vert h^j \partial_{h^i}A\vert\leq   (2+2d)\Vert h\Vert^{-1} \vert h^j \partial_{h^i}A\vert   \\
&\leq  C (2d+2) \left( t+\Vert h \Vert^2\right)^{-\frac{d+2+2a}{2}}\Vert h\Vert^{-1}   \\
&\lesssim \left( t+\Vert h \Vert^2\right)^{-\frac{d+2+2a+1}{2}}, 
\end{split} \end{equation*}
where we used the crucial fact that $ h^j \partial_{h^i}\in \mathcal{M}$ is a vector field vanishing on $ \{h=0,t=0\}$, which allowed to apply the estimate \eqref{ineq:Aderivatives} to the tangent vector field $ (x^j-y^j) \partial_{x^i}$.
\end{proof}

\smallskip

%%----------------------------------------------------------%%
\subsubsection{Singularities of $e^{-tP}, {\Delta e^{-tP}}, e^{-tP}P^{-1}$$\boldmath{.}$\hspace{0.1cm}}
\label{p:parabolicbounds}
%%----------------------------------------------------------%%

The goal of the present subsection is to study in detail the singularities of above operators from the point of view of the parabolic calculus.

\medskip

\textsf{\textbf{ (a) Heat singularity$\boldmath{.}$\hspace{0.1cm}}} We start by describing precisely the parabolic singularities of $e^{-tP}$. We begin by recalling a statement which can be found under different but closely related forms, in the works of Melrose \cite{MelroseAPS}, Grieser \cite{Grieser} and Taylor \cite{Taylor2}. We denote below by $C^\infty([0,+\infty)_{\frac{1}{2}})$ the space of smooth functions of $\sqrt{t}$.

\smallskip

\begin{thm}\label{thm:heatcalculus}
We first describe 
kernels of the heat calculus $\Psi_H^\alpha(M)$ for some real number $\alpha\leqslant 0 $. 
\begin{itemize}
	\item The kernel $K$ is smooth in $(0,+\infty)\times M^2$.   \vspace{0.1cm}
	
	\item We have the off-diagonal quantitative bounds, for any differential operator $P_{\sqrt{t},x,y}$, for all $N>0$, we find
\begin{eqnarray*}
\big\Vert P_{\sqrt{t},x,y}K\big\Vert_{L^\infty(M\times M)}\leq  C_{U,N,\alpha}\left(1+\frac{d(x,y)}{\sqrt{t}}\right)^{-N}.
\end{eqnarray*}

	\item For any $p$, for any open set $U$ endowed with a coordinate system near $p$, there is an element $\widetilde{A}\in C^\infty([0,+\infty)_{\frac{1}{2}}\times \mathbb{R}^d\times U)$ such that
\begin{eqnarray*}
K_t(x,y)=t^{-\frac{d}{2}-1-\alpha}\widetilde{A}\Big(t,\frac{x-y}{\sqrt{t}},x\Big)
\end{eqnarray*}
and $\widetilde{A}\in C^\infty([0,+\infty)_{\frac{1}{2}}\times \mathbb{R}^d\times U)$ satisfies the estimate
\begin{eqnarray*}
\big\Vert D^{\alpha}_{\sqrt{t},X,x}\widetilde{A} \big\Vert_{L^\infty([0,a]\times \mathbb{R}^d \times U)} \leq  C_{N,\alpha,\kappa(U)}\left(1+\Vert X\Vert\right)^{-N}.
\end{eqnarray*}
The above bound holds only true for some compact time interval of the form $[0,a]$ for $0<a<+\infty$.   \vspace{0.1cm}

	\item All previous bounds hold true with an exponential factor $e^{-t\delta}$ for all $\delta \in [0,1)$ which shows exponential decay in time $t$. 
\end{itemize}

Let $K_t$ denotes the massive heat kernel $e^{-tP}$, then the kernel $K_\bullet$ on $ (0,+\infty)\times M^2$ satisfies the above properties with $\alpha=-1$.
\end{thm}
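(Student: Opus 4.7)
The plan is to verify the four listed properties for the massive heat kernel $K_t = e^{-tP}$ by constructing a local Minakshisundaram--Pleijel parametrix and controlling the error via Duhamel iteration; the structural claims in the first three bullets are then read off from the explicit form of the parametrix.

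First I would factor out the mass term: $e^{-tP} = e^{-t} e^{t\Delta_g}$, so it suffices to prove the estimates for the pure heat kernel with an additional universal factor $e^{-t}$, which then produces exponential decay for any $\delta\in[0,1)$ (the extra room between $\delta$ and $1$ absorbs any polynomial growth in $t^{-1}$ coming from derivatives or from large-time spectral estimates). For small times, the strategy is standard: pick $p\in M$, choose geodesic normal coordinates on a small ball $U\ni p$ of radius smaller than the injectivity radius, and write the parametrix
$$
H_N(t,x,y) = (4\pi t)^{-d/2} e^{-d(x,y)^2/(4t)} \sum_{k=0}^N t^k u_k(x,y),
$$
where the heat coefficients $u_k\in C^\infty(U\times U)$ are determined by the usual transport equations (with $u_0(x,x)=1$). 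A direct computation shows $(\partial_t + P)H_N = t^N R_N$ with $R_N\in C^\infty([0,\infty)\times U\times U)$ satisfying the same type of Gaussian bound as $H_N$.

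Second, I would read off the required form of the kernel directly from $H_N$: in normal coordinates $d(x,y)^2 = |x-y|^2(1+O(|x-y|^2))$, so the change of variables $X = (x-y)/\sqrt t$ rewrites
$$
H_N(t,x,y) = t^{-d/2}\, \widetilde A_N(t, X, x),
$$
with $\widetilde A_N\in C^\infty([0,\infty)_{\sqrt t} \times \mathbb{R}^d \times U)$. The Gaussian factor becomes $e^{-|X|^2/4 + O(t|X|^4)}$, which for $t$ in a fixed compact interval $[0,a]$ (with $a$ small enough compared to the injectivity radius) is dominated by $e^{-|X|^2/8}$ outside a neighborhood of $X=0$. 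Hence $\widetilde A_N$ is Schwartz in $X$ uniformly in $(t,x)$, and every derivative $D^\alpha_{\sqrt t,X,x}\widetilde A_N$ decays faster than any polynomial in $\|X\|$. This gives the third bullet and, rewriting $t^{-d/2} = t^{-d/2-1-\alpha}$ with $\alpha=-1$, also identifies the scaling exponent. The off-diagonal bound in the second bullet follows from the same Gaussian factor: when $d(x,y)\geq \varepsilon>0$, splitting $e^{-d(x,y)^2/(4t)}=e^{-d(x,y)^2/(8t)}e^{-d(x,y)^2/(8t)}$ lets the second factor absorb any power of $t^{-1}$, yielding $(1+d(x,y)/\sqrt t)^{-N}$ for all $N$.

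The main technical step, and where I expect most of the work, is passing from the local parametrix $H_N$ to the global heat kernel $K_t$ and showing the error retains the same structure. I would proceed by: (i) gluing the local parametrices via a partition of unity to obtain a global approximate kernel $\widetilde H_N$ on $(0,\infty)\times M^2$ which equals $H_N$ near the diagonal and is smooth (hence admissible) off it; (ii) using Duhamel's formula $K_t = \widetilde H_N(t) - \int_0^t e^{-(t-s)P}(\partial_s+P)\widetilde H_N(s)\, ds$, and noting that for $N$ large the remainder $(\partial_s+P)\widetilde H_N$ has $N$ extra powers of $s$ and therefore the integral term, iterated a finite number of times, converges in the uniform $C^k$ topology on $M^2$, producing a smoothing correction whose derivatives of any order are uniformly bounded on $[0,a]\times M^2$. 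This smoothing correction, after writing it in the rescaled variables $X$, is trivially of Schwartz type in $X$ (constants in $X$ are bounded by any negative power of $1+\|X\|$ provided we absorb the polynomial growth via the Gaussian factor already present in $\widetilde H_N$). For large times $t\geq a$, standard spectral theory for $P$ on the closed manifold $M$ gives $\|D^\alpha_{\sqrt t,x,y}K_t\|_{L^\infty}\lesssim e^{-t}$ directly, since $P\geq 1$ has a spectral gap; combined with the small-time analysis this yields the exponential factor $e^{-\delta t}$ for all $\delta<1$ and completes the proof.
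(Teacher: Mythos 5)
The paper does not actually prove Theorem \ref{thm:heatcalculus}: it is explicitly \emph{recalled} as a known statement, with references to Melrose, Grieser and Taylor, and the surrounding text only uses its conclusions (e.g.\ in Lemma \ref{l:heatvsparabolic} and Lemma \ref{l:heatpsido}). Your parametrix-plus-Duhamel argument is precisely the standard proof found in those references, so the overall architecture is right: the Minakshisundaram--Pleijel expansion gives the local form $t^{-d/2}\widetilde A(t,(x-y)/\sqrt t,x)$ with $\widetilde A$ smooth in $\sqrt t$ and Schwartz in the middle variable, the factorization $e^{-tP}=e^{-t}e^{t\Delta_g}$ together with the spectral gap handles the exponential factor and large times, and the Volterra iteration of the Duhamel remainder supplies the correction.

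One step is stated in a way that does not work as written. You claim the Duhamel correction, rewritten in the rescaled variable $X$, is ``trivially of Schwartz type in $X$'' because ``constants in $X$ are bounded by any negative power of $1+\Vert X\Vert$ provided we absorb the polynomial growth via the Gaussian factor already present in $\widetilde H_N$.'' A function constant in $X$ is of course not $O((1+\Vert X\Vert)^{-N})$, and the correction term carries no Gaussian factor to absorb anything. The correct mechanism is different: on the chart $U$ the variable $X=(x-y)/\sqrt t$ satisfies $\Vert X\Vert\leq \mathrm{diam}(U)/\sqrt t$, so $(1+\Vert X\Vert)^{-N}\gtrsim t^{N/2}$ there, and the decay in $X$ is bought entirely by the fact that the Duhamel remainder (for the order-$N$ parametrix, or after Borel summation) vanishes to arbitrarily high order in $t$ at $t=0$, uniformly in $(x,y)$ together with all derivatives; outside the range $y\in U$ one extends $\widetilde A$ by a cutoff. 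With that repair, and the (minor) observation that the phase $d(x,y)^2/4t=\vert X\vert^2_{g(x)}/4+O(\sqrt t\,\vert X\vert^3)$ is genuinely a smooth function of $(\sqrt t,X,x)$ so that smoothness in $\sqrt t$ rather than $t$ is what one obtains, your proof is complete and coincides with the cited one.
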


\smallskip

Moreover, it is proved in the work~\cite{Grieser} that the above heat calculus $\Psi_H^\bullet(M)$ has suitable invariance properties and behaves well under composition. This is used to prove the existence of the heat parametrix.  
From the previous representation, we immediately deduce the parabolic singularity of the heat kernel.

\smallskip

\begin{lemm}[Heat kernel with parabolic singularity]\label{l:heatvsparabolic}
Under the notations of the previous Theorem, we have for every $p$ and any coordinate system defined in $U$ near $p$
\begin{eqnarray*}
\vert e^{-tP}(x,y)\vert\leq  C\big(\sqrt{t}+\vert x-y \vert \big)^{-d}.
\end{eqnarray*}
Moreover, the above estimate still holds with the same exponent on the right-hand side for the heat-kernel differentiated with vector fields tangent to the diagonal. This implies that $e^{-tP}$ defines an element in $\Psi_P^{-1}$ of the parabolic calculus.   
\end{lemm}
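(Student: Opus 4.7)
\smallskip

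\textbf{Proof plan.} My plan is to work in a local chart near a point $p \in M$ and use the structural representation of the heat kernel given by Theorem \ref{thm:heatcalculus}. For the massive heat kernel ($\alpha = -1$), the exponent $-\frac{d}{2}-1-\alpha$ collapses to $-\frac{d}{2}$, so that theorem provides a function $\widetilde{A} \in C^\infty\bigl([0,a]_{\sqrt{t}} \times \bbR^d_X \times U_x\bigr)$, rapidly decreasing in its middle variable $X$ uniformly on compact sets in $(t,x)$, such that
$$
K(t,x,y) = t^{-d/2}\, \widetilde{A}\bigl(t,\, (x-y)/\sqrt{t},\, x\bigr).
$$
The pointwise bound is then immediate: invoking the rapid decay with $N = d$ gives
$$
|K(t,x,y)| \lesssim t^{-d/2}\bigl(1 + |x-y|/\sqrt{t}\bigr)^{-d} = \bigl(\sqrt{t} + |x-y|\bigr)^{-d},
$$
which matches the exponent $\tfrac{2+2a+d}{2} = \tfrac{d}{2}$ prescribed by Definition \ref{def:paraboliccalculus} for $a=-1$.

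For derivatives along tangent vector fields the key observation is that, in the coordinates $(t, x, X = (x-y)/\sqrt{t})$, each generator of the module $\mathcal{M}_P$ preserves the structure \emph{``$t^{-d/2}$ times a Schwartz-in-$X$ function, possibly multiplied by a nonnegative power of $\sqrt{t}$ and by a polynomial $P(X)$''}. Explicitly, $t\partial_t$ yields $-\tfrac{d}{2}K + t^{-d/2}\bigl[t\,\partial_t\widetilde{A} - \tfrac{1}{2}X\cdot\nabla_X \widetilde{A}\bigr]$; the combination $\partial_{x^i}+\partial_{y^i}$ annihilates the two opposite $\pm t^{-1/2}\partial_{X^i}\widetilde{A}$ pieces produced separately by $\partial_{x^i}$ and $\partial_{y^i}$, and only $t^{-d/2}\partial_{x^i}\widetilde{A}$ survives; $t\partial_{x^i}$ produces $t^{-d/2}[t\,\partial_{x^i}\widetilde{A} + t^{1/2}\partial_{X^i}\widetilde{A}]$, where $t^{1/2}$ is a gain; finally $(x^i-y^i)\partial_t$, $(x^i-y^i)\partial_{x^k}$ and $(x^i-y^i)\partial_{y^k}$ contribute factors of $\sqrt{t}\,X^i$ or $X^i$ that are absorbed by the Schwartz decay of $\widetilde{A}$ in $X$. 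Iterating, after $k$ tangent vector fields one obtains a finite sum of terms of the form $t^{-d/2+j/2}\,P(X)\,\bigl(\partial^{\alpha_1}_{\sqrt{t}}\partial^{\alpha_2}_X\partial^\beta_x\widetilde{A}\bigr)\bigl(t,X,x\bigr)$ with $j \geq 0$.

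Using the decay of $\widetilde{A}$ with $N$ large enough to absorb the polynomial $P$, together with the elementary inequality
$$
t^{-d/2+j/2}\bigl(1+|x-y|/\sqrt{t}\bigr)^{-N} \lesssim \bigl(\sqrt{t}+|x-y|\bigr)^{-d}\qquad (N\geq d,\ 0<t\leq a,\ j\geq 0),
$$
one recovers the bound \eqref{ineq:Aderivatives} for arbitrary iterated tangent derivatives. Smoothness away from the diagonal is part of Theorem \ref{thm:heatcalculus}, and the exponential decay for $t \geq R/2$ follows from the last bullet of that theorem since the operator is \emph{massive} (any $\delta \in [0,1)$ is admissible). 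Combined, these facts give $K \in \Psi_P^{-1}$. The only real content is the bookkeeping that tangent vector fields, pushed onto $(t,x,X)$, never produce a lone $\partial_X \widetilde{A}$ with an uncompensated negative power of $\sqrt{t}$; the nontrivial point to check — and essentially the only one — is the cancellation inside $\partial_{x^i}+\partial_{y^i}$, as every other generator manifestly carries a compensating factor of $t$, $\sqrt{t}X$ or $X$. Once this list has been verified for the generators, the general case follows by induction on the number of vector fields applied.
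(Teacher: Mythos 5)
Your proof is correct and follows essentially the same route as the paper's: both use the heat-calculus representation $K = t^{-d/2}\widetilde{A}\big(t,(x-y)/\sqrt{t},\cdot\big)$ from Theorem \ref{thm:heatcalculus}, obtain the pointwise bound from the rapid decay of $\widetilde{A}$ in its middle variable, and then check stability under the generators of the tangent module (the paper explicitly treats only $\partial_x+\partial_y$ and $M(x-y)\cdot\partial_x$, whereas you verify the full list of generators, which is a harmless strengthening of the same argument).
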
 

\smallskip

\begin{proof}
Recall that
$$
e^{-tP}\equiv K_t(x,y) = t^{-\frac{d}{2}}\widetilde{A}\Big(t,\frac{x-y}{\sqrt{t}},y\Big)
$$
in the chart of Theorem \ref{thm:heatcalculus}. As a corollary of the bounds on $\widetilde{A}$ on the middle variable $X$, we find that
\begin{eqnarray*}
\Big\vert t^{-\frac{d}{2}} \widetilde{A}\Big(t,\frac{x-y}{\sqrt{t}},y\Big) \Big\vert \leq  C_{d}t^{-\frac{d}{2}}\left(1+\frac{\vert x-y \vert}{\sqrt{t}} \right)^{-d}\lesssim\left(\sqrt{t}+\vert x-y\vert \right)^{-d}.
\end{eqnarray*}

This bound immediately shows that the heat kernel has a parabolic singularity.
but we need a bit more to prove that it is an element of the parabolic calculus.
Choose a tangent vector field of the form $\partial_x+\partial_y$ or 
of the 
form $M(x-y).\partial_x\defeq M^i_j (x^j-y^j)\partial_{x^i}$. Then observe that:
\begin{eqnarray*}
(\partial_x+\partial_y)\left( t^{-\frac{d}{2}}\widetilde{A}\Big(t,\frac{x-y}{\sqrt{t}},y\Big)\right) = t^{-\frac{d}{2}}(\partial_y\widetilde{A})\Big(t,\frac{x-y}{\sqrt{t}},y\Big) \lesssim \left(\sqrt{t}+\vert x-y\vert \right)^{-d}
\end{eqnarray*}
since $(\partial_y\widetilde{A}) $ satisfies the same estimates as $\widetilde{A}$. We also get that
\begin{eqnarray*}
\big(M(x-y)\cdot\partial_x\big) \left( t^{-\frac{d}{2}}\widetilde{A}\Big(t,\frac{x-y}{\sqrt{t}},y\Big)\right) = t^{-\frac{d}{2}} t^{-\frac{1}{2}} \big(M(x-y)\cdot \partial_X\widetilde{A}\big)\Big(t,\frac{x-y}{\sqrt{t}},y\Big)h   \\
\lesssim \left(\sqrt{t}+\vert x-y\vert \right)^{-d}\frac{\vert x-y\vert}{\sqrt{t}}\left(1+\frac{\vert x-y\vert}{\sqrt{t}}\right)^{-N}
\lesssim \left(\sqrt{t}+\vert x-y\vert \right)^{-d},
\end{eqnarray*}
where we used the fact that $\frac{\vert x-y\vert}{\sqrt{t}}\left(1+\frac{\vert x-y\vert}{\sqrt{t}}\right)^{-N}$ is bounded when $N$ is large enough. Repeating the above bounds for $L_1\dots L_p K$ where $L_1,\dots,L_p\in \mathcal{M}$ immediately shows that the heat kernel is also an element of $\Psi_P^{-1}$.
\end{proof} 

%\textcolor{red}{
%\textbf{\textsf{(b) Estimates for the kernel of $Pe^{-tP}$$\boldmath{.}$\hspace{0.1cm}}}From the above estimates on the heat kernel, we can easily deduce similar estimates on the kernel of $Pe^{-tP}$.First, we would like to insist on the fact that the kernel of $Pe^{-tP}$ is only defined in the sense of distributions, it is not a function which is in $L^1_{loc}$In the notations of Theorem~\ref{thm:heatcalculus}, an immediate calculation yields thatits kernel $K_t$ satisfies the estimates of Theorem~\ref{thm:heatcalculus}with the representation$$ K_t(x,x-y)=t^{-\frac{d+2}{2}}\tilde{A}(t,\frac{x-y}{\sqrt{t}},x) $$so $Pe^{-tP}$ defines an element in $\Psi_H^0(M)$.Moreover, repeating the proof of Lemma~\ref{l:heatvsparabolic} to the kernel $K$ of $Pe^{-tP}$, we have for every $p$ and any coordinate system defined in $U$ near $p$\begin{eqnarray*}\vert K(t,x,y)\vert\leq  C\big(\sqrt{t}+\vert x-y \vert \big)^{-d-2}.\end{eqnarray*}Moreover, the above estimate still holds with the same exponent on the right-hand side for the kernel $K$ differentiated with vector fields tangent to the diagonal. }

\medskip

\textbf{\textsf{(b) Representation of $e^{-tP} P^{-1}$ in the parabolic calculus$\boldmath{.}$\hspace{0.1cm}}} The second kernel we need to describe is the Schwartz kernel of $e^{-tP} P^{-1}$ . We shall state some preliminary simple lemma: 

\smallskip

\begin{lemm} [From heat decay to parabolic distance] \label{l:ineq2}
We have the inequality
\begin{eqnarray*}
\int_{t}^1 s^{-\frac{d}{2}} \left(1+\frac{\vert x-y\vert}{\sqrt{s}} \right)^{-N-d} \rmd s\lesssim \left(1+\vert x-y\vert\right)^{-N}\frac{2}{d-2}\left(\sqrt{t}+\vert x-y\vert\right)^{-d+2}
\end{eqnarray*}
when $\max(t,\vert x-y\vert)$ tends to $0$
\end{lemm}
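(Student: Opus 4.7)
My plan is a direct calculation exploiting the restricted integration domain $s \in [t,1]$, which forces $\sqrt{s} \leq 1$ and therefore permits us to extract the $(1+|x-y|)^{-N}$ factor cleanly. Set $r \defeq |x-y|$. The central observation is that for $s\in[t,1]$, we have $r/\sqrt{s} \geq r$, so
\[
\left(1+\frac{r}{\sqrt{s}}\right)^{-N-d} = \left(1+\frac{r}{\sqrt{s}}\right)^{-N}\left(1+\frac{r}{\sqrt{s}}\right)^{-d} \leq (1+r)^{-N}\left(1+\frac{r}{\sqrt{s}}\right)^{-d}.
\]
This is the only place where the $(1+r)^{-N}$ factor in the right-hand side of the claim comes from; without the bound $\sqrt{s}\leq 1$ (i.e.\ truncation of the integral at $s=1$) this step would fail.

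Next I simplify the remaining factor algebraically, using
\[
s^{-d/2}\left(1+\frac{r}{\sqrt{s}}\right)^{-d} = s^{-d/2}\cdot\frac{s^{d/2}}{(\sqrt{s}+r)^{d}} = (\sqrt{s}+r)^{-d},
\]
so that the integral is bounded by $(1+r)^{-N}\int_t^1 (\sqrt{s}+r)^{-d}\,ds$. I then perform the change of variables $u=\sqrt{s}$, which gives $ds = 2u\,du$ and
\[
\int_t^1 (\sqrt{s}+r)^{-d}\,ds = 2\int_{\sqrt{t}}^1 \frac{u}{(u+r)^d}\,du \leq 2\int_{\sqrt{t}}^1 (u+r)^{-d+1}\,du,
\]
where the inequality uses $u\leq u+r$. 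Since $d>2$, the last integral evaluates to $\frac{1}{d-2}\big[(\sqrt{t}+r)^{-d+2} - (1+r)^{-d+2}\big]$, which is bounded above by $\frac{1}{d-2}(\sqrt{t}+r)^{-d+2}$. Assembling everything yields exactly the claimed bound with constant $\frac{2}{d-2}$.

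The argument is genuinely routine; there is no real obstacle. The one place that requires care is the algebraic identity simplifying $s^{-d/2}(1+r/\sqrt{s})^{-d}$ to $(\sqrt{s}+r)^{-d}$, which absorbs the awkward $s^{-d/2}$ volume factor, and the trick of extracting $(1+r)^{-N}$ using $\sqrt{s}\leq 1$. Note that the assumption $d>2$ is necessary for the integrability of $(u+r)^{-d+1}$ in a neighbourhood of $u=0$ to produce the correct scaling $(\sqrt{t}+r)^{-d+2}$; the borderline case $d=2$ would produce a logarithmic loss consistent with the shape of the parabolic estimates appearing earlier in Definition \ref{def:paraboliccalculus}.
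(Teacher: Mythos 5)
Your proof is correct and follows essentially the same route as the paper's: extract the factor $(1+|x-y|)^{-N}$ using $\sqrt{s}\leq 1$, rewrite $s^{-d/2}(1+|x-y|/\sqrt{s})^{-d}=(\sqrt{s}+|x-y|)^{-d}$, substitute $u=\sqrt{s}$, bound $u\leq u+|x-y|$, and integrate $(u+|x-y|)^{-d+1}$ using $d>2$. The only difference is that you spell out explicitly the two elementary steps the paper compresses into its first inequality.
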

\begin{proof}
Set $a=\vert x-y\vert$, then:
\begin{equation*} \begin{split}
\int_{t}^1 s^{-\frac{d}{2}} \left(1+\frac{a}{\sqrt{s}}\right)^{-N-d}\rmd s &= \int_{t}^1 s^{-\frac{d}{2}}\left(1+\frac{a}{\sqrt{s}}\right)^{-d} \left(1+\frac{a}{\sqrt{s}}\right)^{-N}\rmd s   \\
&\leq  (1+a)^{-N} \int_{t}^1  (\sqrt{s}+a)^{-d} \rmd s   \\
&\leq 2(1+a)^{-N} \int_{\sqrt{t}}^1  (r+a)^{-d}  r\rmd r \leq  2(1+a)^{-N}\int_{\sqrt{t}}^1  (r+a)^{-d+1}  \rmd r   \\
&\leq (1+a)^{-N} \, \frac{2(1+a)^{-d+2}-2(\sqrt{t}+a)^{-d+2}}{-d+2}   \\
&\leq (1+a)^{-N} \, \frac{2}{d-2}(\sqrt{t}+a)^{-d+2}. 
\end{split} \end{equation*}
\end{proof}

\smallskip

Using the above Lemma we deduce information on the analytic structure of the Schwartz kernel of $e^{-tP} P^{-1}$.

\smallskip

\begin{prop}\label{p:propagatorinparabolic}
With the notations of Theorem \ref{thm:heatcalculus}, the kernel of $e^{-tP}P^{-1}$ belongs to $\Psi_P^{-2}$.
\end{prop}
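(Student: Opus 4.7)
My plan is to use the elementary identity
\[
e^{-tP}P^{-1}=\int_t^\infty e^{-sP}\,ds,
\]
which at the level of Schwartz kernels reads $B(t,x,y)=\int_t^\infty K_s(x,y)\,ds$, where $K_s$ is the massive heat kernel. Once this integral representation is established, the verification that $B\in\Psi_P^{-2}$ will be a bookkeeping exercise combining Theorem \ref{thm:heatcalculus}, Lemma \ref{l:heatvsparabolic} and Lemma \ref{l:ineq2}. Note that the required target bound for $a=-2$ is $(\vert t\vert+d(x,y)^2)^{-(d-2)/2}$, i.e.\ $(\sqrt{t}+d(x,y))^{-(d-2)}$, which is exactly the gain produced by Lemma \ref{l:ineq2}.

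First I would split the integral at $s=1$: the tail $\int_1^\infty K_s\,ds$ is $C^\infty$ on $M\times M$ with exponential decay in $t$ thanks to the last bullet of Theorem \ref{thm:heatcalculus}, so it contributes to the remainder of the parabolic class and is immediately compatible with the decay condition (3) of Definition \ref{defi:flatparabolic}. The main contribution is $\int_t^1 K_s(x,y)\,ds$, to which I apply Theorem \ref{thm:heatcalculus} in a local chart to write $K_s(x,y)=s^{-d/2}\widetilde{A}(s,(x-y)/\sqrt{s},y)$ with $|\widetilde{A}(s,X,y)|\lesssim_N(1+|X|)^{-N-d}$. Feeding this into Lemma \ref{l:ineq2} yields the pointwise bound
\[
\Big|\int_t^1 K_s(x,y)\,ds\Big|\lesssim (\sqrt{t}+|x-y|)^{-d+2},
\]
which is the required $a=-2$ estimate in condition (2) of Definition \ref{defi:flatparabolic}.

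Next I would check the bound under differentiation by an arbitrary string of vector fields in the tangent module $\mathcal{M}_P$. Purely spatial tangent vector fields (of the form $\partial_x+\partial_y$ or $(x^i-y^i)\partial_{x^k}$, etc.) commute with the integral over $s$, and the argument used for $K_s$ in the proof of Lemma \ref{l:heatvsparabolic} shows that the integrand still obeys the bound $s^{-d/2}(1+|x-y|/\sqrt{s})^{-N-d}$, so Lemma \ref{l:ineq2} applies verbatim. The time-derivative vector fields $t\partial_t$ and $(x^i-y^i)\partial_t$ are handled via the identity $\partial_t B=-K_t$, which is the crucial algebraic input: multiplying by $t$ or by $(x^i-y^i)$ and using Lemma \ref{l:heatvsparabolic} gives
\[
|t\partial_t B|\lesssim t(\sqrt{t}+|x-y|)^{-d}\leq (\sqrt{t}+|x-y|)^{-d+2},\qquad
|(x^i-y^i)\partial_t B|\lesssim (\sqrt{t}+|x-y|)^{-d+1},
\]
both of which are at least as good as the required $\Psi_P^{-2}$ bound.

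For higher-order strings $L_1\cdots L_k\in\mathcal{M}_P^{\otimes k}$ I would proceed inductively: at each occurrence of a $\partial_t$ factor, use $\partial_t B=-K_t$ to convert the problem into a bound on derivatives of the heat kernel at the endpoint $s=t$ (boundary term of the integral), while the remaining spatial vector fields can be commuted through the Bochner integral and controlled by the derivative estimates of Lemma \ref{l:gainderivative} applied to $K_s$. The main obstacle I anticipate is precisely this combinatorial step of keeping track of the boundary terms produced each time a time-derivative is peeled off, and checking that every such boundary contribution of the form $t^\alpha(x-y)^\beta(\partial_t^\gamma\partial_x^\delta K_t)(x,y)$ has parabolic homogeneity no worse than $(\sqrt{t}+|x-y|)^{-(d-2)}$ times a bounded function; this follows from the rescaled representation of $K_t$ and is where the careful book-keeping of parabolic weights, in the spirit of Lemma \ref{l:gainderivative}, really pays off. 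Finally, smoothness of $B$ up to $\{t=0\}\setminus\{x=y\}$ is immediate from the off-diagonal rapid decay of $K_s$ uniformly in $s\in(0,1]$, which makes the integrand and all its derivatives integrable in $s$ for $x\neq y$.
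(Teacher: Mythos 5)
Your proposal is correct and follows essentially the same route as the paper: the integral identity $e^{-tP}P^{-1}=\int_t^\infty e^{-sP}\,ds$, the split at $s=1$ with the tail absorbed as a smoothing term, the heat-calculus representation of $K_s$ fed into Lemma \ref{l:ineq2} for the pointwise bound, the reduction $\partial_t B=-K_t$ for time derivatives, differentiation under the integral for spatial ones, and Lemma \ref{l:gainderivative} to conclude. The only difference is presentational (you organize the derivative estimates around the tangent module directly, the paper around the $\partial_{\sqrt t},\partial_h,\partial_x$ form), which is immaterial.
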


\smallskip

\begin{proof}
We start by noticing that when $x\neq y$, the kernel $e^{-tP}P^{-1}(x,y), t\geqslant 0$ is smooth in $(t,x,y)$ since 
$$\partial_t^\alpha \partial_{x,y}^\beta e^{-tP}P^{-1}(x,y)=(-1)^{\alpha} \partial_{x,y}^\beta e^{-tP} P^{\alpha-1}(x,y)$$
which is smooth outside the diagonal as a bounded family 
of pseudodifferential operators uniformly in $t\in [0,1)$. For $t\geqslant 1$, $(-1)^{\alpha} \partial_{x,y}^\beta e^{-tP} P^{\alpha-1}(x,y)$ has smooth kernel by smoothing properties of the heat operator. We need to show that in some chart of the type $U\times \big\{\vert h\vert\leq  R, h\in \mathbb{R}^d\big\}$ as in Theorem \ref{thm:heatcalculus}
\begin{eqnarray*}
e^{-tP} P^{-1}(x,y) = B\big(\sqrt{t},x-y,y\big)
\end{eqnarray*}
where the kernel $B(\sqrt{t},X,y)$ has the following decay properties
\begin{eqnarray*}
\vert  B(\sqrt{t},X,y)\vert\leq  C_{N+d} \big(\sqrt{t}+\vert x-y\vert\big)^{-d+2} \big(1+\vert x-y\vert\big)^{-N}
\end{eqnarray*}
\textbf{uniformly} in $t$ in some compact interval. The positive mass ensures exponential decay of the integrand in the following integral formula
$$
e^{-tP} P^{-1}=\int_t^{+\infty} e^{-sP}\rmd s.
$$
As usual thanks to the exponential decay, we have a preliminary decomposition
as:
\begin{eqnarray*}
e^{-tP} P^{-1}=\int_t^{1} e^{-sP}\rmd s+\underbrace{\int_1^{+\infty} e^{-sP}\rmd s}
\end{eqnarray*}
where the integral underbraced converges absolutely as a smoothing operator thanks to the exponential decay.

 Then in a second step, we shall study the finite integral $\int_t^{1} e^{-sP}\rmd s $, we rely on the heat calculus representation
 $$
 e^{-sP}(x,y) = s^{-\frac{d}{2}}\widetilde{A}\Big(s,\frac{x-y}{\sqrt{s}},y\Big).
 $$
 Set 
 $$
 B(t,x,y) = \int_t^{1} s^{-\frac{d}{2}}\widetilde{A}\Big(s,\frac{x-y}{\sqrt{s}},y\Big)\rmd s.
 $$
By Lemma~\ref{l:ineq2}, we deduce the claimed estimate that reads
\begin{equation*} \begin{split}
\bigg\vert\int_t^{1} s^{-\frac{d}{2}}\widetilde{A}\Big(s,\frac{x-y}{\sqrt{s}},y\Big)ds\bigg\vert &\leq  C_{N+d} \int_t^{1} s^{-\frac{d}{2}}\left(1+\frac{\vert x-y\vert}{\sqrt{s}}\right)^{-N-d} \rmd s   \\
&\leq  C_{N+d} \, \big(1+\vert x-y\vert\big)^{-N} \big(\sqrt{t}+\vert x-y\vert \big)^{-d+2}.
\end{split} \end{equation*}

Now we need to repeat the above bounds for derivatives
$$
\big\vert\partial_t^\alpha \left(e^{-tP}P^{-1}\right)(x,y)\big\vert = \big\vert\partial_t^{\alpha-1} \left(e^{-tP}\right)(x,y)\big\vert \leq  \left( \sqrt{t}+\vert x-y\vert\right)^{-d-2\vert\alpha \vert+2}
$$
since $e^{-tP}$ belongs to the heat calculus.

Repeating the same estimates for the derivatives
$\partial_h^\alpha$, we get:
\begin{eqnarray*}
 \partial^\alpha_{h} \int_t^{1} s^{-\frac{d}{2}}\widetilde{A}\left(s,\frac{h}{\sqrt{s}},y\right) \rmd s=
 \int_t^{1} s^{-\frac{d+\vert \alpha\vert}{2}} \left(\partial^\alpha_{X}\widetilde{A}\right)\left(s,\frac{h}{\sqrt{s}},y\right)  \rmd s\\
 \leq  C_{N+d+\vert\alpha\vert} \int_t^{1} s^{-\frac{d+\vert \alpha\vert}{2}} \left(1+\frac{\vert h \vert}{\sqrt{s}}\right)^{-N-d-\vert\alpha\vert}ds\lesssim \left(\sqrt{t}+\vert h \vert \right)^{-d-\vert \alpha\vert+2}.
\end{eqnarray*} 
The estimates involving derivatives with respect to  to $y$ are treated similarly and are left to the reader. Lemma~\ref{l:gainderivative} allows to conclude that $ e^{-tP} P^{-1}$ belongs to $\Psi_P^{-2}$. 
\end{proof}

\medskip

%%%-------------------------------------------------------------------------%%%
\subsubsection{Parameter-dependent pseudodifferential operators$\boldmath{.}$\hspace{0.1cm}}
%%%-------------------------------------------------------------------------%%%

In this section we shall treat the elements from the parabolic calculus as parameter-dependent pseudodifferential operators acting on $M$, the time variable is treated as a parameter and controls the pseudodifferential order uniformly in the time parameter. Our main result here is Proposition \ref{l:parabolicasfamilypsidos}. To relate our definition of operators in terms of kernels with the classical Fourier definition in terms of symbols, we need to recall some statement which relates the order of a pseudodifferential operator with the growth of the symbol along the diagonal together with the diagonal growth of derivatives of the kernel. This can also be found in Taylor's book \cite[Prop 2.2 p.~6]{Taylor2} and goes back to the work of Kr\'ee and Seeley. Let $\mathcal{M}\subset C^\infty(T(M\times M))$ be the module of vector fields tangent to the diagonal in $M\times M$.

\smallskip

\begin{prop} [Characterization of pseudodifferential kernels] \label{prop:characterizationpsidokernels}
Pick $-d < m<0$. An element $K\in \mathcal{D}^{\prime}(M\times M)$ is the pseudodifferential kernel of some operator in $\Psi^m_{1,0}(M)$ if and only if
\begin{enumerate}
\item $K\in L^1(M\times M)$, 
\item near the diagonal and in some product chart of the form $\kappa\times \kappa: U\times U\subset M\times M\mapsto \mathbb{R}^d\times \mathbb{R}^d$ one has, for any $L_1,\dots,L_p\in \mathcal{M}^p$
$$
\big\vert \left(\kappa\times \kappa\right)_*(L_1\dots L_p K)(x,y) \big\vert \lesssim_{L_1,\dots,L_p} \vert x-y \vert^{-d-m}.
$$
\end{enumerate}
Furthermore, for any open cover $(U_i)_{i\in I}$ of $M$ we can use the constants 
$$
\sup_{(x,y)\in U^2}\vert (\kappa_i\times \kappa_i)_*(L_1\dots L_p K)(x,y) \vert  \vert x-y \vert^{d+m},
$$ 
together with the topology of smooth functions on $C^\infty( V )$, where $V\cup \left(\bigcup_{i\in I} U_i^2 \right)$ forms an open cover of $M\times M$, to get back the topology of pseudodifferential operators in $\Psi^{m}_{1,0}(M)$. 
\end{prop}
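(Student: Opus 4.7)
The proof establishes both implications locally in a product chart and then globalizes using a partition of unity, with the topological equivalence following from the continuity of both arguments. The key conceptual observation is that tangent vector fields to the diagonal act on Schwartz kernels via operations that preserve the order in the pseudodifferential calculus, so that differentiation along the generators of $\mathcal{M}$ mirrors the symbol calculus in a direct way.

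For the forward direction, starting from $A \in \Psi^m_{1,0}(M)$ with kernel $K_A$, I would identify in a local chart the action of the generators of $\mathcal{M}$ on $K_A$. The tangent vector field $\partial_{x^i} + \partial_{y^i}$ produces the kernel of the commutator $[\partial_{x^i}, A] \in \Psi^m_{1,0}$; multiplication by $(x^i - y^i)$ yields the kernel of $[x^i, A] \in \Psi^{m-1}_{1,0}$; and $(x^i - y^i)\partial_{x^j}$ combines these to produce the kernel of an operator in $\Psi^m_{1,0}$. Iterating, $L_1 \cdots L_p K_A$ is always the kernel of some operator in $\Psi^m_{1,0}$, and the classical diagonal bound $|K_B(x,y)| \lesssim |x-y|^{-d-m}$ valid for any $B \in \Psi^m_{1,0}$ with $-d < m < 0$ (proved by splitting the oscillatory integral $K_B(x,y) = (2\pi)^{-d}\int e^{i(x-y)\cdot \xi} b(x,\xi)\, d\xi$ at $|\xi| \sim |x-y|^{-1}$) delivers the required estimate. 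Global $L^1$ integrability is immediate since $M$ is compact and $-d - m > -d$.

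For the converse, I would first note that the tangent hypothesis already implies smoothness of $K$ off the diagonal, since at any point $(x,y)$ with $x\neq y$ the module $\mathcal{M}$ spans the full tangent bundle of $M\times M$. In a chart $\kappa : U \to \mathbb{R}^d$ and with a cutoff $\chi \in C^\infty_c(\mathbb{R}^d)$ equal to $1$ near the origin, define the candidate symbol
\begin{equation*}
a(x,\xi) \defeq \int_{\mathbb{R}^d} e^{-i h\cdot \xi}\, \chi(h)\, (\kappa\times\kappa)_* K(x, x-h)\, dh.
\end{equation*}
I would verify $|\partial_x^\beta \partial_\xi^\alpha a(x,\xi)| \lesssim (1+|\xi|)^{m-|\alpha|}$ by splitting the $h$-integral at $|h|\sim |\xi|^{-1}$. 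On $|h|\leq |\xi|^{-1}$, the pointwise bound $|K(x,x-h)|\lesssim |h|^{-d-m}$ combined with the factor $(-ih)^\alpha$ gives a contribution of order $|\xi|^{m-|\alpha|}$ after integration. On $|h|\geq |\xi|^{-1}$, I would integrate by parts $N$ times using $e^{-ih\cdot\xi} = (1+|\xi|^2)^{-N}(1-\Delta_h)^N e^{-ih\cdot\xi}$, where each $\partial_{h^i}$ applied to $K(x,x-h)$ produces $-\partial_{y^i} K$; the crude bound $|\partial_y^\gamma K| \lesssim |x-y|^{-d-m-|\gamma|}$ follows from the tangent hypothesis applied to $(x^j - y^j)^{\gamma_j}\partial_{y}^{\gamma}$ after dividing by the largest coordinate, and choosing $2N > |\alpha| - m$ yields the desired size $|\xi|^{m-|\alpha|}$. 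Derivatives $\partial_x^\beta a$ are handled identically since $\partial_{x^i} K(x, x-h) = [(\partial_{x^i} + \partial_{y^i}) K](x, x-h)$ is controlled by tangent derivatives. A direct computation shows that $Op(a)$ has kernel $\chi(x-y)\,(\kappa\times\kappa)_* K(x,y)$, identifying $K$ with a pseudodifferential kernel in $\Psi^m_{1,0}$ modulo a smooth remainder supported away from the diagonal.

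Globalization and the topological statement follow from a standard partition-of-unity argument subordinate to the cover $(U_i)$, together with the smoothness of $K$ on the off-diagonal open set $V$; coordinate invariance of $\Psi^m_{1,0}$ assembles the local operators into a global $A \in \Psi^m_{1,0}(M)$ with $K_A = K$ modulo smoothing. The topological equivalence is obtained by tracking best constants at each step in both arguments: the symbol seminorms of the local $a$ are controlled by finitely many tangent-derivative constants of $K$, and conversely the tangent-derivative constants of $K_A$ are controlled by finitely many symbol seminorms via the commutator identities. The main technical obstacle is the bookkeeping in the converse direction, where the non-tangent $h$-derivatives produced by $(1-\Delta_h)^N$ must be converted to tangent-derivative bounds on $K$ with exact power counting to reach $|\xi|^{m-|\alpha|}$; care is also needed to verify that the coordinate transfer and partition-of-unity gluing preserve the symbol class rather than merely the order, which is where the choice $m \in (-d,0)$ enters crucially to keep $L^1$ integrability available throughout.
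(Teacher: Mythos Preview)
Your proof is correct and complete, but it takes a genuinely different route from the paper's argument. For the forward direction, you use the commutator identities $(\partial_{x^i}+\partial_{y^i})K_A = K_{[\partial_{x^i},A]}$ and $(x^i-y^i)K_A = K_{[x^i,A]}$ to see that tangent differentiation preserves the order, and then the diagonal bound via splitting the oscillatory integral at $|\xi|\sim|x-y|^{-1}$. The paper instead establishes a Littlewood--Paley decomposition $A=\sum_j A\Delta_j + R$ with $[A\Delta_j](x,h)=2^{j(d+m)}K_j(x,2^jh)$ for a bounded sequence $(K_j)$ in $\mathcal{S}(U\times\mathbb{R}^d)$, and sums the resulting pointwise bounds. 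For the converse, you define the symbol directly as the partial Fourier transform of the cut-off kernel and verify the $S^m_{1,0}$ estimates by splitting the $h$-integral at $|h|\sim|\xi|^{-1}$ and integrating by parts on the outer region; the paper instead performs a dyadic decomposition in \emph{position space}, writing $K(x,h)=K\chi+\sum_j 2^{j(m+d)}K_j(x,2^jh)$ with $K_j(x,h)=2^{-j(m+d)}\psi(h)K(x,2^{-j}h)$ bounded in $C^\infty_0$ of a fixed annulus, Fourier transforms term by term, and sums the resulting symbol pieces.

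Your approach is more direct for this single statement and has the virtue that the commutator picture makes the stability under $\mathcal{M}$ transparent. The paper's dyadic approach is more constructive and pays off immediately afterward: the same series representation is reused to prove the reconstruction-by-renormalization result for $m\geq 0$ and $m<-d$ (where counterterms appear), and to treat parabolic kernels as parameter-dependent pseudodifferential operators. One minor point: in your converse, the conversion of the $(1-\Delta_h)^N$ derivatives into the ``crude bound'' $|\partial_y^\gamma K|\lesssim |x-y|^{-d-m-|\gamma|}$ is exactly the content of the paper's Lemma~\ref{l:gainderivative}, so you are implicitly invoking the same mechanism there.
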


\smallskip

We characterize only those pseudodifferential kernels which are $L^1$; they correspond to operators with some smoothing properties. By the usual invariance properties of the pseudodifferential calculus, it is enough to prove the statement on $\mathbb{R}^d$. To go back to manifolds one can use a partition of unity as in~\cite[p.~84-87]{Horm3}. 

\smallskip

We shall deduce the Proposition \ref{prop:characterizationpsidokernels} from some elementary results which are of independent interest and will be used later. In the sequel, for every bounded open subset $U\subset \mathbb{R}^d$ we shall denote by $\mathcal{S}(U\times \mathbb{R}^d)$ the set of smooth functions $a\in C^\infty(U\times \mathbb{R}^d)$ which are Schwartz in the second variable uniformly in the first one. This locally convex topological vector space is determined by the seminorms 
$$ 
\sup_{x\in K, \xi\in \mathbb{R}^d} \big\vert(1+\vert\xi\vert)^N \partial_x^\alpha\partial_\xi^\beta a(x;\xi) \big\vert,
$$ 
where $K\subset U$ is compact. We use below the notation $(\Delta_j)_{j=1}^{\infty}$ be the sequence of Littlewood-Paley-Stein projectors on $\mathbb{R}^d$.

\smallskip

\begin{lemm}\label{l:technical1}
Pick $A\in \Psi^m_{1,0}(\mathbb{R}^d)$ for an arbitrary $m\in\bbR$. We can decompose $A$ as a series
$$
A = \sum_{j=1}^\infty A\Delta_j+R
$$
where $R\in \Psi^{-\infty}$ and the kernel of $A\Delta_j$ can be represented as
\begin{eqnarray*}
[A\Delta_j]\big(x,x-y\big) = 2^{j(d+m)}K_j\big(x,2^j(x-y)\big)
\end{eqnarray*}
where $(K_j)_j$ is a bounded sequence in $\mathcal{S}(U\times \mathbb{R}^d)$, in the sense that one has
\begin{eqnarray*}
\big\vert(\partial_x^\alpha\partial_h^\beta K_j)(x,h)\big\vert \leq C_{N,\alpha,\beta} \big(1+\vert h\vert\big)^{-N}
\end{eqnarray*}
for all $N$. Moreover, all the kernels $K_j$ have vanishing moments in the second variable:
$$
\int K_j(x,h)h^\alpha \rmd h=0
$$ 
for all multiindex $\alpha$.
\end{lemm}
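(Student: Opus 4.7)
The plan is to build the decomposition directly from the Littlewood--Paley partition of unity $1=\chi(\xi)+\sum_{j\geq 1}\psi(2^{-j}\xi)$ on $\mathbb{R}^d$. Writing $\Delta_0=\chi(D)$, we set $R\defeq A\Delta_0=A\chi(D)$ and note that since $\chi$ is compactly supported, $\chi(D)\in\Psi^{-\infty}$; composition with $A$ preserves this, so $R\in\Psi^{-\infty}(\mathbb{R}^d)$. What remains is then $A=\sum_{j\geq 1}A\Delta_j+R$, and the whole content of the lemma is to represent the kernel of each $A\Delta_j$ in the announced scaled form.

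For $j\geq 1$, starting from the symbolic representation of $A$ and of $\Delta_j=\psi(2^{-j}D)$, Fubini yields
\[
[A\Delta_j](x,x-y)=\frac{1}{(2\pi)^d}\int_{\mathbb{R}^d} a(x,\xi)\psi(2^{-j}\xi)e^{i\xi\cdot(x-y)}\,d\xi.
\]
Then I would perform the change of variables $\xi=2^j\eta$, which produces the prefactor $2^{jd}$ and makes the oscillatory factor $e^{i2^j\eta\cdot(x-y)}$. Defining
\[
K_j(x,h)\defeq \frac{2^{-jm}}{(2\pi)^d}\int_{\mathbb{R}^d} a(x,2^j\eta)\,\psi(\eta)\,e^{i\eta\cdot h}\,d\eta
\]
directly gives the identity $[A\Delta_j](x,x-y)=2^{j(d+m)}K_j\big(x,2^j(x-y)\big)$.

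The bulk of the work is to verify that $(K_j)_j$ is bounded in $\mathcal{S}(U\times\mathbb{R}^d)$. The key point is that on the annulus $\mathrm{supp}\,\psi$ one has $|\eta|\sim 1$, so $|\xi|=2^j|\eta|\sim 2^j$, and the symbol estimates $|\partial_\xi^\gamma a(x,\xi)|\lesssim(1+|\xi|)^{m-|\gamma|}$ translate into
\[
\big|\partial_\eta^\gamma\big(a(x,2^j\eta)\big)\big|=2^{j|\gamma|}\big|(\partial_\xi^\gamma a)(x,2^j\eta)\big|\lesssim 2^{j|\gamma|}(1+2^j)^{m-|\gamma|}\lesssim 2^{jm}
\]
uniformly on $\mathrm{supp}\,\psi$, and likewise for $x$-derivatives. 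Integration by parts in $\eta$ then converts each power of $|h|$ into a derivative falling on $a(x,2^j\eta)\psi(\eta)$: each such derivative is bounded by $2^{jm}$ by the Leibniz rule and the above estimate, so that after multiplying by the prefactor $2^{-jm}$ the bound is uniform in $j$. This yields, for every $N$ and multi-indices $\alpha,\beta$, an estimate $|\partial_x^\alpha\partial_h^\beta K_j(x,h)|\leq C_{N,\alpha,\beta}(1+|h|)^{-N}$ with constants independent of $j$, which is exactly the claimed bound.

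For the vanishing-moment property, I would use Fourier inversion: by construction $\widehat{K_j(x,\cdot)}(\eta)=2^{-jm}a(x,2^j\eta)\psi(\eta)$, and since $\psi$ vanishes identically in a neighbourhood of $0$, all derivatives of the right-hand side at $\eta=0$ vanish. Because $K_j(x,\cdot)\in\mathcal{S}(\mathbb{R}^d_h)$, this is equivalent to $\int K_j(x,h)h^\alpha\,dh=0$ for every $\alpha$. No step here is a serious obstacle; the only care needed is the bookkeeping that keeps the estimates uniform in $j$, which is guaranteed by localisation to $|\eta|\sim 1$ on $\mathrm{supp}\,\psi$.
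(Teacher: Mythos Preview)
Your proof is correct and follows essentially the same approach as the paper: define $R=A\Delta_0$, write the kernel of $A\Delta_j$ as the inverse Fourier transform of $a(x,\xi)\psi(2^{-j}\xi)$, rescale $\xi=2^j\eta$, and use the symbol estimates on the annulus $\mathrm{supp}\,\psi$ to see that $(2^{-jm}a(x,2^j\eta)\psi(\eta))_j$ is bounded in $\mathcal{S}(U\times\mathbb{R}^d)$, with the vanishing moments coming from the fact that $\psi$ vanishes near the origin. Your presentation is slightly more explicit about the integration-by-parts step that produces the decay in $h$, but the argument is the same.
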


\smallskip

\begin{proof}
The proof is similar to the proof of Proposition 2.2 in Taylor's book \cite{Taylor2}. We start from the expression of the pseudodifferential kernel $K(x,y)=\widetilde{K}(x,x-y)$ of $A$ in terms of the symbol of $A$
\begin{eqnarray*}
\widetilde{K}(x,h)=\frac{1}{(2\pi)^d}\int_{\mathbb{R}^d} e^{i\xi\cdot h}a(x;\xi)\rmd\xi
\end{eqnarray*}
where the integral is understood as a Fourier integral distribution -- a non-convergent integral due to the slow decay of $a$ in the variable $\xi$. Indeed, we define for any test function $\varphi\in C^\infty_c(\mathbb{R}^d)$:
\begin{eqnarray*}
\int_{\mathbb{R}^d}\int_{\mathbb{R}^d} e^{i\xi\cdot h}a(x;\xi) \varphi(h) \rmd\xi \rmd h &=& \int_{\mathbb{R}^d}\int_{\mathbb{R}^d}\left( \frac{\xi.\partial_h}{i\vert \xi\vert^2}^N e^{i\xi\cdot h}\right)a(x;\xi)\rmd\xi \rmd h\\
&=&(-1)^N\int_{\mathbb{R}^d}\int_{\mathbb{R}^d}\left( \frac{\xi \cdot \partial_h}{i\vert \xi\vert^2}^Na(x;\xi)\varphi(h) \right)e^{i\xi\cdot h}\rmd\xi \rmd h
\end{eqnarray*}   
where the rightmost integral is seen to converge when $N$ is large enough since the integration by parts brings in some decay. The function $\widetilde{K}$ is even smooth outside $h=0$. Recall the Littlewood-Paley-Stein projectors were associated to a dyadic partition of unity in frequency
$1=\psi_0+\sum_{n=1}^\infty \psi(2^{-n}.)$, $\psi$ is supported on some corona $1\leq \vert \xi\vert\leq  4 $.
The kernel of $A\Delta_j$ reads 
\begin{eqnarray*}
\frac{1}{(2\pi)^d}\int_{\mathbb{R}^d} e^{i\xi\cdot h}a(x;\xi) \psi(2^{-j}\xi)\rmd\xi.
\end{eqnarray*}
This yields a series decomposition of the form
\begin{equation*} \begin{split}
\widetilde{K}(x,h) &= \frac{1}{(2\pi)^d}\int_{\mathbb{R}^d} e^{i\xi\cdot h}\sum_{j=1}^\infty \psi(2^{-j}\xi) a(x;\xi)\rmd\xi + R   \\
&= \sum_{j=1}^\infty 2^{jd}\frac{1}{(2\pi)^d}\int_{\mathbb{R}^d} e^{i\xi.(2^jh)} \psi(\xi) a(x;2^j\xi)\rmd\xi + R
\end{split} \end{equation*}
where $R\in C^\infty(\bbR^d)$. Now, we use the growth of the symbol and the annulus support to derive the estimate
$$ 
\vert \partial_\xi^\beta \left(\psi(\xi) a(x;2^j\xi)\right)\vert  \leq   C_\beta 2^{jm} 
$$ 
since  
$$
\vert\partial_\xi^\beta a(x;2^j\xi)\vert\lesssim \big(1+2^j\vert \xi\vert\big)^{-m-\vert\beta\vert}
$$ 
because $a$ is a classical symbol, the constant $C_\beta$ does not depend on $n$. This means that $\left(2^{-jm}\psi(\xi) a(x;2^j\xi)\right)_j $ is a bounded sequence of Schwartz functions of $\xi$ uniformly in $x\in U$. This implies that we decomposed our kernel $\widetilde{K}$ as an infinite series:
\begin{eqnarray*}
\widetilde{K}(x,h)=\sum_{j=1}^\infty 2^{jd+jm} K_j(x,2^jh) + R
\end{eqnarray*} 
where 
$$
\left(K_j(x,h)= \frac{2^{-jm}}{(2\pi)^d}\int_{\mathbb{R}^d} e^{i\xi\cdot h} \psi(\xi) a(x;2^j\xi)\rmd\xi \right)_{j\geq 1}
$$ 
is a bounded sequence of smooth functions in $\mathcal{S}(U\times \mathbb{R}^d)$. These are functions smooth in the first variable $x$ and smooth with fast decay in the second variable $h$. Moreover, $\int_{\mathbb{R}^d}K_j(x,h) h^\alpha \rmd h=0$ for all multiindices $\alpha$, the vanishing moment condition immediately follows from the fact that $K_j(x,\cdot)$ has Fourier support away from the origin.
\end{proof}

\smallskip

From the above series representation we deduce a bound on the kernel of each piece $A\Delta_j$. 

\smallskip

\begin{coro}\label{coro:estimateeachpiece}
One has
$$ 
\big\vert\partial_x^\alpha \partial_h^\beta  [A\Delta_j](x,h) \big\vert \leq  C_{N,\alpha,\beta} \, 2^{j(d+m+\vert \beta\vert)} \big(1+2^j\vert h\vert\big)^{-N} 
$$
for some positive constant $C_{N,\alpha,\beta}$ independent of $j$.
\end{coro}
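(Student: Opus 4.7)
The plan is to derive this corollary directly from the representation formula established in Lemma \ref{l:technical1}. That lemma already gives us, for each dyadic block, the factorization
\[
[A\Delta_j](x, x-y) = 2^{j(d+m)} K_j\bigl(x, 2^j(x-y)\bigr),
\]
with $(K_j)_j$ a bounded sequence in $\mathcal{S}(U \times \mathbb{R}^d)$. Writing the kernel with the second argument as $h$ (so that we view $[A\Delta_j](x,h) = 2^{j(d+m)} K_j(x, 2^j h)$), the corollary amounts to propagating the Schwartz bounds on $K_j$ through the anisotropic rescaling in the second argument.

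First I would simply apply the chain rule in the two independent variables. Each $\partial_x$ hits either the explicit $x$-argument in $K_j$ (producing a derivative of $K_j$ of the same scaling order), while each $\partial_h$ brings down a factor $2^j$ and differentiates $K_j$ in its second slot. After $\vert\alpha\vert$ and $\vert\beta\vert$ derivatives I would therefore obtain
\[
\partial_x^\alpha \partial_h^\beta [A\Delta_j](x,h) = 2^{j(d+m+\vert\beta\vert)} \bigl(\partial_x^\alpha \partial_h^\beta K_j\bigr)\bigl(x, 2^j h\bigr).
\]

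Next I would invoke the Schwartz estimates on $K_j$ from Lemma \ref{l:technical1}, which give
\[
\bigl\vert \bigl(\partial_x^\alpha \partial_h^\beta K_j\bigr)(x, H)\bigr\vert \leq C_{N,\alpha,\beta}\,(1+\vert H\vert)^{-N},
\]
uniformly in $j$. Substituting $H = 2^j h$ yields the required bound
\[
\bigl\vert \partial_x^\alpha \partial_h^\beta [A\Delta_j](x,h)\bigr\vert \leq C_{N,\alpha,\beta}\,2^{j(d+m+\vert\beta\vert)}\bigl(1+2^j\vert h\vert\bigr)^{-N}.
\]

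There is essentially no obstacle here: the content of the statement lies entirely in Lemma \ref{l:technical1}, whose proof already isolated the scaling behavior of $K_j$ and the independence of the constants in $j$. The corollary is simply the pointwise reading of that representation after differentiation, the only subtlety being the factor $2^{j\vert\beta\vert}$ produced by the chain rule in the rescaled variable. In particular, the smoothing remainder $R$ from Lemma \ref{l:technical1} does not enter, since we are estimating each individual piece $A\Delta_j$ and $R$ contributes a single Schwartz term which is absorbed trivially.
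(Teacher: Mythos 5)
Your proof is correct and follows exactly the route the paper intends: the corollary is stated as an immediate consequence of the representation $[A\Delta_j](x,h)=2^{j(d+m)}K_j(x,2^jh)$ from Lemma \ref{l:technical1}, and your chain-rule computation together with the uniform Schwartz bounds on $(K_j)_j$ is precisely that deduction. Your remark that the smoothing remainder $R$ plays no role is also accurate, since for each $j\geq 1$ the representation of the piece $A\Delta_j$ is exact.
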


\smallskip

Now we specialize to the case where $ m\in (-d,0) $ and try to deduce growth estimates on the Schwartz kernel of $A$ from the series decomposition. We also discuss the limiting cases $m=0$, $m=-d$ as well as $m<-d$ with $m$ non-integer.

\smallskip

\begin{lemm} \label{LemDiagonalScalingM}
For $A\in \Psi_{1,0}^m(\bbR^d)$ with $-d<m<0$ one has
$$ 
\big\vert L_1\dots L_p K(x,y)\big\vert \lesssim \vert  x-y\vert^{-d-m}
$$
for all vector fields $L_1,\dots,L_p$ that are tangent to the diagonal. For $A\in \Psi_{1,0}^{-d}(\bbR^d)$ one has
$$
\big\vert L_1\dots L_p K(x,y) \big\vert \lesssim \big\vert\log \vert  x-y\vert\big\vert
$$
for all vector fields $L_1,\dots,L_p$ which are tangent to the diagonal.
\end{lemm}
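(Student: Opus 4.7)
The plan is to reduce everything to the dyadic decomposition already obtained in Lemma~\ref{l:technical1} and the uniform pointwise bound of Corollary~\ref{coro:estimateeachpiece}. Write $K(x,y) = \widetilde K(x,x-y) + R(x,y)$ with $R \in C^\infty$ and
$$\widetilde K(x,h) = \sum_{j\geq 1} [A\Delta_j](x,h),$$
where each summand satisfies $|\partial_x^\alpha \partial_h^\beta [A\Delta_j](x,h)| \lesssim 2^{j(d+m+|\beta|)}(1+2^j|h|)^{-N}$. First I would check that every vector field on $\mathbb{R}^d\times\mathbb{R}^d$ tangent to the diagonal can, in the coordinates $(x,h=x-y)$, be expressed as a $C^\infty$-combination of $\partial_{x^i}$ (coming from $\partial_{x^i}+\partial_{y^i}$) and $h^i \partial_{h^j}$ (coming from $(x^i-y^i)\partial_{x^j}$ and $(x^i-y^i)\partial_{y^j}$). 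Applying such fields $L_1\cdots L_p$ to $[A\Delta_j](x,h)$ produces a finite sum of terms of the form $h^{\gamma}\partial_x^\alpha \partial_h^\beta[A\Delta_j]$ with $|\gamma|=|\beta|$, each bounded by $2^{j(d+m)}(2^j|h|)^{|\beta|}(1+2^j|h|)^{-N} \lesssim 2^{j(d+m)}(1+2^j|h|)^{-N+|\beta|}$. Choosing $N$ arbitrarily large, this is essentially the same scaling bound as for $[A\Delta_j]$ itself.

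Next, I would carry out the key summation. For fixed $h\neq 0$, split $\sum_j 2^{j(d+m)}(1+2^j|h|)^{-N}$ into the two pieces $2^j \leq |h|^{-1}$ and $2^j > |h|^{-1}$. On the first piece, bound $(1+2^j|h|)^{-N}\leq 1$, so since $d+m>0$ the geometric sum is controlled by the largest term, giving $\lesssim |h|^{-(d+m)}$. On the second piece, bound $(1+2^j|h|)^{-N}\leq (2^j|h|)^{-N}$ and choose $N > d+m$ so that the resulting geometric series $\sum_{2^j > |h|^{-1}} 2^{j(d+m-N)}|h|^{-N}$ is again dominated by its first term, which equals $|h|^{-(d+m)}$. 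Combined with the estimate of the previous paragraph this yields $|L_1\cdots L_p K(x,y)|\lesssim |x-y|^{-d-m}$.

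For the borderline case $m=-d$, the only change occurs in the low-frequency piece: $\sum_{2^j\leq |h|^{-1}} 2^{j(d+m)} = \sum_{2^j\leq |h|^{-1}} 1 \simeq |\log|h||$, while the high-frequency tail remains absolutely summable as before. This produces the announced logarithmic bound.

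The main technical point, and the one I would be most careful about, is simply the translation from tangent vector fields in $(x,y)$ coordinates to derivations in $(x,h)$ coordinates together with the observation that the factor $h^{\gamma}$ pairs with $\partial_h^\beta$ to give back a pure Schwartz-like factor $(2^j|h|)^{|\beta|}(1+2^j|h|)^{-N}$. Everything else is then a direct consequence of the two-regime splitting of a geometric series. Because the paper explicitly restricts to $-d<m\leq 0$ in the statement, no further argument is required; the sign conditions $d+m>0$ (or $d+m=0$ in the logarithmic limit) ensure that all summations converge in the way described.
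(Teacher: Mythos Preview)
Your proposal is correct and follows essentially the same approach as the paper: both rely on the dyadic decomposition of Lemma~\ref{l:technical1} and the pointwise bounds of Corollary~\ref{coro:estimateeachpiece}, then sum via a two-regime split at $2^j\sim|h|^{-1}$. The only cosmetic difference is in the treatment of tangential derivatives: the paper first proves the unrestricted bound $|\partial_x^\alpha\partial_h^\beta\widetilde K|\lesssim|h|^{-d-m-|\beta|}$ and appeals to the conical-covering trick of Lemma~\ref{l:gainderivative}, whereas you argue directly that tangent fields produce terms $h^\gamma\partial_x^\alpha\partial_h^\beta$ with $|\gamma|=|\beta|$, so that the factor $(2^j|h|)^{|\beta|}$ is absorbed into the Schwartz tail block by block---this is an equally valid implementation of the same idea.
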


\smallskip

\begin{proof}
The series decomposition of $\widetilde{K}$ implies some decay of the form
\begin{eqnarray*}
\big\vert \widetilde{K}(x,h)\big\vert \lesssim \sum_{n=1}^\infty 2^{nd-nm} \big(1+2^n\vert h\vert\big)^{-N}\lesssim \sum_{n=1}^\infty \big(2^{-n}+\vert h\vert\big)^{m-d}\lesssim \vert h\vert^{-m-d}
\end{eqnarray*}
when $m\in (-d,0)$. When $m=-d$, the above bound reads
\begin{eqnarray*}
\big\vert \widetilde{K}(x,h)\big\vert \lesssim \sum_{n=1}^\infty  \big(1+2^n\vert h\vert\big)^{-N} \lesssim \int_1^\infty \big(1+u\vert h\vert\big)^{-N}\frac{du}{u} = \int_{\vert h\vert}^\infty (1+u)^{-N}\frac{du}{u}\lesssim \big\vert \log\vert h\vert\big\vert.
\end{eqnarray*}
Note that when $m<-d$ and $m$ non-integer, each $K_j(x,\cdot)=2^{-j(m+d)} \Delta_{j,h}\widetilde{K}(x,\cdot)$ is bounded and Fourier supported in a corona where $\vert\xi\vert\simeq 2^j$. Hence we recognize that 
$$
\sup_{x\in U} \Vert\partial_x^\alpha\widetilde{K}(x,.)\Vert_{C^{-m-d}(\mathbb{R}^d)}<+\infty 
$$ 
by the Fourier definition of the norm of $C^{-m-d}=\mathcal{B}^{-m-d}_{\infty,\infty}$ for all multiindex $\alpha$.

The estimates for tangential derivatives work similarly, as follows. As in the proof of Lemma \ref{l:gainderivative} it suffices to prove an estimate of the form
$$
\big\vert\partial_x^\alpha\partial_h^\beta\widetilde{K}(x,h)\big\vert \lesssim \vert h\vert^{-m-d-\vert \beta\vert}.
$$ 
We differentiate
\begin{eqnarray*}
\big\vert \partial_x^\alpha \partial_h^\beta \widetilde{K}(x,h) \big\vert \leq  \sum_{n=1}^\infty 2^{nd+nm} \big\vert \partial_x^\alpha \partial_h^\beta K_n(x,2^nh) \big\vert \lesssim \sum_{n=1}^\infty 2^{nd+nm+n \vert\beta\vert} \big\vert\big(\partial_x^\alpha \partial_h^\beta K_n\big)(x,2^nh)\big\vert
\end{eqnarray*}
where we work with the new bounded sequence of smooth functions $\big(\partial_x^\alpha \partial_h^\beta K_n\big)_n$. Repeating the above steps for this new sequence taking into account the extra factor $2^{n\vert\beta\vert}$ yields the desired estimate
$$
\big\vert\partial_x^\alpha\partial_h^\beta\widetilde{K}(x,h)\big\vert \lesssim \vert h\vert^{-m-d-\vert \beta\vert}.
$$ 
\end{proof}

The direct sense of Proposition~\ref{prop:characterizationpsidokernels} follows from Lemma \ref{LemDiagonalScalingM}. Conversely, assume we are given a bounded sequence $(K_j)_j$ of smooth functions in $\mathcal{S}(U\times \mathbb{R}^d)$. Under which condition on $m$ does the series
\begin{eqnarray*}
\sum_{j=1}^\infty 2^{j(d+m)} K_j\big(x,2^j(x-y)\big)
\end{eqnarray*}
converge in pseudodifferential kernels in $\Psi^m_{1,0}(U)$? An answer is provided by the following statement.

\smallskip

\begin{prop}[Reconstructing pseudodifferential operators by renormalization]
The following holds.
\begin{enumerate}
	\item For $-d<m<0$ the above series converges to some pseudodifferential kernel in $\Psi^m_{1,0}(U)$. 
	\item For $m> 0$, $m\notin \mathbb{Z}$, there exists a non-unique sequence $c_{j,\alpha}(x), \vert\alpha\vert\leq  m$, of \textbf{counterterms} depending smoothly on $x$ such that the \textbf{renormalized series}
\begin{eqnarray*}
\sum_{j=1}^\infty \left( 2^{j(d+m)} K_j\big(x,2^j(x-y)\big) - \sum_{\vert\alpha\vert\leq  m} c_{j,\alpha}\partial^\alpha\delta_{\{0\}}(x-y)\right)
\end{eqnarray*}
converges as a pseudodifferential kernel in $\Psi^m_{1,0}(U)$.
	\item For $m<-d$, $m\notin \mathbb{Z}$, there exists a non-unique sequence $c_{j,\alpha}, \vert\alpha\vert\leq  m$, of \textbf{counterterms} such that the \textbf{renormalized series}
\begin{eqnarray*}
\sum_{j=1}^\infty \left( 2^{j(d+m)} K_j\big(x,2^j(x-y)\big) - \sum_{\vert\alpha\vert\leq  m-d} c_{j,\alpha}(x-y)^\alpha\right)
\end{eqnarray*}
converges as a pseudodifferential kernel in $\Psi^m_{1,0}(U)$.
\end{enumerate}
\end{prop}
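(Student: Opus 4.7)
The natural plan is to convert the question to one about symbols and use the dyadic building blocks $a_j(x,\xi)$ associated to each kernel piece $2^{j(d+m)}K_j(x,2^j(x-y))$. A direct computation shows
$$
a_j(x,\xi) = 2^{jm}\,\widehat{K_j}(x,2^{-j}\xi),
$$
where $\widehat{K_j}$ is the Fourier transform of $K_j$ in the second variable. Since $(K_j)_j$ is bounded in $\mathcal{S}(U\times \mathbb{R}^d)$, the family $(\widehat{K_j})_j$ is bounded in $\mathcal{S}(U\times \mathbb{R}^d)$ as well, giving for all $(\alpha,\beta,N)$ the uniform estimate
$$
\big|\partial_x^{\alpha}\partial_\xi^{\beta}a_j(x,\xi)\big| \;\lesssim\; 2^{j(m-|\beta|)}\,\big(1+2^{-j}|\xi|\big)^{-N}.
$$
The task is to decide whether $\sum_j a_j$ converges in $S^m_{1,0}(U\times \mathbb{R}^d)$, which by Proposition~\ref{prop:characterizationpsidokernels} is equivalent to producing a kernel in $\Psi^m_{1,0}(U)$. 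I would split the dyadic sum at the scale $2^j\sim |\xi|$: the high-frequency part ($2^j\leq |\xi|$) is dominated using the Schwartz decay $(2^{-j}|\xi|)^{-N}$ for large $N$, while the low-frequency part ($2^j\geq |\xi|$) is controlled by the geometric sum $\sum 2^{j(m-|\beta|)}$.

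For case (a), with $-d<m<0$, the low-frequency geometric sum converges whenever $|\beta|\geq 0$ because $m<0$; this gives $|\partial_x^\alpha\partial_\xi^\beta a(x,\xi)|\lesssim (1+|\xi|)^{m-|\beta|}$ and hence convergence in $S^m_{1,0}$. No renormalization is needed. For case (b), with $m>0,\,m\notin\mathbb{Z}$, the low-frequency piece of the sum diverges for $|\beta|\leq \lfloor m\rfloor$. To cure this I would subtract from each $a_j$ its Taylor polynomial at $\xi=0$ of order $\lfloor m\rfloor$, namely
$$
T_{\lfloor m\rfloor}a_j(x,\xi) = \sum_{|\alpha|\leq \lfloor m\rfloor} \frac{\xi^{\alpha}}{\alpha!}\,2^{j(m-|\alpha|)}\,(-i)^{|\alpha|}\int h^{\alpha}K_j(x,h)\,dh.
$$
The remainder $a_j - T_{\lfloor m\rfloor}a_j$ satisfies the improved low-frequency bound $O\big(2^{j(m-\lfloor m\rfloor-1)}|\xi|^{\lfloor m\rfloor+1}\big)$; since $m-\lfloor m\rfloor-1<0$ the summation in $j$ now converges. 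The subtracted monomials $\xi^{\alpha}$ correspond via inverse Fourier transform to $i^{|\alpha|}\partial^{\alpha}\delta(x-y)$, which yields exactly the counterterm structure $c_{j,\alpha}(x)\,\partial^{\alpha}\delta_{\{0\}}(x-y)$ announced in the statement.

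Case (c) with $m<-d,\,m\notin\mathbb{Z}$ is treated dually on the kernel side: I would subtract the Taylor expansion of the spatial kernel $\tilde K_j(x,h)\defeq 2^{j(d+m)}K_j(x,2^j h)$ in the variable $h$ at $h=0$ up to order $\lfloor -m-d\rfloor$, producing counterterms $c_{j,\alpha}(x)(x-y)^{\alpha}$ with
$$
c_{j,\alpha}(x) = \tfrac{1}{\alpha!}\,2^{j(d+m+|\alpha|)}\,(\partial_h^{\alpha}K_j)(x,0).
$$
The residual kernel vanishes to order $\lfloor -m-d\rfloor+1$ at the diagonal and, combined with the Schwartz tail $(1+2^j|h|)^{-N}$, produces after dyadic summation in $j$ the sharp diagonal growth $|L_1\cdots L_p\widetilde K(x,y)|\lesssim |x-y|^{-d-m}$ required by Proposition~\ref{prop:characterizationpsidokernels} together with Lemma~\ref{LemDiagonalScalingM}. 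Both cases (b) and (c) will rely on a careful splitting of the dyadic sum at $2^j\sim|\xi|$, respectively $2^j\sim|h|^{-1}$, so the remainders have the same size on the two regimes and the boundary scale dominates. The main obstacle I expect is checking that the convergence is not just pointwise but takes place in the Fréchet topology of $S^m_{1,0}(U\times\mathbb{R}^d)$ uniformly in all seminorms; this requires showing that every derivative of the remainder retains a gain by exactly one unit per $\xi$-derivative (resp.~one spatial-unit of singularity reduction per $h$-derivative) across the dyadic splitting, which is where the non-integer assumption on $m$ plays its role by preventing the borderline logarithmic divergences.
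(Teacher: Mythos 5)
Your proposal is correct and follows essentially the same route as the paper: pass to symbols via the partial Fourier transform, recognize the rescaled pieces $2^{jm}\widehat{K}_j(x,2^{-j}\xi)$, split the dyadic sum at $2^j\sim|\xi|$, and in the divergent cases perform a Taylor subtraction whose order is controlled by $\lfloor m\rfloor$, with the non-integrality of $m$ ruling out the borderline logarithm. The only (immaterial) difference is in case (c), where you Taylor-subtract directly on the kernel in the $h$ variable while the paper sketches the equivalent subtraction of $\delta$-derivatives on the Fourier side and leaves the details to the reader.
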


\smallskip

\begin{proof}
\textsf{\textit{(a)}} We start by proving the easy case where $m\in (-d,0)$. The idea is just to Fourier transform each $2^{jd} K_j(x,2^jh)$ in the second variable $h$ and translate in terms of estimates in Fourier space what it means for $(K_j)_j$ to be bounded in the space $\mathcal{S}(U\times \mathbb{R}^d)$ of Schwartz functions of $h$.
We get a series
\begin{eqnarray*}
a(x;\xi)=\sum_{j=1}^\infty 2^{jm} \widehat{K}_j(x;2^{-j}\xi)
\end{eqnarray*} 
where $\widehat{K}_j$ is a bounded sequence in  $\mathcal{S}(U\times \mathbb{R}^d)$.
Let us now prove that the series 
$$
\sum_{j=1}^\infty 2^{jm}2^{-j\vert\beta\vert}\big(\partial_\xi^\beta \widehat{K}_j\big)(x;2^{-j}\xi)
$$ 
converges for all multi--indices $\beta$ as smooth function in every compact region of $\xi$. If $m<0$, the series converges absolutely uniformly in $\xi$ in some arbitrary compact region, it satisfies the estimate:
\begin{equation*} \begin{split}
\bigg\vert\sum_{j=1}^\infty 2^{jm}2^{-j\vert\beta\vert} \big(\partial_\xi^\beta \widehat{K}_j\big)(x;2^{-j}\xi)\bigg\vert &\leq  \sum_{j=1}^\infty 2^{jm} 2^{-j\vert\beta\vert}\big\vert\big(\partial_\xi^\beta \widehat{K}_j\big)(x;2^{-j}\xi) \big\vert   \\
&\lesssim \sum_{j=1}^\infty 2^{jm}2^{-j\vert\beta\vert} \big(1+2^{-j}\vert \xi\vert\big)^{-N}   \\
&\lesssim \sum_{j=1}^\infty 2^{j(m-\vert\beta\vert)} \big(1+2^{-j}\vert \xi\vert\big)^{m-\vert\beta\vert}   \\
&= \sum_{j=1}^\infty \big(2^j+\vert \xi\vert\big)^{m-\vert\beta\vert}\lesssim \big(1+\vert\xi\vert\big)^{m-\vert\beta\vert}.
\end{split} \end{equation*} 

\textsf{\textit{(b)}} We now treat the more difficult case of a non-integer positive $m$. In that case, the series $\sum_{j=1}^\infty 2^{jm} \widehat{K}_j(x;2^{-j}\xi)$ is highly divergent. Instead of considering $\widehat{K}_j(x;\xi)$, we subtract its Taylor polynomial at $\xi=0$ to increase the vanishing order at $\xi=0$ of
$\widehat{K}_j(x;\xi)$. This yields
\begin{eqnarray*}
R_j(x;\xi)\defeq \widehat{K}_j(x;\xi)-\sum_{\vert\alpha\vert\leq  m} \frac{\xi^\alpha}{\alpha!}\partial_\xi^\alpha \widehat{K}_j(x;0).
\end{eqnarray*}  
Note that $R_j(x;\xi)=\mathcal{O}(\vert\xi\vert^{[m]+1})$ near $\xi=0$ but it is no longer Schwartz in $\xi$ since we subtracted some polynomial.
Instead, it satisfies new estimates of the form
$$
\big\vert \partial_x^\alpha\partial_\xi^\beta R_j(x;\xi)\big\vert \lesssim \big(1+\vert\xi\vert\big)^{[m]-\vert\beta\vert} 
$$
for large $\vert\xi\vert$ and 
$$
\big\vert \partial_x^\alpha\partial_\xi^\beta R_j(x;\xi)\big\vert \lesssim \vert\xi\vert^{[m]+1-\vert\beta\vert} 
$$ 
for small $\vert \xi\vert$. Now consider the new renormalized series $\sum_j 2^{jm}R_j(x;2^{-j}\xi) $. First, it converges absolutely uniformly in $\xi$ in some arbitrary compact region, since
\begin{eqnarray*}
\Big\vert \sum_j 2^{jm}R_j(x;2^{-j}\xi) \Big\vert \lesssim \sum_j 2^{jm} \mathcal{O}(2^{-j([m]+1)}) < +\infty.
\end{eqnarray*}
It also satisfies for $\vert \xi\vert\geqslant 1$ an estimate of the form
\begin{equation*} \begin{split}
\Big\vert \partial_x^\alpha\partial_\xi^\beta \sum_j 2^{jm}&R_j(x;2^{-j}\xi) \Big\vert   \\
&\leq \sum_j2^{jm} \big\vert \partial_x^\alpha\partial_\xi^\beta R_j(x;2^{-j}\xi) \big\vert   \\ 
&\lesssim \sum_{j ,2^{-j}\vert\xi\vert\leq  1 }2^{jm}2^{-j\vert\beta\vert} (2^{-j}\vert\xi\vert)^{[m]+1-\vert\beta\vert} + \sum_{j ,2^{-j}\vert\xi\vert\geqslant 1 }2^{jm}2^{-j\vert\beta\vert} (1+2^{-j}\vert\xi\vert)^{[m]-\vert\beta\vert}   \\
&\lesssim \sum_{j\geqslant \log(\vert\xi\vert)} 2^{-j([m+1]-m)} \vert\xi\vert^{[m]+1-\vert \beta\vert} + \sum_{j\leq  \log(\vert\xi\vert)}(2^j+\vert\xi\vert)^{[m]-\vert\beta\vert}   \\
&\lesssim \vert \xi\vert^{-([m+1]-m)}\vert \xi\vert^{[m]+1-\vert \beta\vert}+\log(\vert\xi\vert)(2\vert\xi\vert)^{[m]-\beta}\lesssim \vert \xi\vert^{m-\vert \beta\vert}.
\end{split} \end{equation*}
This proves that the renormalized series $\sum_j 2^{jm}R_j(x;2^{-j}\xi)$ converges to $S^{m}_{1,0}(U\times \mathbb{R}^d) $. Going back to position space, this implies that the renormalized series
$$
\sum_j 2^{j(d+m)}K_j(x,2^j(x-y))-\sum_{\vert \alpha\vert\leq  m} \frac{ \partial_\xi^\alpha \widehat{K}_j(x;0)}{i^{\vert\alpha\vert}\alpha!}
\partial_x^\alpha\delta_{\{0\}}(x-y)
$$
converges in pseudodifferential kernels of order $m$.

\smallskip

\textsf{\textit{(c)}} The case $m<-d, m\notin\mathbb{N}$, involves a renormalization by subtraction of some derivatives of $\delta$ in Fourier space, by inverse Fourier transform this yields the floating polynomials that we need to subtract to get the correct renormalized convergence as pseudodifferential kernel. We leave the details to the reader.
\end{proof}

\smallskip

Now we may conclude the proof of Proposition \ref{prop:characterizationpsidokernels} by proving only the converse sense.

\smallskip

\begin{proof}
The converse sense uses dyadic decomposition in space. We will assume we are given a kernel $K\in L^1(\mathbb{R}^d\times \mathbb{R}^d)$, smooth outside the diagonal and such that 
for any bounded open $U\subset \mathbb{R}^d$, $L_1,\dots,L_p\in \mathcal{M}^p$
$$
\sup_{(x,y)\in U^2} \big\vert (L_1\dots L_p K)(x,y) \big\vert \leq  C_{L_1,\dots,L_p}  \vert x-y \vert^{-d-m}.
$$  
Start from $1= \sum_{j=1}^\infty \psi(2^{j}.) +\chi $ where $\chi$ vanishes near $0$ and $\psi$ is supported on an annulus $ \{\frac{1}{2}\leq \vert x\vert\leq  4\}$. The central fact is to prove that the family 
$$
K_j(x,.)\defeq 2^{-j(m+d)} \psi(h) K(x,2^{-j}\cdot) 
$$ 
is bounded in $C^\infty_0(\mathbb{R}^d)$ uniformly in $x\in U$. Note that the key Lemma \ref{l:gainderivative} yields
\begin{equation*} \begin{split}
\sup_{x\in U,\frac{1}{2}\leq  \vert h\vert\leq  4} \big\vert D_h^\alpha \left(K(x,2^{-j}h)\right)\big\vert &\leq  \sup_{x\in U,\frac{1}{2}\leq  \vert h\vert\leq  4} 2^{-j\vert\alpha\vert} \big\vert \left(D_h^\alpha K\right)(x,2^{-j}h)\big\vert   \\
&\leq  C_{\alpha,U}  2^{-j\vert\alpha\vert} \sup_{\frac{1}{2}\leq \vert h\vert\leq  4}\vert 2^{-j}h \vert^{-d-m-\vert \alpha\vert}   \\
&\leq C_{\alpha,U}  2^{-j\vert\alpha\vert} 2^{(j+1)(d+m+\vert \alpha\vert)}\leq 2^{\vert \alpha\vert}C_{\alpha,U} 2^{(j+1)(d+m)},
\end{split} \end{equation*}
therefore 
$$ 
\sup_{x\in U,h} \Big\vert D_h^\alpha\Big(2^{-j(m+d)}\psi(h) \left(K(x,2^{-j}h)\right)\Big)\Big\vert \lesssim C_{\alpha,U}.
$$ 
Now we get the decomposition
$$
K(x,h) = K\chi+\sum_{j=1}^\infty \psi(2^jh)K(x,h) = K\chi+\sum_{j=1}^\infty 2^{j(m+d)} K_j(x,2^jh),
$$
with 
$$
K_j=2^{-j(m+d)} \psi(h) K(x,2^{-j}h).
$$
The sequence $(K_j)_{j\in \mathbb{N}}$ is a bounded family of smooth functions supported in the fixed annulus $\{ \frac{1}{2}\leq  \vert h\vert\leq  4 \}$ and $K\chi$ is Schwartz. Hence the Fourier transform in the $h$ variable yields
\begin{eqnarray*}
\widehat{K}(x;\xi)=\widehat{K\chi}+\sum_{j=1}^\infty 2^{jm} \widehat{K_j}(x,2^{-j}\xi),
\end{eqnarray*}
where each $K_j(x;\xi)$ is Schwartz in $\xi$ uniformly in $x\in U$.
We just need to prove that $\widehat{K}(x;\xi)\in S^m_{1,0}$. We have the estimate 
$$
\vert K_j(x;\xi) \vert\leq  C_N \big(1+\vert\xi\vert\big)^{-N}
$$
which holds uniformly in $j$. Hence\footnote{Note that $\sum_{j=1}^\infty (2^j+L)^{-m}\lesssim \int_1^\infty (u+L)^{-m}\frac{du}{u}\lesssim \int_{1+L}^\infty u^{-m} \frac{du}{u-L}\lesssim  \int_{1+L}^\infty u^{-m-1}du\lesssim (1+L)^{-m} $.} we deduce by summing over $j$ that
\begin{equation*} \begin{split} 
\sum_{j=1}^\infty 2^{jm} \big\vert\widehat{K_j}(x,2^{-j}\xi)\big\vert &\leq  C_N  \sum_{j=1}^\infty 2^{jm} \big(1+2^{-j}\vert\xi\vert\big)^{-N}\leq  C_N\sum_{j=1}^\infty \big(2^j+\vert\xi\vert\big)^{m} \big(1+2^{-j}\vert\xi\vert\big)^{-N-m}   \\
&\leq  C_N \big(1+\vert\xi\vert\big)^{m}.
\end{split} \end{equation*}
The estimate for $\big\vert\partial^\alpha_x\partial^\beta_\xi \widehat{K}(x;\xi)\big\vert$ follows from Bernstein inequality. For every mutliindex $\alpha$, the family $\big(\partial_\xi^\alpha \widehat{K}_j(x,.)\big)_j$ is bounded in $\mathcal{S}(\mathbb{R}^d)$ uniformly in $x\in U$ since $\partial_\xi^\alpha \widehat{K}_j=i^{\alpha}\widehat{x^\alpha K_j}$ and each $x^\alpha K_j$ is supported by the annulus $\{\frac{1}{2}\leq \vert h\vert \leq  4 \}$.
\end{proof}

\smallskip

\begin{prop} [From parabolic calculus to time dependent pseudodifferential operators] \label{l:parabolicasfamilypsidos}
 Let $K(t,x,y)$ be some kernel on $(0,+\infty)\times M^2$ which belongs to the parabolic calculus $\Psi_P^{a}$ for some $a<-1$. Then $A(t)$ is \textbf{continuous} in $\Psi_{1,0}^{2+2a}(M)$ uniformly in the parameter $t\in (0,1]$.
\end{prop}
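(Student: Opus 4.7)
The plan is to reduce the statement to the characterization of pseudodifferential kernels in terms of their diagonal behaviour, namely Proposition \ref{prop:characterizationpsidokernels}. Write $m \defeq 2 + 2a < 0$. In the generic range $-d < m < 0$, that characterization requires exactly two things: that $K(t,\cdot,\cdot)$ be in $L^1(M\times M)$ and that for every smooth vector fields $L_1,\dots,L_p$ tangent to the spatial diagonal of $M\times M$ one has
$$
\bigl\vert L_1\cdots L_p K(t,x,y)\bigr\vert \lesssim d(x,y)^{-d-m},
$$
with a control on the best constants in every chart. The key idea is that every such spatial vector field lifts trivially to an element of the Lie algebra $\mathcal{M}_P$ of vector fields tangent to $\{0\}\times\mathrm{Diag}\subset \mathbb{R}\times M^2$, since a purely spatial tangent vector field automatically preserves the ideal of functions vanishing on the space-time diagonal.

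First I would apply Definition \ref{def:paraboliccalculus}: for those lifted vector fields the parabolic bound reads
$$
\bigl\vert L_1\cdots L_p K(t,x,y)\bigr\vert \lesssim \bigl(\sqrt{t}+d(x,y)\bigr)^{-d-2-2a}
$$
uniformly for $t\in (0,1]$ (with the logarithmic variant in the critical case $d+2+2a=0$, treated identically in what follows). Dropping the nonnegative $\sqrt{t}$ factor gives the bound $d(x,y)^{-d-m}$ with a constant independent of $t$. Uniform $L^1$ integrability is automatic in the same stroke: since $a<-1$ the exponent $d+2+2a$ is strictly less than $d$, so $\int_{M\times M}(\sqrt t + d(x,y))^{-d-2-2a}\,dx\,dy$ is finite uniformly in $t\in(0,1]$. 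Invoking Proposition \ref{prop:characterizationpsidokernels} then yields uniform boundedness of the family $(A(t))_{t\in(0,1]}$ in $\Psi^{m}_{1,0}(M)$, and the same argument applied in every local chart together with the part of Proposition \ref{prop:characterizationpsidokernels} controlling the seminorms via the best constants in the tangential estimates secures continuity in the topology of $\Psi^{m}_{1,0}(M)$ in terms of the continuity of those constants as functions of $t$.

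For the continuity statement per se, I would use that Definition \ref{def:paraboliccalculus} actually provides smoothness of $A(\vert t-s\vert,x,y)$ jointly in $\sqrt{t}$ and in the spatial variables, so the vector field $\partial_t$ and the module $\mathcal{M}_P$ generate a larger module which still preserves the parabolic scaling with an extra loss of two parabolic orders. Hence for $0<t_0<t_1\leq 1$,
$$
K(t_1,x,y)-K(t_0,x,y)=\int_{t_0}^{t_1}\partial_s K(s,x,y)\,ds
$$
is bounded in $\Psi^{m+2}_{1,0}(M)$ times $\vert t_1-t_0\vert$, which proves continuity of $t\mapsto A(t)$ into the strictly weaker space $\Psi^{m+2}_{1,0}(M)$; combined with the uniform boundedness in $\Psi^{m}_{1,0}(M)$ and a standard interpolation argument, one recovers continuity in $\Psi^{m}_{1,0}(M)$ itself.

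The only real obstacle is bookkeeping across the two thresholds $m=0$ (which does not occur since $a<-1$) and $m=-d$: in the logarithmic/higher-regularity regime $d+2+2a\leq 0$ the pointwise bound given by the parabolic calculus is stronger than what Proposition \ref{prop:characterizationpsidokernels} demands, so the same argument embeds $A(t)$ into a strictly smoothing subclass and then into $\Psi^{m}_{1,0}(M)$; all the remaining arithmetic of exponents is routine.
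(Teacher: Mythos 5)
Your treatment of the uniform boundedness is essentially the paper's argument: drop the $\sqrt{t}$ in the parabolic bound, observe that purely spatial vector fields tangent to the diagonal lift to $\mathcal{M}_P$, check that $d+2+2a<d$ gives uniform $L^1$ control, and invoke the converse direction of Proposition \ref{prop:characterizationpsidokernels}. That part is fine.

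The continuity argument, however, has a genuine gap at its last step. You establish (at best) that $t\mapsto A(t)$ is locally Lipschitz into the \emph{weaker} space $\Psi^{m+2}_{1,0}(M)$ and uniformly bounded in $\Psi^{m}_{1,0}(M)$, and then claim that ``a standard interpolation argument'' recovers continuity in $\Psi^{m}_{1,0}(M)$ itself. This is false at the endpoint: boundedness in $S^{m}_{1,0}$ together with convergence in $S^{m+2}_{1,0}$ only yields convergence in $S^{m+\epsilon}_{1,0}$ for every $\epsilon>0$, never in $S^{m}_{1,0}$. The standard counterexample is $\sigma_n(\xi)=\psi(2^{-n}\xi)$ for a bump $\psi$ supported in an annulus: this is bounded in $S^0_{1,0}$, tends to $0$ in every $S^{\epsilon}_{1,0}$ with $\epsilon>0$, but its $S^0$ seminorm is identically $1$. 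A secondary issue is the assertion that adjoining $\partial_t$ to $\mathcal{M}_P$ costs exactly ``two parabolic orders'': Definition \ref{def:paraboliccalculus} only controls $t\partial_t$, and Lemma \ref{l:gainderivative} controls $\partial_{\sqrt{t}}$ with a loss of one parabolic order, so $\vert\partial_t K\vert\lesssim t^{-1/2}\big(\sqrt{t}+\vert x-y\vert\big)^{-d-m-1}$, which does not give the bound $\big(\sqrt{t}+\vert x-y\vert\big)^{-d-m-2}$ you use.

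Both problems are repairable, and in fact the repair makes the interpolation unnecessary: since $t\partial_t\in\mathcal{M}_P$, the definition gives directly $\vert \partial_t(L_1\cdots L_p K)(t,x,y)\vert\lesssim t^{-1}\vert x-y\vert^{-d-m}$, so for $0<t_0\le t\le t_1\le 1$ the difference $K(t_1,\cdot,\cdot)-K(t_0,\cdot,\cdot)$ satisfies the order-$m$ tangential estimates of Proposition \ref{prop:characterizationpsidokernels} with constant $O(\vert t_1-t_0\vert/t_0)$, yielding local Lipschitz continuity of $t\mapsto A(t)$ into $\Psi^{m}_{1,0}(M)$ on $(0,1]$ with no endpoint loss. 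The paper instead argues continuity by running the dyadic decomposition of Proposition \ref{prop:characterizationpsidokernels} with $t$ as a parameter: each rescaled piece $K_{j,t}$ depends continuously on $t$ and is dominated uniformly, and one concludes by dominated convergence. Either route is acceptable; the one you wrote is not, as it stands, because of the endpoint interpolation claim.
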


\smallskip

\begin{proof}
Observe that
$t\mapsto K(t,x,y) $ is continuous and the domination 
bound:
$$
\big\vert (L_1\dots L_k K)(t,x,y) \big\vert \leq  C_{L_1\dots L_k} \big(\sqrt{t}+\vert x-y \vert \big)^{-d-2-2a} \leq  C_{L_1\dots L_k} \vert x-y \vert^{-d-2-2a} \in L^1_{loc} 
$$
shows that $t\mapsto (L_1\dots L_k K)(t,.,.)\in L^1(\mathbb{R}^d\times \mathbb{R}^d)$ is \textbf{continuous}. So we just need to repeat the proof of Proposition~\ref{prop:characterizationpsidokernels} in a family version with the parameter $t$, each function $K_{j,t}$ depends \textbf{continuously} on $t$. We check that this is a bounded family of smooth functions supported on the annulus which depends continuously on $t$ and the boundedness in $C^\infty_0\big(\{ \frac{1}{2}\leq \vert h\vert\leq  4\}\big)$ is uniform in $t\in (0,a]$, for any $a<+\infty$. Then the proof follows by dominated convergence. 
\end{proof}

\smallskip

%----------------------------------------------------%
\section{Estimate on the kernel of the resonant term}
\label{SectionProofMainThm2}
%----------------------------------------------------%

The following statement was proved in Section 3.2 of \cite{BDFT}. We recall its detailed proof to make the present work self-contained.

\smallskip

\begin{lemm} \label{l:topologytwopointfunctiong}
Let $M$ be a closed manifold and $K_t(x,y)$ be a smooth kernel on $M^2\backslash{\bf d}_2$ such that one can associate to any small enough open set $U$ a coordinate system in which one has for all multiindices $\alpha,\beta$
\begin{equation} \label{EqHomogeneityBoundsKernels}
\big\vert\partial^\alpha_{s,t}\partial^\beta_{x,y} K_{\vert t-s\vert}(x,y)\big\vert \lesssim \left(\sqrt{t-s}+\vert y-x\vert\right)^{a-2\vert\alpha\vert - \vert\beta\vert}.
\end{equation}
Denote by $\rho_2$ a scaling field on $M^2$ for the inclusion $\mathbf{ d}_2\subset M^2$ and set 
$$
\rho = 2(t-s)\partial_s + \rho_2,
$$
and for $n\geqslant 2$, we denote by $\pi : (x_1,\dots,x_n)\in M^n\rightarrow (x_1,x_2)\in M^2$ the canonical projection on the first two components. Then the family 
$$
\Big(e^{-\ell a}(e^{-\ell\rho})^*\pi^*K_{\vert t-\cdot\vert}(\cdot,\cdot)\Big)_{\ell\geq 0}
$$
is bounded in $\mathcal{D}^\prime_{N^*(\{s=t\})}\left((M^n\times\mathbb{R})\backslash(\pi^*\mathbf{d_2}\cap\{s=t\})\right)$, that is 
$$
\pi^*K_{\vert t-\cdot\vert}(\cdot,\cdot)\in\mathcal{S}^{0,(a,\rho)}_{N^*(\{s=t\})}\big((M^n\times\mathbb{R})\backslash(\pi^*\mathbf{d_2}\cap\{s=t\})\big).
$$
\end{lemm}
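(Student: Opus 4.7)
The plan is to reduce the statement to two computations: a smoothness statement on the open set $\Omega := (M^n \times \mathbb{R}) \setminus (\pi^*\mathbf{d}_2 \cap \{s=t\})$, and an exact scaling identity balancing the Jacobian of the semiflow against the parabolic homogeneity in~\eqref{EqHomogeneityBoundsKernels}.

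First I would check that $\pi^*K_{|t-s|}(x_1,x_2)$ is in fact $C^\infty$ on $\Omega$. Smoothness off $\mathbf{d}_2$ is the hypothesis; on the stratum $\{x_1 = x_2,\, s \neq t\}\subset\Omega$, the bound
\begin{equation*}
\bigl|\partial^\alpha_{s,t}\partial^\beta_{x,y}K_{|t-s|}(x,y)\bigr|\lesssim |t-s|^{-(a+2|\alpha|+|\beta|)/2}
\end{equation*}
keeps every mixed partial locally bounded, so the kernel extends smoothly across the spatial diagonal there. Since $\rho$ vanishes exactly on $\pi^*\mathbf{d}_2\cap\{s=t\}$, its semiflow $e^{-\ell\rho}$ preserves $\Omega$ for all $\ell\geq 0$, and each $(e^{-\ell\rho})^*\pi^*K$ is therefore $C^\infty$ on $\Omega$; in particular its wavefront set is empty, hence \emph{a fortiori} contained in $N^*(\{s=t\})$.

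Next I would quantify the scaling. Working in local coordinates near a point of $\pi^*\mathbf{d}_2\cap\{s=t\}$ with $h := x_2 - x_1$ and $\rho_2 = -h\cdot\partial_h$, the semiflow reads
\begin{equation*}
e^{-\ell\rho}(s,x_1,h,x_3,\dots,x_n) = \bigl(t - e^{2\ell}(t-s),\,x_1,\,e^\ell h,\,x_3,\dots,x_n\bigr),
\end{equation*}
so the parabolic distance $r(z) := \sqrt{|t-s|} + |h|$ transforms as $r(e^{-\ell\rho}(z)) = e^\ell r(z)$. Differentiating the pullback $|\alpha|$ times in $s$ and $|\beta|$ times in $h$ produces a chain-rule factor $e^{(2|\alpha|+|\beta|)\ell}$, while~\eqref{EqHomogeneityBoundsKernels} applied at $e^{-\ell\rho}(z)$ yields $(e^\ell r(z))^{-a-2|\alpha|-|\beta|}$. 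The $e^\ell$ powers collapse to $e^{-a\ell}$, exactly compensated by the prefactor $e^{a\ell}$, giving
\begin{equation*}
\bigl|\partial_s^\alpha\partial_h^\beta\bigl(e^{a\ell}(e^{-\ell\rho})^*\pi^*K\bigr)(z)\bigr|\leq C_{\alpha,\beta}\, r(z)^{-a-2|\alpha|-|\beta|}
\end{equation*}
uniformly in $\ell\geq 0$; derivatives in $x_1, x_3, \dots, x_n$, along which the semiflow acts trivially, contribute only extra damping in $e^{-\ell}$.

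To conclude, I would translate these uniform pointwise bounds into the seminorms of the normal topology. On any compact $C\Subset\Omega$ the weight $r$ is bounded below, so the family is uniformly bounded in $C^k(C)$ for every $k$; pairing with any bounded subset of $C^\infty_c(\Omega)$ then yields strong $\mathcal{D}'$-boundedness. For any chart $\kappa$, cutoff $\chi\in C^\infty_c(\Omega)$ and closed cone $V$ disjoint from $\kappa^{-1*}N^*(\{s=t\})$ above $\mathrm{supp}(\chi)$, the H\"ormander seminorm $\sup_{\xi\in V}(1+|\xi|)^N|\widehat{(\kappa_*\Lambda_\ell)\chi}(\xi)|$ is controlled by $N$-fold integration by parts using the uniform smoothness of $(\kappa_*\Lambda_\ell)\chi$. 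The crux is the exact cancellation in the middle step: the temporal weight $e^{2\ell}$ and the spatial weight $e^\ell$ from the chain rule must fit the parabolic exponent $-a - 2|\alpha| - |\beta|$ for every $(\alpha,\beta)$, which is precisely why $\rho$ carries the factor $2$ on $\partial_s$ rather than $1$. Once this anisotropic balance is in place, the remaining passage from uniform $C^\infty$ bounds on $\Omega$ to the seminorms of $\mathcal{D}'_{N^*(\{s=t\})}(\Omega)$ is soft.
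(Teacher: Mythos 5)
Your scaling computation is the heart of the matter and it matches the paper's: localize near the diagonal, observe that the parabolic weight $r=\sqrt{|t-s|}+|x_1-x_2|$ is multiplied by $e^{\ell}$ under $(e^{-\ell\rho})^*$, and trade the chain-rule factors $e^{(2|\alpha|+|\beta|)\ell}$ against the homogeneity in \eqref{EqHomogeneityBoundsKernels} so that the prefactor $e^{a\ell}$ cancels everything. Two points, however, deserve attention, and the first is a genuine gap.

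You assert that $\pi^*K_{|t-\cdot|}$ is $C^\infty$ on all of $\Omega$, in particular across the stratum $\{t=s,\ x_1\neq x_2\}$, ``so its wavefront set is empty.'' This is false in general and it erases precisely the singularity that the class $\mathcal{D}'_{N^*(\{s=t\})}$ is designed to record. The hypothesis \eqref{EqHomogeneityBoundsKernels} can only be read as a bound on the derivatives of the smooth function $A(u,x,y)$ of $u=|t-s|\geq 0$ (equivalently, on one-sided derivatives in each half-region $\{t\geq s\}$ and $\{s\geq t\}$): the elementary example $K_u(x,y)=u\,\phi(x,y)$ satisfies the hypotheses but $K_{|t-s|}=|t-s|\phi$ has a kink along $\{t=s\}$ whose wavefront set is exactly $N^*(\{t=s\})$ there. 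Consequently your final step — controlling the H\"ormander seminorms by ``$N$-fold integration by parts using the uniform smoothness'' — is not available in the directions transverse to $\{t=s\}$; if it were, you would be proving the too-strong statement $WF=\emptyset$. What the paper does instead is note that the family is smooth, with $\ell$-uniform bounds, on each \emph{closed} half-region $\{t\geq s\}$ and $\{s\geq t\}$; this one-sided smoothness is exactly what yields rapid decay of $\widehat{(\kappa_*\Lambda_\ell)\chi}$ on closed cones avoiding the conormal direction $\mathbb{R}(dt-ds)$, i.e.\ conormality along $\{t=s\}$. You need to supply (or cite) that step. The second, more minor, point: you take $\rho_2=-h\cdot\partial_h$ exactly in the same coordinates where \eqref{EqHomogeneityBoundsKernels} holds. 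A scaling field is only Euler up to higher order, and the normal-form coordinates need not be the ones of the hypothesis; the paper handles this by Taylor-expanding $x_1-x_2=A(h)+\mathcal{O}(|h|^2)$ along the diagonal to extract the lower bound $e^{-\ell}|x_1-x_2|\lesssim|(e^{-\ell\rho})^*(x_1-x_2)|$, which is the clean way to make your exact scaling identity rigorous for a general $\rho_2$.
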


\smallskip

In the sequel, we denote by $\mathcal{K}^a$ the $C^\infty$--module of kernels $K_t(x,y)$ as above depending on two variables endowed with the weakest topology containing the $C^\infty\left([0,+\infty)\times  M^2\backslash\mathbf{d}_2\right)$ topology and which makes all the seminorms defined by the estimates \eqref{EqHomogeneityBoundsKernels} continuous.

\smallskip

\begin{proof}
We first localize in a neighbourhood $U\times U$ of the diagonal since $K$ is smooth off-diagonal. It is enough to prove the claim for $K(x,y)\chi_1(y)\chi_2(x)$ where $\chi_i\in C_c^\infty(U)$ and use a partition of unity go get the global result. In $U\times U$ we pull-back everything to the configuration space, which we write with a slight abuse of notations
$$ 
\pi^*(K\chi_1\chi_2) \big(t,s,x_1,\dots,x_n\big) = K\big(t,s,x_1,x_2\big) \, \chi_1(x_1)\chi_2(x_2).
$$
We already know that this kernel satisfies some bound of the form
\begin{eqnarray*}
\big\vert K(t,s,x_1,x_2)\chi_1(x_1)\chi_2(x_2)\big\vert \lesssim \left(\sqrt{\vert t-s\vert}+\vert x_1-x_2\vert \right)^{a}.
\end{eqnarray*}
Somehow we would like to flow both sides of the inequality by the parabolic dynamics $(e^{-r\rho})^*$ and bound the term $e^{-r\rho*}\left(\sqrt{\vert t-s\vert}+\vert x_1-x_2\vert \right)^{a}$ asymptotically when $r$ goes to $+\infty$. We use for that purpose the Normal Form Theorem for the space part of the Euler vector fields:
$$
\rho_{[n]} = \sum_{k=2}^n h_k\cdot \partial_{h_k}, 
$$
for some new coordinates $(h_k)_{k=2}^n$ that vanish at order $1$ along the deep space diagonal ${\bf d}_n$. The fact that $x_1-x_2$ vanishes at first order along ${\bf d}_n$ implies by Taylor expansion at first order that
$$
x_1-x_2=A(h)+\mathcal{O}(\vert h\vert^2)
$$
where $A(h)$ is a linear function of $(h_k)_{k=2}^n$. One then has
$$
(e^{-t\rho_{[n]}})^*\left(x_1-x_2\right) = (e^{-t\rho_{[n]}})^* A(h) + \mathcal{O}(e^{-2t}\vert h\vert^2) = A(e^{-t}h) + \mathcal{O}(e^{-2t}\vert h\vert^2), 
$$
and an exponential lower bound of the form
$$
e^{-t}\vert x_1-x_2\vert\lesssim \big\vert (e^{-t\rho_{[n]}})^*(x_1-x_2) \big\vert
$$
which yields the desired bound
$$
\big\vert e^{-u\rho*}D_{t}^\alpha D_x^\beta\pi^*(K\chi_1\chi_2)(t,s,x_1,\dots,x_n)\big\vert \lesssim e^{u(-a+2\vert \alpha\vert+\vert \beta\vert)} \Big(\sqrt{\vert t-s\vert}+\vert x_1-x_2\vert \Big)^{a-2\vert \alpha\vert-\vert \beta\vert}
$$
and proves the claim. The above bound allows, for instance, to justify that the singularities when $x_1\neq x_2$ are conormal along the equal time region $t=s$ since we are smooth on each half region $t\geqslant s$ and $s>t$.
\end{proof}

\smallskip

We are now ready to handle the case of the Schwartz kernel $[\odot_i]$ of the resonant product $\odot_i$ localized in the chart with index $i$.

\smallskip

\begin{lemm}\label{lem:7_2}
The Schwartz kernel $[\odot_i]$ belongs to $\mathcal{S}^{-6}_\Gamma(M^3)$ for $\Gamma=N^*\left( \{x=y=z\}\subset M^3 \right)$.
\end{lemm}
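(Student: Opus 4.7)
The plan is to work in a chart around a point of the triple diagonal $\{x=y=z\}\subset M^3$, exploit the explicit Fourier representation of the Littlewood--Paley--Stein blocks $P^i_k,\widetilde P^i_\ell$ in that chart, and read both the wavefront bound $\Gamma=N^*(\{x=y=z\})$ and the scaling exponent $-6$ off the resulting model kernel.

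First I would unfold the definitions of $P^i_k$ and $\widetilde P^i_\ell$ in the chart $\kappa_i\colon U_i\to\mathbb R^d$. Modulo the smooth cutoffs $\psi_i,\widetilde\psi_i,\chi_i,\widetilde\chi_i$, which contribute only a smooth remainder on $M^3$ by the local-to-global principle of Section~\ref{SectionLocalizationLemma}, the kernel of $P^i_k$ equals $2^{kd}\check\psi\bigl(2^k(\kappa_i(x)-\kappa_i(y))\bigr)$ with $\check\psi\in\mathcal S(\mathbb R^d)$, and analogously for $\widetilde P^i_\ell$. Setting $u=\kappa_i(x)-\kappa_i(y)$ and $v=\kappa_i(x)-\kappa_i(z)$, the kernel $[\odot_i]$ takes the model form
\[
F(x,u,v)=\sum_{j\in\{-1,0,1\}}\sum_{k\geq 0}2^{(2k+j)d}\,\check\psi(2^k u)\,\check\psi(2^{k+j}v),
\]
a superposition of separable dyadic blocks at comparable frequency scales in $u$ and $v$.

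Second, a standard dyadic splitting at the critical scale $2^k\sim 1/\max(|u|,|v|)$ combined with the Schwartz decay of $\check\psi$ yields, for all multi-indices $\alpha,\beta,\gamma$, the homogeneous estimate
\[
\bigl|\partial_x^\gamma\partial_u^\alpha\partial_v^\beta F(x,u,v)\bigr|\lesssim (|u|+|v|)^{-2d-|\alpha|-|\beta|},
\]
which for $d=3$ is precisely the $-6$ scaling predicted by the statement. Writing moreover $F$ as the oscillatory integral
\[
F(x,u,v)=\frac{1}{(2\pi)^{2d}}\int e^{i\xi\cdot u+i\eta\cdot v}\,\Theta(\xi,\eta)\,d\xi\,d\eta,\qquad \Theta(\xi,\eta)=\sum_{|k-\ell|\leq 1}\psi(2^{-k}\xi)\psi(2^{-\ell}\eta)\in L^\infty(\mathbb R^{2d}),
\]
with $\Theta$ supported in $|\xi|\asymp|\eta|$, and applying non-stationary phase in $(\xi,\eta)$ away from $(u,v)=0$, shows both that $F$ is smooth off $\{u=v=0\}$ and that its wavefront set is contained in the conormal $N^*(\{u=v=0\})$, which pulls back to $WF([\odot_i])\subset\Gamma$.

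For the scaling I would take $\rho=u\cdot\partial_u+v\cdot\partial_v$ as scaling field relative to $\{x=y=z\}$ in coordinates, so that $(e^{-s\rho})^*F(x,u,v)=F(x,e^{-s}u,e^{-s}v)$. Absorbing the rescaling into the shift $k\mapsto k+\lfloor s/\ln 2\rfloor$ of the summation index transforms the rescaled sum back into one of the same structural form multiplied by $e^{2ds}=e^{6s}$, up to boundary contributions from the integer truncation that remain smooth and uniformly bounded in $s\geq 0$. Combined with the estimates of the second step applied uniformly to the rescaled family, this gives boundedness of $(e^{-6s}(e^{-s\rho})^*[\odot_i])_{s\geq 0}$ in both the strong and the wavefront seminorms of $\mathcal D'_\Gamma(M^3)$, that is, $[\odot_i]\in\mathcal S^{-6}_\Gamma(M^3)$. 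The delicate point is the uniform control of the wavefront seminorms along the rescaled family: one must verify that the shift argument in the dyadic sum is compatible with non-stationary phase bounds tested against conic cutoffs avoiding $\Gamma$, and this is where the structural separation of the two frequency variables $\xi,\eta$ in $\Theta$ plays its key role.
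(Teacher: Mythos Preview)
Your approach is essentially the same as the paper's: both work in a chart, read the pointwise diagonal bounds and the smoothness off the deepest diagonal from the explicit dyadic form of the Littlewood--Paley blocks, and obtain the scaling exponent via the index shift $k\mapsto k+j$, splitting the rescaled series into a piece of the original form and a uniformly smooth tail. Your $(x,u,v)$ coordinates and the oscillatory-integral argument with bounded amplitude $\Theta(\xi,\eta)$ are somewhat cleaner than the paper's direct three-variable Fourier computation testing decay on cones avoiding $\{\xi+\eta+\zeta=0\}$, but two small points deserve correction: the smooth cutoffs are not handled by the local-to-global principle of Section~\ref{SectionLocalizationLemma} (that lemma concerns a different phenomenon) but simply by the stability of wavefront set and weak homogeneity under multiplication by smooth functions; and the paper closes with a short argument, which you omit, showing that the scaling conclusion is independent of the particular scaling field chosen in the chart, via the comparison $e^{-t\rho_2*}=\Psi(t)^*e^{-t\rho_1*}$ with $\Psi(t)$ a convergent family of local diffeomorphisms fixing ${\bf d}_3$.
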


\smallskip

In other words, $[\odot_i]$ is a particular case of a conormal distribution whose wavefront set is concentrated along the deepest diagonal of $M^3$.

\smallskip

\begin{proof}
We shall assume without loss of generality that $[\odot_i]$ is a compactly supported distribution on $(\mathbb{R}^d)^3$.
Recall $[\odot_i]$ is expressed as a series
$$ [\odot_i](x,y,z)=\sum_{\vert k-\ell\vert\leq  1} P_k^i(x,y)\widetilde{P}^i_\ell(y,z)  $$
where $P,\widetilde{P}$ are generalized Littlewood-Paley-Stein projectors in our sense.
We use the diagonal bound on the Littlewood-Paley-Stein projectors $P^i_k,\widetilde{P}^i_\ell$ 
We use the fact that we control the scaling behaviour of each kernel, in local chart $U^3$ near the smallest diagonal $(x=y=z)$, we have the behaviour for $(h_1,h_2)\neq (0,0)\in \mathbb{R}^d$
\begin{eqnarray*}
\big\vert [\odot_i](y+h_1,y,y+h_2) \big\vert &\leq  & \sum_{\vert k-\ell\vert\leq  1} \big\vert P_k^i(y+h_1,y)\widetilde{P}^i_\ell(y,y+h_2)\big\vert  \\
& \lesssim & \sum_{\vert k-\ell\vert\leq  1} 2^{(k+\ell)d} \big(1+2^k\vert h_{1} \vert\big)^{-N} \big(1+2^\ell\vert h_{2} \vert\big)^{-N}   \\
&\lesssim & \sum_{\ell=1}^\infty 2^{2d\ell} \big(1+2^{\ell}\vert h_{1} \vert\big)^{-N} \big(1+2^{\ell}\vert h_{2} \vert\big)^{-N}
\end{eqnarray*}
where the dimension $d=3$. Beware that the right-hand side  of the above estimate blows up when $h_1=h_2=0$ and the kernel $[\odot_i] $ is not even in $L^1_{loc}$. However the series  $\sum_{\vert k-\ell\vert\leq  1} P_k^i(y+h_1,y)\widetilde{P}^i_\ell(y,y+h_2)$ defining $[\odot_i] $ converges in the sense of distributions of order $0$. The same estimate with derivatives reads
\begin{eqnarray*}
\big\vert \partial_{h_1}^\alpha\partial_y^\beta \partial_{h_2}^\gamma [\odot_i] \big(y+h_1,y,y+h_2\big) \big\vert &\lesssim &\sum_{\ell=1}^\infty 2^{(2d+\vert\alpha \vert+\vert \gamma\vert )\ell } \big(1+2^{\ell}\vert h_{1} \vert\big)^{-N} \big(1+2^{\ell}\vert h_{2} \big\vert)^{-N}
\end{eqnarray*}
where $(\alpha,\beta,\gamma)$ are arbitrary multi-indices. The above bound implies that the series 
$$
\sum_{\vert k-\ell\vert\leq  1} P_k^i(y+h_1,y) \, \widetilde{P}^i_\ell(y,y+h_2)
$$ 
converges absolutely in $C^\infty$ when $(h_1,h_2)\neq (0,0)$ which means that 
$[\odot_i]$ is smooth outside the deepest diagonal. The distribution $[\odot_i]$ is compactly supported hence its Fourier transform is well-defined. We need to carefully justify the series
$\sum_{\vert k-\ell\vert\leq  1} P_k^i(y+h_1,y) \, \widetilde{P}^i_\ell(y,y+h_2)$ converges in conormal distributions. It suffices to control the microlocal convergence in one chart of $U\times U\times U$ since the wave front set behaves functorially under pull-backs \cite{BDH}. Note that from the definition
$$
[\odot_i]=\sum_{\vert k-\ell\vert\leq  1} \psi(\kappa(x))\Delta_k(\kappa(x)-\kappa(y))\chi(x)\widetilde{\psi}(\kappa(y))\Delta_\ell(\kappa(y)-\kappa(z))\widetilde{\chi}(z)  
$$
where $\kappa:U_i\mapsto \kappa(U_i)\subset \mathbb{R}^d$ is a given chart,
we get
$$
T= \left(\kappa\times \kappa \times \kappa\right)_*[\odot_i]=\sum_{\vert k-\ell\vert\leq  1} \psi(x)\Delta_k(x-y)\kappa^*\chi(x)\widetilde{\psi}(y)\Delta_\ell(y-z)\kappa^*\widetilde{\chi}(z).
$$
Since the functions $\chi,\widetilde{\chi},\psi,\widetilde{\psi}$ are smooth compactly supported,  
an explicit calculation using the Fourier transform yields
\begin{equation*} \begin{split}
&\big\vert\widehat{T}(\xi,\eta,\zeta)\big\vert   \\
&= \bigg\vert\sum_{\vert k-\ell\vert\leq  1} \frac{1}{(2\pi)^{2d}}\int  \psi(x)e^{i\theta_1.(x-y)}\psi(2^{-k}\theta_1) \kappa^*\chi(y)\widetilde{\psi}(y) e^{i\theta_2.(y-z)}\psi(2^{-k}\theta_2)\kappa^*\widetilde{\chi}(z)   \\
&\hspace{5cm}\times e^{-i(\xi.x+\eta.y+\zeta.z)} \rmd\theta_1\rmd\theta_2\rmd x\rmd y\rmd z \bigg\vert   \\
&= \bigg\vert \sum_{\vert k-\ell\vert\leq  1} \frac{1}{(2\pi)^{2d}} \int \psi(2^{-k}\theta_1) \psi(2^{-\ell}\theta_2)\widehat{\psi}(\xi-\theta_1)\widehat{\kappa^*\chi\widetilde{\psi} }(\eta+\theta_1-\theta_2)\widehat{\kappa^*\widetilde{\chi}}(\zeta+\theta_2) \rmd\theta_1\rmd\theta_2 \bigg\vert
\end{split} \end{equation*}
One needs to argue geometrically to control the Fourier decay of $\vert\widehat{T}(\xi,\eta,\zeta)\vert$ on small closed conic set avoiding the subspace $\{\xi+\eta+\zeta=0\}$ which is the fibre of the conormal of $\{x=y=z\}$. For any $(\xi_0,\eta_0,\zeta_0)\neq (0,0,0)$ such that $\xi_0+\eta_0+\zeta_0\neq 0 $. Assume without loss of generality that $\xi_0\neq 0$ (the other cases are treated symmetrically), then there exists a closed conic neighbourhood $V\subset (\mathbb{R}^{d})^3$ of $(\xi_0,\eta_0,\zeta_0)$ which does not meet $\{\xi+\eta+\zeta=0\}$ such that for some $\delta>0$,
$$
(\xi,\eta,\zeta)\in V \implies \vert \xi+\eta+\zeta \vert \geqslant\delta\vert \xi\vert.
$$
The first geometric inequality reads for three vectors $(A,B,C)\in (\mathbb{R}^d)^3$:
\begin{eqnarray*}
(1+\vert A\vert)^{-N}(1+\vert B\vert)^{-N}(1+\vert C\vert)^{-N}\lesssim
(1+\vert A\pm B\pm C \vert)^{-N}.
\end{eqnarray*} 
The second geometric inequality we shall use reads
\begin{eqnarray*}
1+\vert A \vert \leq  (1+\vert A-B \vert)(1+\vert B\vert)\implies
(1+\vert A-B\vert)^{-d-1}\leq  \frac{(1+\vert A\vert)^{d+1}}{(1+\vert B\vert)^{d+1}}.
\end{eqnarray*}
Applying a change of variables then using both inequalities to $\vert\widehat{T}(\xi,\eta,\zeta)\vert$ yields that for $(\xi,\eta,\zeta)\in V$, we have 
$$
\widehat{T}(\xi,\eta,\zeta) = \sum_{\vert k-\ell\vert\leq  1} \frac{1}{(2\pi)^{2d}} \int \psi(2^{-k}\theta_1) \psi(2^{-\ell}\theta_2)\widehat{\psi}(\xi-\theta_1)\widehat{\kappa^*\chi\widetilde{\psi} }(\eta+\theta_1-\theta_2)\widehat{\kappa^*\widetilde{\chi}}(\zeta+\theta_2) \rmd\theta_1\rmd\theta_2
$$
and an upper bound for $\vert\widehat{T}(\xi,\eta,\zeta)\vert$ of the form
\begin{equation*} \begin{split}
&\bigg\vert \sum_{\vert k-\ell\vert\leq  1}  2^{(k+\ell)d} \int \psi(\theta_1) \psi(\theta_2)\widehat{\psi}(\xi-2^k\theta_1)\widehat{\kappa^*\chi\widetilde{\psi} }(\eta+2^k\theta_1-2^\ell\theta_2)\widehat{\kappa^*\widetilde{\chi}}(\zeta+2^\ell\theta_2) \rmd\theta_1\rmd\theta_2 \bigg\vert   \\
&\lesssim \sum_k 2^{2kd} \int \psi(\theta_1) \psi(\theta_2) (1+\vert \xi-2^k\theta_1\vert)^{-N-4d-2} (1+\vert \zeta+2^k\theta_2\vert)^{-N-2d-1}   \\
&\hspace{5cm}\times \big(1+\vert\eta+2^k\theta_1-2^k\theta_2 \vert\big)^{-N-2d-1}\rmd\theta_1\rmd\theta_2   \\
&\lesssim \sum_k 2^{2kd} \int \psi(\theta_1) \psi(\theta_2) \, \frac{(1+\vert \xi\vert)^{2d+1}}{(1+2^k)^{2d+1}} \, \big(1+\vert \xi-2^k\theta_1\vert\big)^{-N-2d-1}\rmd\theta_1\rmd\theta_2   \\
&\times (1+\vert \zeta+2^k\theta_2\vert)^{-N-2d-1} \big(1+\vert\eta+2^k\theta_1-2^k\theta_2 \vert\big)^{-N-2d-1}\rmd\theta_1\rmd\theta_2   \\
&\lesssim \big(1+\vert \xi+\eta+\zeta\vert\big)^{-N-2d-1}(1+\vert \xi\vert)^{d+1} \sum_k 2^{-k}\lesssim \big(1+\vert \xi+\eta+\zeta\vert\big)^{-N} 
\end{split} \end{equation*}
since one has 
$$
1+\vert \xi\vert \lesssim 1+\vert\xi+\eta+\zeta\vert
$$ 
on $V$. This proves the convergence of the series defining $[\odot_i]$ in the conormal distributions whose wavefront set is contained in $N^*(\{(x=y=z)\}\subset M^3)$.

\smallskip

To probe the microlocal regularity under scalings, we just need to scale the representation of $[\odot_i]$  by a small factor $\lambda=2^{-j}$ in the chart $\kappa_i$ used to define both $P^i_k,\widetilde{P}_\ell^i$, then we will use the invariance of wave front sets under pull-backs together with the normal form result on scaling fields to conclude. First in the chart $\kappa_i\times \kappa_i\times
\kappa_i:U\times U\times U\mapsto (\mathbb{R}^d)^3$, we have
\begin{eqnarray*}
&&[\odot_i](y+2^{-j}h_1,y,y+2^{-j}h_2)   \\
&= &\sum_{\vert k-\ell\vert\leq  1} 2^{(k+\ell)d} \, \kappa_{i*}\widetilde{\chi}(y+2^{\ell-j}h_1) \, \widehat{\psi}(2^{-j}h_1) \, \psi(y) \, \widehat{\psi}(2^{k-j}h_2) \, \kappa_{i*}\chi(y+2^{-j}h_2)\\
&= & 2^{2jd} \sum_{\vert k-\ell\vert\leq  1} 2^{(\ell-j)d} \,  \kappa_{i*}\widetilde{\chi}(y+2^{-j}h_1) \, \widehat{\psi}(2^{\ell-j}h_1) \, \psi(y) 2^{(k-j)d}  \, \widehat{\psi}(2^{k-j}h_2) \, \kappa_{i*}\chi(y+2^{-j}h_2).
\end{eqnarray*}

We need to justify that the series 
$$
\sum_{\vert k-\ell\vert\leq  1} 2^{(\ell-j)d}  \, \kappa_{i*}\widetilde{\chi}(y+2^{-j}h_1) \, \widehat{\psi}(2^{\ell-j}h_1) \, \psi(y) 2^{(k-j)d} \, \widehat{\psi}(2^{k-j}h_2) \, \kappa_{i*}\chi(y+2^{-j}h_2)
$$ 
is bounded in conormal distributions uniformly in the index $j$. Beware that the above series only converges in the sense of distributions of order $0$ (one can think of them as elements in the dual of the Banach space $C^0$). Just rewrite the above series as a sum 
\begin{equation*} \begin{split}
&\underbrace{ \sum_{\vert k-\ell\vert\leq  1,(k,\ell)\geqslant 1} 2^{\ell d} K^i_{\ell+j}(y,2^{\ell}h_1) 2^{kd} \widetilde{K}^i_{k+j}(y,2^{k}h_2)}   \\
&+ \sum_{\vert k-\ell\vert\leq  1, 0\leq  k,\ell \leq  j} 2^{-\ell d} K^i_\ell(y,2^{-\ell}h_1) 2^{-kd} \widetilde{K}^i_k(y,2^{-k}h_2)  
\end{split} \end{equation*}
where the first term underbraced converges in conormal distributions and is bounded uniformly in $j$, and the second term is bounded uniformly in $j$ in the space of smooth functions. So noting that $d=3$ and for $2^{-j}=\lambda$, we conclude that the family $\lambda^{-6} [\odot_i](y+\lambda .,y,y+\lambda .), \lambda\in (0,1]$ forms a bounded family of distributions in $\mathcal{D}^\prime_\Gamma(U^3)$ for $\Gamma=N^*\{h_1=h_2=0\}$ where we fixed a very specific scaling towards the deepest diagonal. The scaling depends on the choice of chart $\kappa_i$. It remains to show that the statement is intrinsic, it holds true for any scaling field in the sense of~\cite{BDFT}, \cite[def 2.1]{DangWrochna} w.r.t. the deep diagonal $d_3\subset M^3$. For any pair of scaling fields $\rho_1,\rho_2$ defined near $d_3\subset M^3$, $e^{-t\rho_2*}= \Psi(t)^* e^{-t\rho_1*}$ where $\Psi(t):\Omega\subset M^3\mapsto \Omega\subset M^3$ is some family of local diffeomorphisms defined in some neighborhood $\Omega$ of $d_3$ fixing $d_3\subset M^3$ which has a well--defined limit when $t\rightarrow +\infty$ by~\cite[Prop 2.3 p.~826]{DangAHP}. If $e^{-6t} e^{-t\rho_1*}[\odot_i], t\in [0,+\infty)$ is bounded in $\mathcal{D}^\prime_{N^*d_3}$, then
\begin{eqnarray*}
e^{-6t} e^{-t\rho_2*}[\odot_i]= e^{6t}\Psi(t)^* e^{-t\rho_1*}[\odot_i]
\end{eqnarray*}
where the family
$\Psi(t)^*e^{6t} e^{-t\rho_1*}[\odot_i]$ is bounded in $\mathcal{D}^\prime_{N^*d_3}$ since $e^{6t} e^{-t\rho_1*}[\odot_i]$ bounded in $\mathcal{D}^\prime_{N^*d_3}$, continuity of the pull--back by $\Psi(t)^*$~\cite{BDH} and
the fact that the family $\Psi(t)$ has a well--defined smooth limit when $t\rightarrow +\infty$.  
\end{proof}

\smallskip

%------------------------------------------------------------%
\section{Composing $\Psi_P^a$ with $\Psi_H^b$}
\label{sect_compostion_para_ker}
%------------------------------------------------------------%

In this section, we prove a weak form of composition theorem for our parabolic calculus. We denote by $\circ$ the composition of kernels in the space variables. More precisely,
\begin{eqnarray*}
K_1\circ K_2(x,y)\defeq\int_{z\in M} K_1(x,z)K_2(z,y) \mu_g(dz)
\end{eqnarray*}
where $\mu_g$ is the Riemannian volume form on $M$. Recall that the heat calculus $\Psi_H$ was defined in Theorem~\ref{thm:heatcalculus} and the parabolic calculus in Definition \ref{def:paraboliccalculus}. We prove the following composition Theorem:

\smallskip

\begin{thm}[Composition Theorem]\label{t:comp2}
Let $M$ be a smooth closed manifold of dimension $d$. Pick $A\in\Psi^{a}_P(M)$ and $B\in \Psi^b_H(M)$ with
$$
\left\{\begin{array}{ll} a,b\leq -1 &  \\  d+2+2a+2b \geq 0 &   \end{array} \right.
$$
Set
\begin{eqnarray*}
C(t_1,t_2,x,y)\defeq\int_{-L}^{\inf(t_1,t_2)} A(t_2-s)\circ B(t_1-s)ds.
\end{eqnarray*}
One has, for all $\epsilon>0$,
$$
\left\{\begin{array}{ll} C\in \Psi_P^{a+b} & \textrm{ if } d+2+2a+2b > 0 \\  C\in \Psi_P^{a+b+\epsilon} & \textrm{ if } d+2+2a+2b = 0  \end{array} \right.
$$
Moreover, the composition is bilinear hypocontinuous for the respective topologies.
\end{thm}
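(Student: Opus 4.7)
The plan is to reduce the question to pointwise diagonal estimates in a chart and then apply a parabolic convolution inequality. Since $M$ is compact and both kernels are smooth off the space-time diagonal $\{t_1 = t_2,\, x = y\}$, a partition of unity reduces the problem to neighborhoods of this diagonal in a chart. After the change of variables $\tau = \inf(t_1,t_2) - s$, with $\delta = |t_1 - t_2|$ and $t_1 \geq t_2$ without loss of generality, the composed kernel reads
\[ C(t_1,t_2,x,y) = \int_0^{t_2 + L}\!\int_M A(\tau, x, z)\,B(\delta + \tau, z, y)\,dz\,d\tau, \]
exhibiting $C$ as a function of $\delta$ up to an exponentially decaying tail in $\tau$, as required by Definition \ref{def:paraboliccalculus}.

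The central analytic tool is the parabolic convolution inequality: if $\alpha + \beta > d + 2$, then for all $\delta \geq 0$,
\[ \int_0^\infty\!\int_M (\sqrt{\tau} + d(x,z))^{-\alpha} (\sqrt{\tau + \delta} + d(z,y))^{-\beta}\, dz\, d\tau \lesssim (\sqrt{\delta} + d(x,y))^{d+2-\alpha-\beta}. \]
This is proved in two steps: the classical Young-type spatial convolution, valid under $\alpha + \beta > d$, yields $(\sqrt{\tau+\delta} + d(x,y))^{d-\alpha-\beta}$; the substitution $r = \sqrt{\tau+\delta}$ then makes the remaining time integral absolutely convergent under $\alpha + \beta > d + 2$. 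Applied with $\alpha = d+2+2a$ and $\beta = d+2+2b$, the criterion becomes exactly $d + 2 + 2(a+b) > 0$, our standing hypothesis; combined with the diagonal bound from Definition \ref{def:paraboliccalculus} for $A$ and its analogue from Lemma \ref{l:heatvsparabolic} for $B$, this yields $|C(t_1,t_2,x,y)| \lesssim (\sqrt{\delta} + d(x,y))^{-d-2-2(a+b)}$.

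To upgrade the sup bound to the full statement $C \in \Psi_P^{a+b}$ one needs identical bounds for $L_1\cdots L_k C$, where the $L_j$ are tangent vector fields to $\{t_1 = t_2,\, x = y\}$. The strategy is to write each such tangent field as a sum of vector fields acting separately on the $A$ and $B$ factors, plus integration-by-parts contributions in the internal variables $z$ and $s$. For instance, using $x - y = (x - z) + (z - y)$ and $t_1 - t_2 = (t_1 - s) - (t_2 - s)$, weighted tangent fields such as $(x-y)^j \partial_{x^k}$ or $(t_1-t_2)\partial_{t_1}$ decompose into pieces that individually lie in the module $\mathcal{M}$ associated with $A$ or $B$. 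By Lemma \ref{l:gainderivative} the parameter-like derivatives of $A$ and $B$ preserve their respective diagonal bounds, while the internal $\partial_z$ contributions produced by differentiating $A$ in $x$ cancel with the matching contributions from differentiating $B$ in $y$ via integration by parts in $z$. Every surviving term is then estimated by the same parabolic convolution inequality with exponents $(\alpha, \beta)$ whose sum satisfies the convergence criterion, giving the required uniform diagonal bound on $L_1\cdots L_k C$.

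When $d + 2 + 2(a+b) = 0$, the $\tau$-integral in the convolution inequality diverges logarithmically; weakening either $\alpha$ or $\beta$ by $2\epsilon$ restores integrability and produces the conclusion $C \in \Psi_P^{a+b+\epsilon}$. Hypocontinuity of the bilinear map $(A,B) \mapsto C$ is automatic because the constants in the diagonal bounds depend bilinearly and continuously on only finitely many seminorms of $A \in \Psi_P^a$ and $B \in \Psi_H^b$, so bounded sets map to bounded sets. The main obstacle is the bookkeeping for the tangential derivatives, specifically verifying that the $\partial_z$ and $\partial_s$ contributions produced by differentiating either factor in its interior variables either cancel or are absorbed by integration by parts without producing uncontrolled boundary terms at $s = \inf(t_1,t_2)$.
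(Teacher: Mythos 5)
Your proposal is correct in outline and reaches the right conclusion, but it runs the core diagonal estimate quite differently from the paper. The paper localizes near the triple diagonal $\{x=z=y,\ t_1=t_2=s\}$, decomposes the cut-off there by a continuous Littlewood--Paley-type integral over parabolic scales $\mu\in[\lambda,1]$, rescales the whole composite kernel at scale $\lambda$, and bounds the two resulting pieces by counting volumes of parabolic coronas; the borderline case $d+2+2a+2b=0$ then appears as a logarithmically divergent $\int_\lambda^1\frac{d\mu}{\mu}$. You instead prove a single parabolic convolution inequality (spatial Young step followed by the time integration) and feed the pointwise diagonal bounds of Definition \ref{def:paraboliccalculus} into it. Your route is more elementary and closer to classical treatments of parabolic Green functions; the paper's scaling decomposition is heavier but generalizes more readily to kernels that only satisfy scaling bounds in the weak (distributional) sense. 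For the tangential derivatives the two arguments coincide: your ``decompose each tangent field into pieces tangent to the $A$- and $B$-diagonals plus an integration by parts in $z$'' is exactly Lemma \ref{l:testingtangentliealgebra}, so there is no cancellation to verify beyond what that decomposition already provides, and no boundary term at $s=\inf(t_1,t_2)$ arises for the spatial generators treated there (the time generators are left to the reader in the paper as well).

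Two soft spots in your sketch deserve attention. First, the intermediate claim that the spatial convolution alone yields $(\sqrt{\tau+\delta}+d(x,y))^{d-\alpha-\beta}$, i.e.\ with the \emph{larger} regularization surviving, is not a formal consequence of Young's inequality: it fails outright if $\alpha>d$ and degenerates to a logarithm at $\alpha=d$ or $\beta=d$. You are saved because $a,b\leq -1$ forces $\alpha=d+2+2a\leq d$ and $\beta=d+2+2b\leq d$, but the clean way to get the $\sqrt{\delta}$-regularization is to split the time integral at $\tau=\delta$ (for $\tau\geq\delta$ one has $\sqrt{\tau+\delta}\simeq\sqrt{\tau}$ and the standard convolution applies; for $\tau\leq\delta$ one integrates $A$ in time first, in the spirit of Lemma \ref{l:ineq2}). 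Second, the case $\beta=d$ (i.e.\ $b=-1$, the heat kernel itself) produces a logarithm in the spatial step if you only use the parabolic bound $(\sqrt{t}+\vert z-y\vert)^{-d}$; either absorb it using strict positivity of $d+2+2a+2b$, or use the full rapid off-diagonal decay $t^{-d/2}(1+\vert z-y\vert/\sqrt{t})^{-N}$ from Theorem \ref{thm:heatcalculus}, which is what the paper exploits. With these repairs your argument is complete.
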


\smallskip

Note that the hypocontinuity implies the sequential continuity for the composition. Note also that the composition cannot remain in the heat calculus, since we no longer have the off-diagonal small time decay. It makes natural that the result of the composition should be valued in the parabolic calculus. For applications to the renormalization of the quartic and quintic trees, especially for the explicit extraction of the counterterms, we use the above result with $a=-\frac{3}{2}, b=-1$ and $\dim(M)=3$. In that case one has $3+2-3-2=0 $.

\smallskip

\begin{proof}
Assume $t_2>t_1$, the other case is symmetrical. 
Without loss of generality, we shall work on $\mathbb{R}^{1+d}$ since the parabolic calculus is defined first on flat space and then transferred on manifolds. The composition result proved on $\mathbb{R}^{1+d}$ will automatically transfer to the manifold setting.

We localize the pair $(x,y)$ in some convex bounded region $U\subset \mathbb{R}^{1+d}$.
The two kernels we shall compose are denoted by
$A(t_1-s,x,x-z)$ and $B(t_2-s,z,z-y)$ respectively.
We would like to study and bound the 
kernel
\begin{eqnarray*}
C(t_2-t_1,x,x-y)\defeq\int_{-L}^{\inf(t_1,t_2)}\int_{\mathbb{R}^d} A(t_1-s,x,x-z) B(t_2-s,z,z-y)dz\rmd s
\end{eqnarray*}

Observe that when $z$ is at distance $\geqslant 1$ from $U$, then both kernels $A,B$ in the above integral are smoothing in the space variable uniformly in $s$, since parabolic kernels are smoothing off-diagonal. Therefore, we may assume without loss of generality that $A, B$ are compactly supported in the variables $(x,z)$ respectively, they are proper operators, so that we may insert a first cut-off function $\chi_1\in C^\infty_c$ in the variable $z$ in the composition without affecting the analytical properties of $C$.

We now work with
$$
C = \int_{-L}^{\inf(t_1,t_2)}\int_{\mathbb{R}^d} A(t_1-s,x,x-z) B(t_2-s,z,z-y) \chi_1(z)\rmd z\rmd s+\text{smoothing}
$$
where $\chi_1=1$ on the support of $B$.
We use the following simple argument to justify $C$ is well-defined when $t_2>t_1$ as can be seen from the explicit bound
\begin{equation*} \begin{split}
\vert C(t_2-t_1,x,x-y)\vert &= \bigg\vert\int_{-L}^{t_1}\int_{z\in \mathbb{R}^d} A(t_1-s,x,x-z)B(t_2-s,z,z-y) \chi_1(z)\rmd z\rmd s\bigg\vert   \\ 
&\hspace{-0.2cm}\lesssim \int_{\mathbb{R}^d}\int_{-L}^{t_1} \frac{\chi_1(z)}{ \big(\sqrt{\vert t_2-s\vert }+\vert x-z \vert\big)^{d+2+2a} \big(\sqrt{\vert t_1-s\vert }+\vert y-z \vert\big)^{d+2+2b} } \,\rmd sdz.
\end{split} \end{equation*}
Since $t_2>t_1$, only one factor $\big(\sqrt{\vert t_1-s\vert }+\vert y-z \vert\big)^{-2-2b-d}$ blows up when $(z,s)=(y,t_1)$. But this is integrable since for all $b\leq  -1$ and all test function $\chi\in C^\infty_c(\mathbb{R}_{\geqslant 0}\times \mathbb{R}^d)$ the following integral is bounded 
$$  
\int_{\mathbb{R}^{1+d}} \frac{\chi(u,z)}{ (\sqrt{u}+\vert z\vert)^{d+2+2b} } dudz \lesssim \int_{\mathbb{R}^{1+d}} \frac{\chi(u,z)}{ (\sqrt{u}+\vert z\vert)^{d} } dudz \lesssim \int_{\mathbb{R}^{1+d}} \frac{\chi(v^2,z)}{ (\vert v\vert+\vert z\vert)^{d} }2v \, dvdz <+\infty,
$$ 
the other factor $\chi_1(z) \big(\sqrt{\vert t_2-s\vert }+\vert x-z \vert\big)^{-(d+2-2a)}$ is treated as test function of $z,s$, which shows the existence of the integral. We next localize the integral over the diagonals. Choose some function $\chi_2(t_2-t_1,t_1-s,x-z,z-y)$ which equals $1$ in some neighborhood of the subspace $\big\{x=z,y=z,t_2=t_1,t_1=s\big\}$. When we are outside the subspace $\big\{x=z,y=z,t_2=t_1,t_1=s\big\}$, then there is at least one of the two kernels $A, B$ that has either strictly positive time argument or is smoothing in the space variables and therefore $A(t_2-s)\circ B(t_1-s)$ will be smoothing in space variables. The next step is to localize the composition near the triple diagonal. We choose some function $\chi_2\big(t_2-t_1,t_1-s,x-z,z-y\big) $ which equals $1$ near the triple diagonal $\big\{x=z,y=z,t_2=t_1,t_1=s\big\}$. Then the composition decomposes as
\begin{equation*} \begin{split}
&\int_{-L}^{\inf(t_1,t_2)}\int_{\mathbb{R}^d} A(t_1-s,x,x-z) \chi_2(t_2-t_1,t_1-s,x-z,z-y) B(t_2-s,z,z-y) \chi_1(z)\rmd z\rmd s   \\
&+\underbrace{\int_{-L}^{\inf(t_1,t_2)}\int_{\mathbb{R}^d} A(t_1-s,x,x-z) (1-\chi_2(t_2-t_1,t_1-s,z-y)) B(t_2-s,z,z-y) \chi_1(z)\rmd z\rmd s}.
\end{split} \end{equation*}
Now observe that $B$ is smoothing off-diagonal and compactly supported in the $z$ variable, therefore the second piece underbraced is well-defined and is going to be smoothing in the space variables. So from now on, we focus on the first piece 
$$
\int_{-L}^{\inf(t_1,t_2)}\int_{\mathbb{R}^d} A(t_1-s,x,x-z) \chi_2(t_2-t_1,t_1-s,x-z,z-y) B(t_2-s,z,z-y) \chi_1(z)\rmd z\rmd s 
$$ 
which contains all singularities of $C$. We use the following notation for the parabolic action on a space-time point $(t_2-t_1,t_1-s,x-z,z-y)\in \mathbb{R}^{2+2d}$, for every $\lambda>0$
$$
\lambda\cdot\big(t_2-t_1,t_1-s,x-z,z-y\big) \defeq \Big(\lambda^2(t_2-t_1),\lambda^2(t_1-s),\lambda(x-z),\lambda(z-y)\Big),
$$
where $(0,+\infty)$ acts by parabolic scalings on space-time points of $\mathbb{R}^{2+2d}$. We will use a multiple scale decomposition of the cut-off function $\chi_2$ as follows
$$
\chi_2 = \chi_2\Big(\lambda^{-1}\cdot\big(t_2-t_1,t_1-s,x-z,z-y\big)\Big) + \int_{\lambda}^1 \psi\Big(\mu^{-1}\cdot\big(t_2-t_1,t_1-s,x-z,z-y\big)\Big) \frac{d\mu}{\mu}
$$
where the piece $\chi_2\big(\lambda^{-1}(t_2-t_1,t_1-s,x-z,z-y)\big)$ is concentrated at scale $\lambda$ near $\big\{x=z,y=z,t_2=t_1,t_1=s\big\}$, and $\psi=-\lambda \frac{d}{d\lambda}\chi_2(\lambda\cdot)|_{\lambda=1} $ and the integral is a continuous decomposition at every scale ranging from $\lambda$ to $1$. Replacing the above decomposition in the definition of the composite operator $C$ yields
\begin{equation*} \begin{split}
& C\big(t_2-t_1,x,x-y\big)   \\
&= \int_{-L}^{t_1}\int_{z\in \mathbb{R}^d} A(t_1-s,x,x-z)\chi_2\Big(\lambda^{-1}\cdot\big(t_2-t_1,t_1-s,z-y\big)\Big)\,B\big(t_2-s,z,z-y\big) \chi_1(z)\rmd z\rmd s   \\
&+ \int_{\lambda}^1\int_{-L}^{t_1}\int_{z\in \mathbb{R}^d} \hspace{-0.3cm}A(t_1-s,x,x-z) \psi\Big(\mu^{-1}\cdot(t_2-t_1,t_1-s,z-y)\Big) B(t_2-s,z,z-y) \chi_1(z)\rmd z\rmd s \frac{d\mu}{\mu}
\end{split} \end{equation*}
The next step is to scale the composite operator exactly at scale $\lambda$
\begin{eqnarray*}
&& C\big(\lambda^2(t_2-t_1),x,\lambda(x-y)\big) = C_1+C_2   \\
C_1&=& \lambda^{d+2}\int_{t_1-(t_1+L)\lambda^{-2}}^{t_1}\int_{z\in \mathbb{R}^d} A\big(\lambda^2(t_1-s),x,\lambda(x-z)\big) \, \chi_2\big(t_2-t_1,t_1-s,z-y\big)\times   \\
&&B\big(\lambda^2(t_2-s),z,\lambda(z-y)\big) \, \chi_1\big(\lambda(z-y)+y\big) \,\rmd z\rmd s   \\
C_2& =&\lambda^{d+2} \int_{\lambda}^1\int_{t_1-(t_1+L)\lambda^{-2}}^{t_1}\int_{z\in \mathbb{R}^d} A\big(\lambda^2(t_1-s),x,\lambda(x-z)\big)\times   \\
&&
\psi\Big((\lambda\mu^{-1})\cdot\big(t_2-t_1,t_1-s,z-y\big)\Big) B\big(\lambda^2(t_2-s),z,\lambda(z-y)\big) \, \chi_1(\lambda(z-y)+y) \,\rmd z\rmd s \frac{d\mu}{\mu}
\end{eqnarray*}
where we made a change of variables $s\mapsto \lambda^2(s-t_1)+t_1$, $z\mapsto \lambda(z-y)+y$, in the integrals. Then we bound the above two different terms in terms of bounds on $A, B$. The assumptions on $A,B$ imply the bounds
$$
\big\vert A(t_1-s,x,x-z) \big\vert \lesssim \left(\sqrt{t_1-s}+\vert x-z \vert \right)^{-d-2a-2},
$$
and 
$$
\big\vert B(t_2-s,z,z-y) \big\vert \lesssim \left( \sqrt{t_2-s}+\vert z-y \vert \right)^{-d-2-2b},
$$
which in turn imply
\begin{eqnarray*}
C_1
&\lesssim & \lambda^{d+2}\lambda^{-2d-4-2a-2b}\int_{t_1-(t_1+L)\lambda^{-2}}^{t_1}\int_{z\in \mathbb{R}^d}\left(\sqrt{t_1-s}+\vert x-z \vert \right)^{-d-2a-2}   \\
&\times & \left( \sqrt{t_2-s}+\vert z-y \vert \right)^{-d-2-2b} \, \chi_2\big(t_2-t_1,t_1-s,x-z,z-y\big) \, \chi_1(\lambda(z-y)+y) \,\rmd z\rmd s   \\
&\lesssim &\lambda^{-d-2-2a-2b},
\end{eqnarray*}
since the product $\chi_2\big(t_2-t_1,t_1-s,x-z,z-y\big) \chi_1(\lambda(z-y)+y)$ is compactly supported in $z,s$ uniformly in $y\in U$, $\lambda\in (0,1]$. For $C_2$ we have the upper bound
\begin{equation*} \begin{split}
&\lambda^{-d-2-2a-2b} \int_{\lambda}^1\int_{t_1-(t_1+L)\lambda^{-2}}^{t_1}\int_{z\in \mathbb{R}^d} \left(\sqrt{t_1-s}+\vert x-z \vert \right)^{-d-2a-2}\times   \\
&\psi\Big(\big(\lambda\mu^{-1})\cdot\big(t_2-t_1,t_1-s,x-z,z-y\big) \Big) \left( \sqrt{t_2-s}+\vert z-y \vert \right)^{-d-2-2b}\chi_1(\lambda(z-y)+y) \,\rmd z\rmd s \frac{d\mu}{\mu}.
\end{split} \end{equation*} 
Now we use the fact that 
$$
\left( \sqrt{t_2-s}+\vert z-y \vert \right)^{-d-2-2b}\simeq \Big(\frac{\mu}{\lambda}\Big)^{-d-2-2b}
$$ 
and 
$$ 
\left(\sqrt{t_1-s}+\vert x-z \vert \right)^{-d-2a-2} \simeq \Big(\frac{\mu}{\lambda}\Big)^{-d-2-2a}
$$ 
on the support of $\psi\big((\lambda\mu^{-1})\cdot(t_2-t_1,t_1-s,x-z,z-y)\times)$ because this function is supported on a corona of radius $\simeq \frac{\mu}{\lambda}$ and also that the integral 
$$
\int_{\mathbb{R}^{d+1}} \psi\Big((\lambda\mu^{-1})\cdot\times(t_2-t_1,t_1-s,x-z,z-y\big)\Big) \,\rmd z\rmd s \lesssim \Big\vert\Big\{ \sqrt{t_1-s}+\vert z-y \vert\leq  \frac{\mu}{\lambda} \Big\} \Big\vert \lesssim \left(\frac{\mu}{\lambda}\right)^{d+2}, 
$$
since we are just bounding by the volume of some parabolic ball of radius $\frac{\mu}{\lambda}$. Combining the three previous bounds yields the estimate
\begin{eqnarray*}
C_2&\lesssim & \lambda^{-d-2-2a-2b} \int_{\lambda}^1\left(\frac{\mu}{\lambda}\right)^{-d-2-2b}\left(\frac{\mu}{\lambda}\right)^{-d-2-2a}\left(\frac{\mu}{\lambda}\right)^{d+2}\frac{d\mu}{\mu}\\
&=&\lambda^{d+2}\int_\lambda^1 \mu^{-2d-4-2a-2b} \frac{d\mu}{\mu}\lesssim \lambda^{-d-2-2a-2b}
\end{eqnarray*}
if $ d-2-2a-2b\neq 0 $. If $d-2-2a-2b= 0$ then $C_1=\mathcal{O}(1)$ when $\lambda>0$ goes to $0$ and we get 
a logarithmic bound for $C_2$ of the form
$$
C_2\lesssim \vert\hspace{-0.05cm} \log\lambda\vert.
$$
So, for the moment, we proved that
\begin{eqnarray*}
C\big(\lambda^2(t_1-t_2),x,\lambda(x-y)\big) = \mathcal{O}\big(\lambda^{-d-2-2a-2b}\big)
\end{eqnarray*}
when $d+2+2a+2b>0$ and 
\begin{eqnarray*}
C(\lambda^2(t_1-t_2),x,\lambda(x-y))=\mathcal{O}\big(\vert\hspace{-0.05cm}\log\lambda\vert\big)
\end{eqnarray*}
when $d+2+2a+2b=0$.
To conclude that the composite operator $C$ still belongs to $\Psi_P$, we need to prove that the above bounds still hold when we test $C$ against elements of the module $\mathcal{M}$ of vector fields tangent to the diagonal $\{t_1=t_2,x=y\}\subset \mathbb{R}^{1+d}\times \mathbb{R}^{1+d}$.

The stability of the bounds by testing against tangent vector fields is treated separately in Lemma~\ref{l:testingtangentliealgebra}.  
\end{proof}

\smallskip

The proofs of the composition Theorems is not done yet, we still need to show that we have the same estimates when we differentiate with vector fields $L_1,\dots, L_k$ that belong to the generators of the module $\mathcal{M}$ of vector fields tangent to $\{t_1=t_2,x=y\}\subset \mathbb{R}^{1+d}\times \mathbb{R}^{1+d}$.

\smallskip

\begin{lemm}[Composition and testing with tangent algebra]\label{l:testingtangentliealgebra}
Under the assumptions of Theorem~\ref{t:comp2}, for every $L\in \mathcal{M}$ in the tangent module, the kernel $LC$ satisfies the same estimate as $C$:
\begin{eqnarray*}
\big\vert LC(t_1,t_2,x,y)\big\vert \leq  C_L \left(\vert t_2-t_1\vert +\vert x-y\vert^2 \right)^{-\frac{d+2+2a+2b}{2}}.
\end{eqnarray*}
\end{lemm}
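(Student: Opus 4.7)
The plan is to reduce the statement to a finite set of generators of $\mathcal{M}$ and show that for each such generator $L$, the distribution $LC$ can be rewritten, via differentiation under the integral combined with integrations by parts in the internal variables $s$ and $z$, as a finite sum of expressions having exactly the same structure as $C$:
$$
\int\int \widetilde{A}(t_1-s,x,x-z)\,\widetilde{B}(t_2-s,z,z-y)\,\widetilde{\chi}(z)\,dz\,ds
$$
with $\widetilde{A}\in\Psi_P^a$ and $\widetilde{B}\in\Psi_H^b$. Once such a representation is at hand, the multi-scale dyadic argument performed in the proof of Theorem~\ref{t:comp2} applies verbatim to each summand and yields the claimed bound of order $-d-2-2a-2b$.

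The generators to treat are the base-point derivatives $\partial_{x_i}+\partial_{y_i}$, the time-tangent $(t_2-t_1)\partial_{t_j}$, the mixed fields $(t_2-t_1)\partial_{x_i}$ and $(x_i-y_i)\partial_{t_j}$, and the spatial dilations $(x_i-y_i)\partial_{x_j}$. The base-point derivatives pose no problem, since differentiating in the third (base) argument of $A$, respectively of $B$, preserves membership in $\Psi_P^a$, respectively $\Psi_H^b$; the $\partial_z$ terms that appear via the chain rule can be absorbed by an integration by parts in $z$ (the $z$-support of the integrand is compact thanks to the cut-off $\chi_1$). For $(t_2-t_1)\partial_{t_1}$, the identity $t_2-t_1=(t_2-s)-(t_1-s)$ produces two pieces: $(t_1-s)\partial_{t_1}A\in\Psi_P^a$ because $t\partial_t$ lies in $\mathcal{M}_A$, while the $(t_2-s)$-piece is handled by transferring $\partial_{t_1}=-\partial_s$ acting on $A$ onto $B$ through an integration by parts in $s$; this yields $(t_2-s)\partial_{t_2}B\in\Psi_H^b$ plus a boundary contribution that we discuss below.

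The truly delicate generators are the mixed space-time ones, e.g.\ $(x_i-y_i)\partial_{t_1}$ and $(x_i-y_i)\partial_{x_j}$. The key idea is to split
$$
x_i-y_i=(x_i-z_i)+(z_i-y_i)
$$
and distribute the two summands to $A$ and $B$ respectively: $(x_i-z_i)\partial_{t_1}A\in\Psi_P^a$ since $h^i\partial_t\in\mathcal{M}_A$, while the other piece $(z_i-y_i)\partial_{t_1}$, after integration by parts in $s$, becomes $(z_i-y_i)\partial_{t_2}B\in\Psi_H^b$, $h^i\partial_t$ being tangent to the heat-kernel diagonal as well. The same splitting reduces $(x_i-y_i)\partial_{x_j}$ to a sum of terms of the form $(x_i-z_i)\partial_{x_j}A$ and $(z_i-y_i)\partial_{x_j}A$: the first lies in $\Psi_P^a$ (as $h^i\partial_{x_j}\in\mathcal{M}_A$), and the second can be rewritten, modulo $\partial_z$ terms on which we integrate by parts, as a tangent derivative of $B$. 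The boundary terms produced by the integrations by parts in $s$ are localized at $s=\min(t_1,t_2)$ and contain a factor $B(0,z,z-y)$, which by the heat calculus estimates of Theorem~\ref{thm:heatcalculus} is an approximate identity in $z$ and therefore contributes at most $A(|t_1-t_2|,x,x-y)$ itself, a term already satisfying the required parabolic bound.

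The main obstacle is the algebraic bookkeeping in Step~3: each splitting $x-y=(x-z)+(z-y)$ trades a globally non-tangent factor for a pair of factors that are each tangent to the internal diagonal of the appropriate kernel, and one must verify that after all commutations the resulting integrands preserve the scaling degree $-d-2-2a-2b$ without any boundary contribution producing a worse singularity. Once this is in place, the scaling arguments of Theorem~\ref{t:comp2} go through unchanged and give the bound for $LC$. An induction on the number of factors $L_1,\dots,L_k$ then yields the general estimate $|L_1\cdots L_k C|\lesssim(\sqrt{|t_2-t_1|}+|x-y|)^{-d-2-2a-2b}$ stated in the lemma.
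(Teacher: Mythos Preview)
Your approach is essentially the paper's: both reduce to generators of $\mathcal{M}$, use the key splitting $x-y=(x-z)+(z-y)$ to distribute the non-tangent factor onto the two kernels so that each piece becomes a tangent derivative of $A$ or $B$, and then observe that the resulting integrands again satisfy the hypotheses of Theorem~\ref{t:comp2}. The paper only writes out the purely spatial generators $\partial_{x^i}+\partial_{y^i}$ and $M(x-y)\cdot\partial_x$ explicitly and defers the time-involving generators to the reader, whereas you sketch those as well via the additional device of integration by parts in $s$; this is a harmless elaboration of the same method rather than a different route.
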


\smallskip

\begin{proof}
We reduce the proof to some local computation in local coordinates involving generators of $\mathcal{M}$. We will do the detailed calculation for translations of the form $\partial_{x^i}+\partial_{y^i} $ (translation) and for general linear vector fields fixing the diagonal $\{x-y=0\}\subset U\times U$ of the form $M(x-y)\cdot\partial_x$. This covers the following important examples: $(x^i-y^i)\partial_{x^k}-(x^k-y^k)\partial_{x^i}$ (rotation), $(x^i-y^i)\partial_{x^i}$ (scaling), $(x^i-y^i)\partial_{x^k}+(x^k-y^k)\partial_{x^i} $ (boosts) in the situation of Theorem \ref{t:comp2} and we leave to the reader the other computations which follow the same pattern. We start with the translations
\begin{eqnarray*}
\left(\partial_{x^i}+\partial_{y^i}\right) C(t_1,t_2,x,y)&=&\left(\partial_{x^i}+\partial_{y^i}\right) \int_{\mathbb{R}^d}\int_{-L}^{\inf(t_1,t_2)} A(t_2-s,x,z) B(t_1-s,z,y) \chi(z)\rmd s\rmd z\\
&=&\int_{\mathbb{R}^d}\int_{-L}^{\inf(t_1,t_2)} ((\partial_{x^i}+\partial_{z^i} )A)(t_2-s,x,z)B(t_1-s,z,y)  \chi(z)\rmd s\rmd z\\
&+& \int_{\mathbb{R}^d}\int_{-L}^{\inf(t_1,t_2)} A(t_2-s,x,z) ((\partial_{y^i}+\partial_{z^i} )B)(t_1-s,z,y) \chi(z)\rmd s\rmd z\\
&+& \int_{\mathbb{R}^d}\int_{-L}^{\inf(t_1,t_2)} A(t_2-s,x,z)B(t_1-s,z,y) (\partial_{z^i}\chi)(z)\rmd s\rmd z.
\end{eqnarray*}
We see we can repeat the bounds of the proof of Theorem~\ref{t:comp2} on each term using the crucial information that both $(\partial_{x^i}+\partial_{z^i} )$  and $(\partial_{y^i}+\partial_{z^i}) $ are in the tangent algebra of the respective diagonals $\{x=z\}$ and $\{y=z\}$ and the stability of the two kernels $A, B$ in $\Psi_P$ by derivation by the tangent Lie algebra. Given a matrix $M\in M_d(\mathbb{R})$, we use the short hand notation $M(x-y)\cdot\partial_x$ for the vector field $M_i^j(x-y)^i\partial_{x^j}$ where we sum over repeated indices. Differentiating at $t=0$ yields the exact identities
\begin{equation*} \begin{split}
M(x-y)\cdot\partial_x &\int_{-L}^{\inf(t_1,t_2)} A(t_2-s)\circ \chi B(t_1-s) \rmd s   \\
&= \int_{-L}^{t_1} \left(\underbrace{ M(x-z)\cdot\partial_x} A\right)\circ \chi B+\left(\underbrace{\left(M(z-y)\cdot\partial_x + M(z-y)\cdot\partial_y  \right)}A\right) \circ \chi B \\
&\hspace{1cm}+  A\circ \chi\left( \underbrace{ M(z-y)\cdot\partial_y}B\right) + A\circ \left(M(z-y)\cdot\partial_y\chi\right)B\rmd s
\end{split} \end{equation*}
we decompose in different groups where we underbrace the vector fields which are tangent to the diagonal of the corresponding kernel. As usual, we decomposed $M(x-y)\cdot\partial_x \int_{-L}^{\inf(t_1,t_2)} A(t_2-s)\circ \chi B(t_1-s) \rmd s $ as a sum of three compositions of operators where the operators still satisfy the assumptions of Theorem~\ref{t:comp2}, so we are done.
\end{proof}

\medskip

%--------------------------------------------------------------%
\appendix
\section{A commutator identity on $\mathbb{R}^d$$\boldmath{.}$ \hspace{0.1cm}}
\label{SubsectionCommutatorIdentity}
%--------------------------------------------------------------%

We prove in this appendix a commutator estimate for triple paraproducts on $\mathbb{R}^d$, the result we establish is originally due to Bony~\cite[Thm 2.3 p.~215]{Bony} but we thought it would be useful to include a complete detailed proof here since it plays a central role for the proof of Theorem~\ref{thm_A_2_mfd} and the original paper~\cite{Bony} is written in French.

\smallskip

We are given $(f,g,h)$ where $f\in C^{\alpha_1},g\in C^{\alpha_2}$ and $h\in C^\beta$ where $\alpha_1,\alpha_2>0$ and $\beta <0$. We would like to control a commutator:
\begin{eqnarray*}
f\prec\left( g\prec h \right)-(fg)\prec h.
\end{eqnarray*}

\smallskip

\begin{lemm}\label{lemm:comm_paramultiplication_flat}
Let $f\in C^{\alpha_1}(\mathbb{R}^d),g\in C^{\alpha_2}(\mathbb{R}^d)$ and $h\in C^\beta(\mathbb{R}^d)$ where $\alpha_1,\alpha_2>0$ and $\beta <0$. Then we have
\begin{eqnarray*}
\|f\prec\left( g\prec h \right)-(fg)\prec h\|_{\alpha_1\wedge \alpha_2+\beta}\lesssim \|f\|_{\alpha_1}\|g\|_{\alpha_1}\|h\|_{\beta}.
\end{eqnarray*}
\end{lemm}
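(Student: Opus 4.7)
The natural strategy is to expand both paraproducts using Littlewood--Paley blocks and to reduce the whole estimate to an $L^\infty$ gain on the finite-scale discrepancy
\[
R_j \defeq S_{j-2}(fg) - S_{j-2}(f)\,S_{j-2}(g), \qquad j\geqslant 0.
\]
The key intermediate claim is that $\|R_j\|_{L^\infty} \lesssim 2^{-j(\alpha_1\wedge\alpha_2)}\|f\|_{\alpha_1}\|g\|_{\alpha_2}$; this is the only step that uses both positive regularities.

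First I would rewrite $g\prec h = \sum_k S_{k-2}(g)\Delta_k h$, notice that each summand has Fourier support in a corona of radius $\sim 2^k$, and deduce by spectral localisation that $\Delta_j(g\prec h)$ only involves indices $|k-j|\leqslant 2$. Combining the standard commutator estimate $\|[\Delta_j, M_{S_{k-2}(g)}]v\|_{L^\infty}\lesssim 2^{-j\alpha_2}\|g\|_{\alpha_2}\|v\|_{L^\infty}$ (valid since $g\in C^{\alpha_2}$, via the H\"older-continuity of the convolution kernel) with $\|S_{k-2}(g) - S_{j-2}(g)\|_{L^\infty}\lesssim 2^{-j\alpha_2}\|g\|_{\alpha_2}$ for $|k-j|\leqslant 2$, one obtains
\[
\Delta_j(g\prec h) = S_{j-2}(g)\,\Delta_j h + E_j,\qquad \|E_j\|_{L^\infty}\lesssim 2^{-j(\alpha_2+\beta)}\|g\|_{\alpha_2}\|h\|_\beta.
\]
Since $S_{j-2}(f)$ is bounded in $L^\infty$, the contribution $\sum_j S_{j-2}(f)E_j$ already lies in $C^{\alpha_2+\beta}\subset C^{(\alpha_1\wedge\alpha_2)+\beta}$ with the desired norm, so up to this acceptable remainder
\[
f\prec(g\prec h) - (fg)\prec h \;=\; \sum_j R_j\,\Delta_j h \;+\; (\text{acceptable}).
\]

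For the central estimate on $R_j$ I would exploit the splitting
\[
R_j \;=\; S_{j-2}\bigl((f-S_{j-2}f)\,g\bigr) \;+\; \bigl[S_{j-2},\,M_{S_{j-2}(f)}\bigr]g,
\]
bounding the first piece by $\|f-S_{j-2}f\|_{L^\infty}\|g\|_{L^\infty}\lesssim 2^{-j\alpha_1}\|f\|_{\alpha_1}\|g\|_{\alpha_2}$ and treating the second via the classical commutator estimate $\|[S_{j-2},M_\phi]\|_{L^\infty\to L^\infty}\lesssim 2^{-j}\|\nabla\phi\|_{L^\infty}$ applied with $\phi = S_{j-2}f$, combined with the Bernstein bound $\|\nabla S_{j-2}(f)\|_{L^\infty}\lesssim 2^{j(1-\alpha_1)}\|f\|_{\alpha_1}$. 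The symmetric splitting, interchanging the roles of $f$ and $g$, yields the same bound with $\alpha_2$ in place of $\alpha_1$, so selecting the more favourable of the two at each scale produces the loss $2^{-j(\alpha_1\wedge\alpha_2)}$.

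The final and most delicate step is to repackage $\sum_j R_j\Delta_j h$ as an element of $C^{(\alpha_1\wedge\alpha_2)+\beta}$. Since $\Delta_j h$ has Fourier support in the $2^j$-corona whereas $R_j$ does not, the Besov norm cannot be read off directly from $\|R_j\,\Delta_j h\|_{L^\infty}\lesssim 2^{-j[(\alpha_1\wedge\alpha_2)+\beta]}\|f\|_{\alpha_1}\|g\|_{\alpha_2}\|h\|_\beta$. The standard remedy is to apply an enlarged block $\widetilde\Delta_k$ and observe that $\widetilde\Delta_k(R_j\,\Delta_j h)$ vanishes for $k\gg j$ and is controlled for $k\lesssim j$ by the previous pointwise bound; a geometric-series resummation in $j$ then yields the claimed Besov estimate. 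This last dyadic bookkeeping is the main technical obstacle, as one must check that the contributions coming from nearby scales do not accumulate and destroy the gain of $(\alpha_1\wedge\alpha_2)$ derivatives.
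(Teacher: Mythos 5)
Your overall architecture matches the paper's: both proofs reduce the commutator to a series $\sum_j R_j\,\Delta_j h$ where $R_j$ measures the discrepancy between $S_{j-2}(fg)$ and a product of low-frequency truncations of $f$ and $g$, and both hinge on the bound $\Vert R_j\Vert_{L^\infty}\lesssim 2^{-j(\alpha_1\wedge\alpha_2)}$. Your splitting of $R_j$ into a high-frequency tail of $f$ plus $[S_{j-2},M_{S_{j-2}f}]g$ is a legitimate alternative to the paper's decomposition $fg=S_{j-5}(f)S_{j-5}(g)+R$ with $R$ containing a high-frequency factor; note however that the first-order commutator bound $\Vert[S_{j-2},M_\phi]\Vert_{L^\infty\to L^\infty}\lesssim 2^{-j}\Vert\nabla\phi\Vert_{L^\infty}$ saturates at a gain of $2^{-j}$, so your route only yields $2^{-j\min(\alpha_1\wedge\alpha_2,\,1)}$ and does not cover exponents above $1$, which the statement allows; the paper's tail decomposition avoids this.

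The genuine gap is in your final step. Writing $\sigma=(\alpha_1\wedge\alpha_2)+\beta$, your proposed control $2^{k\sigma}\sum_{j\gtrsim k}\Vert R_j\Delta_j h\Vert_{L^\infty}\lesssim 2^{k\sigma}\sum_{j\gtrsim k}2^{-j\sigma}$ is a divergent geometric series precisely when $\sigma\leq 0$ --- and $\sigma<0$ is the regime in which the lemma is actually applied (for instance $\alpha_1\wedge\alpha_2=\tfrac12-\epsilon$ and $\beta=-1-\epsilon$ in the proof of Theorem \ref{thm_A_2_mfd}). So under your scheme the contributions from nearby scales really do accumulate, and the "standard remedy" you invoke does not close the argument. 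The correct observation, which is what the paper's proof exploits through Meyer's lemma on series of spectrally annulus-supported blocks, is that $R_j\,\Delta_j h$ is itself Fourier-supported in an annulus $\vert\xi\vert\sim 2^j$: $R_j$ is a difference of two terms, each supported in a ball whose radius is a small fraction of $2^j$, and multiplication by $\Delta_j h$ pushes the support of the product back into a corona. Hence $\widetilde\Delta_k(R_j\Delta_j h)$ vanishes unless $\vert k-j\vert\leq C$, the sum over $j$ is finite for each $k$, and no resummation is needed; this is exactly what makes the estimate survive for negative target regularity. The same remark applies to your remainder $\sum_j S_{j-2}(f)E_j$, whose membership in $C^{\alpha_2+\beta}$ with $\alpha_2+\beta<0$ also requires annulus rather than mere ball support.
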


\smallskip

\begin{proof}
We first deal with the term $f\prec\left( g\prec h \right) $. By definition, we write:
\begin{eqnarray*}
f\prec\left( g\prec h \right)=\sum_{i=2}^\infty S_{i-2}(f) \Delta_i\left( \sum_{j=2}^\infty S_{j-2}(g)\Delta_j(h)\right).
\end{eqnarray*}
In the sequel, we shall repeatedly use the following result that can be found in \cite[Lemma 3 p.~280]{Meyer2}.

\smallskip

\begin{lemm}[Decomposition H\"older]
Let $p$ be a real number, $p\notin \mathbb{N}$ and $0<a<b$ be given.
If we are given a sequence $a_j$ of smooth functions such that 
$\Vert a_j \Vert_{L^\infty}=\mathcal{O}(2^{-jp})$
and each $a_j$ is Fourier supported in coronas $\big\{ a2^j\leq  \vert \xi\vert \leq  b2^j\big\}$, then the series
$ \sum_{j=0}^\infty a_j$
converges in the H\"older space $C^p=B^p_{\infty,\infty}$.  
\end{lemm}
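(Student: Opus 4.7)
\medskip

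\noindent\textit{Plan of proof.} The plan is to follow the standard Littlewood--Paley strategy in two steps: first establish that the series defining $u=\sum_{j\geq 0} a_j$ converges in $\mathcal{S}'(\bbR^d)$, then bound the Besov norm of the limit using the frequency localisation of each $a_j$.

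For the distributional convergence, I would exploit the fact that $\widehat{a_j}$ is supported away from the origin. For any non-negative integer $N$, the distribution $(-\Delta)^{-N} a_j$ is well defined, still Fourier supported in $\{a2^j\leq |\xi|\leq b2^j\}$, and a Bernstein-type inequality gives
\[
\|(-\Delta)^{-N} a_j\|_{L^\infty}\lesssim 2^{-2jN}\|a_j\|_{L^\infty}\lesssim 2^{-j(p+2N)}.
\]
For any Schwartz $\varphi$, duality then yields
\[
|\langle a_j,\varphi\rangle| = |\langle (-\Delta)^{-N} a_j, (-\Delta)^N\varphi\rangle|\lesssim 2^{-j(p+2N)}\|(-\Delta)^N\varphi\|_{L^1}.
\]
Choosing $N$ so large that $p+2N>0$, the series $\sum_j\langle a_j,\varphi\rangle$ is absolutely convergent for every $\varphi\in\mathcal{S}(\bbR^d)$, so $\sum_j a_j$ converges in $\mathcal{S}'(\bbR^d)$ to some tempered distribution $u$.

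For the Besov norm bound, I would use a Fourier support matching argument: since $\psi$ generating the dyadic Littlewood--Paley partition is supported in a fixed corona, the Fourier supports of $\psi(2^{-k}\cdot)$ and of $\widehat{a_j}$ are disjoint as soon as $|k-j|>C$, for some constant $C=C(\psi,a,b)$ depending only on the corona widths. Consequently $\Delta_k a_j=0$ for $|k-j|>C$, and passing to the distributional limit
\[
\Delta_k u = \sum_{j:\,|k-j|\leq C}\Delta_k a_j
\]
is a finite sum with at most $2C+1$ nonzero terms. Combining this with the uniform bound $\|\Delta_k\|_{L^\infty\to L^\infty}\lesssim 1$ (Young's inequality, applied to the convolution kernel of $\Delta_k$, whose $L^1$ mass is scale invariant), one obtains
\[
2^{kp}\|\Delta_k u\|_{L^\infty}\lesssim \sum_{j:\,|k-j|\leq C} 2^{kp}\|a_j\|_{L^\infty}\lesssim \sup_j 2^{jp}\|a_j\|_{L^\infty}<\infty,
\]
which is exactly the statement $u\in B^p_{\infty,\infty}=C^p$.

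The main obstacle lies in the distributional convergence when $p\leq 0$, since the pointwise bounds on $a_j$ then do not sum directly; it is precisely the Fourier support assumption $\mathrm{supp}\,\widehat{a_j}\subset\{|\xi|\sim 2^j\}$ that resolves this, via the $(-\Delta)^{-N}$ trick above. Once convergence is secured, the Besov bound is an elementary, purely algebraic consequence of the spectral localisation. The hypothesis $p\notin\mathbb{N}$ is not used in the $B^p_{\infty,\infty}$ estimate itself; it enters only because the authors identify $B^p_{\infty,\infty}$ with the classical Hölder--Zygmund space $C^p$, an identification that requires $p$ to be non-integer.
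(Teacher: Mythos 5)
Your proof is correct. Note that the paper does not actually prove this lemma — it is quoted from Meyer \cite[Lemma 3 p.~280]{Meyer2} — and your argument is the standard one: reverse Bernstein plus duality to get convergence in $\mathcal{S}'(\mathbb{R}^d)$, then the finite overlap of the dyadic annuli (so that $\Delta_k a_j=0$ for $|k-j|>C$) to bound $2^{kp}\Vert\Delta_k u\Vert_{L^\infty}$ uniformly. One small caveat on the wording: with only the $\mathcal{O}(2^{-jp})$ hypothesis the partial sums need not converge in the $B^p_{\infty,\infty}$ norm itself, since the tail $\sup_{j\geq J}2^{jp}\Vert a_j\Vert_{L^\infty}$ need not vanish; what your argument establishes — and what is actually used in the paper — is convergence in $\mathcal{S}'$ together with membership of the limit in $B^p_{\infty,\infty}$ (equivalently, norm convergence in every $B^{p-\epsilon}_{\infty,\infty}$). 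Your observation that the hypothesis $p\notin\mathbb{N}$ plays no role in the $B^p_{\infty,\infty}$ estimate and only matters for the identification with classical H\"older--Zygmund spaces is also accurate.
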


\smallskip

The first crucial observation, since $\Delta_i$ localizes in Fourier space on the corona $\{ 2^{i-1}\leq  \vert \xi\vert\leq  2^{i+1} \}$ and that each $S_{j-2}(g)\Delta_j(h)$ is supported in the corona $2^{j-2}\leq  \vert \xi\vert\leq   2^{j+2}$, necessarily the double sum over both $i,j$ localizes on the diagonal $ \vert i-j \vert\leq  3$.
So we rewrite the previous term as a double sum
\begin{eqnarray*} \label{eq:diaglocalization}
f\prec\left( g\prec h \right)=\sum_{\vert i-j\vert\leq  3, i,j\geqslant 2} S_{i-2}(f) \Delta_i\big( S_{j-2}(g)\Delta_j(h)\big).
\end{eqnarray*}
The second observation is that if we fix $j$, then the sum of the five terms:
\begin{eqnarray} \label{eq:asumgivesidentity}
\sum_{i=j-3}^{j+3}\Delta_{i}\big( S_{j-2}(g)\Delta_j(h)\big) = S_{j-2}(g)\Delta_j(h)
\end{eqnarray}
this is because 
$ \psi(2^{-j+3}\xi)+\dots + \psi(2^{-j-3}\xi)=1$ on the corona 
$2^{j-2}\leq  \vert \xi\vert\leq   2^{j+2}$ by construction of the Littlewood-Paley-Stein partition of unity. 
Therefore at fixed $j$, we can add and subtract as follows
\begin{eqnarray*}
 &&\sum_{i=j-3}^{j+3} S_{i-2}(f) \Delta_i\big( S_{j-2}(g)\Delta_j(h)\big)   \\
 &=&\sum_{i=j-3}^{j+3} S_{j-5}(f) \Delta_i\big( S_{j-2}(g)\Delta_j(h)\big) - \sum_{i=j-3}^{j+3} (S_{i-2}-S_{j-5})(f) \Delta_i\big( S_{j-2}(g)\Delta_j(h)\big)   \\
 &=&S_{j-5}(f)  S_{j-2}(g)\Delta_j(h)
 -\sum_{i=j-3}^{j+3} (S_{[j-5,i-2]})(f) \Delta_i\big( S_{j-2}(g)\Delta_j(h)\big)
\end{eqnarray*}
then in the second line, we used the second miracle equation~(\ref{eq:asumgivesidentity}). We define $(S_{[j-5,i-2]})$ as the difference $(S_{i-2}-S_{j-5})$ and we observe that $(S_{[j-5,i-2]})(f)$ is Fourier supported on some corona contained in the shell $ \vert \xi\vert\leq  2^{i-2}$. Since $\Delta_i\left( S_{j-2}(g)\Delta_j(h)\right)$ is supported in the shell $2^{i-1}\leq  \vert \xi\vert\leq  2^{i+1}$ because of the localizing property of $\Delta_i$, the discrepancy $i-2,i$ makes the support of the product $(S_{[j-5,i-2]})(f) \Delta_i\left( S_{j-2}(g)\Delta_j(h)\right)$ a corona around $\vert\xi\vert\simeq 2^i$. From the H\"older regularities assumptions on the functions $(f,g,h)$, we get the bound
\begin{eqnarray*}
\Big\Vert \sum_{i=j-3}^{j+3}  (S_{[j-5,i-2]})(f) \Delta_i\left( S_{j-2}(g)\Delta_j(h)\right) \Big\Vert_{L^\infty} \lesssim \sum_{i=j-3}^{j+3} \big\Vert (S_{[j-5,i-2]})(f)\big\Vert_{L^\infty}\Vert \Delta_j(h)\Vert_{L^\infty}\\ \lesssim 2^{-(j-5)\alpha_1}2^{-j\beta}\lesssim 2^{-j(\alpha_1+\beta)}
\end{eqnarray*}
where we use the fact that $S_{j-2}(g)$ is bounded uniformly in the index $j$ since $g\in C^{\alpha_2}$ for $\alpha_2>0$. 
So the series
\begin{eqnarray*}
\sum_j \left( \sum_{i=j-3}^{j+3}  (S_{[j-5,i-2]})(f) \Delta_i\left( S_{j-2}(g)\Delta_j(h)\right)\right)
\end{eqnarray*}
is a series of functions 
supported in coronas $ a2^j\leq  \vert \xi\vert \leq  b2^j$ for $0<a<b$ and thus converges absolutely in $C^{\alpha_1+\beta}$. This tells us that in the equality
\begin{equation*} \begin{split}
\sum_{i=j-3}^{j+3} S_{i-2}(f) \Delta_i&\big( S_{j-2}(g)\Delta_j(h)\big)   \\
&= S_{j-5}(f)  S_{j-2}(g)\Delta_j(h) - \sum_{i=j-3}^{j+3} (S_{[j-5,i-2]})(f) \Delta_i\big( S_{j-2}(g)\Delta_j(h)\big)
\end{split} \end{equation*}
the sum $\sum_{i=j-3}^{j+3} (S_{[j-5,i-2]})(f) \Delta_i\big( S_{j-2}(g)\Delta_j(h)\big)$ is a good term absorbed in a good remainder and we should only keep $S_{j-5}(f)  S_{j-2}(g)\Delta_j(h)$.

So for the moment, we just proved that
\begin{eqnarray*} \label{eq:simplificationhuge}
f\prec\left( g\prec h \right)=\sum_{j\geqslant 5} S_{j-5}(f)  S_{j-2}(g)\Delta_j(h)+C^{\alpha_1+\beta}.
\end{eqnarray*}
 Now we would like to compare this quantity with
\begin{eqnarray*}
\left(fg\right)\prec h=\sum_{j\geqslant 2} S_{j-2}(fg)\Delta_j(h)
\end{eqnarray*}
so the difference $f\prec\left( g\prec h \right)-\left(fg\right)\prec h$ reads
\begin{eqnarray*}
\sum_{j\geqslant 5} \big(S_{j-5}(f)  S_{j-2}(g)-S_{j-2}(fg)\big) \Delta_j(h)+C^{\alpha_1+\beta}.
\end{eqnarray*}
Everything boils down to studying the difference $S_{j-5}(f)  S_{j-2}(g)-S_{j-2}(fg)$ which we treat as follows. First decompose $fg=S_{j-5}(f)S_{j-5}(g)+R$ where the remainder contains at least either one of the two terms $ \sum_{i\geqslant j-5} \Delta_i(f)$ or  $\sum_{i\geqslant j-5} \Delta_i(g) $ in factor. Observe that 
\begin{eqnarray*}
\sum_{i\geqslant j-5} \Delta_i(f)=\mathcal{O}_{C^{\alpha_1}}(2^{-j\alpha_1})\\
\sum_{i\geqslant j-5} \Delta_i(g)=\mathcal{O}_{C^{\alpha_2}}(2^{-j\alpha_2})
\end{eqnarray*} 
this is almost by construction of these objects and by definition of the H\"older norms. Therefore using the continuity of 
$S_{j-2}:C^\bullet\mapsto C^\bullet$ acting on H\"older spaces where this is bounded uniformly in $j$, we deduce that
$S_{j-2}(R)=\mathcal{O}_{C^{\alpha_1\wedge\alpha_2}}(2^{-j(\alpha_1\wedge\alpha_2)})$
and $\sum_j S_{j-2}(R)\Delta_j(h)$ is a series of functions each term 
supported in coronas $ a2^j\leq  \vert \xi\vert \leq  b2^j$ for $0<a<b$, $ \Vert S_{j-2}(R)\Delta_j(h)\Vert_{L^\infty}=\mathcal{O}(2^{-j(\alpha_1\wedge\alpha_2+\beta)}) $  and thus
converges absolutely in $C^{\alpha_1\wedge\alpha_2+\beta}$.
Using the magic identity 
$$
S_{j-2}\big(S_{j-5}(f)S_{j-5}(g)\big) = S_{j-5}(f)S_{j-5}(g),
$$ 
this means that the difference can be simplified as 
$$
S_{j-5}(f)  S_{j-2}(g)-S_{j-2}(fg) = S_{j-5}(f)\big(  S_{j-2}(g)-S_{j-5}(g)\big) - S_{j-2}(R)
$$ 
and combining with the fact that $\sum_j S_{j-2}(R)\Delta_j(h)\in C^{\alpha_1\wedge\alpha_2+\beta}$, the difference $f\prec\left( g\prec h \right)-\left(fg\right)\prec h$ now reads
$$
\sum_{j\geqslant 5} S_{j-5}(f)S_{[j-5,j-2]}(g)\Delta_j(h)+C^{\alpha_1\wedge\alpha_2+\beta},
$$
we are done using again the fact that it is a series of functions supported in annular domains and that
$\Vert S_{j-5}(f)S_{[j-5,j-2]}(g)\Delta_j(h)\Vert_{L^\infty}=\mathcal{O}(2^{-j(\alpha_2+\beta)})$.
So we get
$$
f\prec\left( g\prec h \right)-\left(fg\right)\prec h \in C^{\alpha_1\wedge\alpha_2+\beta}
$$
as required.
\end{proof}

%--------------------------------------------------------------%
\section{Paralinearization in the bundle case. \hspace{0.1cm}}
\label{SubsectionParaBundle}
%--------------------------------------------------------------%

Let $F\in C^\infty( \mathbb{R}^d\times M_n(\mathbb{R}), M_n(\mathbb{R}))$ be a smooth function of the two variables $(x,m)\in \mathbb{R}^d\times M_n(\mathbb{R})$ where the second variable is matrix and $F$ is matrix valued. 

Assume we are given a map 
$$M: x\in \mathbb{R}^d\mapsto M(y)\in M_n(\mathbb{R})$$ of H\"older regularity $C^\alpha$, we would like to define a paralinearization of the composite function $$ x\in\mathbb{R}^d \longmapsto F(x,M(x))$$ generalizing the work of Bony to the noncommutative matrix case. This allows to extend paralinearization to composition of \textbf{nonlinear} bundle maps as we will later explain. 

Start from the telescopic series, we put $S_jM=M_j$:
\begin{eqnarray*}
F(.,M)=F(.,M_0)+F(.,M_1)-F(.,M_0)+\dots+F(.,M_{j+1})-F(.,M_j)+\dots
\end{eqnarray*}
the series converges uniformly to $F(.,M(.))$.

We have the nice identity
\begin{eqnarray}
F(.,M_{j+1})-F(.,M_j)= \mathbf{M}_j\Delta_jM 
\end{eqnarray}
where $\mathbf{M}_j$ is the linear operator~:
\begin{eqnarray*}
\mathbf{M}_j=\int_0^1 \rmd tD_mF(.,M_j+t\Delta_jM)\in C^\infty(\mathbb{R}^d\times M^n(\mathbb{R}), End(M^n(\mathbb{R}),M^n(\mathbb{R})) ).
\end{eqnarray*}

\begin{lemm}
If $M\in C^\alpha$ for $\alpha\in (0,+\infty)$ and $ \sup_{x\in \mathbb{R}^p,M\in M_n(\mathbb{R})\setminus\{0\} } \frac{\Vert D_mF(x,M)\Vert_{M_n(\mathbb{R})}}{\Vert M\Vert_{M_n(\mathbb{R})}} <+\infty $, then
there exists constants $C_\alpha$ s.t.
$$\Vert \partial^\alpha \mathbf{M}_j\Vert_\infty \leqslant C_\alpha 2^{j\vert\alpha \vert}.$$ 
\end{lemm}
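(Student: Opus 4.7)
The claim is a Bernstein--Faà di Bruno estimate for a smooth nonlinear function evaluated on a low-frequency object. To avoid notational clash with the H\"older index $\alpha$, I will write the multi-index in the statement as $\beta$, so the goal is
$$\|\partial^\beta \mathbf{M}_j\|_{L^\infty} \leq C_\beta \, 2^{j|\beta|}.$$

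\textbf{Step 1: Frequency localization of the argument.} The first observation is that the matrix-valued map
$$N_{j,t}(x) \defeq M_j(x) + t\,\Delta_j M(x) = (1-t)\,S_j M(x) + t\,S_{j+1} M(x)$$
has Fourier support in $\{|\xi|\lesssim 2^{j+1}\}$ uniformly in $t\in[0,1]$. Since $M\in C^\alpha\subset L^\infty$, the $L^\infty$-boundedness of the truncation operators $S_j,S_{j+1}$ gives a uniform bound $\|N_{j,t}\|_{L^\infty}\leq C\|M\|_{L^\infty}$. In particular, $N_{j,t}(x)$ takes values in a fixed compact subset $K\Subset M_n(\mathbb{R})$ independently of $(j,t,x)$.

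\textbf{Step 2: Bernstein estimates on $N_{j,t}$.} The Fourier support of $N_{j,t}$ lies in a ball of radius $\sim 2^j$, so by Bernstein's inequality, for every multi-index $\gamma$,
$$\|\partial^\gamma N_{j,t}\|_{L^\infty}\leq C_\gamma\, 2^{j|\gamma|}\|M\|_{L^\infty},$$
with $C_\gamma$ independent of $j$ and $t$.

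\textbf{Step 3: Chain rule.} Apply Fa\`a di Bruno's formula to the composition $x\mapsto (D_mF)(x,N_{j,t}(x))$ component-wise. Each $\partial^\beta$ derivative produces a finite linear combination of terms of the form
$$\bigl(\partial_x^{\beta_0}\partial_m^{k} F\bigr)\!\bigl(x, N_{j,t}(x)\bigr)\cdot\prod_{i=1}^{k}\partial^{\gamma_i} N_{j,t}(x),$$
where $\beta_0+\gamma_1+\cdots+\gamma_k = \beta$ (each $\gamma_i\neq 0$) and $k\leq |\beta|$.

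\textbf{Step 4: Term-by-term bounds.} Because $F\in C^\infty(\mathbb{R}^d\times M_n(\mathbb{R}))$ and $N_{j,t}(x)\in K$ for every $(j,t,x)$, the factor $(\partial_x^{\beta_0}\partial_m^{k} D_mF)(x,N_{j,t}(x))$ is bounded by some constant $C_{\beta_0,k}(K,F)$ independent of $j,t,x$; here is where the hypothesis on $F$ enters, and for any reasonable interpretation of the assumed control of $D_mF$, the higher derivatives on the compact $K$ are bounded by smoothness of $F$. By Step 2, each product $\prod_{i=1}^k\partial^{\gamma_i}N_{j,t}$ is bounded by $C\,2^{j(|\gamma_1|+\cdots+|\gamma_k|)}=C\,2^{j(|\beta|-|\beta_0|)}\leq C\,2^{j|\beta|}$. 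Summing the finitely many terms in the Fa\`a di Bruno expansion, we obtain $\|\partial^\beta (D_mF(\cdot,N_{j,t}))\|_{L^\infty}\leq C_\beta\, 2^{j|\beta|}$ uniformly in $t\in[0,1]$.

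\textbf{Step 5: Conclude.} Integrating over $t\in [0,1]$ preserves the bound, yielding $\|\partial^\beta \mathbf{M}_j\|_{L^\infty}\leq C_\beta\,2^{j|\beta|}$.

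The main pedagogical obstacle is the bookkeeping in Step 3, and the main analytic input is that $M\in C^\alpha\subset L^\infty$ allows us to confine the arguments $N_{j,t}$ to a fixed compact set on which the smooth function $F$ and all its derivatives are bounded; once this is noted, the estimate is a standard combination of Bernstein's inequality with the chain rule.
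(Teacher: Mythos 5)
Your proof is correct and follows essentially the same route as the paper's: a uniform $L^\infty$ bound on the truncations $S_jM$ coming from $\alpha>0$, Bernstein's inequality for the frequency-localized argument, and Fa\`a di Bruno for the composition with the smooth map $D_mF$. (Your Step 4 tacitly assumes the $x$-derivatives of $F$ are bounded uniformly in $x\in\mathbb{R}^d$, not just locally, but the paper's own Fa\`a di Bruno computation makes exactly the same implicit assumption, so this is not a discrepancy.)
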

\begin{proof}
By definition of the Besov-H\"older norm, $\Vert\Delta_jM\Vert_\infty \leqslant \Vert M\Vert_{C^\alpha}2^{-j\alpha} $ therefore
$\Vert S_jM\Vert_\infty\leqslant \sum_{p\leqslant j} \Vert \Delta_jM\Vert_\infty\leqslant \sum_{p\leqslant j} \Vert M\Vert_{C^\alpha}2^{-p\alpha} \leqslant \Vert M\Vert_{C^\alpha}$.

Therefore each projected function $S_jM$ is bounded in $L^\infty$ norm by $\Vert M\Vert_{C^\alpha}$ and Fourier supported in some ball of radius $\lesssim 2^{j+1}$.
Therefore by Bernstein's Lemma, we deduce the inequality
$\Vert\partial^\alpha S_jM\Vert_\infty \leqslant \Vert S_jM\Vert_\infty  2^{j\vert\alpha\vert}\leqslant \Vert M\Vert_{C^\alpha} 2^{j\vert\alpha\vert}$.
We need to prove that the above bounds for the sequence of smooth functions $(M_j)_j$ remain stable under composition with some given smooth function $G\in C^\infty(\mathbb{R}^d\times M_n(\mathbb{R}))$.

By the Faa-di-Bruno formula:
\begin{eqnarray*}
&&\Vert\partial^\beta G(.,M_j(.))\Vert_\infty \\ &\leqslant &\sum_{ \alpha_1+\alpha_2=\beta, \beta_1+\dots+\beta_k=\alpha_2 } \left(\begin{array}{c}
\beta\\
\alpha_1,\alpha_2
\end{array} \right) \left( \begin{array}{c}
\alpha_2\\
\beta_1\dots\beta_k
\end{array} \right)  \Vert \partial_x^{\alpha_1}\partial_m^{\alpha_2}G \left(\partial^{\beta_1}M_j, \dots, \partial^{\beta_k}M_j\right)\Vert_\infty\\
&\lesssim & \sum_{ \alpha_1+\alpha_2=\beta, \beta_1+\dots+\beta_k=\alpha_2 } \Vert \partial_x^{\alpha_1}\partial_m^{\alpha_2}G \Vert_\infty
\underset{2^{j(\vert\beta_1\vert+\dots+\vert\beta_k\vert)}}{ \underbrace{ \Vert\partial^{\beta_1}M_j\Vert_\infty \dots\Vert \partial^{\beta_k}M_j\Vert_\infty}}\\
&\lesssim &\sum_{ \alpha_1+\alpha_2=\beta, \beta_1+\dots+\beta_k=\alpha_2 } \Vert \partial_x^{\alpha_1}\partial_m^{\alpha_2}G \Vert_\infty 2^{j\vert\alpha_2\vert}\lesssim 2^{j\vert \beta\vert}
\end{eqnarray*}
where we sum over all $k$ and all multiindices $(\beta_1,\dots,\beta_k)$, , $(\alpha_1,\alpha_2)$  satisfying certain integer partitions
with constraints: $\alpha_1+\alpha_2=\beta, \beta_1+\dots+\beta_k=\alpha_2 $.

 Finally, by the application of the Faa--di--Bruno formula for the composition $D_mF(.,M_j+t\Delta_jM)$ as in~\cite[Lemma 2 p.~279]{Meyer2}, we deduce the desired bound. We use the fact that H\"older functions of positive regularity forms an algebra for the pointwise product.
\end{proof}

Consider the linear operator $\mathcal{L}$ defined as
\begin{eqnarray*}
\mathcal{L}U=\sum_j\mathbf{M}_j\Delta_jU
\end{eqnarray*}
the operator $\mathcal{L}$ belongs to $\Psi^0_{1,1}(\mathbb{R}^d\times M_n(\mathbb{R}), End(M_n(\mathbb{R}),M_n(\mathbb{R})))$.
Indeed the symbol of this operator reads
$$ \sum_j\mathbf{M}_j(x,m)\psi(2^{-j}\xi)\in C^\infty( \mathbb{R}^d\times M_n(\mathbb{R}) , End(M_n(\mathbb{R}),M_n(\mathbb{R})) ) $$
and the estimate of the above Lemma implies that the symbol belongs to the space $S^0_{1,1}$.

Following H\"ormander~\cite[p.~432]{HormanderNashMoser}, we define a pararegularized operator as~:
\begin{eqnarray}
\left(D_mF(.,M)\prec M\right)_{ij}:= \sum_{(k,\ell)\in \{1,\dots,n\}^2} \frac{\partial F_{ij}}{\partial m_{k\ell}}(.,M(.)) \prec M_{k\ell}
\end{eqnarray}
where $(i,j)\in \{1,\dots,n\}^2$ and the paraproduct is taken w.r.t. the variable $x\in \mathbb{R}^d$
$$ \sum_{\ell=3}^\infty \sum_{(k,\ell)\in \{1,\dots,n\}^2} S_{\ell-2}\left(\frac{\partial F_{ij}}{\partial m_{k\ell}}(.,M(.))\right) \Delta_\ell M_{k\ell} $$
where the LP projection is taken w.r.t. the variable $x\in \mathbb{R}^d$.

\begin{thm}[Bony-Meyer paralinearization: vectorial case]
For $s\in (0,+\infty)$,
if $F\in $ and $M\in C^s$ then
the difference term
\begin{eqnarray}
 F(.,M)- D_mF(.,M)\prec M\in C^{2s}
\end{eqnarray}
has higher regularity.
\end{thm}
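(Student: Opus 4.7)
The plan is to adapt Bony's classical telescoping proof to the matrix-valued setting, using the symbol-type estimates on $\mathbf{M}_j$ already supplied in the excerpt. I would begin from the telescopic identity already displayed above,
\[
F(\cdot, M) - F(\cdot, M_0) = \sum_{j \geq 0} \mathbf{M}_j\, \Delta_j M, \qquad \mathbf{M}_j = \int_0^1 D_mF(\cdot, M_j + t\Delta_j M)\,dt,
\]
and compare it to the matrix-valued paraproduct $D_mF(\cdot, M)\prec M = \sum_{j\geq 2} S_{j-2}\bigl(D_mF(\cdot, M)\bigr)\,\Delta_j M$. Since $F(\cdot, M_0)$ is smooth (the low-frequency cut-off $M_0$ being smooth), the task reduces to showing $R := \sum_j r_j \in C^{2s}$ where $r_j = A_j\,\Delta_j M + B_j\,\Delta_j M$ with $A_j := \mathbf{M}_j - D_mF(\cdot, M)$ and $B_j := D_mF(\cdot, M) - S_{j-2}\bigl(D_mF(\cdot, M)\bigr)$.

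The first key step is to prove the two bounds $\|A_j\|_\infty + \|B_j\|_\infty \lesssim 2^{-js}$. For $A_j$, the mean-value representation of $\mathbf{M}_j$ and the smoothness of $F$ (hence the local Lipschitz character of $D_mF$ in the $m$-variable on the bounded range of $M_j + t\Delta_j M$) yield
\[
\|A_j\|_\infty \lesssim \bigl\|M_j + t\Delta_j M - M\bigr\|_\infty \lesssim 2^{-js},
\]
using $M \in C^s$ and the dyadic tail bound $\|M - S_j M\|_\infty \lesssim 2^{-js}$. For $B_j$ one needs the composition property $D_mF(\cdot, M) \in C^s$; this is immediate from the Hölder definition when $s \in (0,1)$, and for general $s$ it follows by combining the Faa-di-Bruno-type symbol bounds on $D_mF(\cdot, M_j)$ (already used in the excerpt's Lemma) with $\|D_mF(\cdot, M) - D_mF(\cdot, M_j)\|_\infty \lesssim 2^{-js}$, feeding them into the dyadic Littlewood-Paley criterion for Hölder spaces. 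Multiplying by $\|\Delta_j M\|_\infty \lesssim 2^{-js}$ then gives the doubled gain $\|r_j\|_\infty \lesssim 2^{-2js}$.

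To upgrade this pointwise decay to $C^{2s}$ regularity of $\sum_j r_j$, I would avoid relying on annular Fourier support for $r_j$, which fails because $\mathbf{M}_j$ is not a Littlewood-Paley block. Instead I would exploit the symbol-type derivative bounds $\|\partial^\alpha \mathbf{M}_j\|_\infty \lesssim 2^{j|\alpha|}$ supplied by the excerpt's Lemma, combined via Leibniz with Bernstein applied to $\Delta_j M$ and to $S_{j-2}(D_mF(\cdot, M))$, to obtain $\|\partial^\alpha r_j\|_\infty \lesssim 2^{j|\alpha|} 2^{-2js}$ for all multi-indices $\alpha$. Writing $\Delta_k R = \sum_j \Delta_k r_j$ and splitting into the two regimes $j \geq k$ and $j < k$: in the first regime the bare $L^\infty$ bound gives a geometric tail $\lesssim 2^{-2ks}$; in the second, $N$ integrations by parts against the Schwartz kernel of $\Delta_k$ extract a decay factor $2^{-(k-j)N}$ from the derivative bounds, yielding another geometric series of size $\lesssim 2^{-2ks}$. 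This shows $\|\Delta_k R\|_\infty \lesssim 2^{-2ks}$ and hence $R \in C^{2s}$.

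The main obstacle is the composition lemma $D_mF(\cdot, M) \in C^s$, which is logically a small paralinearization of its own. Handling it for $s > 1$ without circular reasoning is the only non-mechanical point: the route through Bernstein bounds on $D_mF(\cdot, M_j)$ plus telescoping sidesteps the issue by reducing back to the symbol-type estimates already established in the excerpt. The matrix non-commutativity plays no further role beyond keeping the order of the product $\mathbf{M}_j\,\Delta_j M$ (rather than $\Delta_j M\,\mathbf{M}_j$) consistent throughout, since every estimate above is an $L^\infty$ bound that is applied entrywise and then recombined, all constants depending only on finitely many derivatives of $F$ on the compact set where $M + t\Delta_j M$ takes its values.
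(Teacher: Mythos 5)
Your proposal is correct and essentially reproduces the paper's proof: the quantity you call $A_j+B_j$ is exactly the paper's $Q_j=\int_0^1 D_mF(\cdot,M_j+t\Delta_jM)\,dt-S_{j-2}\big(D_mF(\cdot,M)\big)$, and your two bounds (the $L^\infty$ bound $\lesssim 2^{-js}$ via the Lipschitz character of $D_mF$ plus the composition fact $D_mF(\cdot,M)\in C^s$, and the derivative bounds $\lesssim 2^{j(|\beta|-s)}$ via Bernstein and Fa\`a di Bruno) are precisely the symbol estimates $\Vert\partial^\beta Q_j\Vert_\infty\lesssim 2^{-js+j|\beta|}$ that the paper establishes. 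The only difference is presentational: the paper packages $\sum_j Q_j(x)\psi(2^{-j}\xi)$ as a pararegularized symbol of class $S^{-s}_{1,1}$ and invokes the $C^s\to C^{2s}$ mapping property of such operators, whereas you unwind that mapping property by hand through the $j\geq k$ versus $j<k$ dyadic splitting with integration by parts.
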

\begin{proof}
It suffices to show that the difference pseudodifferential operators
$\mathcal{L}-D_mF(.,M)\prec  $ belong to the space of pseudodifferential operators $\Psi^{-s}_{1,1}$. We compute its symbol
\begin{eqnarray*}
\rho(x,M;\xi)=\sum_{j=2}^\infty Q_j(x,M) \psi(2^{-j}\xi)
\end{eqnarray*} 
where 
$$Q_j(x,M)=\int_0^1 \rmd tD_mF(.,M_j+t\Delta_jM)-S_{j-2}\left(\frac{\partial F}{\partial m}(.,M(.))\right) .$$
It suffices to show the decay estimate
\begin{eqnarray*}
\Vert \partial^\beta Q_j \Vert_\infty\leqslant C_\beta 2^{-js+j\vert \beta\vert}.
\end{eqnarray*}

We start with $\beta=0$.
The first idea is to note that $M\in C^s$ hence $\Vert M-S_jM\Vert_{\infty}\leqslant \Vert M\Vert_{C^s}2^{-js}$ hence
$$ \Vert D_mF(.,M)-D_mF(.,S_jM+t\Delta_jM)\Vert_{\infty}\lesssim 2^{-js} $$ uniformly in $t\in [0,1]$,
since the differential $D_mF(.,.)$ depends smoothly hence in a Lipschitz way in its second argument $M$. 
But $D_mF(.,M(.)) $ also belongs to $C^s$ as a composition of a smooth function with a H\"older function (obvious from the definition of H\"older functions in terms of controlled increments).
Therefore we also have
an estimate of the form
$$ \Vert D_mF(.,M(.)) -S_{j-2}D_mF(.,M(.))  \Vert_\infty\lesssim 2^{-js} $$ and therefore by the triangle inequality
$$ \Vert D_mF(.,S_jM+t\Delta_jM)- S_{j-2}D_mF(.,M(.)) \Vert_\infty \lesssim 2^{-js}  $$
uniformly in $t\in [0,1]$.

Then we deal with the case where $\vert \beta\vert>s$.
In this case, we no longer control the difference term but we rather need to prove directly that
\begin{eqnarray}\label{eq:derivativeprojected}
\Vert \partial^\beta S_{j-2} D_mF(.,M(.)) \Vert_\infty\lesssim 2^{-js+j\vert \beta\vert} 
\end{eqnarray}
and also
\begin{eqnarray*}\label{eq:derivativefunctionproj}
\Vert \partial^\beta D_mF(.,S_jM+t\Delta_jM) \Vert_\infty\lesssim 2^{-js+j\vert \beta\vert}.
\end{eqnarray*}

The first inequality (\ref{eq:derivativeprojected}) is easy to prove since for all $g\in C^s$, if $\vert\beta\vert >s $ then
$$ \Vert\partial^\beta S_jg\Vert_\infty \leqslant \sum_{p\leqslant j} 
\Vert\partial^\beta \Delta_pg\Vert_\infty\leqslant  \sum_{p\leqslant j}  2^{(p+1)\vert\beta\vert} \Vert \Delta_pg\Vert_\infty\lesssim  \sum_{p\leqslant j}  2^{(p+1)\vert\beta\vert} 2^{-ps}\lesssim 2^{(p+1)(\vert\beta\vert-s)} $$
where we just used Bernstein's Lemma to estimate $L^\infty$ norms of derivatives of .functions with Fourier support in corona's of radius $\simeq 2^p$. The second inequality~(\ref{eq:derivativefunctionproj}) set
$M_j=S_jM+t\Delta_jM$, we keep the following properties
\begin{eqnarray}\label{ineq:controlledincrement}
 \Vert M_{j+1}-M_j\Vert_\infty\lesssim 2^{-js} 
\end{eqnarray} 
and 
\begin{eqnarray}\label{ineq:controlledderivatives}
\Vert\partial^\beta M_j \Vert_\infty\lesssim 2^{j(\vert \beta\vert-s)}.
\end{eqnarray}
We only need to following general fact, for any sequence $(M_j)_j$ of functions in $C^\infty(\mathbb{R}^d,M_n(\mathbb{R}))$ satisfying inequalities~\ref{ineq:controlledincrement} and \ref{ineq:controlledderivatives}, then for any smooth $G(.,.)\in C^\infty(\mathbb{R}^d\times M_n(\mathbb{R}),M_n(\mathbb{R}))$, the sequence of composite functions $ G(.,M_j(.))_j$ satisfies the same type of estimates, stability of the above estimates by composition. The estimate of the form of~\ref{ineq:controlledincrement} is obviously satisfied for $G(.,M_j(.))_j$ and we focus on \ref{ineq:controlledderivatives}. The main difficulty is that when we apply Faa-di--Bruno formula for $\partial^\beta G(.,M_j(.))$, $\vert \beta\vert > s$ this involves derivatives $\partial^\alpha M_j$ of order $\vert \alpha\vert\leqslant s$ on which our assumptions give no control. 

Inequality \ref{ineq:controlledincrement} implies that
$$\Vert \partial^\beta (M_{j+1}-M_j)\Vert_\infty\lesssim 2^{-js+j\vert\beta\vert}$$
for all multiindex $\beta$ since our functions are supported in balls of radius $\simeq 2^j$ and by Bernstein's Lemma. So by triangular inequality we deduce that for multindices $\vert \beta\vert < s$, we still have:
$$ \Vert \partial^\beta M_j \Vert_\infty \leqslant \sum_{p\leqslant j-1} 
\Vert \partial^\beta (M_{p+1}-M_p)\Vert_\infty\lesssim \sum_{p\leqslant j-1} 2^{-ps+p\vert\beta\vert} \lesssim  1$$
and when $\vert \beta\vert=s$ we get
a slightly worse bound of the form:
$$ \Vert \partial^\beta M_j \Vert_\infty \leqslant \sum_{p\leqslant j-1} 
\Vert \partial^\beta (M_{p+1}-M_p)\Vert_\infty\lesssim \sum_{p\leqslant j-1} 1 \lesssim  j.$$

With these slightly worse bound we can still control $\Vert \partial^\beta G(.,M_j(.)) \Vert_\infty$ by using the Faa-di--Bruno formula.
\end{proof}

A consequence of the above result is the following paralinearization Theorem for bundle maps 
\begin{thm}
Let $(E_1,E_2,E_3)\mapsto M$ be a triple of smooth vector bundles, $f\in C^s(Hom(E_1,E_2))$ a bundle map of H\"older regularity $s\in (0,+\infty)$ and $F$ a smooth local map
sending sections of $E_2$ to $E_3$ in the sense that $F(x,s(x))\in E_{3,x}$ depends only on $x,s(x)\in M\times E_{2,x}$.

Then there exists a generalized paraproduct $\prec$ such that
the difference
\begin{eqnarray*}
F(.,f(.))-D_sF(.,f(.))\prec f\in C^{2s}.
\end{eqnarray*}
has H\"older regularity $2s$.
\end{thm}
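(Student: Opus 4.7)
The plan is to reduce the statement to the flat vectorial paralinearization theorem proved just above by working in simultaneous local trivializations, and to conclude globally through the local-to-global principle of Section~\ref{SectionLocalizationLemma}.

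First, choose a finite open cover $(U_i,\kappa_i)_{i\in I}$ of $M$ fine enough that all three bundles $E_1,E_2,E_3$ are simultaneously trivializable over each $U_i$. Pick subordinated cut-off functions $\chi_{i1}\ll\chi_{i2}\ll\widetilde{\chi}_i\in C^\infty_c(U_i)$ with $\sum_i\chi_{i1}=1$ and $\psi_i\in C^\infty_c(\kappa_i(U_i))$ with $\psi_i=1$ on $\kappa_i(\mathrm{supp}\,\chi_{i2})$, as in Section~\ref{sss:decompositionproduct}. Via the chosen trivializations of $E_1$ and $E_2$ over $U_i$, the bundle map $\widetilde{\chi}_i f$ is represented by a compactly supported matrix-valued function $f_i\defeq\kappa_{i*}(\widetilde{\chi}_i f)\in C^s(\mathbb{R}^d,M_n(\mathbb{R}))$, where $n$ depends on the ranks of $E_1,E_2$; locality and smoothness of $F$, together with the trivialization of $E_3$, yield a matrix-valued smooth function $F_i\in C^\infty(\mathbb{R}^d\times M_n(\mathbb{R}),\mathbb{R}^{\mathrm{rk}(E_3)})$ such that $\kappa_{i*}(\chi_{i1} F(\cdot,f(\cdot)))(x)=\kappa_{i*}(\chi_{i1})(x)\,F_i(x,f_i(x))$ pointwise.

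Second, apply the flat vectorial paralinearization theorem of Appendix~\ref{SubsectionParaBundle} to each $F_i$ and $f_i$ to obtain
\begin{eqnarray*}
F_i(\cdot,f_i(\cdot))-D_mF_i(\cdot,f_i(\cdot))\prec f_i\in C^{2s}(\mathbb{R}^d,\mathbb{R}^{\mathrm{rk}(E_3)}),
\end{eqnarray*}
where $\prec$ is the flat paraproduct on $\mathbb{R}^d$. Then define the generalized paraproduct in the bundle case by
\begin{eqnarray*}
D_sF(\cdot,f(\cdot))\prec f\defeq\sum_i\kappa_i^*\Big[\psi_i\big(D_mF_i(\cdot,f_i(\cdot))\prec\kappa_{i*}(\chi_{i1}f)\big)\Big]
\end{eqnarray*}
after identifying the target with $E_3$ via the local trivialization. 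Because $\sum_i\chi_{i1}=1$, the identity $F(\cdot,f(\cdot))=\sum_i\kappa_i^*[\psi_i\kappa_{i*}(\chi_{i1}F(\cdot,f(\cdot)))]=\sum_i\kappa_i^*[\psi_i\kappa_{i*}(\chi_{i1})F_i(\cdot,f_i(\cdot))]$ holds, and the difference between its $i$-th term and the $i$-th term of $D_sF(\cdot,f(\cdot))\prec f$ is exactly $\kappa_i^*[\psi_i\kappa_{i*}(\chi_{i1})\,R_i]$ where $R_i\in C^{2s}(\mathbb{R}^d)$ by the flat theorem.

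The main obstacle is that the flat paralinearization is applied to the \emph{cut-off} section $\widetilde{\chi}_i f$ rather than to $f$ itself, while the generalized paraproduct uses the extra cut-off $\chi_{i1}$ on the right-hand slot; one must verify that swapping these cut-offs only affects the difference by a smooth error. Here the key localization Lemma~\ref{l:keylocalization} does the job: for each $i$, the flat differences
\begin{eqnarray*}
D_mF_i(\cdot,f_i(\cdot))\prec f_i-D_mF_i(\cdot,f_i(\cdot))\prec\kappa_{i*}(\chi_{i1}f)
\end{eqnarray*}
lie in $C^\infty(\mathbb{R}^d,\mathbb{R}^{\mathrm{rk}(E_3)})$ since $\kappa_{i*}(\chi_{i1}f)=\kappa_{i*}(\chi_{i1})f_i$ with $\kappa_{i*}(\chi_{i1})=1$ on a neighbourhood of the support of the right-slot (because $\chi_{i1}\ll\widetilde{\chi}_i$). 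Transition between trivializations over overlaps $U_i\cap U_j$ enters both $F(\cdot,f(\cdot))$ and $D_sF(\cdot,f(\cdot))\prec f$ through the same smooth linear combinations, so it only adds smooth errors. Applying finally the local-to-global principle of Section~\ref{SectionLocalizationLemma} to each term, the global difference $F(\cdot,f(\cdot))-D_sF(\cdot,f(\cdot))\prec f$ belongs to $C^{2s}(M,E_3)$, which is the desired statement.
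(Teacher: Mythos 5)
Your overall architecture is the right one and is exactly what the paper intends (the paper states this theorem as a direct consequence of the flat vectorial paralinearization of Appendix~\ref{SubsectionParaBundle}, without writing out the localization): trivialize all three bundles over a fine cover, use locality of $F$ and $\chi_{i1}\ll\widetilde\chi_i$ to write $\kappa_{i*}(\chi_{i1}F(\cdot,f(\cdot)))=\kappa_{i*}(\chi_{i1})F_i(\cdot,f_i(\cdot))$, apply the flat theorem chart by chart, and reassemble. However, the step you flag as ``the main obstacle'' is handled incorrectly, and as written it is a genuine gap. You claim that
$D_mF_i(\cdot,f_i)\prec f_i-D_mF_i(\cdot,f_i)\prec\kappa_{i*}(\chi_{i1}f)$
is smooth by Lemma~\ref{l:keylocalization}, on the grounds that $\kappa_{i*}(\chi_{i1})=1$ near the support of the right slot. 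This is backwards: $\chi_{i1}\ll\widetilde\chi_i$ means $\widetilde\chi_i=1$ on $\mathrm{supp}\,\chi_{i1}$, so $\chi_{i1}$ is certainly \emph{not} equal to $1$ on $\mathrm{supp}\,f_i=\mathrm{supp}\,\kappa_{i*}(\widetilde\chi_i f)$, and Lemma~\ref{l:keylocalization} does not apply. The difference above equals $D_mF_i(\cdot,f_i)\prec\kappa_{i*}\big((\widetilde\chi_i-\chi_{i1})f\big)$, which is a full paraproduct against a nonzero $C^s$ function and is therefore only $C^s$ — not $C^\infty$, and not even $C^{2s}$. So the term-by-term identification ``the difference is exactly $\kappa_i^*[\psi_i\kappa_{i*}(\chi_{i1})R_i]$'' is also not exact.

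The fix is to compare the correct pair of expressions. The flat theorem gives $R_i\defeq F_i(\cdot,f_i)-D_mF_i(\cdot,f_i)\prec f_i\in C^{2s}$, hence $\kappa_{i*}(\chi_{i1})F_i(\cdot,f_i)-\kappa_{i*}(\chi_{i1})\big(D_mF_i(\cdot,f_i)\prec f_i\big)\in C^{2s}$ after multiplication by the smooth compactly supported prefactor. What remains is to move the cut-off from \emph{outside} the paraproduct into the right slot, i.e. to control
$\kappa_{i*}(\chi_{i1})\big(g\prec f_i\big)-g\prec\big(\kappa_{i*}(\chi_{i1})f_i\big)$
with $g=D_mF_i(\cdot,f_i)\in C^s$. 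This is the commutator $[\mathrm{M}_\eta,P_g]f_i$ with $\eta=\kappa_{i*}(\chi_{i1})$: since $P_g\in Op(B^0_s)$ and $\mathrm{M}_\eta\in\Psi^0_{1,0}$, Proposition~\ref{lemm:commutator} together with Proposition~\ref{l:simplifiedprooftwistedclass} (this is the mechanism behind the first estimate of Lemma~\ref{lem:smt_comm}, which extends to a right slot of positive regularity) shows that this commutator maps $C^s$ into $C^{2s}$. That gives exactly the $C^{2s}$ error you need — note you only ever need $C^{2s}$, never $C^\infty$. With this substitution your chart-by-chart comparison closes, and the globalization via the local-to-global principle of Section~\ref{SectionLocalizationLemma} goes through as you describe.
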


\medskip

\vspace{1cm}

\noindent \textcolor{gray}{$\bullet$} {\sf I. Bailleul} -- Univ Brest, CNRS UMR 6205, Laboratoire de Math\'ematiques de Bretagne Atlantique, France. {\it E-mail}: ismael.bailleul@univ-brest.fr   

\bigskip

\noindent \textcolor{gray}{$\bullet$} {\sf N.V. Dang} -- Sorbonne Université and Université Paris Cité, CNRS, IMJ-PRG, F-75005 Paris, France. Institut Universitaire de France, Paris, France.
{\it E-mail}: dang@imj-prg.fr

\bigskip

\noindent \textcolor{gray}{$\bullet$} {\sf L. Ferdinand} -- 
Laboratoire de Physique des 2 infinis Irène Joliot-Curie, UMR 9012, Universit\'e Paris-Saclay, Orsay, France.
{\it E-mail}: lferdinand@ijclab.in2p3.fr

\bigskip

\noindent \textcolor{gray}{$\bullet$} {\sf T.D. T\^o} -- Sorbonne Université and Université Paris Cité, CNRS, IMJ-PRG, F-75005 Paris, France. {\it E-mail}: tat-dat.to@imj-prg.fr

\end{document}